\pgfplotsset{compat=1.18}
\definecolor{uuuuuu}{rgb}{0.27,0.27,0.27}
\definecolor{sqsqsq}{rgb}{0.1255,0.1255,0.1255}
\newtheorem{definition}{Definition} [section]
\newtheorem{theorem}[definition]{Theorem}
\newtheorem{lemma}[definition]{Lemma}
\newtheorem{proposition}[definition]{Proposition}
\newtheorem{corollary}[definition]{Corollary}
\newtheorem{claim}[definition]{Claim}
\newtheorem{fact}[definition]{Fact}
\newcommand{\norm}[1]{\left\lVert#1\right\rVert}
\tikzset{unlabeled_vertex/.style={inner sep=1.7pt, outer sep=0pt, circle, fill}} 
\tikzset{labeled_vertex/.style={inner sep=2.2pt, outer sep=0pt, rectangle, fill=yellow, draw=black}} 
\tikzset{edge_color0/.style={color=black,line width=1.2pt,opacity=0.5}} 
\tikzset{edge_color1/.style={color=red,  line width=1.2pt,opacity=1}} 
\tikzset{edge_color2/.style={color=blue, line width=1.2pt,opacity=1}} 
\tikzset{edge_color3/.style={color=green,line width=1.2pt}} 
\tikzset{edge_color4/.style={color=red,  line width=1.2pt,dotted}} 
\tikzset{edge_color5/.style={color=blue, line width=1.2pt,dotted}} 
\tikzset{edge_color6/.style={color=green, line width=1.2pt,dotted}} 
\tikzset{edge_color7/.style={color=orange, line width=1.2pt}} 
\tikzset{edge_color8/.style={color=gray, line width=1.2pt}} 
\tikzset{edge_thin/.style={color=black}} 
\tikzset{edge_hidden/.style={color=black,dotted,opacity=0}} 
\tikzset{vertex_color1/.style={inner sep=1.7pt, outer sep=0pt, draw, circle, fill=red}} 
\tikzset{vertex_color2/.style={inner sep=1.7pt, outer sep=0pt, draw, circle, fill=blue}} 
\tikzset{vertex_color3/.style={inner sep=1.7pt, outer sep=0pt, draw, circle, fill=green}} 
\tikzset{labeled_vertex_color1/.style={inner sep=2.2pt, outer sep=0pt, draw, rectangle, fill=red}} 
\tikzset{labeled_vertex_color2/.style={inner sep=2.2pt, outer sep=0pt, draw, rectangle, fill=blue}} 
\tikzset{labeled_vertex_color3/.style={inner sep=2.2pt, outer sep=0pt, draw, rectangle, fill=green}}
\tikzset{
vtx/.style={inner sep=1.1pt, outer sep=0pt, circle, fill,draw}, 
vtxl/.style={inner sep=1.1pt, outer sep=0pt, rectangle, fill=yellow,draw=black}, 
hyperedge/.style={fill=pink,opacity=0.5,draw=black}, 
}
\begin{document}
\title{\bf\Large Strong stability from vertex-extendability and applications in generalized Tur\'{a}n problems}
\date{\today}
\author[1,2]{Wanfang Chen\thanks{Research was supported by China Scholarship Council (CSC)~No.~202306140125. \\ Email: \texttt{52215500039@stu.ecnu.edu.cn}}}
\author[1]{Xizhi Liu\thanks{Research was supported by ERC Advanced Grant 101020255. Email: \texttt{xizhi.liu.ac@gmail.com}}}
\affil[1]{Mathematics Institute and DIMAP,
            University of Warwick, 
            Coventry, CV4 7AL, UK}
\affil[2]{School of Mathematical Sciences and Shanghai Key Laboratory of PMMP, 
East China Normal University, 
Shanghai, 200241, China}
\maketitle
\begin{abstract}
Extending the work of Liu--Mubayi--Reiher~\cite{LMR23unif} on hypergraph Tur\'{a}n problems, we introduce the notion of vertex-extendability for general extremal problems on hypergraphs and develop an axiomatized framework for proving strong stability for extremal problems satisfying certain properties. 
This framework simplifies the typically complex and tedious process of obtaining stability and exact results for extremal problems into a much simpler task of verifying their vertex-extendability.

We present several applications of this method in generalized Tur\'{a}n problems including the Erd\H{o}s Pentagon Problem, hypergraph Tur\'{a}n-goodness, and generalized Tur\'{a}n problems of hypergraphs whose shadow is complete multipartite. 
These results significantly strengthen and extend previous results of Erd\H{o}s~\cite{Erdos62}, Gy\H{o}ri--J\'{a}nos--Simonovits~\cite{GPS91}, Grzesik~\cite{Gre12}, Hatami--Hladk\'{y}--Kr\'{a}\v{l}--Norine--Razborov~\cite{HHKNR13},  Morrison--Nir--Norin--Rz\k{a}\.{z}ewski--Wesolek~\cite{MNNRPW23}, Gerbner--Palmer~\cite{GP22}, and others.  

\medskip

\textbf{Keywords:}  vertex-extendability, degree-stability, generalized Tur\'{a}n problems, the Erd\H{o}s Pentagon Problem. 


\end{abstract}
\section{Intorduction}\label{SEC:Intorduction}
For an integer $r\ge 2$, an \textbf{$r$-uniform hypergraph} (henceforth \textbf{$r$-graph}) $\mathcal{H}$ is a collection of $r$-subsets of some finite set $V$.
We identify a hypergraph $\mathcal{H}$ with its edge set and use $V(\mathcal{H})$ to denote its vertex set. 
The size of $V(\mathcal{H})$ is denoted by $v(\mathcal{H})$. 
For every $i \in [r-1]$, the \textbf{link} of an $i$-set $T \subset V(\mathcal{H})$ is defined as 
\begin{align*}
    L_{\mathcal{H}}(T)
    \coloneqq \left\{e \in \binom{V(\mathcal{H})}{r-i} \colon T \cup e \in \mathcal{H}\right\}. 
\end{align*}
The \textbf{degree} of $T$ in $\mathcal{H}$ is $d_{\mathcal{H}}(T) \coloneqq |L_{\mathcal{H}}(T)|$. 
We use $\delta_{i}(\mathcal{H})$, $\Delta_{i}(\mathcal{H})$, and $d_{i}(\mathcal{H})$ to denote the \textbf{minimum, maximum, and average $i$-degree} of $\mathcal{H}$, respectively.
We will omit the subscript $i$ in the case where $i = 1$. 

We use $\mathfrak{G}^{r}$ to denote the collection of all vertex-labeled\footnote{We assume that different vertices receive different labels.} $r$-graphs, and for every $n \in \mathbb{N}$, we use $\mathfrak{G}^{r}(n) \subset \mathfrak{G}^{r}$ to denote the subfamily consisting of all $n$-vertex $r$-graphs. 
Given a map $\Gamma \colon \mathfrak{G}^{r} \to \mathbb{R}$, 
the \textbf{$\Gamma$-degree} of a vertex $v$ in an $r$-graph $\mathcal{H}$ is defined as  
\begin{align*}
    d_{\Gamma, \mathcal{H}}(v)
    \coloneqq \Gamma(\mathcal{H}) - \Gamma(\mathcal{H}-v),  
\end{align*}
where $\mathcal{H}-v$ denotes the $r$-graph obtained from $\mathcal{H}$ by removing $v$ and all edges containing $v$. 
We use $\delta_{\Gamma}(\mathcal{H})$, $\Delta_{\Gamma}(\mathcal{H})$, and $d_{\Gamma}(\mathcal{H})$ to denote the \textbf{minimum, maximum, and average $\Gamma$-degree} of $\mathcal{H}$, respectively. 
We say an $r$-graph $\mathcal{H}$ is \textbf{$\Gamma$-positive} if $\Gamma(\mathcal{H})>0$. 

In Extremal Combinatorics, given a map $\Gamma \colon \mathfrak{G}^{r} \to \mathbb{R}$, we are interested in the \textbf{extremal number} $\mathrm{ex}_{\Gamma}(n)$ and the \textbf{extremal family} $\mathrm{EX}_{\Gamma}(n)$ of $\Gamma$, which are defined by 
\begin{align*}
    \mathrm{ex}_{\Gamma}(n) 
     \coloneqq \max\left\{\Gamma(\mathcal{H}) \colon \mathcal{H} \in \mathfrak{G}^{r}(n)\right\},
     \quad  
    \mathrm{EX}_{\Gamma}(n)
     \coloneqq \left\{\mathcal{H} \in \mathfrak{G}^{r}(n) \colon \Gamma(\mathcal{H}) = \mathrm{ex}_{\Gamma}(n)\right\}.
\end{align*}
Several central topics in Extremal Combinatorics, such as the Tur\'{a}n problem, generalized Tur\'{a}n problem, and the inducibility problem, can be formulated as determining $\mathrm{ex}_{\Gamma}(n)$ and $\mathrm{EX}_{\Gamma}(n)$ for specific maps $\Gamma$ (see Section~\ref{SEC:applications} for some examples).
Prior research has established foundational methodologies for deriving exact and stability theorems for various specific extremal problems. 
Notably, references~\cite{NY17,PST19,LMR23unif} address nondegenerate hypergraph Tur\'{a}n problems, and reference~\cite{LPSS23} handles graph extremal problems with applications in the graph inducibility problem. 

In this work, we build upon the foundation established by Liu--Mubayi--Reiher~\cite{LMR23unif} regarding nondegenerate hypergraph Tur\'{a}n problems, extending it to a much broader class of extremal problems concerning hypergraphs. 
Our primary contributions introduce an axiomatized framework for proving a strong form of stability known as degree-stability for general hypergraph extremal problems.
This form of stability was initially explored by Andr\'{a}sfai--Erd\H{o}s--S\'{o}s~\cite{AES74} and Erd\H{o}s--Simonovits~\cite{ES73} in the context of graph Tur\'{a}n problems.
It is stronger than the type of stability first considered by Simonovits in his pioneering work~\cite{SI68}, and it often leads to an exact result almost effortlessly. 
A key concept in this framework is what we call $\Gamma$-vertex-extendability, which is an extension of the vertex-extendability introduced in~\cite{LMR23unif} for nondegenerate Tur\'{a}n problems. 
Our main results (Theorems~\ref{THM:GeneralFunction-Stability},~\ref{THM:GeneralFunction-Stability-b}, and~\ref{THM:GeneralFunction-Stability-c}) in this section reduce the proof for the degree-stability of extremal problem associated with $\Gamma$ to the straightforward task of verifying the vertex-extendability of $\Gamma$. 

In the subsequent part of this section, we will introduce definitions inspired by common properties shared by some well-studied extremal problems and state the main results. 
In Section~\ref{SEC:applications}, we will present some applications of the general theorems in generalized Tur\'{a}n problems.  
Additionally, in an upcoming paper~\cite{CILLP24}, we will show applications of our general theorems to Tur\'{a}n problems under $(t,p)$-norms (see Section~\ref{SEC:Remark} for more details).  
It is worth noting that there are several additional extremal problems, such as those considered in~\cite{LM21,BJ23,GGJ23,Ger23}, which our general theorems could potentially address. 
These applications are left for interested readers to explore in the future.

Throughout this paper, asymptotic notations are taken with respect to $n$.
Though not entirely precise, it may be helpful for readers to understand the definitions and calculations in the proofs by considering  $\varepsilon, \delta$ as sufficiently small constants and $n$ as a sufficiently large integer. 

\subsection{Definitions}
We use $\mathbb{R}_{\ge 0}$ to denote the set of nonnegative real numbers. 
Given a real number $k \in \mathbb{R}$, we say 
a map $\Gamma \colon \mathfrak{G}^{r} \to \mathbb{R}$ is \textbf{$k$-uniform} if for every $n \in \mathbb{N}$ and for every $\Gamma$-positive $\mathcal{H} \in \mathfrak{G}^{r}(n)$, 
\begin{align}\label{equ:k-uniform-def}
    \left| \sum_{v\in V(\mathcal{H})}d_{\Gamma, \mathcal{H}}(v) - k \cdot \Gamma(\mathcal{H}) \right|
    = o\left(\mathrm{ex}_{\Gamma}(n)\right).  
\end{align}
The \textbf{extremal average degree} of a $k$-uniform map $\Gamma \colon \mathfrak{G}^{r} \to \mathbb{R}$ is defined as
\begin{align*}
    \mathrm{exdeg}_{\Gamma}(n)
    \coloneqq \frac{k \cdot \mathrm{ex}_{\Gamma}(n)}{n}. 
\end{align*}
\begin{definition}\label{DEF:general-property-a}
    Let $\Gamma \colon \mathfrak{G}^{r} \to \mathbb{R}_{\ge 0}$ be a $k$-uniform map.  
    \begin{enumerate}[label=(\roman*)]
        \item\label{DEF:general-property-a1} We say $\Gamma$ is \textbf{$(\delta_{\ast}, c_{\ast},C_{\ast})$-smooth} for some $\delta_{\ast} \in (0,1]$, $c_{\ast}>0$, and $C_{\ast} > 0$ if, for every $n \in \mathbb{N}$ and $\delta \in [0,\delta_{\ast}]$, 
            \begin{align*}
                \mathrm{ex}_{\Gamma}(n-\delta n) 
                \le \mathrm{ex}_{\Gamma}(n) - \delta k \cdot \mathrm{ex}_{\Gamma}(n) + C_{\ast} \delta^{1+c_{\ast}} \cdot \mathrm{ex}_{\Gamma}(n) + o\left(\mathrm{ex}_{\Gamma}(n)\right). 
            \end{align*}
        \item\label{DEF:general-property-a2} We say $\Gamma$ is \textbf{locally $C$-Lipschitz} for some $C >0$ if, 
        for every $\Gamma$-positive $n$-vertex $r$-graph $\mathcal{H}$ and for every induced subgraph $\mathcal{H}'\subset \mathcal{H}$ on $n' \le n$ vertices, the following inequality holds for all $v\in V(\mathcal{H}') \colon$ 
        \begin{align}\label{equ:DEF:general-property-a2}
            d_{\Gamma, \mathcal{H}'}(v)
            \ge 
            d_{\Gamma, \mathcal{H}}(v) - \frac{C(n-n')}{n} \cdot \mathrm{exdeg}_{\Gamma}(n)
            - o(\mathrm{degex}_{\Gamma}(n)). 
        \end{align}
    \end{enumerate}
\end{definition}
We will omit parameters $k, \delta_{\ast}, c_{\ast},C_{\ast}, C$ if they are not crucial in the context. 

\begin{definition}\label{DEF:general-property-b}
    Let $\Gamma \colon \mathfrak{G}^{r} \to \mathbb{R}$ be a map.   
\begin{enumerate}[label=(\roman*)]
    \item\label{DEF:general-property-b1} We say $\Gamma$ is \textbf{continuous} if for every $\varepsilon>0$ there exist $\delta>0$ and $N_{0}$ such that the following holds for all $r$-graph $\mathcal{H}$ on $n\ge N_{0}$ vertices$\colon$
    if $\mathcal{H}' \subset \mathcal{H}$ satisfies $|\mathcal{H}'| \ge |\mathcal{H}| - \delta n^r$, then $\Gamma(\mathcal{H}') \ge \Gamma(\mathcal{H}) - \varepsilon \cdot \mathrm{ex}_{\Gamma}(n)$,
    \item\label{DEF:general-property-b2} We say $\Gamma$ is \textbf{locally monotone} if for every $\Gamma$-positive $r$-graph $\mathcal{H}$ and for every subgrpah $\mathcal{H}' \subset \mathcal{H}$, the inequality $d_{\Gamma, \mathcal{H}'}(v) \le d_{\Gamma, \mathcal{H}}(v)$ holds for every $v\in V(\mathcal{H}')$.
\end{enumerate}
\end{definition}
The following definitions are motivated by the well-known Zykov symmetrization~\cite{Zyk49}. 
Given a pair of vertices $u,v\in V(\mathcal{H})$,
we say $\{u,v\}$ is \textbf{uncovered} if there is no edge in $\mathcal{H}$ containing both $u$ and $v$, and say $u,v$ are \textbf{equivalent} if $L_{\mathcal{H}}(u) = L_{\mathcal{H}}(v)$ (in particular, this implies that $\{u,v\}$ is uncovered).
We use $\mathcal{H}_{u\to v}$ to denote the $r$-graph obtained from $\mathcal{H}$ by \textbf{symmetrizing $u$ into $v$}, that is,  
\begin{align*}
    \mathcal{H}_{u\to v} 
    \coloneqq \left(\mathcal{H} \setminus \{E\in \mathcal{H} \colon u\in E\} \right) \cup \left\{\{u\}\cup e \colon e\in L_{\mathcal{H}}(v) \right\}. 
\end{align*}
An $r$-graph $\mathcal{H}$ is \textbf{symmetrized} if every pair of uncovered vertices in $\mathcal{H}$ are equivalent.

\begin{definition}\label{DEF:symmetrization-increasing}
    A map $\Gamma \colon \mathfrak{G}^{r} \to \mathbb{R}$ is \textbf{symmetrization-increasing} if for every $\mathcal{H} \in \mathfrak{G}^{r}$ and for every pair of uncovered vertices $u,v \in V(\mathcal{H})$,  
\begin{align}\label{equ:sym-increase}
    \text{either}\quad 
    \Gamma(\mathcal{H}) 
    < \max\left\{\Gamma(\mathcal{H}_{u\to v}),\ \Gamma(\mathcal{H}_{v\to u})\right\}
    \quad\text{or}\quad 
    \Gamma(\mathcal{H})
    = \Gamma(\mathcal{H}_{u\to v})
    = \Gamma(\mathcal{H}_{v\to u}).
\end{align}   
\end{definition}
Notice that~\eqref{equ:sym-increase} holds iff there exists $\alpha \in (0,1)$ (in the proofs we will choose $\alpha = 1/2$) such that 
\begin{align*}
    \Gamma(\mathcal{H})
    \le \alpha \cdot \Gamma(\mathcal{H}_{u\to v}) + (1-\alpha) \cdot \Gamma(\mathcal{H}_{v\to u}). 
\end{align*}

\begin{definition}
    Let $\mathfrak{H}$ be a hereditary\footnote{Hereditary means that if $F \in \mathfrak{H}$, then $F' \in \mathfrak{H}$ for all (not necessarily induced) subgraphs $F' \subset F$.} family of $r$-graphs. 
     Let $\Gamma \colon \mathfrak{G}^{r} \to \mathbb{R}$ be a $k$-uniform map.  
     \begin{enumerate}[label=(\roman*)]
        \item We say $\Gamma$ is \textbf{edge-stable} with respect to $\mathfrak{H}$ if for every $\varepsilon>0$ there exist $\delta>0$ and $N_{0}$ such that every $r$-graph on $n \ge N_{0}$ vertices with $\Gamma(\mathcal{H}) \ge (1-\delta) \cdot \mathrm{ex}_{\Gamma}(n)$ is a member in $\mathfrak{H}$ after removing at most $\varepsilon n^r$ edges,
        \item We say $\Gamma$ is \textbf{$(\varepsilon,N_0)$-degree-stable} with respect to $\mathfrak{H}$ if every $r$-graph $\mathcal{H}$ on $n \ge N_0$ vertices with $\delta_{\Gamma}(\mathcal{H}) \ge (1-\varepsilon)\cdot \mathrm{exdeg}_{\Gamma}(n)$ is contained in $\mathfrak{H}$,
        \item We say $\Gamma$ is \textbf{$(\varepsilon,N_0)$-vertex-extendable} with respect to $\mathfrak{H}$ if for every $n \ge N_0$, 
        every $n$-vertex $r$-graph $\mathcal{H}$ with $\delta_{\Gamma}(\mathcal{H}) \ge (1-\varepsilon) \cdot \mathrm{exdeg}_{\Gamma}(n)$ and $\mathcal{H} - v \in \mathfrak{H}$ for some $v\in V(\mathcal{H})$ satisfies $\mathcal{H} \in \mathfrak{H}$, 
        \item We say $\Gamma$ is \textbf{symmetrized-stable} with respect to $\mathfrak{H}$ if every symmetrized $\Gamma$-positive $r$-graph is contained in $\mathfrak{H}$. 
     \end{enumerate}
\end{definition}
We refer the reader to Section~\ref{SEC:applications} and Fact~\ref{FACT:inj-basic-properties} for some examples of these definitions. 
Parameters $\varepsilon$ and $N_0$ will be omitted if they are not crucial in the context. 

The following simple fact follows easily from the definitions. 

\begin{fact}\label{FACT:exact-symm-increase}
    Suppose that $\Gamma$ is a symmetrization-increasing map and $\Gamma$ is symmetrized-stable with respect to $\mathfrak{H}$. 
    Then 
    \begin{align*}
        \mathrm{ex}_{\Gamma}(n)
        = \max\left\{\Gamma(\mathcal{H}) \colon \mathcal{H} \in \mathfrak{H} \text{ and } v(\mathcal{H}) = n\right\}
        \quad\text{for every}\quad n\in \mathbb{N}. 
    \end{align*}
\end{fact}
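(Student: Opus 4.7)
The $\ge$ direction of the equality is immediate since $\mathfrak{H}$ restricted to $n$-vertex graphs is a subfamily of $\mathfrak{G}^{r}(n)$. For the reverse inequality I will exhibit an extremal $n$-vertex $r$-graph that lies in $\mathfrak{H}$.

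The plan is a Zykov-style symmetrization argument guided by a monovariant. Among all extremal graphs $\mathcal{H} \in \mathfrak{G}^{r}(n)$, that is those with $\Gamma(\mathcal{H}) = \mathrm{ex}_{\Gamma}(n)$, choose $\mathcal{H}^{*}$ maximizing the potential
\[
\Phi(\mathcal{H}) \coloneqq \sum_{C} |C|^{2},
\]
where the sum runs over the equivalence classes of the relation ``$u \sim v$ iff $L_{\mathcal{H}}(u) = L_{\mathcal{H}}(v)$''. I claim that $\mathcal{H}^{*}$ must be symmetrized.

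Suppose otherwise: some uncovered pair $\{u, v\}$ in $\mathcal{H}^{*}$ is non-equivalent. By Definition~\ref{DEF:symmetrization-increasing}, either $\max\{\Gamma(\mathcal{H}^{*}_{u \to v}), \Gamma(\mathcal{H}^{*}_{v \to u})\} > \Gamma(\mathcal{H}^{*})$, which would contradict extremality of $\mathcal{H}^{*}$, or both symmetrizations preserve the $\Gamma$-value. Writing $A$ and $B$ for the equivalence classes of $u$ and $v$ respectively in $\mathcal{H}^{*}$ and assuming without loss of generality $|B| \ge |A|$, the graph $\mathcal{H}' \coloneqq \mathcal{H}^{*}_{u \to v}$ is then still extremal. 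Unwinding the definition of $\mathcal{H}^{*}_{u \to v}$ shows that (i) $u$ and $v$ become equivalent in $\mathcal{H}'$ (since $u$ inherits the link of $v$, whose own link is unaltered as $\{u,v\}$ was uncovered), and (ii) any two vertices equivalent in $\mathcal{H}^{*}$ remain equivalent in $\mathcal{H}'$, because the symmetric difference $L_{\mathcal{H}'}(w)\triangle L_{\mathcal{H}^{*}}(w)$ is determined by the $2$-vertex co-links $L_{\mathcal{H}^{*}}(\{w,u\})$ and $L_{\mathcal{H}^{*}}(\{w,v\})$, and these are constant on equivalence classes. Consequently the new partition coarsens the old one after replacing $A$ with $A \setminus \{u\}$ and $B$ with $B \cup \{u\}$, and a short convexity estimate gives
\[
\Phi(\mathcal{H}') - \Phi(\mathcal{H}^{*}) \;\ge\; (|A|-1)^{2} + (|B|+1)^{2} - |A|^{2} - |B|^{2} \;=\; 2(|B|-|A|) + 2 \;\ge\; 2,
\]
contradicting the maximality of $\Phi(\mathcal{H}^{*})$.

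Thus $\mathcal{H}^{*}$ is symmetrized. In the nontrivial case $\mathrm{ex}_{\Gamma}(n) > 0$ this makes $\mathcal{H}^{*}$ $\Gamma$-positive, so the symmetrized-stability hypothesis places $\mathcal{H}^{*} \in \mathfrak{H}$, yielding the $\le$ direction. The only technical point I anticipate is the bookkeeping in step (ii): one must track precisely how the edges incident to $u$ are replaced by edges mirroring the link of $v$, and confirm that the resulting modification of each link $L_{\mathcal{H}'}(w)$ factors through the co-link data of $w$ alone. Once that identity is in hand, the monovariant step closes in essentially one line, and the degenerate case $\mathrm{ex}_{\Gamma}(n) \le 0$ is either vacuous or handled by a trivial member of $\mathfrak{H}$.
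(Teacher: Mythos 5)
Your argument is correct and is essentially the argument the paper has in mind: the paper states this fact without proof ("follows easily from the definitions"), but your Zykov-style maximization of the extremal pair via the potential $\sum_C |C|^2$ is exactly the mechanism the authors deploy (as $\Psi(\mathcal{H})$, with the same case analysis on $\Gamma(\mathcal{H}_{u\to v})$ versus $\Gamma(\mathcal{H}_{v\to u})$ and the same estimate $(|A|-1)^2+(|B|+1)^2-|A|^2-|B|^2\ge 2$) in their proof of Theorem~\ref{THM:GeneralFunction-Stability}. Your bookkeeping point (ii) is fine as used — for $w\neq u$ the new link of $w$ is determined by $L_{\mathcal{H}^*}(w)$ together with the pair-links $L_{\mathcal{H}^*}(\{w,u\})$, $L_{\mathcal{H}^*}(\{w,v\})$, all constant on equivalence classes, so the new partition coarsens the modified old one — and the $\Gamma$-positivity caveat you flag matches the implicit assumption in the paper's definition of symmetrized-stable.
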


\subsection{General results}\label{SUBSEC:intro-general}
We state our main results in this subsection.

\begin{theorem}\label{THM:GeneralFunction-Stability}
     Let $r \ge 2$ be an integer, $\Gamma \colon \mathfrak{G}^{r} \to \mathbb{R}_{\ge 0}$ be a uniform, smooth, locally Lipschitz, continuous, and locally monotone map, and $\mathfrak{H}$ be a hereditary family of $r$-graphs. 
     Suppose that 
    \begin{enumerate}[label=(\roman*)]
        \item\label{THM:GeneralFunction-Stability-1} $\Gamma$ is a symmetrization-increasing, 
        \item\label{THM:GeneralFunction-Stability-2} $\Gamma$ is symmetrized-stable with respect to $\mathfrak{H}$, and
        \item\label{THM:GeneralFunction-Stability-3} $\Gamma$ is vertex-extendable with respect to $\mathfrak{H}$.
    \end{enumerate}
    Then $\Gamma$ is degree-stable with respect to $\mathfrak{H}$. 
\end{theorem}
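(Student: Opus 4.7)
The plan is to proceed by induction on $n$, using vertex-extendability as the inductive lifting tool and the locally Lipschitz property (paired with smoothness) as the mechanism for passing from $\mathcal{H}$ to its vertex-deleted subgraphs.

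First I will fix parameters: let $\varepsilon_v > 0$ and $N_v$ be the vertex-extendability constants, $C$ the locally Lipschitz constant, and $(\delta_*, c_*, C_*)$ the smoothness constants. I choose $\varepsilon > 0$ strictly below $\varepsilon_v$ and $N_0 \ge N_v$ large enough that the various $O(1/n)$ and $o(1)$ losses appearing below fit inside the slack $\varepsilon_v - \varepsilon$. The main induction then asserts: for every $n \ge N_0$, every $n$-vertex $r$-graph $\mathcal{H}$ satisfying $\delta_\Gamma(\mathcal{H}) \ge (1-\varepsilon)\,\mathrm{exdeg}_\Gamma(n)$ lies in $\mathfrak{H}$.

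For the inductive step, I pick any $v \in V(\mathcal{H})$; locally Lipschitz gives
\[
\delta_\Gamma(\mathcal{H}-v) \ge (1-\varepsilon)\,\mathrm{exdeg}_\Gamma(n) - \tfrac{C}{n}\,\mathrm{exdeg}_\Gamma(n) - o(\mathrm{exdeg}_\Gamma(n)),
\]
while smoothness applied with $\delta = 1/n$ (combined with uniformity to estimate the ratio $\mathrm{exdeg}_\Gamma(n-1)/\mathrm{exdeg}_\Gamma(n)$) controls the change of the target threshold. Putting these together forces $\delta_\Gamma(\mathcal{H}-v) \ge (1-\varepsilon)\,\mathrm{exdeg}_\Gamma(n-1)$ when $n$ is large, so the induction hypothesis gives $\mathcal{H}-v \in \mathfrak{H}$; since $\varepsilon \le \varepsilon_v$, vertex-extendability then promotes this to $\mathcal{H} \in \mathfrak{H}$.

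The base case at $n = N_0$ is where I expect the main difficulty. My plan is to use uniformity to deduce $\Gamma(\mathcal{H}) \ge (1-\varepsilon - o(1))\,\mathrm{ex}_\Gamma(N_0) > 0$ from the minimum-degree hypothesis, then apply iterated Zykov symmetrization on uncovered pairs: symmetrization-increasing guarantees $\Gamma$ does not decrease, so the process terminates at a symmetrized $\Gamma$-positive graph $\mathcal{H}^*$, which lies in $\mathfrak{H}$ by symmetrized-stability. The obstacle is that $\mathcal{H}$ and $\mathcal{H}^*$ share only a vertex set, so transferring $\mathcal{H}^* \in \mathfrak{H}$ to $\mathcal{H} \in \mathfrak{H}$ requires further work; tentatively I would track the symmetrization chain, using local monotonicity and continuity to propagate the minimum-degree condition along it, and apply vertex-extendability together with the hereditariness of $\mathfrak{H}$ at each intermediate stage. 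The tightest part of the overall argument is the constant matching: ensuring that the cumulative $O(1/n)$ losses from locally Lipschitz and from the $\mathrm{exdeg}_\Gamma(n) \to \mathrm{exdeg}_\Gamma(n-1)$ comparison stay strictly below $\varepsilon_v - \varepsilon$ throughout the induction, which will dictate the final choice of $\varepsilon$ and $N_0$.
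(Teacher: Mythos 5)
There is a genuine gap, and it sits exactly where you flagged the ``constant matching'': the inductive step cannot be closed. To descend from $n$ to $n-1$ with the \emph{same} $\varepsilon$ you need
$(1-\varepsilon)\,\mathrm{exdeg}_{\Gamma}(n) - \tfrac{C}{n}\,\mathrm{exdeg}_{\Gamma}(n) - o(\mathrm{exdeg}_{\Gamma}(n)) \ge (1-\varepsilon)\,\mathrm{exdeg}_{\Gamma}(n-1)$,
i.e.\ the drop of $\mathrm{exdeg}_{\Gamma}$ from $n$ to $n-1$ must absorb the per-step loss. But the axioms are far too coarse at the scale of a single vertex deletion: smoothness (and Fact~\ref{FACT:basic-property-Lambda}) controls $\mathrm{ex}_{\Gamma}(n-1)$ only up to an additive $o(\mathrm{ex}_{\Gamma}(n))$ error, so $\mathrm{exdeg}_{\Gamma}(n-1)$ may even exceed $\mathrm{exdeg}_{\Gamma}(n)$ by $o(\mathrm{exdeg}_{\Gamma}(n))$, while your loss is $\Theta(1/n)\cdot\mathrm{exdeg}_{\Gamma}(n)+o(\mathrm{exdeg}_{\Gamma}(n))$; and even ignoring the $o(\cdot)$ terms, nothing forces the Lipschitz constant $C$ to be at most $k-1$ (in the applications, e.g.\ Fact~\ref{FACT:inj-basic-properties}, $C = 2(v(Q))^2/\pi_{\mathrm{inj}}(Q,\mathcal{F}) \gg v(Q)-1$), so the deterministic part of the loss already outweighs the deterministic part of the gain. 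The slack $\varepsilon_v-\varepsilon$ cannot rescue this: it is available once, at the final application of vertex-extendability, whereas your induction must re-establish the \emph{same} threshold at every level, so per-step losses compound over an unbounded number of levels. Finally, the ``base case'' at $n=N_0$ is not a base case at all: $N_0$ must be large for the asymptotic axioms to apply, so it is the full problem, and the symmetrize-then-transfer-back step you leave open is precisely the hard part --- after full symmetrization $\mathcal{H}^{*}$ may differ from $\mathcal{H}$ on a positive fraction of vertices' links, and hereditariness plus vertex-extendability can only bridge a \emph{small} vertex set.

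The paper's proof is organized to avoid exactly these two traps. It works at a single fixed $n$ with a counterexample chosen so that the pair $(\Gamma(\mathcal{H}),\Psi(\mathcal{H}))$ is lexicographically maximal; then one Zykov symmetrization step at a single vertex produces $\mathcal{H}'$ that is strictly larger in this order, hence (by maximality) $\mathcal{H}'-Z_{\Gamma,\delta_1}(\mathcal{H}')\in\mathfrak{H}$, and $\mathcal{H}$ and $\mathcal{H}'$ agree outside one vertex. The exceptional set is small by Lemma~\ref{LEMMA:Z-delta-size} ($|Z|\le \delta_2 n$ with $\delta_2\ll\delta_1/C$), and vertices are added back via Lemma~\ref{LEMMA:Lambda-set-extendability}: crucially, the re-added set has size only $O(\varepsilon n/C)$, so the total Lipschitz loss is $O(\varepsilon)\cdot \mathrm{exdeg}_{\Gamma}(n)$ and the comparison of extremal degrees is made at a proportional scale (via Fact~\ref{FACT:basic-property-Lambda}~\ref{FACT:basic-property-Lambda-b}), where the $o(1)$ errors are negligible against a constant-factor slack $(1-\varepsilon)\to(1-3\varepsilon)$. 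In short: no vertex-by-vertex descent through all values of $n$, no per-step threshold recovery, and the symmetrization is confined to one step on an extremal counterexample rather than a full symmetrization chain. Your outline would need to be restructured along these lines (or supply a quantitatively stronger Lipschitz/smoothness hypothesis) before it can work.
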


In situations where conditions~\ref{THM:GeneralFunction-Stability-1} and~\ref{THM:GeneralFunction-Stability-2} in Theorem~\ref{THM:GeneralFunction-Stability} are not satisfied, we can bypass them if the edge-stability of $\Gamma$ has already been established, as demonstrated in the following theorem.

\begin{theorem}\label{THM:GeneralFunction-Stability-b}
    Let $r \ge 2$ be an integer, $\Gamma \colon \mathfrak{G}^{r} \to \mathbb{R}_{\ge 0}$ be a uniform, smooth, locally Lipschitz, continuous, and locally monotone map, and $\mathfrak{H}$ be a hereditary family of $r$-graphs. 
    Suppose that $\Gamma$ is edge-stable and vertex-extendable with respect to $\mathfrak{H}$.
    Then $\Gamma$ is degree-stable with respect to $\mathfrak{H}$. 
\end{theorem}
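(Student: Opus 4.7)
The plan is to leverage the minimum $\Gamma$-degree assumption to pin down $\Gamma(\mathcal{H})$ near its maximum, use edge-stability to locate a close member $\mathcal{H}^\ast \in \mathfrak{H}$, carve out a small ``defective'' vertex set $S$ such that $\mathcal{H}[V(\mathcal{H}) \setminus S] \in \mathfrak{H}$, and then reconstruct $\mathcal{H}$ by iteratively adding back the vertices of $S$ through vertex-extendability. This mirrors the overall shape of the argument for Theorem~\ref{THM:GeneralFunction-Stability}, but with edge-stability replacing the symmetrization machinery.

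First, starting from $\delta_{\Gamma}(\mathcal{H}) \ge (1-\varepsilon)\cdot \mathrm{exdeg}_{\Gamma}(n)$, I would sum vertex $\Gamma$-degrees and invoke the $k$-uniform condition~\eqref{equ:k-uniform-def} to obtain $k\cdot \Gamma(\mathcal{H}) \ge (1-\varepsilon)\, k \cdot \mathrm{ex}_{\Gamma}(n) - o(\mathrm{ex}_{\Gamma}(n))$, and hence $\Gamma(\mathcal{H}) \ge (1-2\varepsilon)\, \mathrm{ex}_{\Gamma}(n)$ for $n$ large. With $\varepsilon$ chosen small relative to the edge-stability parameter, edge-stability then produces a subgraph $\mathcal{H}^\ast \subset \mathcal{H}$ with $\mathcal{H}^\ast \in \mathfrak{H}$ and $|\mathcal{H} \setminus \mathcal{H}^\ast| \le \beta n^r$ for an arbitrarily small $\beta > 0$.

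The crux is to produce a set $S \subset V(\mathcal{H})$ of size $o(n)$ with $\mathcal{H}[V(\mathcal{H}) \setminus S] \in \mathfrak{H}$. The key observation is that if every vertex outside $S$ has the same link in $\mathcal{H}$ as in some fixed $\mathcal{H}^\ast \in \mathfrak{H}$, then $\mathcal{H}[V(\mathcal{H}) \setminus S] = \mathcal{H}^\ast[V(\mathcal{H}) \setminus S]$, and the latter lies in $\mathfrak{H}$ by the hereditary property. I would realize this by iterative refinement: repeatedly apply edge-stability to the current graph and delete those vertices whose links deviate noticeably from the current chosen $\mathcal{H}^\ast$. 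Local Lipschitzness (Definition~\ref{DEF:general-property-a}\ref{DEF:general-property-a2}) together with smoothness (Definition~\ref{DEF:general-property-a}\ref{DEF:general-property-a1}) ensures that the minimum $\Gamma$-degree condition -- and hence the $\Gamma$-value lower bound needed to reapply edge-stability via $k$-uniformity -- survives each round, with cumulative vertex loss bounded by $o(n)$.

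Once $\mathcal{H} - S \in \mathfrak{H}$, I would reinstate vertices of $S$ one at a time to form a chain of induced subgraphs $\mathcal{H} - S = \mathcal{G}_0 \subset \mathcal{G}_1 \subset \cdots \subset \mathcal{G}_{|S|} = \mathcal{H}$. Local Lipschitzness applied to each pair $\mathcal{G}_i \subset \mathcal{H}$, combined with smoothness to compare $\mathrm{exdeg}_{\Gamma}$ at sizes $n$ and $v(\mathcal{G}_i)$, implies the minimum $\Gamma$-degree of each $\mathcal{G}_i$ stays within the vertex-extendability threshold, so vertex-extendability inductively yields $\mathcal{G}_i \in \mathfrak{H}$, culminating in $\mathcal{H} \in \mathfrak{H}$. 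The main obstacle I expect is precisely the iterative refinement step: controlling how quickly the ``bad edge'' count shrinks toward zero, keeping the accumulated bad-vertex set at $o(n)$ across all rounds, and carefully tracking how the parameters of edge-stability, $k$-uniformity, smoothness, and local Lipschitzness interact as the vertex set thins; continuity of $\Gamma$ (Definition~\ref{DEF:general-property-b}\ref{DEF:general-property-b1}) should be the right tool for passing $\Gamma$-value bounds between an intermediate graph and its ``de-noised'' version $\mathcal{H}^\ast$ at each round.
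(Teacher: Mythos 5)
Your opening and closing moves match the paper's: deducing $\Gamma(\mathcal{H}) \ge (1-O(\varepsilon))\,\mathrm{ex}_{\Gamma}(n)$ from the minimum $\Gamma$-degree via $k$-uniformity, invoking edge-stability, and finishing with a vertex-by-vertex reconstruction governed by vertex-extendability. But the middle step, which you yourself flag as the crux, has a genuine gap. You want a set $S$ of size $o(n)$ such that the \emph{induced} subgraph $\mathcal{H}-S$ lies in $\mathfrak{H}$, and you propose to get it by ensuring every vertex outside $S$ has the \emph{same} link in $\mathcal{H}$ as in the edge-stability approximation $\mathcal{H}^{\ast}$, deleting only vertices whose links ``deviate noticeably.'' These two requirements are incompatible: edge-stability only bounds $|\mathcal{H}\setminus\mathcal{H}^{\ast}|$ by $\beta n^{r}$, and these removed edges can be spread so thinly that every vertex meets at least one of them while no vertex meets more than $O(\beta n^{r-1})$ of them (for $r=2$, think of a perfect matching of removed edges). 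Then no vertex deviates ``noticeably,'' yet for \emph{every} choice of $S$ with $|S|=o(n)$ one has $\mathcal{H}[V\setminus S]\ne \mathcal{H}^{\ast}[V\setminus S]$, so membership in $\mathfrak{H}$ does not follow from heredity. The proposed fix--iterating edge-stability and re-cleaning--has no mechanism forcing the count of discrepant edges down to exactly zero: each application of edge-stability again only guarantees ``at most $\varepsilon n^{r}$ bad edges,'' so the iteration neither terminates with a perfectly clean induced subgraph nor keeps the accumulated deletions at $o(n)$ unless the bad edges happen to concentrate, which nothing guarantees.

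The paper's proof avoids ever asking an induced subgraph of $\mathcal{H}$ to lie in $\mathfrak{H}$. It takes the edge-deleted graph $\mathcal{H}_1\in\mathfrak{H}$ from edge-stability, uses continuity to keep $\Gamma(\mathcal{H}_1)$ large, lets $Z$ be the vertices of low $\Gamma$-degree \emph{in $\mathcal{H}_1$} (small by Lemma~\ref{LEMMA:Z-delta-size}), and starts from $\mathcal{G}_0=\mathcal{H}_1[U]\in\mathfrak{H}$ with $U=V\setminus Z$. It then rebuilds $\mathcal{H}$ through the chain $\mathcal{G}_i=\mathcal{G}_{i-1}\cup\{e\in\mathcal{H}[V_i]\colon v_i\in e\}$, processing the $U$-vertices first and the $Z$-vertices afterwards. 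The decisive observation is that $\mathcal{G}_i - v_i \subset \mathcal{G}_{i-1}\in\mathfrak{H}$, so heredity plus vertex-extendability (with the degree hypothesis verified via local monotonicity for $v_i\in U$ and the Lipschitz property for $v_i\in Z$) gives $\mathcal{G}_i\in\mathfrak{H}$ at every step. In other words, vertex-extendability is used not only to reinstate deleted \emph{vertices}, as in your plan, but also to reinstate the deleted \emph{edges} at each $U$-vertex, one vertex at a time; this is exactly what removes the need for your problematic ``identical links'' set $S$, and it is the idea your proposal is missing.
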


In some cases, we can establish the edge-stability of $\Gamma$ by reducing it to the degree-stability\footnote{In fact, edge-stability is sufficient for our purpose.} of a simpler map $\hat{\Gamma}$, for which we can apply Theorem~\ref{THM:GeneralFunction-Stability}. 
Such a strategy was used in works such as~\cite{MU06,PI13}.

Given two maps $\Gamma, \hat{\Gamma} \colon \mathfrak{G}^{r} \to \mathbb{R}_{\ge 0}$, we say $\hat{\Gamma}$ is a \textbf{trajectory} of $\Gamma$ if $\mathrm{ex}_{\Gamma}(n) \ge (1-o(1)) \cdot \mathrm{ex}_{\hat{\Gamma}}(n)$, and for every $\Gamma$-positive $r$-graph $\mathcal{H}$ on $n$ vertices there exists a subgraph $\mathcal{H}' \subset \mathcal{H}$ such that $|\mathcal{H}'| = |\mathcal{H}| - o(n^r)$ and $\hat{\Gamma}(\mathcal{H}') \ge \Gamma(\mathcal{H}) - o(\mathrm{ex}_{\Gamma}(n))$.

\begin{theorem}\label{THM:GeneralFunction-Stability-c}
    Let $r \ge 2$ be an integer, $\Gamma, \hat{\Gamma} \colon \mathfrak{G}^{r} \to \mathbb{R}_{\ge 0}$ be uniform, smooth, locally Lipschitz, continuous, and locally monotone maps, and $\mathfrak{H}$ be a hereditary family of $r$-graphs. 
    Suppose that 
    \begin{enumerate}[label=(\roman*)]
        \item\label{THM:GeneralFunction-Stability-c-1} $\hat{\Gamma}$ is a trajectory of $\Gamma$, 
        \item\label{THM:GeneralFunction-Stability-c-2} 
        $\hat{\Gamma}$ is symmetrization-increasing, 
        \item\label{THM:GeneralFunction-Stability-c-3} $\hat{\Gamma}$ is symmetrized-stable with respect to $\mathfrak{H}$, and 
        \item\label{THM:GeneralFunction-Stability-c-4} both $\hat{\Gamma}$ and $\Gamma$ are vertex-extendable with respect to $\mathfrak{H}$. 
    \end{enumerate}
    Then $\Gamma$ is degree-stable with respect to $\mathfrak{H}$.
\end{theorem}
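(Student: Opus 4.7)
The strategy is to chain Theorems~\ref{THM:GeneralFunction-Stability} and~\ref{THM:GeneralFunction-Stability-b}, using the trajectory property to bridge $\hat{\Gamma}$ and $\Gamma$. First I apply Theorem~\ref{THM:GeneralFunction-Stability} to $\hat{\Gamma}$: the map $\hat{\Gamma}$ is uniform, smooth, locally Lipschitz, continuous, and locally monotone by hypothesis, while items~\ref{THM:GeneralFunction-Stability-c-2}, \ref{THM:GeneralFunction-Stability-c-3}, and the $\hat{\Gamma}$ half of~\ref{THM:GeneralFunction-Stability-c-4} supply the three remaining hypotheses of that theorem. This yields degree-stability of $\hat{\Gamma}$ with respect to $\mathfrak{H}$.

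The core technical step is upgrading this to edge-stability of $\hat{\Gamma}$, which I do by iterated vertex deletion. Starting from any $\mathcal{H}$ with $\hat{\Gamma}(\mathcal{H}) \ge (1-\delta)\mathrm{ex}_{\hat{\Gamma}}(n)$, I repeatedly remove any vertex of $\hat{\Gamma}$-degree below $(1-\varepsilon/2)\mathrm{exdeg}_{\hat{\Gamma}}(m)$, where $m$ is the current vertex count. At each step the $\hat{\Gamma}$-value drops by at most $\mathrm{exdeg}_{\hat{\Gamma}}(m)$, while the smoothness condition controls how $\mathrm{ex}_{\hat{\Gamma}}(m)$ decreases as $m$ shrinks; balancing the two rates via the $k$-uniformity identity $\sum_v d_{\hat{\Gamma},\mathcal{H}}(v) \approx k \hat{\Gamma}(\mathcal{H})$ shows that the process must terminate after at most $o(n)$ deletions, since otherwise the surviving $\hat{\Gamma}$-value would dip below what smoothness allows. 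The terminal graph $\mathcal{H}^{\ast}$ then has minimum $\hat{\Gamma}$-degree at least $(1-\varepsilon/2)\mathrm{exdeg}_{\hat{\Gamma}}(v(\mathcal{H}^{\ast}))$, so the degree-stability established in Step~1 places $\mathcal{H}^{\ast} \in \mathfrak{H}$. Since $\mathfrak{H}$ is hereditary and the deleted vertices were incident to only $o(n^r)$ edges, $\mathcal{H}$ itself lies in $\mathfrak{H}$ after removing at most $\varepsilon n^r$ edges; thus $\hat{\Gamma}$ is edge-stable.

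The remaining steps are short. Applying the trajectory property to a $\Gamma$-extremal graph shows $\mathrm{ex}_{\hat{\Gamma}}(n) = (1 \pm o(1))\mathrm{ex}_{\Gamma}(n)$. So if $\Gamma(\mathcal{H}) \ge (1-\delta)\mathrm{ex}_{\Gamma}(n)$, the trajectory yields $\mathcal{H}' \subset \mathcal{H}$ with $|\mathcal{H} \setminus \mathcal{H}'| = o(n^r)$ and $\hat{\Gamma}(\mathcal{H}') \ge (1-\delta-o(1))\mathrm{ex}_{\hat{\Gamma}}(n)$; edge-stability of $\hat{\Gamma}$ places $\mathcal{H}'$, and hence $\mathcal{H}$, into $\mathfrak{H}$ after removing at most $\varepsilon n^r$ edges in total. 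Thus $\Gamma$ is edge-stable, and since $\Gamma$ is also vertex-extendable by hypothesis~\ref{THM:GeneralFunction-Stability-c-4}, Theorem~\ref{THM:GeneralFunction-Stability-b} applied to $\Gamma$ delivers the desired degree-stability. The principal obstacle is the iterated-deletion argument of Step~2, where smoothness, $k$-uniformity, and local Lipschitz must be orchestrated simultaneously to guarantee both that only $o(n)$ vertices are removed and that the minimum $\hat{\Gamma}$-degree of the survivor rises to the degree-stability threshold.
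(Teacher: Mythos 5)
Your proposal is correct and follows essentially the same route as the paper: apply Theorem~\ref{THM:GeneralFunction-Stability} to $\hat{\Gamma}$ to obtain its degree-stability (hence edge-stability), transfer this to edge-stability of $\Gamma$ via the trajectory property, and conclude by applying Theorem~\ref{THM:GeneralFunction-Stability-b} to $\Gamma$ together with its vertex-extendability. The only difference is that you spell out the upgrade from degree-stability to edge-stability of $\hat{\Gamma}$ by an iterated vertex-deletion argument, whereas the paper treats this implication as immediate; it can also be done in one shot by deleting the small set of low-$\hat{\Gamma}$-degree vertices, using Lemma~\ref{LEMMA:Z-delta-size} and the locally Lipschitz property.
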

\textbf{Remark.}
Conditions~\ref{THM:GeneralFunction-Stability-c-2},~\ref{THM:GeneralFunction-Stability-c-3}, and~\ref{THM:GeneralFunction-Stability-c-4} in Theorem~\ref{THM:GeneralFunction-Stability-c} can be replaced with the condition that $\hat{\Gamma}$ is edge-stable with respect to $\mathfrak{H}$ and $\Gamma$ is vertex-extendable with respect to $\mathfrak{H}$.

Proofs for Theorems~\ref{THM:GeneralFunction-Stability}, ~\ref{THM:GeneralFunction-Stability-b}, and Theorem~\ref{THM:GeneralFunction-Stability-c}
are included in Section~\ref{SEC:Proof-general}. 

\section{Applications in generalized Tur\'{a}n problems}\label{SEC:applications}
Given two $r$-graphs $Q$ and $\mathcal{H}$, a map $\phi \colon Q \to \mathcal{H}$ is a \textbf{homomorphism} if $\phi(e) \in \mathcal{H}$ for all $e \in Q$.  
When such a homomorphism exists, we say $Q$ is \textbf{$\mathcal{H}$-colorable}. 
We use $\mathrm{Hom}(Q,\mathcal{H})$ to denote the family of all homomorphisms from $Q$ to $\mathcal{H}$ and let $\mathrm{hom}(Q,\mathcal{H}) \coloneqq |\mathrm{Hom}(Q,\mathcal{H})|$. 
Let $\mathrm{Inj}(Q,\mathcal{H})$ denote the family of \textbf{injective homomorphisms} from $Q$ to $\mathcal{H}$ and let $\mathrm{inj}(Q,\mathcal{H}) \coloneqq |\mathrm{Inj}(Q,\mathcal{H})|$.
For every vertex $v\in V(\mathcal{H})$, the  \textbf{$Q$-degree} of $v$ in $\mathcal{H}$ is 
\begin{align*}
    d_{Q,\mathcal{H}}(v)
    \coloneqq \left\{\phi \in \mathrm{Inj}(Q,\mathcal{H}) \colon v\in \phi(V(Q))\right\}. 
\end{align*}
We used $d_{Q}(\mathcal{H}), \delta_{Q}(\mathcal{H}), \Delta_{Q}(\mathcal{H})$ to denote the \textbf{average, minimum, maximum} $Q$-degree of $\mathcal{H}$, respectively.

Given a family $\mathcal{F}$ of $r$-graphs, we say $\mathcal{H}$ is \textbf{$\mathcal{F}$-free}
if it does not contain any member of $\mathcal{F}$ as a subgraph.
Given an $r$-graph $Q$ and a family $\mathcal{F}$ of $r$-graphs, let 
\begin{align*}
    \mathrm{inj}(n,Q,\mathcal{F})
    \coloneqq \max\left\{\mathrm{inj}(Q,\mathcal{H}) \colon  \text{$v(\mathcal{H}) = n$ and $\mathcal{H}$ is $\mathcal{F}$-free} \right\}. 
\end{align*}
The \textbf{generalized Tur\'{a}n number} is related to $\mathrm{inj}(n,Q,\mathcal{F})$ via the definition $\mathrm{ex}(n,Q,\mathcal{F}) \coloneqq {\mathrm{inj}(n,Q,\mathcal{F})}/{|\mathrm{Aut}(Q)|}$.
Here, $\mathrm{Aut}(Q) = \mathrm{Inj}(Q,Q)$ is the automorphism group of $Q$. 
The \textbf{generalized Tur\'{a}n density} is defined as $\pi(Q, \mathcal{F}) \coloneqq \lim_{n\to \infty} {\mathrm{ex}(n,Q,\mathcal{F})}/{\binom{n}{v(Q)}}$. 
The existence of this limit can be established by a simple averaging argument used by Katona--Nemetz--Simonovits in~\cite{KNS64}. 
The \textbf{Tur\'{a}n number} of $\mathcal{F}$ is simply $\mathrm{ex}(n,K_{r}^{r},\mathcal{F})$, and for simplicity, denoted by $\mathrm{ex}(n,\mathcal{F})$. 
Similarly, the \textbf{Tur\'{a}n density} of $\mathcal{F}$ is $\pi(\mathcal{F}) \coloneqq \lim_{n\to \infty}\mathrm{ex}(n,\mathcal{F})/\binom{n}{r}$. 
A family $\mathcal{F}$ is called \textbf{$Q$-nondegenerate} (resp. \textbf{nondegenerate}) if $\pi(Q, \mathcal{F}) > 0$ (resp. $\pi(\mathcal{F}) > 0$).
In this section, we consider only nondegenerate cases. 

The study of $\mathrm{ex}(n,\mathcal{F})$ has been a central topic in Extremal Combinatorics since Tur\'{a}n's seminal work~\cite{TU41}, in which he determined the value of $\mathrm{ex}(n,K_{\ell})$ for all $\ell \ge 3$ (with the case $\ell=3$ having been addressed even earlier by Mantel~\cite{Mantel07}). 
We refer the reader to surveys~\cite{Fur91,Sid95,Keevash11} for results on Tur\'{a}n problems. 
The study of generalized Tur\'{a}n problems was initiated by Erd\H{o}s~\cite{Erdos62}, who determined $\mathrm{ex}(n,K_{k},K_{\ell})$ for all $\ell > k \ge 3$.  
Gy\H{o}ri--J\'{a}nos--Simonovits~\cite{GPS91} extended the result of Erd\H{o}s to the case where $Q$ is complete multipartite and $\mathcal{F} = K_{\ell+1}$. 
Alon--Shikhelman~\cite{AS16} initiated a systematic study on $\mathrm{ex}(n,Q, \mathcal{F})$ for graphs and obtained numerous results for various combinations of $(Q, \mathcal{F})$. 
Since then, this topic has been extensively studied in the recent decade.

To maintain consistency in notation, we will use $\mathrm{inj}(n,Q,\mathcal{F})$ instead of $\mathrm{ex}(n,Q,\mathcal{F})$ in the remaining part.
For convenience, we set $\pi_{\mathrm{inj}}(Q,\mathcal{F}) \coloneqq \lim_{n\to \infty} \frac{\mathrm{inj}(n,Q,\mathcal{F})}{n^{v(Q)}}$.

\subsection{Erd\H{o}s' pentagon problem}
In 1984, Erd\H{o}s~\cite{Erd84} conjectured that for every $n \ge 5$ the maximum number of copies of $C_5$ in an $n$-vertex triangle-free graph is attained by the balanced blowup of $C_5$. 
This conjecture was solved independently by Grzesik~\cite{Gre12} and Hatami--Hladk\'{y}--Kr\'{a}\v{l}--Norine--Razborov~\cite{HHKNR13} for large $n$, and subsequently by Lidick\'{y}--Pfender~\cite{LP18} for all $n$. 
All three proofs rely on the machinery of Flag Algebra~\cite{Raz07} and are computer-assisted. 

\begin{theorem}[\cite{Gre12,HHKNR13,LP18}]\label{THM:Pentagon-exact}
    For every $n \in \mathbb{N}$,  
    \begin{align*}
        \mathrm{inj}(n,C_5,K_3) 
        = 10\prod_{i=0}^{4}\left\lfloor\frac{n+i}{5}\right\rfloor. 
    \end{align*}
\end{theorem}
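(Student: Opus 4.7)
The plan is to apply Theorem~\ref{THM:GeneralFunction-Stability-b} to the $5$-uniform map $\Gamma \colon \mathfrak{G}^{2} \to \mathbb{R}_{\ge 0}$ defined by $\Gamma(\mathcal{H}) \coloneqq \mathrm{inj}(C_5, \mathcal{H})$ when $\mathcal{H}$ is $K_3$-free and $\Gamma(\mathcal{H}) \coloneqq 0$ otherwise, together with the hereditary family $\mathfrak{H}$ of all triangle-free subgraphs of blowups of $C_5$. The target identity is $\mathrm{ex}_{\Gamma}(n) = 10 \prod_{i=0}^{4} \lfloor (n+i)/5 \rfloor$, attained by the balanced $C_5$-blowup, on which every vertex has $\Gamma$-degree $(1+o(1)) \cdot \mathrm{exdeg}_{\Gamma}(n)$.

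First I would verify the regularity axioms for $\Gamma$. Uniformity is exact since every injective homomorphism from $C_5$ uses exactly five vertices; smoothness with $c_{\ast} = 1$ follows from the asymptotic $\mathrm{ex}_{\Gamma}(n) \sim 10(n/5)^5$ together with the expansion $(1-\delta)^5 = 1 - 5\delta + O(\delta^2)$; local Lipschitzness follows from the fact that any fixed pair of vertices lies in at most $O(n^3)$ copies of $C_5$; continuity follows since removing $\delta n^2$ edges kills at most $O(\delta n^5)$ copies; local monotonicity is immediate. I would then invoke edge-stability of $\Gamma$ with respect to $\mathfrak{H}$, which is implicit in the flag algebra arguments of~\cite{Gre12,HHKNR13}: any $K_3$-free graph with $\mathrm{inj}(C_5, \mathcal{H}) \ge (1-\delta) \mathrm{ex}_{\Gamma}(n)$ becomes a subgraph of some blowup of $C_5$ after removing at most $\varepsilon n^2$ edges.

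The core novel input is the vertex-extendability of $\Gamma$. Suppose $\mathcal{H}-v$ is a triangle-free subgraph of a $C_5$-blowup with parts $A_1, \dots, A_5$ of sizes $(1 \pm o(1)) n/5$ and $d_{\Gamma, \mathcal{H}}(v) \ge (1-\varepsilon) \cdot 10(n/5)^4$. Writing $a_j = |N_{\mathcal{H}}(v) \cap A_j|$, I would enumerate the $5$-cycles through $v$ by the tuple of parts hosting the path $v x_1 x_2 x_3 x_4$; the cyclic adjacency pattern of $C_5$ leaves only a handful of viable type-patterns, giving a polynomial upper bound on $d_{\Gamma, \mathcal{H}}(v)$ in the $a_j$'s and $|A_j|$'s. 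Combined with the triangle-free constraint (which forces $N_{\mathcal{H}}(v)$ to be an independent set in the blowup, hence contained in a union of at most two pairwise non-adjacent parts $A_k \cup A_{k+2}$) together with edge-stability, this bound approaches $10(n/5)^4$ only when essentially all of $N(v)$ is concentrated in some $A_k \cup A_{k+2}$ with the residual neighbors forming a negligible set that can be removed by a local swapping argument. Placing $v$ into $A_{k+1}$ then witnesses $\mathcal{H} \in \mathfrak{H}$.

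With edge-stability and vertex-extendability established, Theorem~\ref{THM:GeneralFunction-Stability-b} yields degree-stability of $\Gamma$. For any extremal $\mathcal{H}$ on $n$ vertices,
\[
    d_{\Gamma, \mathcal{H}}(v) \ge \mathrm{ex}_{\Gamma}(n) - \mathrm{ex}_{\Gamma}(n-1) = (1 - o(1)) \cdot \mathrm{exdeg}_{\Gamma}(n),
\]
by a direct calculation from the explicit product formula, so degree-stability applies and $\mathcal{H} \in \mathfrak{H}$. The theorem then reduces to the integer optimization $\max \{10 \prod_{j=1}^{5} a_j \colon \sum_{j=1}^{5} a_j = n,\ a_j \in \mathbb{Z}_{\ge 0}\}$, whose AM--GM optimum is $10 \prod_{i=0}^{4} \lfloor (n+i)/5 \rfloor$, giving the result for all sufficiently large $n$; the finitely many small cases are covered by~\cite{LP18}. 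The main obstacle is the vertex-extendability step, as it must convert an averaged lower bound on $d_{\Gamma, \mathcal{H}}(v)$ into a rigid structural statement about $N(v)$ by simultaneously exploiting triangle-freeness and the cyclic adjacency pattern of $C_5$.
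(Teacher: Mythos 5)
The paper does not actually prove this statement: Theorem~\ref{THM:Pentagon-exact} is quoted from~\cite{Gre12,HHKNR13,LP18}, and the paper's own contribution is the stronger degree-stability result (Theorem~\ref{THM:AES-pentagon}), proved via Theorem~\ref{THM:GeneralFunction-Stability-b}, the cited edge-stability (Theorem~\ref{THM:Pentagon-stability}), and the vertex-extendability in Proposition~\ref{PROP:Pentagon-vtx-ext}. Your architecture mirrors that route, but two steps in your sketch have genuine gaps, and they are exactly the steps that carry the content.

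First, the vertex-extendability argument. Triangle-freeness of $\mathcal{H}$ only makes $N_{\mathcal{H}}(v)$ independent in $\mathcal{H}$, not in the blowup: since $\mathcal{H}-v$ is merely a subgraph of $C_5(A_1,\ldots,A_5)$, your claim that $N(v)$ must lie in $A_k\cup A_{k+2}$ does not follow as stated. More seriously, vertex-extendability demands the exact conclusion $\mathcal{H}\in\mathfrak{H}$, i.e.\ $\mathcal{H}$ itself is $C_5$-colorable; ``residual neighbors forming a negligible set that can be removed by a local swapping argument'' is not an available move (nothing may be deleted, and no recoloring argument is given). The paper's Proposition~\ref{PROP:Pentagon-vtx-ext} closes precisely this gap: the minimum $C_5$-degree hypothesis forces every vertex of $\mathcal{H}-v$ to have near-complete neighborhoods in the two adjacent parts (inequality~\eqref{equ:G'-min-deg}), so if $N_i\neq\emptyset$ then $N_{i\pm1}$ must be small (else a triangle through $v$ appears), and together with the fact that at least two of the $N_i$ are large (Claim~\ref{CLAIM:unique-i}) one concludes $N_j=\emptyset$ \emph{exactly} for all $j\notin\{i_{\ast},i_{\ast}+2\}$. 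Your sketch stops short of this exactness, which is the crux. Second, your deduction that an extremal graph has minimum $C_5$-degree $(1-o(1))\cdot\mathrm{exdeg}_{\Gamma}(n)$ is circular as written: $d_{\Gamma,\mathcal{H}}(v)\ge \mathrm{ex}_{\Gamma}(n)-\mathrm{ex}_{\Gamma}(n-1)$ is fine, but bounding $\mathrm{ex}_{\Gamma}(n-1)$ from above ``by a direct calculation from the explicit product formula'' presupposes the formula being proved, and the known asymptotics $\mathrm{ex}_{\Gamma}(m)=(10+o(1))(m/5)^5$ carry additive errors of order $o(m^5)$, which swamp the $\Theta(n^4)$ difference you need. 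The claim is true, but the standard (and the paper's, see Proposition~\ref{PROP:min-degree-extremal}) justification is a Zykov-type cloning argument: if some vertex has $C_5$-degree below the average by $\omega(n^3)$, clone a maximum-$C_5$-degree vertex onto it; cloning preserves triangle-freeness and strictly increases $\mathrm{inj}(C_5,\cdot)$, contradicting extremality. With these two repairs your plan does recover the formula for large $n$ (the final optimization and the appeal to~\cite{LP18} for small $n$ are fine, and consistent with the paper treating this theorem as a citation), but as written both the extendability endgame and the minimum-degree step are unsupported.
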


The edge-stability of the Erd\H{o}s Pentagon Problem was initially established in~\cite{HHKNR13} and subsequently refined in~\cite{PST19}.

\begin{theorem}[\cite{HHKNR13}]\label{THM:Pentagon-stability}
    For every $\varepsilon>0$ there exist $\delta>0$ and $N_0$ such that the following holds for all $n \ge N_0$. 
    If $G$ is an $n$-vertex triangle-free graph with
    $\mathrm{inj}(C_5, G) \ge (10-\delta)\left(n/5\right)^5$, 
    then $G$ is $C_5$-colorable after removing at most $\varepsilon n^2$ edges. 
\end{theorem}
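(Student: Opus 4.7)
The plan is to prove edge-stability by a standard graphon compactness argument that reduces the discrete statement to the uniqueness of the extremal graphon in the associated continuous optimization problem.

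I would argue by contradiction. Suppose the conclusion fails, so there is some $\varepsilon>0$ together with a sequence of triangle-free graphs $G_n$ on $n\to\infty$ vertices satisfying $\mathrm{inj}(C_5,G_n)\ge (10-o(1))(n/5)^5$, yet none of them can be made $C_5$-colorable by deleting at most $\varepsilon n^2$ edges. By the Lov\'{a}sz--Szegedy compactness of the graphon space, pass to a subsequence converging in cut distance to a limit graphon $W$. Continuity of homomorphism densities gives $t(K_3,W)=\lim t(K_3,G_n)=0$, so $W$ is triangle-free, while $t(C_5,W)=\lim t(C_5,G_n)\ge 10/5^5=2/625$, which is precisely the asymptotic $C_5$-density of the balanced $C_5$-blowup.

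The crucial step is then to invoke the graphon form of Theorem~\ref{THM:Pentagon-exact}: the maximum of $t(C_5,\cdot)$ over triangle-free graphons equals $2/625$, and it is attained \emph{uniquely} (up to measure-preserving transformations and modifications on a null set) by the balanced $C_5$-blowup graphon $W_{\ast}$. Hence $W=W_{\ast}$ almost everywhere. Combined with cut-distance convergence and the counting lemma, this implies that for all sufficiently large $n$ the graph $G_n$ is within cut distance $\varepsilon/10$ of $W_{\ast}$ after an appropriate relabeling of its vertices; in particular, it differs from a genuine balanced $C_5$-blowup on $n$ vertices by at most $\varepsilon n^2$ edges, contradicting the choice of the sequence.

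The main obstacle is the uniqueness statement for the graphon extremizer, since Theorem~\ref{THM:Pentagon-exact} as recorded in the excerpt gives only the extremal count. This uniqueness is essentially what the flag-algebra arguments of Grzesik~\cite{Gre12} and Hatami--Hladk\'{y}--Kr\'{a}\v{l}--Norine--Razborov~\cite{HHKNR13} establish when one inspects the equality cases of the Cauchy--Schwarz / semidefinite inequalities used there; verifying that every tight graphon is a balanced $C_5$-blowup is the nontrivial content. A more hands-on alternative that bypasses the graphon limit is to quantify the flag-algebra computation itself, yielding an inequality of the form $2/625 - t(C_5,G)\ge c\cdot \Phi(G)$, where $\Phi(G)$ is a nonnegative functional that vanishes only on blowups of $C_5$; this is the route taken in~\cite{PST19}, and once such a slack is in hand the stability conclusion follows by standard manipulations together with a removal-type argument.
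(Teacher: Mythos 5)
This statement is not proved in the paper at all: Theorem~\ref{THM:Pentagon-stability} is quoted verbatim from~\cite{HHKNR13} (with a refinement in~\cite{PST19}), and the paper's treatment is purely a citation, later fed into Theorem~\ref{THM:general-generalized-Turan-b} together with the new vertex-extendability statement (Proposition~\ref{PROP:Pentagon-vtx-ext}). So there is no in-paper argument to compare yours against; the question is whether your sketch stands on its own as a proof, and as written it does not.

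Two concrete gaps. First, the load-bearing step is the uniqueness (up to measure-preserving maps) of the triangle-free graphon maximizing the $C_5$-density, and you explicitly defer this to ``inspecting the equality cases'' of the flag-algebra computations in~\cite{Gre12,HHKNR13}, or to the quantified slack inequality of~\cite{PST19}. But that uniqueness/stability analysis is precisely the content of the theorem being proved; the semidefinite certificate by itself gives the bound $t(C_5,W)\le 2/625$, and extracting the structure of the equality case from it is the nontrivial work done in those papers. So your argument is a reduction to the cited results rather than an independent proof --- which is fine as a gloss on why the citation is legitimate, but it does not discharge the statement. Second, the passage from $W=W_{\ast}$ and cut-distance convergence to ``$G_n$ differs from a balanced $C_5$-blowup by at most $\varepsilon n^2$ edges'' is not a consequence of the counting lemma: in general, closeness in cut distance does not imply closeness in edit distance. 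The implication does hold here because $W_{\ast}$ is a $\{0,1\}$-valued step function (a random-free graphon), and for such limits cut-metric convergence upgrades to $L^1$/edit-distance convergence; this is exactly the mechanism of Pikhurko's analytic approach to stability and must be invoked explicitly, since it is the only reason the compactness argument yields the edge-removal conclusion $\mathrm{inj}$-stability asks for (note also that the target is ``$C_5$-colorable after deleting $\varepsilon n^2$ edges,'' which follows from edit-closeness to a blowup by deleting the symmetric difference, but not from cut-closeness alone). With those two points repaired --- i.e., either carrying out the equality-case analysis or citing~\cite{HHKNR13,PST19} for it, and citing the random-free step --- your outline becomes a correct, standard derivation, but at that point it coincides with simply citing the sources, which is what the paper does.
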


As a quick application of Theorem~\ref{THM:GeneralFunction-Stability-b}, we establish the following degree-stability for the Erd\H{o}s Pentagon Problem, which strengthens both Theorems~\ref{THM:Pentagon-exact} and~\ref{THM:Pentagon-stability} for large $n$.

\begin{theorem}\label{THM:AES-pentagon}
    There exist $\varepsilon>0$ and $N_0$ such that the following holds for all $n \ge N_0$. 
    If $G$ is an $n$-vertex triangle-free graph with  $\delta_{C_5}(G) \ge (10-\varepsilon) \left(n/5\right)^4$, 
    then $G$ is $C_5$-colorable. 
\end{theorem}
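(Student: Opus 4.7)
The plan is to apply Theorem~\ref{THM:GeneralFunction-Stability-b} to the map $\Gamma\colon\mathfrak{G}^{2}\to\mathbb{R}_{\ge 0}$ defined by $\Gamma(G):=\mathrm{inj}(C_{5},G)$ when $G$ is triangle-free and $\Gamma(G):=0$ otherwise, with $\mathfrak{H}$ the hereditary family of all $C_{5}$-colorable graphs. The edge-stability required by Theorem~\ref{THM:GeneralFunction-Stability-b} is provided directly by Theorem~\ref{THM:Pentagon-stability}, so the task reduces to verifying the axioms of Definitions~\ref{DEF:general-property-a} and~\ref{DEF:general-property-b} together with vertex-extendability of $\Gamma$ with respect to $\mathfrak{H}$.

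Most of the structural axioms are standard for a map counting a fixed graph. $\Gamma$ is $5$-uniform because every $C_{5}$-copy meets exactly five vertices, so $\sum_{v}d_{\Gamma,G}(v)=5\Gamma(G)$ identically. Smoothness with $k=5$ and $c_{\ast}=1$ follows from $\mathrm{ex}_{\Gamma}(n)=(10+o(1))(n/5)^{5}$ together with the expansion $(1-\delta)^{5}\le 1-5\delta+10\delta^{2}$. Continuity holds because each edge participates in $O(n^{3})$ copies of $C_{5}$, so removing $\delta n^{2}$ edges costs $O(\delta)\cdot\mathrm{ex}_{\Gamma}(n)$. Local monotonicity is immediate, and the local Lipschitz estimate follows because each deleted vertex destroys at most $O(n^{3})$ copies of $C_{5}$ through any fixed $v$, matching $\mathrm{exdeg}_{\Gamma}(n)/n=\Theta(n^{3})$.

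The substance of the proof lies in verifying vertex-extendability. Fix small $\varepsilon>0$, and suppose $G$ is triangle-free on $n$ vertices with $\delta_{C_{5}}(G)\ge(1-\varepsilon)\cdot 10(n/5)^{4}$, and that $G-v$ is $C_{5}$-colorable with color classes $V_{1},\dots,V_{5}$. Summing the degree hypothesis gives $\mathrm{inj}(C_{5},G)\ge(1-\varepsilon)\,\mathrm{ex}_{\Gamma}(n)$; since the maximum $C_{5}$-degree is $O(n^{4})$, this implies $\mathrm{inj}(C_{5},G-v)\ge(1-2\varepsilon)\,\mathrm{ex}_{\Gamma}(n)$. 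Combining with $\mathrm{inj}(C_{5},G-v)\le 10\prod_{i}|V_{i}|$ and AM-GM forces $|V_{i}|=(1\pm o(1))n/5$ for each $i$, and forces $G-v$ to miss only $o(n^{2})$ edges of the complete balanced $C_{5}$-blowup on $V_{1},\dots,V_{5}$. Writing $d_{i}:=|N_{G}(v)\cap V_{i}|$, a direct enumeration of length-$3$ walks in the blowup (via signed-step sequences summing to $\pm 1,\pm 2\pmod 5$) yields
\[
    d_{C_{5},G}(v)=10\,(n/5)^{2}\bigg[3\sum_{a\in\mathbb{Z}/5\mathbb{Z}}d_{a}d_{a+1}+\sum_{a\in\mathbb{Z}/5\mathbb{Z}}d_{a}d_{a+2}\bigg]+o(n^{4}),
\]
so the degree hypothesis forces the bracketed quantity to be at least $(1-o(1))(n/5)^{2}$.

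Triangle-freeness is the decisive input in the remaining analysis: for any cyclic adjacency $V_{a},V_{a+1}$, every edge of $G-v$ between $N(v)\cap V_{a}$ and $N(v)\cap V_{a+1}$ forms a triangle through $v$, and since $G-v$ has only $o(n^{2})$ missing blowup edges, we deduce $d_{a}d_{a+1}=o(n^{2})$ for every $a$. This annihilates the weight-$3$ sum and reduces the inequality to $\sum_{a}d_{a}d_{a+2}\ge(1-o(1))(n/5)^{2}$; combined with the ceiling $d_{a}\le(1+o(1))n/5$ and the independence constraint in $C_{5}$, this forces the mass of $(d_{1},\dots,d_{5})$ to concentrate on a single cyclic-distance-$2$ pair $\{V_{i-1},V_{i+1}\}$, with $d_{i-1},d_{i+1}=(1-o(1))n/5$ and all other $d_{j}=o(n)$. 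The main obstacle is upgrading this asymptotic concentration to the exact containment $N_{G}(v)\subseteq V_{i-1}\cup V_{i+1}$ needed for $C_{5}$-colorability: a single stray neighbor $u\in N(v)\cap V_{j}$ with $j\notin\{i-1,i+1\}$ must be ruled out by a pointwise argument combining triangle-freeness with the near-completeness of the blowup (for $j\in\{i,i\pm 2\}$, a neighbor $u$ of $v$ in $V_{j}$ together with the near-full $N(v)\cap V_{i\pm 1}$ produces a triangle through $v$ once one finds any surviving blowup edge from $u$ to $N(v)\cap V_{i\pm 1}$, which exists since $u$ misses only $o(n)$ of its blowup neighbors while $|N(v)\cap V_{i\pm 1}|=(1-o(1))n/5$). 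Once these stragglers are eliminated, placing $v$ inside $V_{i}$ yields a $C_{5}$-coloring of $G$, completing the vertex-extendability check and hence the proof.
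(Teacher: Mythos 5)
Your overall architecture is exactly the paper's: reduce Theorem~\ref{THM:AES-pentagon} to vertex-extendability of $K_3$ (with respect to $C_5$-colorable graphs) via Theorem~\ref{THM:GeneralFunction-Stability-b} together with the edge-stability of Theorem~\ref{THM:Pentagon-stability}, the axioms of Definitions~\ref{DEF:general-property-a} and~\ref{DEF:general-property-b} being routine (the paper packages them in Fact~\ref{FACT:inj-basic-properties}). Your verification of vertex-extendability is also broadly in the spirit of the paper's Proposition~\ref{PROP:Pentagon-vtx-ext}: balanced parts, the observation that triangle-freeness kills edges between $N(v)\cap V_a$ and $N(v)\cap V_{a+1}$, concentration of $N(v)$ on a cyclic-distance-two pair, and a final pointwise elimination of stray neighbours. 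The walk-count identity you use for $d_{C_5,G}(v)$ is correct (only its upper-bound direction is needed), and the concentration step goes through.

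There is, however, a genuine gap at the step you yourself flag as the main obstacle. You justify the existence of a surviving blowup edge from a stray neighbour $u\in N(v)\cap V_j$ into the near-full set $N(v)\cap V_{i\pm1}$ by asserting that ``$u$ misses only $o(n)$ of its blowup neighbours'', and you derive this from the aggregate bound that $G-v$ misses only $o(n^2)$ blowup edges. That implication is false: an aggregate deficit of $O(\sqrt{\varepsilon})n^2$ edges is perfectly compatible with a set of $\Theta(\sqrt{\varepsilon})n$ vertices each missing an entire class of $\approx n/5$ blowup neighbours, and the stray vertex $u$ is by assumption atypical, so you cannot place it among the well-behaved vertices. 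As written, the final step (and hence the exact containment $N_G(v)\subseteq V_{i-1}\cup V_{i+1}$) does not follow. The repair is available from your own hypotheses and is precisely what the paper does: apply the minimum $C_5$-degree assumption to $u$ itself. Since every copy of $C_5$ through $u$ in $G-v$ is transversal and consistent with the $C_5$-colouring, $d_{C_5,G}(u)\ge(10-\varepsilon)(n/5)^4$ forces $|N(u)\cap V_{j-1}|$ and $|N(u)\cap V_{j+1}|$ to both be at least $(1/5-O(\sqrt{\varepsilon}))n$ pointwise — this is exactly~\eqref{equ:G'-min-deg} in the paper — and intersecting with $N(v)\cap V_{i\pm1}$ of size $(1-o(1))n/5$ inside a class of size at most $(1/5+O(\sqrt{\varepsilon}))n$ produces the triangle through $v$. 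With that substitution your argument closes; note that the paper avoids the aggregate missing-edge count altogether and runs the whole analysis through such pointwise degree estimates and the sets $N_i=N_G(v)\cap V_i$.
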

An interesting problem arises in determining the tight bound for $\varepsilon$ in Theorem~\ref{THM:AES-pentagon}. 

The proof of Theorem~\ref{THM:AES-pentagon} is presented in Section~\ref{SEC:Proof-Pentagon}. 

\subsection{Blowups of $2$-covered hypergraphs versus expansions}
Given a graph $F$, the ($r$-uniform) \textbf{expansion} $H_{F}^{r}$ of $F$ is the $r$-graph obtained from $F$ by adding $r-2$ new vertices into each edge, ensuring that these $(r-2)$-sets are pairwise disjoint. 
For simplicity, let $H_{\ell+1}^{r} \coloneqq H_{K_{\ell+1}}^{r}$. 
Expansions are an important class of hypergraphs introduced by Mubayi~\cite{MU06} to provide the first explicitly defined examples that yield an infinite family of numbers realizable as Tur\'{a}n densities for hypergraphs.

Given an $r$-graph $\mathcal{G}$ on $[m]$ and pairwise disjoint sets $V_1, \ldots, V_m$, we use $\mathcal{G}(V_1, \ldots, V_m)$ to denote the $r$-graph obtained from $\mathcal{G}$ by replacing every vertex $i\in [m]$ with the set $V_i$ and every edge $\{i_1, \ldots, i_m\} \in \mathcal{G}$ with the complete $r$-partite $r$-graph with parts $V_{i_1}, \ldots, V_{i_m}$. 
The $r$-graph $\mathcal{G}(V_1, \ldots, V_m)$ is called a \textbf{blowup} of $\mathcal{G}$. 
 
We say an $r$-graph $\mathcal{G}$ is \textbf{$2$-covered} if $\partial_{r-2}\mathcal{G}$ is a complete graph, where for every $i\in [r-1]$, 
\begin{align*}
    \partial_{r-i} \mathcal{G}
    \coloneqq \left\{ e\in \binom{V(G)}{i} \colon \exists E\in \mathcal{G} \text{ such that } e\subset E \right\}. 
\end{align*}
Note that a graph is $2$-covered iff it is complete, and complete multipartite graphs are blowups of complete graphs.
Extending the result of Erd\H{o}s, Gy\H{o}ri--J\'{a}nos--Simonovits proved in~\cite{GPS91} that if $Q$ is a complete $k$-partite graph and $\ell \ge k$, then the extremal construction for the generalized Tur\'{a}n problem $\mathrm{ex}(n,Q,K_{\ell+1})$ is complete $\ell$-partite. 
The corresponding edge-stability of this problem was established very recently by Gerbner--Karim in~\cite{GK23}. 
As an application of Theorems~\ref{THM:GeneralFunction-Stability} and~\ref{THM:GeneralFunction-Stability-c}, we strengthen and extend both results to hypergraphs in the following theorem. 

Recall that a graph $F$ is \textbf{edge-critical} if there exists an edge $e\in F$ such that the chromatic number of $F\setminus\{e\}$ is strictly smaller than that of $F$.

\begin{theorem}\label{THM:Homomorphism-complete-multipartite-hypergraph}
    Let $\ell \ge k \ge r \ge  2$ be integers and $Q$ be a blowup of some $2$-covered $k$-vertex $r$-graph.
    Suppose that $F$ is an edge-critical graph with $\chi(F) = \ell+1$. 
    Then there exist $\varepsilon > 0$ and $N_0 > 0$ such that the following statements hold for all $n \ge N_0 \colon$
    \begin{enumerate}[label=(\roman*)]
        \item\label{THM:Homomorphism-complete-multipartite-hypergraph-2} Every $n$-vertex $H_{F}^{r}$-free $r$-graph $\mathcal{H}$ with $\delta_{Q}(\mathcal{H}) \ge (1-\varepsilon) \frac{v(Q)\cdot \mathrm{inj}(n,Q,H_{F}^{r})}{n}$ is $\ell$-partite. 
        \item\label{THM:Homomorphism-complete-multipartite-hypergraph-1} $\mathrm{inj}(n,Q,H_{F}^{r}) = \mathrm{inj}(Q,\mathcal{G})$ for some complete $\ell$-partite $r$-graph $\mathcal{G}$ on $n$ vertices.  
    \end{enumerate}
\end{theorem}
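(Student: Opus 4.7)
The plan is to apply Theorem~\ref{THM:GeneralFunction-Stability-c} with
\[
\Gamma(\mathcal{H}) \coloneqq \mathrm{inj}(Q,\mathcal{H}) \cdot \mathbf{1}[\mathcal{H}\text{ is }H_F^r\text{-free}], \quad \hat{\Gamma}(\mathcal{H}) \coloneqq \mathrm{inj}(Q,\mathcal{H}) \cdot \mathbf{1}[\mathcal{H}\text{ is }\ell\text{-partite}],
\]
and $\mathfrak{H}$ the hereditary family of all $\ell$-partite $r$-graphs. The analytic hypotheses ($v(Q)$-uniformity, smoothness, locally Lipschitz, continuity, local monotonicity) follow from the polynomial structure of $\mathrm{inj}(Q,\cdot)$. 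Since $\chi(H_F^r)=\ell+1$, every $\ell$-partite $r$-graph is $H_F^r$-free, so $\mathrm{ex}_\Gamma(n) \ge \mathrm{ex}_{\hat\Gamma}(n)$; both the symmetrized-stability and the vertex-extendability of $\hat\Gamma$ with respect to $\mathfrak{H}$ are tautological, since $\hat\Gamma$-positivity coincides with membership in $\mathfrak{H}$.

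To check that $\hat\Gamma$ is symmetrization-increasing, note that for $\ell$-partite $\mathcal{H}$ and uncovered $\{u,v\}$, the graphs $\mathcal{H}_{u\to v}$ and $\mathcal{H}_{v\to u}$ remain $\ell$-partite after reassigning $u$ (respectively $v$) to the other's part. Expanding $\mathrm{inj}(Q,\cdot)$ according to the action on $\{u,v\}$, the inequality
\[
\mathrm{inj}(Q,\mathcal{H}_{u\to v})+\mathrm{inj}(Q,\mathcal{H}_{v\to u}) \geq 2\,\mathrm{inj}(Q,\mathcal{H})
\]
reduces after cancellation to a nonnegative sum of squared differences in the link indicators of $u$ and $v$; the key structural input is that any non-edge pair of $Q$ lies in a common blowup class (as $Q$ is a blowup of a $2$-covered $r$-graph), which provides the $x\leftrightarrow y$ symmetry that collapses the relevant cross-terms. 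The trajectory condition then reduces to a Mubayi--Pikhurko-style edge-stability for $H_F^r$-free $r$-graphs, available because $F$ is edge-critical: for any $\Gamma$-positive near-extremal $\mathcal{H}$, a comparison of $Q$-counts with the balanced complete $\ell$-partite $r$-graph yields $e(\mathcal{H}) \ge (1-o(1))\,\mathrm{ex}(n,H_F^r)$, and edge-stability produces an $\ell$-partite subgraph $\mathcal{H}' \subseteq \mathcal{H}$ with $|\mathcal{H}'| = |\mathcal{H}| - o(n^r)$ and $\hat\Gamma(\mathcal{H}') \geq \Gamma(\mathcal{H}) - o(n^{v(Q)})$ by continuity.

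The main step is the vertex-extendability of $\Gamma$. Suppose $\mathcal{H}$ is $H_F^r$-free, $\delta_Q(\mathcal{H}) \geq (1-\varepsilon)\,\mathrm{exdeg}_\Gamma(n)$, and $\mathcal{H}-v$ is $\ell$-partite with parts $V_1,\ldots,V_\ell$. If $\mathcal{H}$ were not $\ell$-partite then $v$ has an edge meeting every $V_i$. Fix an edge $x_0y_0$ of $F$ such that $F-x_0y_0$ is $\ell$-colorable, together with such a coloring $c\colon V(F)\to[\ell]$ satisfying $c(x_0)=c(y_0)=i_0$. The aim is to construct a copy of $H_F^r$ in $\mathcal{H}$ by setting $x_0 \mapsto v$, choosing $y_0 \mapsto u$ for some $u \in V_{i_0}$ adjacent to $v$ in the $2$-shadow of $\mathcal{H}$, embedding each remaining $w \in V(F)$ into $V_{c(w)}$, and, for each edge of $F$, greedily selecting pairwise disjoint $(r-2)$-set extensions to produce edges of $\mathcal{H}$. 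The resulting $H_F^r$ contradicts $H_F^r$-freeness. The principal technical obstacle is propagating the $Q$-degree lower bound, via the locally Lipschitz property, into co-degree bounds on $\mathcal{H} - v$ strong enough to carry out the greedy extensions simultaneously and disjointly; this is precisely where the $2$-covered blowup structure of $Q$ is used.

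With all hypotheses verified, Theorem~\ref{THM:GeneralFunction-Stability-c} delivers the degree-stability asserted in~\ref{THM:Homomorphism-complete-multipartite-hypergraph-2}. For~\ref{THM:Homomorphism-complete-multipartite-hypergraph-1}, smoothness forces any extremal $\mathcal{H}$ to have minimum $Q$-degree within $o(\mathrm{exdeg}_\Gamma(n))$ of $\mathrm{exdeg}_\Gamma(n)$, hence to be $\ell$-partite by~\ref{THM:Homomorphism-complete-multipartite-hypergraph-2}; any missing crossing $r$-edge can then be added without violating $H_F^r$-freeness and without decreasing $\mathrm{inj}(Q,\cdot)$ by local monotonicity, so the extremum is attained by a complete $\ell$-partite $r$-graph.
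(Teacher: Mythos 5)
Your choice of $\hat{\Gamma}(\mathcal{H}) = \mathrm{inj}(Q,\mathcal{H})\cdot\mathbbm{1}[\mathcal{H}\text{ is }\ell\text{-partite}]$ does make symmetrized-stability and vertex-extendability of $\hat{\Gamma}$ tautological, but it destroys the trajectory hypothesis of Theorem~\ref{THM:GeneralFunction-Stability-c}, and that is where the real content of the theorem lives. The trajectory condition quantifies over \emph{every} $\Gamma$-positive $\mathcal{H}$, and it is non-vacuous whenever $\Gamma(\mathcal{H}) = \Omega(\mathrm{ex}_{\Gamma}(n))$: for such $\mathcal{H}$ you must produce $\mathcal{H}'\subset\mathcal{H}$ with $|\mathcal{H}'| = |\mathcal{H}| - o(n^r)$ and $\hat{\Gamma}(\mathcal{H}')>0$, i.e.\ an $\ell$-partite subgraph missing only $o(n^r)$ edges. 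This is false: already for $r=2$, $Q=K_k$, $F=K_{\ell+1}$, the join of $T(m,\ell-2)$ with a balanced blowup of $C_5$ is $K_{\ell+1}$-free, has $\Theta(n^k)=\Theta(\mathrm{ex}_{\Gamma}(n))$ labeled copies of $K_k$, and is $\Omega(n^2)$-far from $\ell$-partite. Your fallback --- ``a comparison of $Q$-counts with $T^r(n,\ell)$ yields $e(\mathcal{H})\ge(1-o(1))\,\mathrm{ex}(n,H_F^r)$, then apply Mubayi--Pikhurko-style edge-stability'' --- is not a proof even for near-extremal $\mathcal{H}$: a $Q$-count comparison alone does not force the edge count up to the Tur\'{a}n number (e.g.\ for $K_4$-free graphs with $(1-o(1))(n/3)^3$ triangles, Kruskal--Katona-type counting only yields roughly $0.18n^2$ edges, far below $n^2/3$), and the statement ``near-extremal $Q$-count implies $o(n^r)$-close to $\ell$-partite'' is precisely the $Q$-edge-stability that Theorem~\ref{THM:Homomorphism-complete-multipartite-hypergraph} is establishing (for $r=2$ it is the Gerbner--Karim theorem the paper strengthens, not an available black box for general hypergraph $Q$). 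The paper avoids all of this by taking $\hat{\Gamma} = \mathrm{inj}(Q,\cdot)\cdot\mathbbm{1}_{\mathcal{K}_{\ell+1}^{r}}$ with the weak-expansion family: the trajectory condition then \emph{is} the Removal Lemma via $\mathcal{K}_{\ell+1}^{r}\le_{\mathrm{hom}}\{H_F^r\}$ (Fact~\ref{FACT:le-hom}), symmetrization-increasing comes from blowup-invariance plus Proposition~\ref{PROP:complete-multipartite-sym-increase} (Fact~\ref{FACT:blowup-invariant-symmetrization-increasing}), and symmetrized-stability with respect to $\mathfrak{K}_{\ell}^{r}$ is Fact~\ref{FACT:weak-expansion}.

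The other load-bearing step, the $Q$-vertex-extendability of $H_F^r$, is also only a plan in your write-up. The paper first needs $\pi_{\mathrm{inj}}(Q,H_F^r)=\lambda_{Q}(K_{\ell}^{r})$ (Proposition~\ref{PROP:complete-multipartite-vtx-extend}, itself obtained through the symmetrization machinery you discarded), and then the Lagrangian analysis of Lemmas~\ref{LEMMA:Lagrangian-Multiplier}--\ref{LEMMA:vertex-extendability-clique-expansion} to convert the minimum $Q$-degree hypothesis into: all parts of $\mathcal{H}-v$ have linear size and each vertex misses only $\delta n^{r-1}$ crossing edges, which is exactly what lets the greedy embedding of $H_F^r$ run and produces the final contradiction with $\delta_{Q}(\mathcal{H})$ when the link of $v$ either meets all $\ell$ parts or has fewer than $\ell-1$ ``large'' parts. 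You explicitly flag this conversion as ``the principal technical obstacle'' without carrying it out, so the extendability step is missing. (For part~\ref{THM:Homomorphism-complete-multipartite-hypergraph-1}, the claim that ``smoothness forces'' an extremal graph to have near-extremal minimum $Q$-degree is also unjustified as stated; the paper instead runs the replacement argument of Proposition~\ref{PROP:min-degree-extremal}.)
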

\textbf{Remark.}
The case where $r=2$ and $Q = K_{k}$ in Theorem~\ref{THM:Homomorphism-complete-multipartite-hypergraph}~\ref{THM:Homomorphism-complete-multipartite-hypergraph-1} corresponds to the result by Ma--Qiu in~{\cite[Corollary~1.3]{MQ20}}. 
The case where $r=2$ and $F = K_{\ell+1}$ in Theorem~\ref{THM:Homomorphism-complete-multipartite-hypergraph}~\ref{THM:Homomorphism-complete-multipartite-hypergraph-2} is a strengthen of the stability theorem by Gerbner--Karim in~{\cite[Theorem~1.1]{GK23}}.

The following improvement on Theorem~\ref{THM:Homomorphism-complete-multipartite-hypergraph} can be derived by 
combining Theorem~\ref{THM:Homomorphism-complete-multipartite-hypergraph} for the case $(r, F) = (2,K_{\ell+1})$ (or~{\cite[Theorem~1.1]{GK23}}) 
with the argument used in the proof of Theorem~\ref{THM:Turan-good-edge-critical-expansion}.  
Since Theorem~\ref{THM:Homomorphism-complete-multipartite-hypergraph} provides a nice application of Theorems~\ref{THM:GeneralFunction-Stability} and~\ref{THM:GeneralFunction-Stability-c}, we include its short proof in this paper. 

\begin{theorem}\label{THM:Homomorphism-complete-multipartite-hypergraph-b}
    Let $\ell \ge k \ge r \ge  2$ be integers and $Q$ be an $r$-graph with $\partial_{r-2} Q$ being complete $k$-partite.
    Suppose that $F$ is an edge-critical graph with $\chi(F) = \ell+1$. 
    Then there exist $\varepsilon > 0$ and $N_0 > 0$ such that the following statements hold for all $n \ge N_0 \colon$
    \begin{enumerate}[label=(\roman*)]
        \item Every $n$-vertex $H_{F}^{r}$-free $r$-graph $\mathcal{H}$ with $\delta_{Q}(\mathcal{H}) \ge (1-\varepsilon) \frac{v(Q)\cdot \mathrm{inj}(n,Q,H_{F}^{r})}{n}$ is $\ell$-partite. 
        \item $\mathrm{inj}(n,Q,H_{F}^{r}) = \mathrm{inj}(Q,\mathcal{G})$ for some complete $\ell$-partite $r$-graph $\mathcal{G}$ on $n$ vertices. 
    \end{enumerate}
\end{theorem}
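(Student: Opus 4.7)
The plan is to apply Theorem~\ref{THM:GeneralFunction-Stability-b} to the map $\Gamma(\mathcal{H}) := \mathrm{inj}(Q,\mathcal{H})$, set to zero on $r$-graphs containing $H_F^r$, with target family $\mathfrak{H}$ equal to the hereditary family of all $\ell$-partite $r$-graphs. The regularity axioms required by Theorem~\ref{THM:GeneralFunction-Stability-b} — uniformity with $k = v(Q)$, smoothness, local Lipschitzness, continuity, and local monotonicity — follow from routine $Q$-count estimates, using that $\partial_{r-2}Q$ is complete $k$-partite and $\mathrm{inj}(n,Q,H_F^r) = \Theta(n^{v(Q)})$; the verification is essentially identical to that carried out in the proof of Theorem~\ref{THM:Homomorphism-complete-multipartite-hypergraph} for the blowup case. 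Hence it suffices to verify edge-stability and vertex-extendability of $\Gamma$ with respect to $\mathfrak{H}$.

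For edge-stability, I would import the argument in the proof of Theorem~\ref{THM:Turan-good-edge-critical-expansion}. That argument shows that in any $n$-vertex $H_F^r$-free $\mathcal{H}$ whose $Q$-count is close to extremal, the $2$-shadow $\partial_{r-2}\mathcal{H}$ must be close to $K_{\ell+1}$-free: if a dense region of $\mathcal{H}$ produced a $K_{\ell+1}$ in the shadow, the edge-criticality of $F$ together with the available $(r-2)$-vertices could be used to lift it to an embedded $H_F^r$. Combining this with the $(r,F) = (2, K_{\ell+1})$ instance of Theorem~\ref{THM:Homomorphism-complete-multipartite-hypergraph} (equivalently, the Gerbner--Karim stability theorem~\cite{GK23}) transfers the graph-level edge-stability of the problem ``maximize the number of complete $k$-partite subgraphs in $K_{\ell+1}$-free graphs'' back to edge-stability for $\Gamma$.

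For vertex-extendability, consider an $n$-vertex $H_F^r$-free $\mathcal{H}$ with $\delta_{Q}(\mathcal{H}) \ge (1-\varepsilon) \tfrac{v(Q) \cdot \mathrm{inj}(n,Q,H_F^r)}{n}$ such that $\mathcal{H} - v$ is $\ell$-partite with parts $V_1, \ldots, V_\ell$. If $v$ fits into none of the $V_i$, then for each $i$ there is some $u_i \in V_i$ with $\{v,u_i\} \in \partial_{r-2}\mathcal{H}$, so $\{v,u_1,\ldots,u_\ell\}$ induces a $K_{\ell+1}$ in the $2$-shadow. Using the $\delta_Q(\mathcal{H})$ lower bound to supply many disjoint $(r-2)$-vertex extensions of each shadow pair, together with the edge-critical edge of $F$, one embeds an $H_F^r$ into $\mathcal{H}$, contradicting $H_F^r$-freeness. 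Hence $v$ lies in some $V_i$ and $\mathcal{H} \in \mathfrak{H}$.

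The main obstacle is the edge-stability step: one must convert the $H_F^r$-freeness of $\mathcal{H}$ into a near-$K_{\ell+1}$-free condition on a suitable shadow of $\mathcal{H}$ so that the graph-level result~\cite{GK23} can be invoked. This reduction is precisely the content of the proof of Theorem~\ref{THM:Turan-good-edge-critical-expansion} and crucially exploits the edge-criticality of $F$; once it is in hand, applying Theorem~\ref{THM:GeneralFunction-Stability-b} closes the loop and yields both the degree-stability statement~(i) and, by Fact~\ref{FACT:exact-symm-increase} together with the known asymptotic value of $\mathrm{inj}(n,Q,H_F^r)$ on $\ell$-partite extremal constructions, the exact statement~(ii).
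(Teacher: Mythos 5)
Your overall skeleton is the paper's: edge-stability is obtained exactly as you describe, by the shadow reduction of Theorem~\ref{THM:Turan-stable-expansion} (Corollary~\ref{CORO:Turan-stable-expansion}~\ref{CORO:Turan-stable-expansion-1}) combined with the $(r,F)=(2,K_{\ell+1})$ case of Theorem~\ref{THM:Homomorphism-complete-multipartite-hypergraph} / \cite{GK23}, and the conclusion then comes from Theorem~\ref{THM:GeneralFunction-Stability-b} in the form of Theorem~\ref{THM:general-generalized-Turan-b}. The genuine gap is in your vertex-extendability step, which you treat as routine but which is the substantive part. If $\mathcal{H}-v$ is $\ell$-partite with parts $V_1,\ldots,V_\ell$ and $v$ has a shadow neighbour $u_i$ in each part, the set $\{v,u_1,\ldots,u_\ell\}$ need not span a $K_{\ell+1}$ in $\partial_{r-2}\mathcal{H}$: the pairs $u_iu_j$ need not be covered at all, since $\mathcal{H}-v$ is only $\ell$-partite, not complete $\ell$-partite. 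Even granting the shadow clique, Fact~\ref{FACT:extend-F} lets you lift to $H_F^r$ only when all but one of the embedded edges of $F$ have links containing many pairwise disjoint sets; an arbitrary pair $\{v,u_i\}$ in the shadow may have codegree $1$, and the minimum $Q$-degree hypothesis does not supply large codegree for those specific pairs. You also ignore link edges of $v$ having two vertices in one part, which must be excluded separately. The paper's Proposition~\ref{PROP:Q-vtx-ext-F-expansion} handles all of this differently: it first uses $\pi_{\mathrm{inj}}(Q,H_F^r)=\lambda_Q(K_{\ell}^{r})$ (which comes out of the edge-stability step, Lemma~\ref{LEMMA:weak-stable-rough-upper-bound}) and the Lagrange-multiplier analysis (Lemmas~\ref{LEMMA:Lagrangian-Multiplier} and~\ref{LEMMA:homo-subgraph-complete-partite}) to show $\mathcal{H}-v$ is pointwise close in degree to $K^{r}(V_1,\ldots,V_\ell)$; then Lemma~\ref{LEMMA:greedily-embedding-Gi} builds a complete multipartite scaffold inside the sets $N_{\mathcal{H}}^{C_F}(v)\cap V_i$ of \emph{high-codegree} neighbours; and, crucially, the case where such high-codegree neighbours exist in at most $\ell-2$ parts is killed not by an $H_F^r$-embedding (none need exist there) but by showing $d_{Q,\mathcal{H}}(v)$ drops below $\delta_Q(\mathcal{H})$ --- a counting step entirely absent from your sketch.

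A second, smaller gap is your route to part (ii). Fact~\ref{FACT:exact-symm-increase} requires $\Gamma$ to be symmetrization-increasing and symmetrized-stable; these are established only for $Q$ a blowup of a $2$-covered $r$-graph and for the blowup-invariant family $\mathcal{K}_{\ell+1}^{r}$ (Proposition~\ref{PROP:complete-multipartite-sym-increase}, Fact~\ref{FACT:weak-expansion}), not for a general $Q$ with complete multipartite shadow and the single forbidden graph $H_F^r$ --- this is precisely why Theorem~\ref{THM:Homomorphism-complete-multipartite-hypergraph-b} is not proved through Theorem~\ref{THM:GeneralFunction-Stability}. Moreover, knowing the asymptotic value of $\mathrm{inj}(n,Q,H_F^r)$ does not give the exact equality claimed in (ii). The paper instead deduces (ii) from the degree-stability statement (i) via the vertex-replacement argument of Proposition~\ref{PROP:min-degree-extremal}, and some argument of that kind is needed in your write-up as well.
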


The proof of Theorem~\ref{THM:Homomorphism-complete-multipartite-hypergraph} is presented in Section~\ref{SEC:proof-complete-multipartite}. 
The proof of Theorem~\ref{THM:Homomorphism-complete-multipartite-hypergraph-b} is presented in Section~\ref{SEC:Proof-Turan-goodness}. 
\subsection{Tur\'{a}n-goodness}
An interesting phenomenon, conjectured by Gerbner--Palmer~\cite{GP22} and proved by Morrison--Nir--Norin--Rz\k{a}\.{z}ewski--Wesolek~\cite{MNNRPW23}, is that for every fixed graph $Q$ and sufficiently large $n$, the extremal graph for the generalized Tur\'{a}n problem $\mathrm{ex}(n,Q,K_{\ell+1})$ is always the Tur\'{a}n graph $T(n,\ell)$, provided that $\ell$ is sufficiently large. 
Here $T(n,\ell)$ is the balanced complete $\ell$-partite graph on $[n]$. 
In fact, it was shown in~\cite{MNNRPW23} that this is true whenever $\ell \ge 300 (v(Q))^{9}$. 
The corresponding edge-stability was established recently by Gerbner--Karim in~\cite{GK23}. 
As another application of Theorem~\ref{THM:GeneralFunction-Stability-b}, we strengthen and extend both results to hypergraphs in the following theorem.

Let $T^{r}(n,\ell)$ denote the balanced complete $\ell$-partite $r$-graph on $[n]$. 

\begin{theorem}\label{THM:Turan-good-edge-critical-expansion}
    Let $r\ge 2$ be an integer and $Q$ be an $r$-graph. 
    Suppose that $F$ is an edge-critical graph with $\chi(F) = \ell+1 > 300 (v(Q))^{9}$. 
    Then there exist $\varepsilon > 0$ and $N_0 > 0$ such that the following statements hold for all integers $n \ge N_0 \colon$
    \begin{enumerate}[label=(\roman*)]
        \item\label{THM:Turan-good-edge-critical-expansion-2} Every $n$-vertex $H_{F}^{r}$-free $r$-graph $\mathcal{H}$ with $\delta_{Q}(\mathcal{H}) \ge (1-\varepsilon) \frac{v(Q) \cdot \mathrm{inj}(n,Q, H_{F}^{r})}{n}$ is $\ell$-partite. 
        \item\label{THM:Turan-good-edge-critical-expansion-1} $\mathrm{inj}(n,Q, H_{F}^{r}) = \mathrm{inj}\left(Q, T^{r}(n,\ell)\right)$. 
    \end{enumerate}
\end{theorem}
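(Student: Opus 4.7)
The plan is to apply Theorem~\ref{THM:GeneralFunction-Stability-b} with
\begin{align*}
\Gamma(\mathcal{H}) \coloneqq
\begin{cases} \mathrm{inj}(Q,\mathcal{H}) & \text{if $\mathcal{H}$ is $H_F^r$-free,}\\ 0 & \text{otherwise,}\end{cases}
\end{align*}
and $\mathfrak{H}$ the hereditary family of $\ell$-partite $r$-graphs (which are automatically $H_F^r$-free, since $\chi(F)=\ell+1$). The map $\Gamma$ is $v(Q)$-uniform because every injective homomorphism of $Q$ occupies exactly $v(Q)$ vertices; smoothness, continuity, local Lipschitz, and local monotonicity are generic features of injection-counting maps on an $\mathcal{F}$-free family, which I would derive from a general fact (presumably Fact~\ref{FACT:inj-basic-properties}) rather than re-verify in this specific setting. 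After this bookkeeping, Theorem~\ref{THM:GeneralFunction-Stability-b} reduces the proof to verifying (A) edge-stability and (B) vertex-extendability of $\Gamma$ with respect to $\mathfrak{H}$.

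For (A), I would reduce to the graph case. Morrison--Nir--Norin--Rz\k{a}\.{z}ewski--Wesolek~\cite{MNNRPW23} show that when $\chi(F)=\ell+1>300(v(Q))^9$, the extremum $\mathrm{ex}(n,Q,F)$ is attained by $T(n,\ell)$, and Gerbner--Karim~\cite{GK23} promote this to an edge-stability statement. Given an $n$-vertex $H_F^r$-free $\mathcal{H}$ with $\mathrm{inj}(Q,\mathcal{H})\ge(1-\delta)\mathrm{inj}(n,Q,H_F^r)$, I form the auxiliary graph $G\subseteq\partial_{r-2}\mathcal{H}$ consisting of pairs whose codegree in $\mathcal{H}$ exceeds $\eta n^{r-2}$, for a small $\eta>0$. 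A standard supersaturation-expansion argument (the device used by Mubayi~\cite{MU06} and Pikhurko~\cite{PI13}) forces $G$ to be $F$-free, since a copy of $F$ in $G$ could be greedily expanded into an $H_F^r\subseteq\mathcal{H}$. Because $\mathrm{inj}(Q,\mathcal{H})\le\mathrm{inj}(Q,G)+o(n^{v(Q)})$, the graph edge-stability applied to $G$ yields a partition of $V(\mathcal{H})$ into $\ell$ classes making $\mathcal{H}$ $\ell$-partite after deleting $o(n^r)$ edges, which is the edge-stability of $\Gamma$.

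The main obstacle is (B). Suppose $\mathcal{H}-v$ is $\ell$-partite with parts $V_1,\ldots,V_\ell$ and $\delta_Q(\mathcal{H})\ge(1-\varepsilon)\mathrm{exdeg}_\Gamma(n)$. Using (A) together with smoothness, I first establish $|V_i|=n/\ell\pm o(n)$ for every $i$. Next I rule out any edge through $v$ whose other $r-1$ vertices contain two elements in the same part $V_j$: fixing a critical edge $e_0=xy$ of $F$ and a proper $\ell$-coloring of $F-e_0$ (which must satisfy $c(x)=c(y)$ since $\chi(F)=\ell+1$), such a bad edge would allow me to embed $F$ into $\partial_{r-2}\mathcal{H}$ by using the bad pair as $e_0$ and the near-complete $\ell$-partite shadow of $\mathcal{H}-v$ for $F-e_0$, and then greedily expand to $H_F^r\subseteq\mathcal{H}$. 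With every edge through $v$ now ``good'' (other vertices in distinct parts), it suffices to locate $i\in[\ell]$ such that $v$ has no $2$-shadow neighbor in $V_i$, because placing $v$ into such a $V_i$ keeps $\mathcal{H}$ $\ell$-partite. If no such $i$ exists, pick $v_0\in V(F)$ with $\chi(F-v_0)\le\ell$ (always available for edge-critical $F$, e.g.\ any endpoint of a critical edge), embed $F-v_0$ into $\partial_{r-2}(\mathcal{H}-v)$ respecting a proper $\ell$-coloring, send $v_0\mapsto v$, and realize each $F$-edge from $v_0$ to $w$ by a $2$-shadow neighbor of $v$ in the appropriate $V_{c(w)}$ (available by the contradiction assumption and large pair-codegrees). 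Greedy expansion then produces $H_F^r\subseteq\mathcal{H}$, contradicting $H_F^r$-freeness. This establishes (B), so Theorem~\ref{THM:GeneralFunction-Stability-b} yields part~\ref{THM:Turan-good-edge-critical-expansion-2}. Part~\ref{THM:Turan-good-edge-critical-expansion-1} then follows from a standard balancing/Jensen exchange showing $T^r(n,\ell)$ maximizes $\mathrm{inj}(Q,\cdot)$ among $\ell$-partite $r$-graphs on $n$ vertices, together with a short induction on $n$ that upgrades degree-stability to the exact statement.
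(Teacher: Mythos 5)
Your high-level strategy coincides with the paper's: reduce via Theorem~\ref{THM:GeneralFunction-Stability-b} (in the form of Theorem~\ref{THM:general-generalized-Turan-b}) to edge-stability plus vertex-extendability, obtain edge-stability by passing to the $2$-shadow and quoting Gerbner--Karim and Morrison--Nir--Norin--Rz\k{a}\.{z}ewski--Wesolek (this is Theorem~\ref{THM:Turan-stable-expansion} and Corollary~\ref{CORO:Turan-stable-expansion}), and finish the exact statement by a replacement argument (Proposition~\ref{PROP:min-degree-extremal}). The genuine gap is in your step (B), which is where the real work of the paper lives (Proposition~\ref{PROP:Q-vtx-ext-F-expansion} together with Lemma~\ref{LEMMA:vertex-extendability-clique-expansion}). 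In your endgame you assume that if $v$ has a $2$-shadow neighbour in every part $V_i$, then you can embed $F$ with $v_0\mapsto v$ and greedily expand to $H_F^r$, justified by ``large pair-codegrees''. But the contradiction hypothesis supplies only \emph{one} shadow neighbour per part, and nothing forces the resulting pairs $\{v,u\}$ to have large codegree in $\mathcal{H}$: for $r\ge 3$ such a pair may lie in a single hyperedge, so its link need not contain the many pairwise disjoint $(r-2)$-sets needed to expand the edges of $F$ at $v_0$. Since $v_0$ is an endpoint of a critical edge, $d_F(v_0)\ge \ell$ and every colour class of any proper $\ell$-colouring of $F-v_0$ meets $N_F(v_0)$, so you cannot route around a deficient part; even for $r=2$ a single colour class may contain several neighbours of $v_0$, so one neighbour of $v$ per part is not enough. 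Fact~\ref{FACT:extend-F} tolerates only one exceptional edge. This is exactly why the paper works with the high-codegree neighbourhoods $N^{C_F}_{\mathcal{H}}(v)$ and the dichotomy between $I$ and $I_{\mathrm{large}}$: the embedding argument is run only when at least $\ell-1$ parts contain $\hat{\varepsilon}n$ high-codegree neighbours (all edges at $v$ but one expand greedily, the last via the witnessing hyperedge), while the complementary case $|I_{\mathrm{large}}|\le \ell-2$ is eliminated by a completely different, quantitative step: deleting the few edges of $L_{\mathcal{H}}(v)$ meeting the deficient parts and comparing, via Lemmas~\ref{LEMMA:homo-subgraph-complete-partite} and~\ref{LEMMA:Lagrangian-Multiplier}, the $Q$-degree of $v$ with that of a vertex of $K^r(V_1,\dots,V_\ell)$ in a part its link now misses, which yields $d_{Q,\mathcal{H}}(v)<\delta_Q(\mathcal{H})$, a contradiction. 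Your sketch has no counterpart of this counting argument, so the case of a part in which every shadow neighbour of $v$ has small codegree with $v$ is simply not handled.

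Two smaller points. Your claim $|V_i|=n/\ell\pm o(n)$ requires the balanced vector to be the essentially unique maximizer of $P_{Q,K_{\ell}^{r}}$ on the simplex, which is the MNNRW input rather than a consequence of edge-stability; the paper sidesteps this by using only closeness to \emph{some} optimal vector whose coordinates are bounded below (Lemmas~\ref{LEMMA:Lagrangian-Multiplier} and~\ref{LEMMA:opt-solotions-clique-expansion-general}), and it also derives the vertex-wise near-completeness of $\mathcal{H}-v$ inside $K^r(V_1,\dots,V_\ell)$, which your embeddings need but which you never establish. Likewise, for part~\ref{THM:Turan-good-edge-critical-expansion-1}, the statement that $T^{r}(n,\ell)$ maximizes $\mathrm{inj}(Q,\cdot)$ among complete $\ell$-partite $r$-graphs is not a ``standard balancing/Jensen exchange'' (it is false for small $\ell$); it is precisely the Tur\'{a}n-goodness theorem of~\cite{MNNRPW23}, transferred to hypergraphs through Fact~\ref{FACT:homomorphism-shadow}, and should be cited as such. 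Finally, in step (A) the Gerbner--Karim stability is stated for $K_{\ell+1}$-free hosts, so before applying it to your $F$-free auxiliary graph you need the removal-lemma step (as in the proof of Theorem~\ref{THM:Turan-stable-expansion}) that makes the shadow $K_{\ell+1}$-free after deleting $o(n^2)$ edges.
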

\textbf{Remark.}
In Theorem~\ref{THM:Turan-good-edge-critical-expansion}~\ref{THM:Turan-good-edge-critical-expansion-2}, one can additionally say that the ratio of the parts in $\mathcal{H}$ is close to uniform.

The proof of Theorem~\ref{THM:Turan-good-edge-critical-expansion} is presented in Section~\ref{SEC:Proof-Turan-goodness}. 
\section{Proofs for Theorems~\ref{THM:GeneralFunction-Stability},~\ref{THM:GeneralFunction-Stability-b}, and~\ref{THM:GeneralFunction-Stability-c}}\label{SEC:Proof-general}
%
Given an $n$-vertex $r$-graph $\mathcal{H}$, 
recall that two vertices $u,v \in V(\mathcal{H})$ are equivalent in $\mathcal{H}$ if $L_{\mathcal{H}}(u) = L_{\mathcal{H}}(v)$. 
Define  
\begin{align*}
    \Psi(\mathcal{H}) 
    \coloneqq \sum_{i \in [m]} |C_i|^2,   
\end{align*}
where $\{C_1, \ldots, C_{m}\}$ is the collection of equivalence classes in $\mathcal{H}$.
For a $k$-uniform map $\Gamma \colon \mathfrak{G}^{r} \to \mathbb{R}_{\ge 0}$ define for every $\mathcal{H} \in \mathfrak{G}^{r}(n)$ and for every $\delta>0$, the set 
\begin{align*}
    Z_{\Gamma, \delta}(\mathcal{H})
        \coloneqq 
        \left\{v\in V(\mathcal{H}) \colon d_{\Gamma, \mathcal{H}}(v) \le (1-\delta)\cdot \mathrm{exdeg}_{\Gamma}(n) \right\}. 
\end{align*}

We will prove the following theorem which implies Theorem~\ref{THM:GeneralFunction-Stability}. 

\begin{theorem}\label{THM:GeneralFunction-Stability-d}
     Let $\Gamma \colon \mathfrak{G}^{r} \to \mathbb{R}_{\ge 0}$ be a $k$-uniform, $(\delta_{\ast}, c_{\ast}, C_{\ast})$-smooth, locally $C$-Lipschitz, continuous, and locally monotone map with $k, \delta_{\ast}, c_{\ast},  C_{\ast}, C >0$.
     Let $\mathfrak{H}$ be a hereditary family of $r$-graphs. 
     Suppose that 
    \begin{enumerate}[label=(\roman*)]
        \item $\Gamma$ is a   symmetrization-increasing, 
        \item $\Gamma$ is symmetrized-stable with respect to $\mathfrak{H}$, and
        \item $\Gamma$ is vertex-extendable with respect to $\mathfrak{H}$.
    \end{enumerate}
    Then there exist $\varepsilon>0$ and $N_0$ such that every $r$-graph $\mathcal{H}$ on $n>N_0$ vertices with $\Gamma(\mathcal{H}) \ge (1-\varepsilon) \cdot \mathrm{ex}_{\Gamma}(n)$ satisfies $\mathcal{H}-Z_{\Gamma,\varepsilon'}(\mathcal{H}) \in \mathfrak{H}$, where $\varepsilon' \coloneqq 2\varepsilon^{\frac{c_{\ast}}{2+c_{\ast}}}$. 
\end{theorem}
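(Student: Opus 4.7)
The plan is to argue, in three stages, that $\mathcal{H} - Z \in \mathfrak{H}$, where $Z \coloneqq Z_{\Gamma, \varepsilon'}(\mathcal{H})$.

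First I would bound $|Z|$ quantitatively. By local monotonicity, removing the vertices of $Z$ one at a time gives $\Gamma(\mathcal{H}) - \Gamma(\mathcal{H} - Z) \le |Z|(1-\varepsilon') \mathrm{exdeg}_{\Gamma}(n)$, while smoothness together with $\Gamma(\mathcal{H} - Z) \le \mathrm{ex}_{\Gamma}(n - |Z|)$ gives $\Gamma(\mathcal{H} - Z) \le \mathrm{ex}_{\Gamma}(n) \bigl[1 - k |Z|/n + C_{\ast} (|Z|/n)^{1 + c_{\ast}}\bigr] + o(\mathrm{ex}_{\Gamma}(n))$. Chaining these with the hypothesis $\Gamma(\mathcal{H}) \ge (1-\varepsilon) \mathrm{ex}_{\Gamma}(n)$ and substituting $\varepsilon' = 2 \varepsilon^{c_{\ast}/(2 + c_{\ast})}$ yields $|Z|/n = O\bigl(\varepsilon^{2/(2 + c_{\ast})}\bigr)$. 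Setting $\mathcal{H}' \coloneqq \mathcal{H} - Z$ and $n' \coloneqq n - |Z|$, the fact that $v \notin Z$ together with the locally Lipschitz property (and comparing $\mathrm{exdeg}_{\Gamma}(n)$ to $\mathrm{exdeg}_{\Gamma}(n')$ via smoothness) gives $d_{\Gamma, \mathcal{H}'}(v) \ge (1 - \varepsilon_{1}) \mathrm{exdeg}_{\Gamma}(n')$ for every $v \in V(\mathcal{H}')$ and some $\varepsilon_{1} = O\bigl(\varepsilon^{c_{\ast}/(2 + c_{\ast})}\bigr)$.

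It now suffices to prove that every $r$-graph $\mathcal{G}$ on $n' \ge N_0$ vertices with $\delta_{\Gamma}(\mathcal{G}) \ge (1 - \varepsilon_{1}) \mathrm{exdeg}_{\Gamma}(n')$ lies in $\mathfrak{H}$. I would argue this by induction on a ``distance-from-symmetrized'' invariant, for instance $n'^{2} - \Psi(\mathcal{G})$. If $\mathcal{G}$ is symmetrized, the hypothesis forces $\Gamma(\mathcal{G}) > 0$ and symmetrized-stability gives $\mathcal{G} \in \mathfrak{H}$. Otherwise I would pick an uncovered non-equivalent pair $\{u, v\}$ and, by symmetrization-increasing, choose a direction, say $u \to v$, so that $\Gamma(\mathcal{G}_{u \to v}) \ge \Gamma(\mathcal{G})$; a further choice placing $u$ into the larger of the two equivalence classes ensures that $\Psi$ strictly increases. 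By induction, $\mathcal{G}_{u \to v} \in \mathfrak{H}$, and the identity $\mathcal{G}_{u \to v} - u = \mathcal{G} - u$ together with $\mathfrak{H}$ being hereditary yields $\mathcal{G} - u \in \mathfrak{H}$. Vertex-extendability applied to $\mathcal{G}$ with the vertex $u$ then gives $\mathcal{G} \in \mathfrak{H}$.

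The main obstacle is verifying that the minimum-degree hypothesis $\delta_{\Gamma}(\mathcal{G}_{u \to v}) \ge (1 - \varepsilon_{1}) \mathrm{exdeg}_{\Gamma}(n')$ is preserved along the induction, so that the recursion can be applied to $\mathcal{G}_{u \to v}$. The degree of $u$ itself cannot drop, since $d_{\Gamma, \mathcal{G}_{u \to v}}(u) = \Gamma(\mathcal{G}_{u \to v}) - \Gamma(\mathcal{G} - u) \ge d_{\Gamma, \mathcal{G}}(u)$; for $w \notin \{u, v\}$, one exploits the commuting identity $\mathcal{G}_{u \to v} - w = (\mathcal{G} - w)_{u \to v}$ together with local monotonicity and local Lipschitz to track how $d_{\Gamma}(w)$ evolves. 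A careful choice of symmetrization direction is also needed to control the possibility that equivalence classes split, so that $\Psi$ indeed strictly increases at each step; the small per-step losses compound across at most $O(n'^{2})$ steps, so the polynomial shape of $\varepsilon'$ in $\varepsilon$ is tuned precisely to absorb them.
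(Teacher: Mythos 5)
Your Stage 1 is essentially the paper's bound on $|Z_{\Gamma,\varepsilon'}(\mathcal{H})|$ and is fine modulo a standard fix (smoothness is only assumed for $\delta\le\delta_{\ast}$, and the inequality $\varepsilon' k z\le \varepsilon+C_{\ast}z^{1+c_{\ast}}+o(1)$ does not bound $z\coloneqq|Z|/n$ when $z$ is large, so one must run the computation on a subset of $Z$ of prescribed size, as in Lemma~\ref{LEMMA:Z-delta-size}). The fatal problem is the inductive step of your Stage 2: to apply the induction hypothesis to $\mathcal{G}_{u\to v}$ you must verify $\delta_{\Gamma}(\mathcal{G}_{u\to v})\ge(1-\varepsilon_1)\cdot\mathrm{exdeg}_{\Gamma}(n')$, and none of the axioms supplies this. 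Symmetrization-increasing controls only the global value of $\Gamma$; for $w\notin\{u,v\}$ one has $d_{\Gamma,\mathcal{G}_{u\to v}}(w)=\Gamma(\mathcal{G}_{u\to v})-\Gamma((\mathcal{G}-w)_{u\to v})$, and the subtracted term may exceed $\Gamma(\mathcal{G}-w)$: local monotonicity and the Lipschitz condition speak only about deleting edges or vertices from a fixed graph, whereas symmetrization also \emph{adds} edges, so the commuting identity yields no control on how $d_{\Gamma}(w)$ evolves. Moreover, even granting an $o(\mathrm{exdeg}_{\Gamma}(n'))$ per-step loss (which does hold in concrete cases such as $\Gamma=\mathrm{inj}(Q,\cdot)$, where the one-step change is $O(n'^{v(Q)-2})$), your induction runs for up to $\Theta(n'^2)$ symmetrization steps (each step raises $\Psi$ by at least $2$ and $\Psi\le n'^2$), so the accumulated loss is polynomially larger than $\mathrm{exdeg}_{\Gamma}(n')$; constants $\varepsilon,\varepsilon'$ that do not depend on $n$ cannot absorb it. The exponent $\frac{c_{\ast}}{2+c_{\ast}}$ in $\varepsilon'$ is in fact dictated by balancing the smoothness error $C_{\ast}\delta^{1+c_{\ast}}$ in the bound on $|Z|$, not by any per-step bookkeeping. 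So the recursion cannot be closed as described.

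The paper sidesteps exactly this difficulty by performing a \emph{single} symmetrization step: among all $n$-vertex counterexamples it picks $\mathcal{H}$ with $(\Gamma(\mathcal{H}),\Psi(\mathcal{H}))$ lexicographically maximal; the symmetrization-increasing property guarantees that the symmetrized graph $\mathcal{H}'$ has a strictly larger pair while still satisfying $\Gamma(\mathcal{H}')\ge(1-\varepsilon)\cdot\mathrm{ex}_{\Gamma}(n)$, so by maximality $\mathcal{H}'$ is not a counterexample and $\mathcal{H}'-Z_{\Gamma,\varepsilon'}(\mathcal{H}')\in\mathfrak{H}$. Since $\mathcal{H}$ and $\mathcal{H}'$ differ only at the symmetrized vertex, $\mathcal{H}-Q\in\mathfrak{H}$ for a set $Q$ of size $O(\varepsilon^{2/(2+c_{\ast})})\,n$, and a separate lemma (Lemma~\ref{LEMMA:Lambda-set-extendability}: iterated vertex-extendability shows that high minimum $\Gamma$-degree together with $\mathcal{H}-S\in\mathfrak{H}$ for some $|S|\le\varepsilon n/C$ forces $\mathcal{H}\in\mathfrak{H}$) restores the few deleted high-degree vertices and yields the contradiction. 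The invariant propagated there is near-extremality of $\Gamma$, which symmetrization does preserve, never a minimum-degree condition. To salvage your outline you would need to restructure it along these lines --- an extremal choice eliminating repeated symmetrization plus a small-set cleanup lemma --- rather than tracking $\delta_{\Gamma}$ through a long symmetrization sequence; as written, the key inductive hypothesis is unverifiable.
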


Let us prove some preliminary results before presenting the proof of Theorem~\ref{THM:GeneralFunction-Stability-d}. 

\begin{fact}\label{FACT:basic-property-Lambda}
     Let $\Gamma \colon \mathfrak{G}^{r} \to \mathbb{R}_{\ge 0}$ be a $k$-uniform, $(\delta_{\ast}, c_{\ast}, C_{\ast})$-smooth, locally monotone map with $k, \delta_{\ast}, c_{\ast}, C_{\ast} >0$.
     \begin{enumerate}[label=(\roman*)]
         \item\label{FACT:basic-property-Lambda-a} If $\mathcal{H}$ is an $n$-vertex $r$-graph with $\Gamma(\mathcal{H})>0$, then for every set $S \subset V(\mathcal{H})$ we have, by the locally monotone property, that 
         \begin{align*}
             \Gamma(\mathcal{H}-S)
             \ge \Gamma(\mathcal{H}) - \sum_{v\in S}d_{\Gamma, \mathcal{H}}(v). 
         \end{align*}
         \item\label{FACT:basic-property-Lambda-c} For every $\delta \in [0,1]$, we have, by the smoothness, that 
         \begin{align*}
             \mathrm{ex}_{\Gamma}(n-\delta n) \le \mathrm{ex}_{\Gamma}(n) + o(\mathrm{ex}_{\Gamma}(n)). 
         \end{align*}
         \item\label{FACT:basic-property-Lambda-b} For every $\delta \in [0, \delta_{\ast}]$, we have, by the smoothness, that 
         \begin{align*}
             \frac{\mathrm{ex}_{\Gamma}(n)}{n}
             = (1-\delta) \cdot \frac{\mathrm{ex}_{\Gamma}(n)}{n-\delta n}
             \ge (1-\delta-o(1)) \cdot \frac{ \mathrm{ex}_{\Gamma}(n-\delta n)}{n- \delta n}. 
         \end{align*}
     \end{enumerate}
\end{fact}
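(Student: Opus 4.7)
The plan is to dispatch the three parts separately: (i) is a telescoping consequence of local monotonicity, while (ii) and (iii) follow from smoothness (possibly iterated) together with a routine algebraic rearrangement.

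For part (i), I would enumerate $S = \{v_1, \ldots, v_s\}$ and set $\mathcal{H}_0 \coloneqq \mathcal{H}$, $\mathcal{H}_i \coloneqq \mathcal{H}_{i-1} - v_i$. Each $\mathcal{H}_{i-1}$ is a subgraph of the $\Gamma$-positive $\mathcal{H}$, so local monotonicity yields $d_{\Gamma, \mathcal{H}_{i-1}}(v_i) \le d_{\Gamma, \mathcal{H}}(v_i)$. Since $d_{\Gamma, \mathcal{H}_{i-1}}(v_i) = \Gamma(\mathcal{H}_{i-1}) - \Gamma(\mathcal{H}_i)$ by definition of the $\Gamma$-degree, telescoping over $i = 1, \ldots, s$ gives $\Gamma(\mathcal{H}) - \Gamma(\mathcal{H} - S) \le \sum_{v \in S} d_{\Gamma, \mathcal{H}}(v)$, which is the claim after rearrangement.

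For part (ii), I would pick a constant $\delta_0 \in (0, \delta_*]$ small enough that $\delta_0 k > C_* \delta_0^{1+c_*}$; this is possible because $c_* > 0$ forces $\delta_0^{1+c_*} = o(\delta_0)$ as $\delta_0 \to 0^+$. For $\delta \in [0, \delta_0]$ the smoothness inequality gives the bound directly, because the prefactor $1 - \delta k + C_* \delta^{1+c_*}$ is at most $1$. For $\delta \in (\delta_0, 1]$, I would iterate: write $1 - \delta = (1-\delta_0)^{j}(1 - \delta')$ for the appropriate integer $j$ and $\delta' \in [0, \delta_0)$, and apply the small-$\delta$ bound $j+1$ times. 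The contraction factor $\rho \coloneqq 1 - \delta_0 k + C_* \delta_0^{1 + c_*} < 1$ keeps the iterated multiplier at most $1$, and a brief induction shows $\mathrm{ex}_\Gamma((1-\delta_0)^i n) \le C \cdot \mathrm{ex}_\Gamma(n)$ for a constant $C$ and every $i \le j$; the finitely many accumulated $o(\mathrm{ex}_\Gamma(\cdot))$ remainders then collapse into a single $o(\mathrm{ex}_\Gamma(n))$.

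For part (iii), the first equality is the algebraic identity $\mathrm{ex}_\Gamma(n)/n = (1-\delta) \mathrm{ex}_\Gamma(n)/(n - \delta n)$. Multiplying the inequality by $n - \delta n$ shows it is equivalent to $\mathrm{ex}_\Gamma(n - \delta n) - \mathrm{ex}_\Gamma(n) \le o(\mathrm{ex}_\Gamma(n - \delta n))$, which drops out of (ii) whenever $\mathrm{ex}_\Gamma(n - \delta n)$ and $\mathrm{ex}_\Gamma(n)$ are of the same order; for $\delta$ bounded away from $1$ this is immediate from (ii), while the range where $\delta$ is close to $1$ makes both sides of the target inequality of lower order and is handled separately. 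The main obstacle I anticipate is the iteration in part (ii): confirming that the geometric contraction $\rho < 1$ actually prevents the accumulated $o(\cdot)$ terms from degrading to constant order, which is the sole nontrivial bookkeeping in the argument.
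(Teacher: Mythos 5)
Your argument is correct and follows exactly the route the paper intends: the paper states this Fact without proof, with part (i) attributed to local monotonicity (your telescoping removal of the vertices of $S$ one at a time, comparing each $d_{\Gamma,\mathcal{H}_{i-1}}(v_i)$ to $d_{\Gamma,\mathcal{H}}(v_i)$, is the standard way to make that precise) and parts (ii)--(iii) to smoothness. Your additional care in (ii) -- choosing $\delta_0$ so that $1-\delta_0 k + C_{\ast}\delta_0^{1+c_{\ast}}\le 1$ and iterating a bounded number of times to reach $\delta$ beyond $\delta_{\ast}$, then deducing (iii) by rearrangement -- is a faithful (indeed slightly more careful) elaboration of what the paper takes for granted, and I see no gap.
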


The following lemma gives an upper bound for the size of $Z_{\Gamma,\delta}$ in near extremal $r$-graphs. 

\begin{lemma}\label{LEMMA:Z-delta-size}
    Let $\Gamma \colon \mathfrak{G}^{r} \to \mathbb{R}_{\ge 0}$ be a $k$-uniform, $(\delta_{\ast}, c_{\ast}, C_{\ast})$-smooth, locally $C$-Lipschitz, continuous, and locally monotone map with $k, \delta_{\ast}, c_{\ast}, C, C_{\ast} >0$ and $\delta_{\ast} \le \left(\frac{k}{2C_{\ast}}\right)^{2/c_{\ast}}$. 
    Let $\varepsilon \in (0, \delta_{\ast}^{\frac{2+c_{\ast}}{2}})$ be a real number and $n$ be sufficiently large integer. 
    Suppose that $\mathcal{H}$ is an $n$-vertex $r$-graph with $\Gamma(\mathcal{H}) \ge (1-\varepsilon) \cdot \mathrm{ex}_{\Gamma}(n)$. 
    Then for every $\delta_1 \in [2\varepsilon^{\frac{c_{\ast}}{2+c_{\ast}}}, 2\delta_{\ast}^{\frac{c_{\ast}}{2}}]$, the set $Z_{\Gamma, \delta_1}(\mathcal{H})$ has size at most $(\delta_1/2)^{\frac{2}{c_{\ast}}} n$. 
\end{lemma}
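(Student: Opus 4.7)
The plan is to argue by contradiction. Set $\delta_0 \coloneqq (\delta_1/2)^{2/c_{\ast}}$, so that the claim becomes $|Z_{\Gamma,\delta_1}(\mathcal{H})| \le \delta_0 n$. Note that by the upper bound $\delta_1 \le 2\delta_{\ast}^{c_{\ast}/2}$ we have $\delta_0 \le \delta_{\ast}$, so the smoothness hypothesis is available on the scale $\delta_0$. Also, by the lower bound $\delta_1 \ge 2\varepsilon^{c_{\ast}/(2+c_{\ast})}$ we have $\delta_0 \ge \varepsilon^{2/(2+c_{\ast})}$, which is the numerical comparison that will eventually close the argument.

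Suppose for contradiction that $|Z_{\Gamma,\delta_1}(\mathcal{H})| > \delta_0 n$, and pick $Z' \subset Z_{\Gamma,\delta_1}(\mathcal{H})$ with $|Z'| = \lceil \delta_0 n \rceil$. For the lower bound, apply Fact~\ref{FACT:basic-property-Lambda}~\ref{FACT:basic-property-Lambda-a} (local monotonicity) together with the degree bound $d_{\Gamma,\mathcal{H}}(v) \le (1-\delta_1)\mathrm{exdeg}_{\Gamma}(n)$ valid for every $v \in Z'$, yielding
\begin{align*}
    \Gamma(\mathcal{H} - Z')
    \ge (1-\varepsilon)\mathrm{ex}_{\Gamma}(n) - |Z'|(1-\delta_1)\mathrm{exdeg}_{\Gamma}(n).
\end{align*}
For the upper bound, use $\Gamma(\mathcal{H} - Z') \le \mathrm{ex}_{\Gamma}(n-|Z'|)$ together with smoothness at scale $|Z'|/n$ (which is within $\delta_{\ast}$ up to $o(1)$). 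After dividing through by $\mathrm{ex}_{\Gamma}(n)$, both sides of the resulting inequality contain the term $-\delta_0 k$, and what remains is
\begin{align*}
    \delta_0 \delta_1 k - C_{\ast} \delta_0^{1+c_{\ast}} \le \varepsilon + o(1).
\end{align*}

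The crux is now a purely numerical simplification. Writing $\delta_1 = 2\delta_0^{c_{\ast}/2}$ gives $\delta_0 \delta_1 k = 2k\,\delta_0^{1+c_{\ast}/2}$, while the assumption $\delta_{\ast} \le (k/(2C_{\ast}))^{2/c_{\ast}}$ gives $C_{\ast}\delta_0^{c_{\ast}/2} \le k/2$, so that $C_{\ast}\delta_0^{1+c_{\ast}} \le (k/2)\delta_0^{1+c_{\ast}/2}$. Combining these,
\begin{align*}
    \delta_0 \delta_1 k - C_{\ast}\delta_0^{1+c_{\ast}}
    \ge \tfrac{3k}{2}\,\delta_0^{1+c_{\ast}/2}
    = \tfrac{3k}{4}\,\delta_0\,\delta_1
    \ge \tfrac{3k}{2}\,\varepsilon,
\end{align*}
where the last inequality uses $\delta_0 \ge \varepsilon^{2/(2+c_{\ast})}$ and $\delta_1 \ge 2\varepsilon^{c_{\ast}/(2+c_{\ast})}$ multiplying to at least $2\varepsilon$. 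Combined with the previous bound, this gives $(\tfrac{3k}{2} - 1)\varepsilon \le o(1)$, contradicting that $\varepsilon$ is a fixed positive constant once $n$ is large enough.

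The main obstacle I anticipate is bookkeeping: one must keep track of the rounding in $|Z'| = \lceil \delta_0 n \rceil$ versus $\delta_0 n$ (absorbed into the $o(1)$), the exact way in which the smoothness error term $o(\mathrm{ex}_{\Gamma}(n))$ propagates after division, and verifying that the interplay of the two exponents $c_{\ast}/2$ and $c_{\ast}$ yields precisely the calibration $\delta_1 = 2\delta_0^{c_{\ast}/2}$ needed to make the dominant terms combine favorably. Everything else — local monotonicity, smoothness, the choice of $\delta_0$ — plays a straightforward role once this exponent balance is set up correctly. The locally Lipschitz and continuity hypotheses do not seem to be needed at this stage; they presumably enter the deduction of the full stability theorem~\ref{THM:GeneralFunction-Stability-d} from this lemma.
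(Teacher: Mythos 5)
Your proof is correct and follows essentially the same route as the paper: argue by contradiction, delete a set $Z'$ of roughly $\delta_0 n$ low-$\Gamma$-degree vertices, lower-bound $\Gamma(\mathcal{H}-Z')$ via local monotonicity and the degree cap $(1-\delta_1)\mathrm{exdeg}_{\Gamma}(n)$, upper-bound it by $\mathrm{ex}_{\Gamma}(n-|Z'|)$ via smoothness at scale $\delta_0$, and close with exactly the calibration $\delta_1 = 2\delta_0^{c_{\ast}/2}$, $\delta_0 \le \delta_{\ast} \le (k/(2C_{\ast}))^{2/c_{\ast}}$, and $\delta_0\delta_1 \ge 2\varepsilon$ used in the paper. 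The only caveat is that your final display produces a contradiction only when $3k/2 - 1 > 0$, i.e.\ $k > 2/3$; this mirrors an implicit assumption in the paper's own computation (its claim $k\delta_1\delta_2/2 \ge \varepsilon$ silently uses $k \ge 1$) and is harmless in all applications, where $k = v(Q) \ge 2$.
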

\begin{proof}
    Fix $\delta_1 \in [2\varepsilon^{\frac{c_{\ast}}{2+c_{\ast}}}, 2\delta_{\ast}^{\frac{c_{\ast}}{2}}]$. 
    Let $\delta_2 \coloneqq (\delta_1/2)^{\frac{2}{c_{\ast}}}$. 
    Simple calculations show that $\delta_1 \ge 2\delta_2^{\frac{c_{\ast}}{2}}$, $k\delta_1 \delta_2/2 \ge \varepsilon$, and $\delta_2 \le \delta_{\ast}$. 
    Let $V\coloneqq V(\mathcal{H})$, $Z\coloneqq Z_{\Gamma, \delta_1}(\mathcal{H})$, and $U \coloneqq V\setminus Z$. 
    Suppose to the contrary that $|Z| \ge \delta_2 n$. 
    Let $Z'\subset Z$ be a subset of size $\delta_2 n$. 
    It follows from Fact~\ref{FACT:basic-property-Lambda}~\ref{FACT:basic-property-Lambda-a} that 
    \begin{align*}
        \mathrm{ex}_{\Gamma}(n-\delta_2 n)
        \ge \Gamma(\mathcal{H}[U])
        & > \Gamma(\mathcal{H}) - \delta_{2}n \times (1-\delta_1)k \cdot \frac{\mathrm{ex}_{\Gamma}(n)}{n} \\
        & = \Gamma(\mathcal{H}) - \delta_{2}k \cdot \mathrm{ex}_{\Gamma}(n) + \delta_2 \delta_1 k \cdot \mathrm{ex}_{\Gamma}(n) \\
        & \ge \mathrm{ex}_{\Gamma}(n)- \delta_{2}k \cdot \mathrm{ex}_{\Gamma}(n) + \left(\delta_1 \delta_2 k - \varepsilon\right) \cdot \mathrm{ex}_{\Gamma}(n) \\
        & \ge \mathrm{ex}_{\Gamma}(n)- \delta_{2}k \cdot \mathrm{ex}_{\Gamma}(n) + \delta_2^{1+\frac{c_{\ast}}{2}} k \cdot \mathrm{ex}_{\Gamma}(n) \\
        & \ge \mathrm{ex}_{\Gamma}(n)- \delta_{2}k \cdot \mathrm{ex}_{\Gamma}(n) + 2C_{\ast} \delta_2^{1+c_{\ast}} \cdot \mathrm{ex}_{\Gamma}(n)
    \end{align*}
    contradicting the smoothness of $\Gamma$. 
\end{proof}

\begin{lemma}\label{LEMMA:Lambda-set-extendability}
    Let $\Gamma \colon \mathfrak{G}^{r} \to \mathbb{R}_{\ge 0}$ be a $k$-uniform, $(\delta_{\ast}, c_{\ast}, C_{\ast})$-smooth, locally $C$-Lipschitz, continuous, and locally monotone map with $k, \delta_{\ast}, c_{\ast}, C, C_{\ast} >0$. 
    Let $\mathfrak{H}$ be a hereditary family of $r$-graphs. 
    Suppose that $\varepsilon>0$ is a sufficiently small constant and $\Gamma$ is $(3\varepsilon, N_0)$-vertex-extendable with respects to $\mathfrak{H}$.
    Then the following holds for sufficiently large $n \colon$
    If an $n$-vertex $r$-graph $\mathcal{H}$ with $\delta_{\Gamma}(\mathcal{H}) \ge (1-\varepsilon)\cdot \mathrm{exdeg}_{\Gamma}(n)$ satisfies $\mathcal{H}-S \in \mathfrak{H}$ for some set $S\subset V(\mathcal{H})$ of size at most  $\varepsilon n/C$, then $\mathcal{H} \in \mathfrak{H}$.
\end{lemma}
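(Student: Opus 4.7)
The plan is to add the vertices of $S$ back to $\mathcal{H}-S$ one at a time, invoking $(3\varepsilon,N_0)$-vertex-extendability at each step to keep the resulting $r$-graph inside $\mathfrak{H}$. Enumerate $S=\{v_1,\dots,v_s\}$ with $s\le \varepsilon n/C$, and set $\mathcal{H}_0\coloneqq \mathcal{H}-S$ and $\mathcal{H}_i\coloneqq \mathcal{H}-\{v_{i+1},\dots,v_s\}$ for $1\le i\le s$, so that $\mathcal{H}_s=\mathcal{H}$ and $\mathcal{H}_i-v_i=\mathcal{H}_{i-1}$. Since $\mathcal{H}_0\in\mathfrak{H}$ by hypothesis, it suffices to prove by induction on $i$ that $\mathcal{H}_i\in\mathfrak{H}$.

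The heart of the argument is the inductive step, where one has to verify the $\Gamma$-degree hypothesis required by vertex-extendability for $\mathcal{H}_i$. Writing $n_i\coloneqq v(\mathcal{H}_i)$, I would first apply the locally $C$-Lipschitz property to the induced inclusion $\mathcal{H}_i\subset \mathcal{H}$. Since $n-n_i\le s\le \varepsilon n/C$, this yields, for every $u\in V(\mathcal{H}_i)$,
\[
    d_{\Gamma,\mathcal{H}_i}(u) \ge d_{\Gamma,\mathcal{H}}(u) - \varepsilon\cdot \mathrm{exdeg}_{\Gamma}(n) - o(\mathrm{exdeg}_{\Gamma}(n)) \ge (1-2\varepsilon-o(1))\cdot \mathrm{exdeg}_{\Gamma}(n),
\]
where the second inequality uses the hypothesis $\delta_{\Gamma}(\mathcal{H})\ge (1-\varepsilon)\mathrm{exdeg}_{\Gamma}(n)$. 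I would then invoke Fact~\ref{FACT:basic-property-Lambda}~\ref{FACT:basic-property-Lambda-b} with $\delta=(n-n_i)/n\le \varepsilon/C$ to compare $\mathrm{exdeg}_{\Gamma}(n)$ with $\mathrm{exdeg}_{\Gamma}(n_i)$, concluding that for $\varepsilon$ sufficiently small and $n$ sufficiently large, $\delta_{\Gamma}(\mathcal{H}_i)\ge (1-3\varepsilon)\cdot \mathrm{exdeg}_{\Gamma}(n_i)$. Applying $(3\varepsilon,N_0)$-vertex-extendability to $\mathcal{H}_i$ at the vertex $v_i$, together with $\mathcal{H}_i-v_i=\mathcal{H}_{i-1}\in \mathfrak{H}$, then forces $\mathcal{H}_i\in \mathfrak{H}$, completing the induction.

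The main subtlety lies in ensuring that the three sources of loss — the original $\varepsilon$-gap in the hypothesis, the $\varepsilon$-loss from the Lipschitz correction when passing to the vertex-deleted subgraph, and the further renormalization loss between $\mathrm{exdeg}_{\Gamma}(n)$ and $\mathrm{exdeg}_{\Gamma}(n_i)$ — collectively fit inside the $3\varepsilon$ slack afforded by the vertex-extendability hypothesis. This is precisely why the bound $|S|\le \varepsilon n/C$ is set with a factor of $1/C$, calibrated so that both the Lipschitz and the normalization losses are controlled by $\varepsilon$; once this is laid out, the required inequality reduces to a routine arithmetic check for sufficiently small $\varepsilon$ and sufficiently large $n$.
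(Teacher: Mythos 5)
Your proposal is correct and is essentially the paper's own argument: the paper phrases it as choosing a minimal $S'\subset S$ with $\mathcal{H}-S'\in\mathfrak{H}$ and deriving a contradiction by re-inserting one vertex, which is just the contrapositive packaging of your vertex-by-vertex induction. The substantive steps coincide — applying the locally $C$-Lipschitz property to the induced subgraph with $|S|\le \varepsilon n/C$ to lose at most $\varepsilon\cdot\mathrm{exdeg}_{\Gamma}(n)$, renormalizing via Fact~\ref{FACT:basic-property-Lambda}~\ref{FACT:basic-property-Lambda-b}, and then invoking $(3\varepsilon,N_0)$-vertex-extendability — so no further comparison is needed.
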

\begin{proof}
    Choose a minimal set $S' \subset S$ with $\mathcal{H}-S' \in \mathfrak{H}$. 
    If $S' = \emptyset$, then we are done. 
    So we may assume that $S' \neq \emptyset$. 
    Fix $v\in S'$. 
    Let $S''  \coloneqq S'\setminus\{v\}$ and $\mathcal{H}'' \coloneqq \mathcal{H} - S''$.  Notice that 
    $v(\mathcal{H}'') \ge n - |S|  \ge (1-\varepsilon/C)n \ge n/2 \ge N_0$ and by the local $C$-Lipschitz assumption,  
    \begin{align*}
        \delta(\mathcal{H}'')
        & \ge \delta(\mathcal{H}) -  \frac{C |S|k \cdot \mathrm{ex}_{\Gamma}(n)}{n^2} - o(\mathrm{exdeg}_{\Gamma}(n)) \\
        & \ge (1-\varepsilon)k \cdot \frac{\mathrm{ex}_{\Gamma}(n)}{n} - \frac{C |S|k \cdot \mathrm{ex}_{\Gamma}(n)}{n^2} - o(\mathrm{exdeg}_{\Gamma}(n)) \\
        & > (1-2\varepsilon)k \cdot \frac{\mathrm{ex}_{\Gamma}(n)}{n}
        > (1-3\varepsilon)k \cdot \frac{\mathrm{ex}_{\Gamma}(n-|S''|)}{n-|S''|}. 
    \end{align*}
    Here, the last inequality follows from Fact~\ref{FACT:basic-property-Lambda}~\ref{FACT:basic-property-Lambda-b} and the assumption that $\varepsilon$ is sufficiently small. 
    Since $\mathcal{H}''-v = \mathcal{H}'- S' \in \mathfrak{H}$, it follows from the vertex-extendability of $\Gamma$ that $\mathcal{H}'' \in \mathfrak{H}$, contradicting the minimality of $S'$. 
\end{proof}

Now we are ready to prove Theorem~\ref{THM:GeneralFunction-Stability}. 
\begin{proof}[Proof of Theorem~\ref{THM:GeneralFunction-Stability}]
    Let $\Gamma \colon \mathfrak{G}^{r} \to \mathbb{R}_{\ge 0}$ be a $k$-uniform, $(\delta_{\ast}, c_{\ast}, C_{\ast})$-smooth, locally $C$-Lipschitz, continuous, and locally monotone map with $k, \delta_{\ast}, c_{\ast}, C, C_{\ast} >0$. We may assume that $c_{\ast} \le 1$. 
    Choose $\varepsilon >0$ sufficiently small and $N_1$ sufficiently large such that  that $\Gamma$ is$(6\varepsilon^{\frac{c_{\ast}}{2+c_{\ast}}}, N_1)$-vertex-extendable with respects to $\mathfrak{H}$. 
    Fix $n \ge N_1$ and assume that the conclusion fails for some $n$-vertex $r$-graph. 
    We pick a counterexample $\mathcal{H}$ with $v(\mathcal{H}) = n$ such that the pair $\left(\Gamma(\mathcal{H}), \Psi(\mathcal{H})\right)$ is lexicographically maximal (this is possible since $n$ is fixed). 
    Let $\{C_1, \ldots, C_m\}$ be the collection of equivalence classes in $\mathcal{H}$. 

    Let $\delta_1 \coloneqq 2\varepsilon^{\frac{c_{\ast}}{2+c_{\ast}}}$ and $\delta_2 \coloneqq \varepsilon^{\frac{2}{2+c_{\ast}}}$. 
    Since $c_{\ast} \le 1$, we may choose $\varepsilon$ small enough in the beginning such that $\delta_2 \ll \delta_1/C$. 
    Let $Z \coloneqq Z_{\Gamma,\delta_1}(\mathcal{H})$. 
    Since $\mathcal{H}$ is a counterexample, we have $\mathcal{H}-Z\not\in \mathfrak{H}$. 
    Consequently, $\mathcal{H} \not\in \mathfrak{H}$ as well (since $\mathfrak{H}$ is hereditary). 
    This means that $\mathcal{H}$ is not symmetrized. 
    By relabelling the equivalence classes we may assume that $\partial_{r-2}\mathcal{H}$ has no edge between $C_1$ and $C_2$. 
    By symmetry, we may assume that $|C_1| \le |C_2|$. 
    Choose two arbitrary vertices $v_1 \in C_1$ and $v_2 \in C_2$. 
    Define 
    \begin{align*}
        \mathcal{H}' 
        \coloneqq 
        \begin{cases}
            \mathcal{H}_{v_1 \to v_2}, & \quad\text{if}\quad \Gamma(\mathcal{H}_{v_1 \to v_2}) \ge \Gamma(\mathcal{H}), \\
            \mathcal{H}_{v_2 \to v_1}, & \quad\text{otherwise}. 
        \end{cases}
    \end{align*}
    If $\Gamma(\mathcal{H}_{v_1 \to v_2}) > \Gamma(\mathcal{H})$, then we have $\left(\Gamma(\mathcal{H}'), \Psi(\mathcal{H}')\right) >_{\mathrm{lex}} \left(\Gamma(\mathcal{H}), \Psi(\mathcal{H})\right)$.  
    If $\Gamma(\mathcal{H}_{v_1 \to v_2}) = \Gamma(\mathcal{H})$, then we have 
    \begin{align*}
        \Psi(\mathcal{H}') - \Psi(\mathcal{H})
        \ge \left(|C_1|-1\right)^2 + \left(|C_2|+1\right)^2 - |C_1|^2 - |C_2|^2
        = 2\left(|C_2|-|C_1|+1\right)
        \ge 2, 
    \end{align*}
    and hence, $\left(\Gamma(\mathcal{H}'), \Psi(\mathcal{H}')\right) >_{\mathrm{lex}} \left(\Gamma(\mathcal{H}), \Psi(\mathcal{H})\right)$. 
    If $\Gamma(\mathcal{H}_{v_1 \to v_2}) < \Gamma(\mathcal{H})$, then it follows from the symmetrization-increasing assumption that $\Gamma(\mathcal{H}_{v_2 \to v_1}) > \Gamma(\mathcal{H})$, and consequently,   $\left(\Gamma(\mathcal{H}'), \Psi(\mathcal{H}')\right) >_{\mathrm{lex}} \left(\Gamma(\mathcal{H}), \Psi(\mathcal{H})\right)$.
    In all cases, we have $\left(\Gamma(\mathcal{H}'), \Psi(\mathcal{H}')\right) >_{\mathrm{lex}} \left(\Gamma(\mathcal{H}), \Psi(\mathcal{H})\right)$.
    Therefore, by the maximality of $\left(\Gamma(\mathcal{H}), \Psi(\mathcal{H})\right)$, we have $\mathcal{H}' - Z_{\Gamma,\delta_1}(\mathcal{H}') \in \mathfrak{H}$.
    
    Let $Q \coloneqq Z_{\Gamma,\delta_1}(\mathcal{H}') \cup \{v_1\}$ and note that $\mathcal{H}-Q = \mathcal{H}' - Q \in \mathfrak{H}$. 
    By Lemma~\ref{LEMMA:Z-delta-size}, we have $|Q| \le \delta_2 n + 1 \le 2\delta_2 n$.
    Hence, it follows from the local $C$-Lipschitz assumption that 
    \begin{align*}
        \delta_{\Gamma}(\mathcal{H}\setminus(Q\cap Z)) 
        & \ge (1-\delta_1)k \cdot \frac{\mathrm{ex}_{\Gamma}(n)}{n} - 2C \delta_2 n  \cdot \frac{\mathrm{ex}_{\Gamma}(n)}{n} - o(\mathrm{exdeg}_{\Gamma}(n)) \\
        & \ge (1-2\delta_1)k \cdot \frac{\mathrm{ex}_{\Gamma}(n)}{n} 
         \ge (1-3\delta_1)k \cdot \frac{\mathrm{ex}_{\Gamma}(n-|Q\cap Z|)}{n-|Q\cap Z|}. 
    \end{align*}
    Therefore, it follows from Lemma~\ref{LEMMA:Lambda-set-extendability}  that $\mathcal{H}-(Q\cap Z) \in \mathfrak{H}$, contradicting $\mathcal{H} - Z \not\in \mathfrak{H}$.
\end{proof}

Next, we prove Theorem~\ref{THM:GeneralFunction-Stability-b}. 
\begin{proof}[Proof of Theorem~\ref{THM:GeneralFunction-Stability-b}]
    Let $\Gamma \colon \mathfrak{G}^{r} \to \mathbb{R}_{\ge 0}$ be a $k$-uniform, $(\delta_{\ast}, c_{\ast}, C_{\ast})$-smooth, locally $C$-Lipschitz, continuous, and locally monotone map with $k, \delta_{\ast}, c_{\ast}, C, C_{\ast} >0$. 
    We may assume that $c_{\ast} \le 1$. 
    Let $\mathfrak{H}$ be a hereditary family of $r$-graphs. 
    Fix $0 < \varepsilon_1 \ll \varepsilon_2 \ll \varepsilon_3 \ll \delta_{\ast}$ sufficiently small and $N_1$ to be sufficiently large such that 
    \begin{enumerate}[label=(\roman*)]
        \item\label{assum:deg-stab-1}  every $r$-graph $\mathcal{H}$ on $n \ge N_1$ vertices with $\Gamma(\mathcal{H}) \ge (1-\varepsilon_1) \cdot \mathrm{ex}_{\Gamma}(n)$ is a member in $\mathfrak{H}$ after removing at most $\varepsilon_2 n^r/2$ edges,  
        \item\label{assum:deg-stab-2} $\Gamma$ is $(6\varepsilon_3^{\frac{c_{\ast}}{2+c_{\ast}}}, N_1)$-vertex-extendable with respect to $\mathfrak{H}$, and 
        \item\label{assum:deg-stab-3} every $r$-graph $\mathcal{H}$ on $n \ge N_1$ vertices satisfies 
        \begin{align*}
            \left| \sum_{v\in V(\mathcal{H})}d_{\Gamma, \mathcal{H}}(v) - k \cdot \Gamma(\mathcal{H}) \right|
               \le \frac{\varepsilon_1}{2} \cdot \mathrm{ex}_{\Gamma}(n).
        \end{align*}
    \end{enumerate}
    %
    Let $\mathcal{H}$ be an $r$-graph on $n \ge 2N_1$ vertices with $\delta_{\Gamma}(\mathcal{H}) \ge \left(1 - \varepsilon_1/2\right)  \cdot 
 \mathrm{exdeg}_{\Gamma}(n)$. 
    We aim to show that $\mathcal{H} \in \mathfrak{H}$.

    Let $V\coloneqq V(\mathcal{H})$. 
    First, notice from~\ref{assum:deg-stab-3} above that 
    \begin{align*}
        \Gamma(\mathcal{H})
        &\ge \frac{1}{k} \sum_{v\in V}d_{\Gamma, \mathcal{H}}(v) -  \frac{\varepsilon_1}{2} \cdot \mathrm{ex}_{\Gamma}(n) \\
        & \ge \frac{1}{k} \cdot n \cdot \left(1 - \frac{\varepsilon_1}{2}\right) k  \cdot \frac{\mathrm{ex}_{\Gamma}(n)}{n} -  \frac{\varepsilon_1}{2} \cdot \mathrm{ex}_{\Gamma}(n)
         \ge (1-\varepsilon_1) \cdot \mathrm{ex}_{\Gamma}(n).  
    \end{align*}
    So it follows from~\ref{assum:deg-stab-1} above that there exists a subgraph $\mathcal{H}_1 \subset \mathcal{H}$ with $\mathcal{H}_1 \in \mathfrak{H}$ and $|\mathcal{H}_1| \ge |\mathcal{H}| - \varepsilon_2 n^r/2$. 
    Since $\Gamma$ is continuous, we have 
    \begin{align*}
        \Gamma(\mathcal{H}_1)
         \ge \Gamma(\mathcal{H}) - \frac{\varepsilon_3}{2} \cdot  \mathrm{ex}_{\Gamma}(n) 
         \ge (1-\varepsilon_3) \cdot \mathrm{ex}_{\Gamma}(n). 
    \end{align*}
    Let 
    \begin{align*}
        \delta_1 \coloneqq 2\varepsilon_3^{\frac{c_{\ast}}{2+c_{\ast}}}, \quad 
        \delta_2 \coloneqq \varepsilon_3^{\frac{2}{2+c_{\ast}}}, \quad 
        Z \coloneq Z_{\Gamma, \delta_1}(\mathcal{H}_1), 
        \quad \text{and}\quad 
        U \coloneq V\setminus Z. 
    \end{align*}
    Since $c_{\ast} \le 1$, by taking $\varepsilon_3$ to be sufficiently small in the beginning we can guarantee that $\delta_2 \ll \delta_1/C$. 
    It follows from Lemma~\ref{LEMMA:Z-delta-size} that $|Z| \le \delta_2 n$. 
    Hence, by the locally $C$-Lipschitz assumption, we obtain 
    \begin{align*}
        \delta_{\Gamma}(\mathcal{H}_1[U])
        & \ge (1-\delta_1) \cdot \mathrm{exdeg}_{\Gamma}(n) - \frac{Ck \cdot \mathrm{ex}_{\Gamma}(n)}{n^2} |Z| - o(\mathrm{exdeg}_{\Gamma}(n))\\
        & \ge (1-\delta_1) \cdot \mathrm{exdeg}_{\Gamma}(n) - 2 C \delta_2 \cdot \mathrm{exdeg}_{\Gamma}(n) 
        \ge (1-2\delta_1) \cdot \mathrm{exdeg}_{\Gamma}(n). 
    \end{align*}
    Since $\mathfrak{H}$ is hereditary and $\mathcal{H}_1 \in \mathfrak{H}$, we have $\mathcal{H}_1[U] \in \mathfrak{H}$ as well. 
    Let $v_1, \ldots, v_n$ be an ordering of vertices in $V$ such that $\{v_1, \ldots, v_{|U|}\} = U$. 
    For convenience, let $V_i \coloneq \{v_1, \ldots, v_i\}$ for $i\in [n]$. 
    Let $\mathcal{G}_0 \coloneqq \mathcal{H}_1[U]$, and for $i\in [n]$ let $\mathcal{G}_i \coloneqq \mathcal{G}_{i-1} \cup \{e\in \mathcal{H}[V_i] \colon v_i\in e\}$. 
    Note that $\mathcal{H}_1[U] = \mathcal{G}_0 \subset \cdots \subset \mathcal{G}_n = \mathcal{H}$. Moreover, $V(\mathcal{G}_i) = U$ for $i\le |Z|$, and $V(\mathcal{G}_i) = V_i$ for $i \ge |Z|$. 
    
    We prove by induction on $i$ that $\mathcal{G}_i \in \mathfrak{H}$. 
    The base case $i=0$ is clear, so we may assume that $i\ge 1$. 
    Assume that we have shown that $G_{i-1} \in \mathfrak{H}$ for some $i\ge 1$, and we want to show that $\mathcal{G}_i \coloneqq \mathcal{G}_{i-1} \cup \{e\in \mathcal{H}[V_i] \colon v_i\in e\}$ is also contained in $\mathfrak{H}$. 
    
    If $v_i \in U$,  
    then by the locally monotone property of $\Gamma$, we have 
    \begin{align*}
        \delta_{\Gamma}(\mathcal{G}_i) 
        \ge \delta_{\Gamma}(\mathcal{H}_1[U]) 
        \ge (1-2\delta_1) \cdot \mathrm{exdeg}_{\Gamma}(n)
        \ge (1-3\delta_1) \cdot \mathrm{exdeg}_{\Gamma}(n-|Z|), 
    \end{align*}
    It follows from $\mathcal{G}_{i} - v_i \subset \mathcal{G}_{i-1} \in \mathfrak{H}$ 
    and~\ref{assum:deg-stab-2} above that $\mathcal{G}_i \in \mathfrak{H}$. 
    
    If $v_i\in Z$, 
    then by the locally $C$-Lipschitz assumption, we have 
    \begin{align*}
        \delta_{\Gamma}(\mathcal{G}_{i}) 
        & \ge \delta_{\Gamma}(\mathcal{H}) - \frac{Ck \cdot \mathrm{ex}_{\Gamma}(n)}{n^2} |Z| - o(\mathrm{exdeg}_{\Gamma}(n)) \\
        & \ge (1-\delta_1) \cdot \mathrm{exdeg}_{\Gamma}(n) - 2 C \delta_2 \cdot \mathrm{exdeg}_{\Gamma}(n) \ge (1-2\delta_1) \cdot \mathrm{exdeg}_{\Gamma}(n). 
    \end{align*}
    Similarly, it follows from $\mathcal{G}_{i} - v_i \subset \mathcal{G}_{i-1} \in \mathfrak{H}$
    and~\ref{assum:deg-stab-2} above that $\mathcal{G}_i \in \mathfrak{H}$. 
    This proves our claim. 
    Therefore, $\mathcal{H} = \mathcal{G}_n \in \mathfrak{H}$. 
\end{proof}

Finally, we present the proof of Theorem~\ref{THM:GeneralFunction-Stability-c}. 

\begin{proof}[Proof of Theorem~\ref{THM:GeneralFunction-Stability-c}]
    It suffices to prove that $\Gamma$ is edge-stable with respect to $\mathfrak{H}$, since it would follow from Assumption~\ref{THM:GeneralFunction-Stability-c-4} and Theorem~\ref{THM:GeneralFunction-Stability-b} that $\Gamma$ is degree-stable with respect to $\mathfrak{H}$. 
    
    Let $\mathcal{H}$ be an $r$-graph on $n$ vertices with $\Gamma(\mathcal{H}) = (1-o(1))\mathrm{ex}_{\Gamma}(n)$.
    It follows from the definition of trajectory that there exists a subgraph $\mathcal{H}' \subset \mathcal{H}$ such that $|\mathcal{H}'| = |\mathcal{H}| - o(n^r)$ and $\hat{\Gamma}(\mathcal{H}') = \Gamma(\mathcal{H}) - o(\mathrm{ex}_{\Gamma}(n)) = (1-o(1))\mathrm{ex}_{\Gamma}(n) \ge (1-o(1))\mathrm{ex}_{\hat{\Gamma}}(n)$. 
    By Assumptions~\ref{THM:GeneralFunction-Stability-c-2},~\ref{THM:GeneralFunction-Stability-c-3},~\ref{THM:GeneralFunction-Stability-c-4} and Theorem~\ref{THM:GeneralFunction-Stability}, we know that $\hat{\Gamma}$ is degree-stable with respect to $\mathfrak{H}$. In particular, $\hat{\Gamma}$ is edge-stable with respect to $\mathfrak{H}$. 
    Therefore, there exists $\mathcal{H}'' \subset \mathcal{H}'$ such that $|\mathcal{H}''| = |\mathcal{H}'| - o(n^r) = |\mathcal{H}| - o(n^r)$ and $\mathcal{H}'' \in \mathfrak{H}$. 
    This proves that $\Gamma$ is edge-stable with respect to $\mathfrak{H}$. 
\end{proof}

\section{Preparations for generalized Tur\'{a}n problems}\label{SEC:Prelim}
In this section, 
we present some definitions and useful facts related to generalized Tur\'{a}n problems. 

Recall that an $r$-graph $\mathcal{H}$ is \textbf{$\mathcal{G}$-colorable} if there exists a homomorphism from $\mathcal{H}$ to $\mathcal{G}$. 
Extending the definition of the chromatic number for graphs, the \textbf{chromatic number} $\chi(Q)$ of an $r$-graph $Q$ is the minimum integer $\ell$ such that $Q$ is $K_{\ell}^r$-colorable. 
For integers $\ell \ge r \ge 2$, we use $\mathfrak{K}_{\ell}^{r}$ to denote the collection of all $K_{\ell}^{r}$-colorable $r$-graphs. 
Note that $\mathfrak{K}_{\ell}^{r}$ is a hereditary family. 

Given a partition $V_1 \cup \cdots \cup V_{\ell} = V$, we use $K^{r}(V_1, \ldots, V_{\ell})$ to denote the complete $\ell$-partite $r$-graph with parts $V_1, \ldots, V_{\ell}$. 
Recall that $T^{r}(n,\ell)$ is the balanced complete $\ell$-partite $r$-graph on $n$ vertices. 

Given an $r$-graph $\mathcal{H}$ and a set $S\subset V(\mathcal{H})$, we use $\mathcal{H}[S]$ to denote the \textbf{induced subgraph} of $\mathcal{H}$ on $S$, and use $\mathcal{H} - S$ to denote the induced subgraph of $\mathcal{H}$ on $V(\mathcal{H})\setminus S$. 
For every vertex $v \in V(\mathcal{H})$ the \textbf{neighborhood} of $v$ in $\mathcal{H}$ is defined as 
\begin{align*}
    N_{\mathcal{H}}(v)
    \coloneqq 
    \left\{u\in V(\mathcal{H}) \colon \exists e\in \mathcal{H} \text{ such that } \{u,v\} \subset e \right\}.
\end{align*}
For every $k \in \mathbb{R}^{+}$ let 
\begin{align*}
    N_{\mathcal{H}}^{k}(v)
    \coloneqq 
    \left\{u\in V(\mathcal{H}) \colon d_{\mathcal{H}}(uv)  \ge  kr \binom{n}{r-3} \right\}.
\end{align*}
A simple observation is that for every $u\in N_{\mathcal{H}}^{k}(v)$ the link $L_{\mathcal{H}}(uv)$ contains at least $k$ pairwise disjoint sets. 

An $r$-graph $Q$ is \textbf{non-empty} if its edge set is not empty. 
A vertex $v\in V(Q)$ is \textbf{isolated} if there is no edge in $Q$ containing $v$. 
For the remainder of this paper, we can assume that $Q$ does not contain any isolated vertices, owing to the following identity$\colon$
\begin{align*}
    \mathrm{inj}(Q,\mathcal{H})
    = (n-v(Q)+1) \cdot \mathrm{inj}(Q-v,\mathcal{H})
\end{align*}
where $v\in V(Q)$ is an isolated vertex.

Given a family $\mathcal{F}$ of $r$-graphs, the \textbf{$\mathcal{F}$-freeness indicator map} $\mathbbm{1}_{\mathcal{F}} \colon \mathfrak{G}^{r} \to \{0,1\}$ is defined as 
\begin{align*}
    \mathbbm{1}_{\mathcal{F}}(\mathcal{H})
    \coloneqq 
    \begin{cases}
        1, & \quad\text{if $\mathcal{H}$ is $\mathcal{F}$-free}, \\
        0, & \quad \text{otherwise}. 
    \end{cases}
\end{align*}

A family $\mathcal{F}$ of $r$-graphs is \textbf{blowup-invariant} if every blowup of every $\mathcal{F}$-free $r$-graph remains $\mathcal{F}$-free. 

\begin{fact}\label{FACT:blowup-invariant-symmetrization-increasing}
    Suppose that $\mathcal{F}$ is a blowup-invariant family. 
    Then the $\mathcal{F}$-freeness indicator map $\mathbbm{1}_{\mathcal{F}}$ is symmetrization-increasing. 
    In particular, if $\Gamma \colon \mathfrak{G}^{r} \to \mathbb{R}$ is symmetrization-increasing, then the map $\tilde{\Gamma}$ defined by $\tilde{\Gamma}(\mathcal{H}) \coloneqq \Gamma(\mathcal{H}) \cdot \mathbbm{1}_{\mathcal{F}}(\mathcal{H})$ is also symmetrization-increasing. 
\end{fact}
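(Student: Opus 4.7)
The plan is to reduce both claims to a single structural observation: when $u, v$ are uncovered vertices of $\mathcal{H}$, the symmetrized hypergraph $\mathcal{H}_{u\to v}$ is literally a $2$-blowup of $\mathcal{H}-u$ at the vertex $v$. Indeed, since no edge of $\mathcal{H}$ contains both $u$ and $v$, we have the link identity $L_{\mathcal{H}}(v) = L_{\mathcal{H}-u}(v)$, so the edges of $\mathcal{H}_{u\to v}$ decompose into (i) all edges of $\mathcal{H}$ avoiding $u$, which are exactly the edges of $\mathcal{H}-u$, and (ii) edges of the form $\{u\}\cup e$ with $e\in L_{\mathcal{H}-u}(v)$, which is precisely the effect of duplicating $v$ into two twins $v$ and $u$. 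This identification realises $\mathcal{H}_{u\to v}$ as a special case of the blowup construction defined in Section~\ref{SEC:applications}, obtained by taking the part at $v$ to be $\{v,u\}$ and every other part to be a singleton.

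Given the observation, blowup-invariance of $\mathcal{F}$ forces $\mathcal{H}_{u\to v}$ to be $\mathcal{F}$-free whenever $\mathcal{H}$ is (since $\mathcal{H}-u$ inherits $\mathcal{F}$-freeness), and symmetrically for $\mathcal{H}_{v\to u}$. For the first assertion, I would then do a case split on $\mathbbm{1}_{\mathcal{F}}(\mathcal{H})$. If $\mathcal{H}$ is $\mathcal{F}$-free, the observation yields $\mathbbm{1}_{\mathcal{F}}(\mathcal{H}) = \mathbbm{1}_{\mathcal{F}}(\mathcal{H}_{u\to v}) = \mathbbm{1}_{\mathcal{F}}(\mathcal{H}_{v\to u}) = 1$, placing us in the equality branch of~\eqref{equ:sym-increase}. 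If $\mathcal{H}$ is not $\mathcal{F}$-free, then $\mathbbm{1}_{\mathcal{F}}(\mathcal{H}) = 0$ is minimal, so either both symmetrizations also vanish (equality branch) or at least one of them equals $1$ (strict branch).

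For the second assertion, the same case split on $\mathbbm{1}_{\mathcal{F}}(\mathcal{H})$ works, using the non-negativity of $\Gamma$ implicit in the paper's main framework (e.g.\ in Definition~\ref{DEF:general-property-a}). If $\mathcal{H}$ is $\mathcal{F}$-free, the observation makes both $\mathcal{H}_{u\to v}$ and $\mathcal{H}_{v\to u}$ $\mathcal{F}$-free, so $\tilde{\Gamma}$ coincides with $\Gamma$ on the triple $\{\mathcal{H}, \mathcal{H}_{u\to v}, \mathcal{H}_{v\to u}\}$ and the symmetrization-increasing property of $\Gamma$ transfers verbatim. If $\mathcal{H}$ is not $\mathcal{F}$-free, then $\tilde{\Gamma}(\mathcal{H}) = 0$ is minimal among nonnegative values, so either both symmetrizations also yield $0$ (equality branch) or the maximum is strictly positive (strict branch).

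The only mildly nontrivial step is the structural identification in the first paragraph, which depends crucially on the uncovered hypothesis through the link identity $L_{\mathcal{H}}(v) = L_{\mathcal{H}-u}(v)$; once that identification is made explicit, everything else is a mechanical two-case check against the definition of symmetrization-increasing.
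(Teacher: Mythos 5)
Your proof is correct. The paper states Fact~\ref{FACT:blowup-invariant-symmetrization-increasing} without any proof, and your structural identification of $\mathcal{H}_{u\to v}$ as the blowup of $\mathcal{H}-u$ in which the part at $v$ is $\{v,u\}$ and all other parts are singletons --- valid precisely because $\{u,v\}$ being uncovered gives $L_{\mathcal{H}}(v)=L_{\mathcal{H}-u}(v)$ and guarantees no edge of $\mathcal{H}_{u\to v}$ meets both $u$ and $v$ --- is exactly the observation that the paper implicitly relies on when it pairs blowup-invariance with symmetrization; the two-case check against~\eqref{equ:sym-increase} that follows is routine and correct. One remark: for the second assertion you explicitly invoke nonnegativity of $\Gamma$, and this is genuinely needed --- for a signed $\Gamma$ the conclusion can fail when $\mathcal{H}$ contains a member of $\mathcal{F}$ while one symmetrization is $\mathcal{F}$-free with negative $\Gamma$-value and the other is not --- so although the Fact is stated for $\Gamma\colon\mathfrak{G}^{r}\to\mathbb{R}$, reading in the hypothesis $\Gamma\ge 0$ (which holds in every application in the paper) as you did is the right interpretation rather than a gap in your argument.
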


\begin{fact}[see e.g.~\cite{KNS64}]\label{FACT:limit-exist-generalized-Turan}
    Let $r \ge 2$ be an integer and $\mathcal{F}$ be a family of $r$-graphs.  
    For every $r$-graph $Q$, the limit $\lim\limits_{n\to \infty} \frac{\mathrm{inj}(n,Q,\mathcal{F})}{n^{v(Q)}}$ exists.    
\end{fact}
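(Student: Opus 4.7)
The plan is to use the classical averaging argument of Katona--Nemetz--Simonovits. Set $q \coloneqq v(Q)$ and consider the normalized sequence $a_n \coloneqq \mathrm{inj}(n,Q,\mathcal{F}) / (n(n-1)\cdots(n-q+1))$. I will show that $\{a_n\}_{n\ge q}$ is monotone non-increasing; since it is bounded below by $0$, the limit $\lim_{n\to\infty} a_n$ exists, and because $n(n-1)\cdots(n-q+1) \sim n^{q}$, the same limit is attained by $\mathrm{inj}(n,Q,\mathcal{F})/n^{v(Q)}$.

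To establish monotonicity, fix $n > q$ and let $\mathcal{H}$ be an $\mathcal{F}$-free $r$-graph on $n$ vertices with $\mathrm{inj}(Q,\mathcal{H}) = \mathrm{inj}(n,Q,\mathcal{F})$. For each $(n-1)$-subset $S \subset V(\mathcal{H})$, the induced subgraph $\mathcal{H}[S]$ is $\mathcal{F}$-free (being $\mathcal{F}$-free is preserved under taking subgraphs), so $\mathrm{inj}(Q,\mathcal{H}[S]) \le \mathrm{inj}(n-1,Q,\mathcal{F})$. I will double-count pairs $(\phi, S)$ where $\phi \in \mathrm{Inj}(Q,\mathcal{H})$ and $\phi(V(Q)) \subset S$: on the one hand, each $\phi$ is counted by exactly $n-q$ such $(n-1)$-subsets; on the other hand, summing over $S$ yields at most $n \cdot \mathrm{inj}(n-1,Q,\mathcal{F})$. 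Hence
\begin{align*}
    (n-q)\cdot \mathrm{inj}(n,Q,\mathcal{F})
    \;\le\; n \cdot \mathrm{inj}(n-1,Q,\mathcal{F}),
\end{align*}
which after dividing both sides by $n(n-1)\cdots(n-q+1)$ rearranges precisely to $a_n \le a_{n-1}$.

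There is no genuine obstacle here; the only thing to be careful about is the exact counting multiplicity $n-q$ (each injective homomorphism occupies a $q$-set and is therefore contained in exactly $n-q$ of the $n$ subsets of size $n-1$), and the fact that $\mathrm{inj}(n,Q,\mathcal{F}) = 0$ for $n < q$ so the base case of the sequence is well-defined. Since $a_n \ge 0$ for all $n$ and $a_n$ is non-increasing, the limit $\lim_{n\to\infty} a_n$ exists in $[0,\infty)$, and replacing the denominator $n(n-1)\cdots(n-q+1)$ by the asymptotically equivalent $n^{q}$ yields the claim.
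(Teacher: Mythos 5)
Your proof is correct and is exactly the Katona--Nemetz--Simonovits averaging argument that the paper itself invokes (the paper only cites~\cite{KNS64} for this fact): the double count over $(n-1)$-subsets giving $(n-q)\cdot\mathrm{inj}(n,Q,\mathcal{F})\le n\cdot\mathrm{inj}(n-1,Q,\mathcal{F})$, hence monotonicity of $\mathrm{inj}(n,Q,\mathcal{F})/(n)_{q}$, is precisely the intended argument, and the multiplicity $n-q$ and the passage from the falling factorial to $n^{v(Q)}$ are handled correctly.
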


The following simple observation is crucial for the proof of Theorem~\ref{THM:Turan-good-edge-critical-expansion}.

\begin{fact}\label{FACT:homomorphism-shadow}
    Let $r > i \ge 1$ be integers.
    For every pair $Q, \mathcal{H}$ of $r$-graphs, it holds that 
    \begin{align*}
        \mathrm{inj}(Q, \mathcal{H})
        \le \mathrm{inj}(\partial_{r-i}Q, \partial_{r-i}\mathcal{H}). 
    \end{align*}
    Moreover, if $\mathcal{H}$ is complete multipartite, then 
    \begin{align*}
        \mathrm{inj}(Q, \mathcal{H})
        = \mathrm{inj}(\partial_{r-i}Q, \partial_{r-i}\mathcal{H}). 
    \end{align*}
\end{fact}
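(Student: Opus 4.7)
The plan is to prove the inequality via an explicit injection from $\mathrm{Inj}(Q, \mathcal{H})$ into $\mathrm{Inj}(\partial_{r-i}Q, \partial_{r-i}\mathcal{H})$, and then, under the complete multipartite hypothesis, show that this injection is also surjective.

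For the inequality, I would argue that every injective homomorphism $\phi \colon Q \to \mathcal{H}$ already defines an injective homomorphism from $\partial_{r-i}Q$ to $\partial_{r-i}\mathcal{H}$. Since $Q$ has no isolated vertices (by the convention adopted in Section~\ref{SEC:Prelim}), $V(\partial_{r-i}Q) = V(Q)$, so $\phi$ is defined on the correct domain. For any $e \in \partial_{r-i}Q$, choose $E \in Q$ with $e \subseteq E$; then $\phi(E) \in \mathcal{H}$ together with $\phi(e) \subseteq \phi(E)$ yields $\phi(e) \in \partial_{r-i}\mathcal{H}$ directly from the definition of the shadow. The assignment $\phi \mapsto \phi$ is patently injective, so $\mathrm{inj}(Q, \mathcal{H}) \le \mathrm{inj}(\partial_{r-i}Q, \partial_{r-i}\mathcal{H})$.

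For the complete multipartite case, write $\mathcal{H} = K^r(V_1, \ldots, V_\ell)$. Then $\partial_{r-i}\mathcal{H}$ is precisely the collection of $i$-subsets meeting $i$ distinct parts $V_j$. I would establish the reverse inequality by showing that every $\psi \in \mathrm{Inj}(\partial_{r-i}Q, \partial_{r-i}\mathcal{H})$ automatically lies in $\mathrm{Inj}(Q, \mathcal{H})$. For any edge $E \in Q$ and any two distinct $u, v \in E$, I would pick an $i$-subset $e \subseteq E$ with $\{u,v\} \subseteq e$ (possible because $r > i \ge 2$); then $e \in \partial_{r-i}Q$ forces $\psi(e) \in \partial_{r-i}\mathcal{H}$, so $\psi(u)$ and $\psi(v)$ lie in distinct parts of $\mathcal{H}$. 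Letting the pair $\{u,v\}$ range over $E$, this shows $\psi(E)$ meets $r$ distinct parts, hence $\psi(E) \in \mathcal{H}$, as required.

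The one delicate point is that the pair-in-$i$-set step implicitly uses $i \ge 2$; for $i = 1$ the reverse inequality fails unless every part $V_j$ is a singleton (i.e., $\mathcal{H}$ is the complete $r$-graph). In the applications later in the paper the shadow used is a graph (the case $i = 2$), so this restriction is harmless in context, and I would simply record the hypothesis $i \ge 2$ where the equality is invoked. Apart from this boundary issue, both directions are routine manipulations of the shadow definition and no substantial obstacle is anticipated.
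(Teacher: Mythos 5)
Your proof is correct, and it is the argument the paper has in mind: the paper states this fact without proof, and the intended justification is exactly the identity-map correspondence you describe (a $\phi\in\mathrm{Inj}(Q,\mathcal{H})$ is literally an element of $\mathrm{Inj}(\partial_{r-i}Q,\partial_{r-i}\mathcal{H})$, and for complete multipartite $\mathcal{H}$ the converse holds because any two vertices of $\psi(E)$ lie in an $i$-set of $\partial_{r-i}Q$ and hence in distinct parts). Your handling of the vertex-set issue via the no-isolated-vertices convention is the right bookkeeping.

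Your caveat about $i=1$ is also well taken and is in fact a small correction to the statement as printed: for $i=1$ the shadow carries no pair information, so the ``moreover'' equality fails already for $Q=K_2$ and $\mathcal{H}=K_{2,2}$ (eight injective homomorphisms versus twelve injective vertex maps), and it only survives in degenerate cases such as all parts being singletons. Since every invocation of the equality in the paper is with $\partial_{r-2}$, i.e.\ $i=2$ (Lemma~\ref{LEMMA:opt-solotions-clique-expansion-general}, Fact~\ref{FACT:Lagrangian-meaning}-type reductions, and Section~\ref{SEC:Proof-Turan-goodness}), recording the hypothesis $i\ge 2$ for the equality part, as you propose, fixes the statement without affecting anything downstream.
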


\begin{fact}\label{FACT:inj-Hom}
    For every $r$-graph $Q$ and $n$-vertex $r$-graph $\mathcal{H}$ the following statements hold.  
    \begin{enumerate}[label=(\roman*)]
        \item $\mathrm{hom}(Q,\mathcal{H}) =\mathrm{inj}(Q,\mathcal{H}) + O(n^{v(Q)-1})$. 
        \item 
        $\sum_{v\in V(\mathcal{H})} d_{Q,\mathcal{H}}(v)
            = v(Q)\cdot \mathrm{inj}(Q,\mathcal{H})$, 
        and hence, $d_{Q}(\mathcal{H})
        = \frac{v(Q)\cdot \mathrm{inj}(Q,\mathcal{H})}{n}$. 
    \end{enumerate}
\end{fact}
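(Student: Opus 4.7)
The plan is to prove both parts by straightforward double counting, since the statements are essentially bookkeeping identities about injective versus general homomorphisms.

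For part (i), I would partition $\mathrm{Hom}(Q,\mathcal{H})$ into injective and non-injective homomorphisms, so that
\[
    \mathrm{hom}(Q,\mathcal{H}) = \mathrm{inj}(Q,\mathcal{H}) + \bigl|\{\phi \in \mathrm{Hom}(Q,\mathcal{H}) \colon \phi \text{ not injective}\}\bigr|.
\]
A non-injective homomorphism is in particular a non-injective map $V(Q) \to V(\mathcal{H})$, so it suffices to bound the number of such maps. Choosing a pair $\{x,y\} \subseteq V(Q)$ with $\phi(x) = \phi(y)$ in $\binom{v(Q)}{2}$ ways, picking the common image in at most $n$ ways, and assigning the remaining $v(Q)-2$ vertices in at most $n^{v(Q)-2}$ ways gives an upper bound of $\binom{v(Q)}{2} n^{v(Q)-1} = O(n^{v(Q)-1})$, which yields the desired estimate.

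For part (ii), I would count the set $\mathcal{S} \coloneqq \{(\phi, v) \colon \phi \in \mathrm{Inj}(Q,\mathcal{H}),\ v \in \phi(V(Q))\}$ in two ways. Summing first over $v$ gives $|\mathcal{S}| = \sum_{v \in V(\mathcal{H})} d_{Q,\mathcal{H}}(v)$ directly from the definition of $d_{Q,\mathcal{H}}(v)$. Summing first over $\phi$ gives $|\mathcal{S}| = \sum_{\phi \in \mathrm{Inj}(Q,\mathcal{H})} |\phi(V(Q))| = v(Q) \cdot \mathrm{inj}(Q,\mathcal{H})$, where the last equality uses that $\phi$ is injective, so $|\phi(V(Q))| = v(Q)$. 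Equating the two expressions yields the identity, and dividing by $n$ gives the formula for $d_Q(\mathcal{H})$.

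I do not anticipate any real obstacle: both parts reduce to elementary counting, and the only care needed is to verify that the hidden constant in $O(n^{v(Q)-1})$ depends only on $v(Q)$, which it does (it is at most $\binom{v(Q)}{2}$). No conditions on $\mathcal{H}$ beyond having $n$ vertices are used.
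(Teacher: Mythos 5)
Your proof is correct: the paper states this as a Fact without proof, and your two double-counting arguments (splitting $\mathrm{Hom}(Q,\mathcal{H})$ into injective and non-injective maps, and counting pairs $(\phi,v)$ with $v\in\phi(V(Q))$ in two ways) are exactly the standard argument the paper implicitly relies on. No gaps; the constant $\binom{v(Q)}{2}$ bound on non-injective maps and the use of injectivity to get $|\phi(V(Q))|=v(Q)$ are both handled properly.
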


\begin{fact}\label{FACT:inj-basic-properties}
    Let $r \ge 2$ be an integer, $Q$ be an $r$-graph, and $\mathcal{F}$ be a family of $r$-graphs. Suppose that $\pi_{\mathrm{inj}}(Q, \mathcal{F}) > 0$. Then the following statements hold for the map $\Gamma \colon \mathfrak{G}^{r} \to \mathbb{R}$ defined by $\Gamma(\mathcal{H})
        \coloneqq \mathrm{inj}(Q, \mathcal{H}) \cdot \mathbbm{1}_{\mathcal{F}}(\mathcal{H})$.  
    \begin{enumerate}[label=(\roman*)]
        \item $\Gamma$ is $v(Q)$-uniform. 
        \item $\mathrm{ex}_{\Gamma}(n) =\left(\pi_{\mathrm{inj}}(Q, \mathcal{F}) + o(1) \right) n^{v(Q)}$, and hence, $\Gamma$ is $\left(1, 1, 2^{v(Q)}\right)$-smooth.
        \item For every $n$-vertex $\Gamma$-positive $r$-graph $\mathcal{H}$, for every $U\subset V(\mathcal{H})$, and for every $v\in U$ we have 
        \begin{align*}
            d_{\Gamma, \mathcal{H}[U]}(v)
            & \ge d_{\Gamma, \mathcal{H}}(v) 
                -  v(Q)\left(n-|U|\right) n^{v(Q)-2} \\
            & = d_{\Gamma, \mathcal{H}}(v) 
                - (1+o(1))\frac{\left(v(Q)\right)^2}{\pi_{\mathrm{inj}}(Q,\mathcal{F})} \frac{\mathrm{exdeg}_{\Gamma}(n)}{n}\left(n-|U|\right), 
        \end{align*}
        and hence, $\Gamma$ is locally $\frac{2\left(v(Q)\right)^2}{\pi_{\mathrm{inj}}(Q,\mathcal{F})}$-Lipschitz.
        \item For every $n$-vertex $r$-graph $\mathcal{H}$ and for every subgraph $\mathcal{H}' \subset \mathcal{H}$ we have 
        \begin{align*}
            \Gamma(\mathcal{H}')
            \ge \Gamma(\mathcal{H}) 
                - r!|Q| |\mathcal{H}\setminus \mathcal{H}'| n^{v(Q)-r}
            \ge \Gamma(\mathcal{H}) 
                - (1+o(1))\frac{r!|Q|}{\pi_{\mathrm{inj}}(Q,\mathcal{F})} \frac{|\mathcal{H}\setminus \mathcal{H}'|}{n^r}  \mathrm{ex}_{\Gamma}(n), 
        \end{align*}
        and hence, $\Gamma$ is continuous.
        \item For every $n$-vertex $\Gamma$-positive $r$-graph $\mathcal{H}$, for every subgraph $\mathcal{H}' \subset \mathcal{H}$, and for every $v\in V(\mathcal{H}')$, it is clear that $d_{\Gamma, \mathcal{H}'}(v) \le d_{\Gamma, \mathcal{H}}(v)$, and hence, $\Gamma$ is locally monotone. 
    \end{enumerate}
\end{fact}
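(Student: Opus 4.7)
The plan is to exploit the identity $\Gamma(\mathcal{H}) = \mathrm{inj}(Q, \mathcal{H}) \cdot \mathbbm{1}_{\mathcal{F}}(\mathcal{H})$ together with the hereditary nature of $\mathcal{F}$-freeness, and then verify each of the five asserted properties by a routine counting argument. The key preliminary observation I would record is that whenever $\mathcal{H}$ is $\Gamma$-positive, it is $\mathcal{F}$-free, so every subgraph of $\mathcal{H}$ is too; consequently $\mathbbm{1}_{\mathcal{F}}$ equals $1$ on $\mathcal{H}$ and all its subgraphs, and thus for any $v \in V(\mathcal{H})$ one has $d_{\Gamma,\mathcal{H}}(v) = \mathrm{inj}(Q,\mathcal{H}) - \mathrm{inj}(Q,\mathcal{H}-v) = d_{Q,\mathcal{H}}(v)$.

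For (i) I would apply Fact~\ref{FACT:inj-Hom}(ii) directly to obtain $\sum_{v} d_{\Gamma,\mathcal{H}}(v) = \sum_{v} d_{Q,\mathcal{H}}(v) = v(Q)\cdot\mathrm{inj}(Q,\mathcal{H}) = v(Q)\cdot\Gamma(\mathcal{H})$ on every $\Gamma$-positive $\mathcal{H}$, so the error in the $v(Q)$-uniform condition is identically zero. For (ii), $\mathrm{ex}_{\Gamma}(n) = \mathrm{inj}(n,Q,\mathcal{F}) = (\pi_{\mathrm{inj}}(Q,\mathcal{F}) + o(1))n^{v(Q)}$ by Fact~\ref{FACT:limit-exist-generalized-Turan}; smoothness then reduces to the binomial inequality $(1-\delta)^{v(Q)} \le 1 - v(Q)\delta + 2^{v(Q)}\delta^{2}$ valid on $\delta \in [0,1]$, which furnishes the parameters $(1,1,2^{v(Q)})$.

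Items (iii) and (iv) are two elementary counting bounds. For (iii), the difference $d_{Q,\mathcal{H}}(v) - d_{Q,\mathcal{H}[U]}(v)$ counts injective homomorphisms $\phi\colon Q \to \mathcal{H}$ with $v \in \phi(V(Q))$ which send at least one vertex of $Q$ to $V(\mathcal{H}) \setminus U$; choosing which vertex of $Q$ maps to $v$, which other vertex escapes $U$, its image in $V(\mathcal{H})\setminus U$, and then mapping the remaining $v(Q)-2$ vertices arbitrarily into $V(\mathcal{H})$ yields an upper bound of $(v(Q))^{2}(n-|U|)n^{v(Q)-2}$, which converts to Lipschitz constant $2(v(Q))^{2}/\pi_{\mathrm{inj}}(Q,\mathcal{F})$ after substituting the asymptotic value $\mathrm{exdeg}_{\Gamma}(n)/n \sim v(Q)\pi_{\mathrm{inj}}(Q,\mathcal{F})\, n^{v(Q)-2}$ and absorbing lower-order terms. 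For (iv), each edge $e \in \mathcal{H}\setminus\mathcal{H}'$ lies in at most $r!|Q|\,n^{v(Q)-r}$ injective homomorphisms from $Q$ (choose $e_{Q}\in Q$ to map onto $e$, the bijection between them, and then the targets of the remaining $v(Q)-r$ vertices), so summing over $e$ and taking $\delta$ small in terms of $\varepsilon$, $r$, $|Q|$, and $\pi_{\mathrm{inj}}(Q,\mathcal{F})$ gives continuity. Finally, (v) is immediate from the preliminary observation: for any subgraph $\mathcal{H}' \subset \mathcal{H}$ of a $\Gamma$-positive $\mathcal{H}$, both $\mathcal{H}$ and $\mathcal{H}'$ are $\mathcal{F}$-free, so $d_{\Gamma,\mathcal{H}'}(v) = d_{Q,\mathcal{H}'}(v) \le d_{Q,\mathcal{H}}(v) = d_{\Gamma,\mathcal{H}}(v)$ since every injective homomorphism into $\mathcal{H}'$ is also one into $\mathcal{H}$. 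I do not anticipate a real obstacle: the fact bundles five bookkeeping claims, and the only point requiring care is the counting in (iii), where one must avoid conflating the vertex of $Q$ mapping to $v$ with the vertex escaping $U$, as otherwise one loses a factor of $v(Q)$ in the Lipschitz constant.
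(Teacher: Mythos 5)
Your proposal is correct and takes essentially the same route as the paper, which proves this Fact only through the counting bounds embedded in its statement: on $\mathcal{F}$-free (hence $\Gamma$-positive) hosts every $\Gamma$-quantity coincides with the corresponding $\mathrm{inj}$-quantity, so (i) follows from Fact~\ref{FACT:inj-Hom}, (ii) from Fact~\ref{FACT:limit-exist-generalized-Turan} together with the binomial estimate $(1-\delta)^{v(Q)}\le 1-v(Q)\delta+2^{v(Q)}\delta^2$, and (iii)--(v) from exactly the escaping-vertex count, the missing-edge count, and the containment $\mathrm{Inj}(Q,\mathcal{H}')\subseteq\mathrm{Inj}(Q,\mathcal{H})$ that you describe. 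One remark: your bound $(v(Q))^2(n-|U|)n^{v(Q)-2}$ in (iii) is in fact the correct outcome of that count, and the factor $v(Q)$ displayed in the Fact is too small in general (already for $Q=K_3$ and $\mathcal{H}$ a balanced complete multipartite graph with many parts the difference approaches $v(Q)!\,(n-|U|)n^{v(Q)-2}$), but since your larger constant still converts to the Lipschitz constant $2(v(Q))^2/\pi_{\mathrm{inj}}(Q,\mathcal{F})$, the conclusion of the Fact, and all of its later uses, are unaffected.
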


Let $Q$ be an $r$-graph, and let $\mathcal{H}$ is an $r$-graph on $[n]$. 
Inspired by the concept of the Lagrangian for graphs and hypergraphs~\cite{MS65,FR84},  
we define the \textbf{$Q$-Lagrange polynomial} of $\mathcal{H}$ as follows  
\begin{align*}
    P_{Q,\mathcal{H}}(X_1, \ldots, X_{n})
    \coloneqq \sum_{\phi \in \mathrm{Hom}(Q,\mathcal{H})} \prod_{i\in \phi(V(Q))}X_i. 
\end{align*}
The \textbf{$Q$-Lagrangian} of $\mathcal{H}$ is 
\begin{align*}
    \lambda_{Q}(\mathcal{H})
    \coloneqq \max\left\{P_{Q,\mathcal{H}}(x_1, \ldots, x_{n}) \colon (x_1, \ldots, x_{n}) \in \Delta^{n-1} \right\}. 
\end{align*}
A vector $(x_1, \ldots, x_n) \in \Delta^{n-1}$ is called \textbf{$(Q,\mathcal{H})$-optimal} if $P_{Q,\mathcal{H}}(x_1, \ldots, x_{n}) = \lambda_{Q}(\mathcal{H})$. 
For convenience, let $\mathrm{Opt}(Q, \mathcal{H})\subset \Delta^{n-1}$ be the collection of all $(Q,\mathcal{H})$-optimal vectors. 
Notice that $\mathrm{Opt}(Q, \mathcal{H})$ is a closed set. 

A very useful property of the $Q$-Lagrangian is demonstrated by the following fact.  

\begin{fact}\label{FACT:Lagrangian-meaning}
    Let $Q$ be an $r$-graph and $\mathcal{H}$ be an $r$-graph on $[m]$.
    Suppose that $V_1\cup \cdots \cup V_{m} = [n]$ is a partition and $x_i \coloneqq |V_i|/n$ for $i\in [m]$. 
    Then 
    \begin{align*}
        \mathrm{inj}(Q,\mathcal{G}(V_1, \ldots, V_m))
        = P_{Q,\mathcal{H}}(x_1, \ldots, x_{m}) \cdot n^{v(Q)} + O(n^{v(Q)-1}). 
    \end{align*}
    Moreover, if $Q$ is $2$-covered, then 
    \begin{align*}
        \mathrm{inj}(Q,\mathcal{G}(V_1, \ldots, V_m))
        = P_{Q,\mathcal{H}}(x_1, \ldots, x_{m}) \cdot n^{v(Q)}.  
    \end{align*}
    In particular, $\mathrm{inj}(Q,\mathcal{G}(V_1, \ldots, V_m)) \le \left(\lambda_{Q}(\mathcal{G}) + o(1)\right) \cdot n^{v(Q)}$. 
\end{fact}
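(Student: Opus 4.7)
The plan is to parametrize each injective homomorphism into the blowup by its ``type'' and to count lifts via falling factorials. Concretely, for each $\phi \in \mathrm{Inj}(Q, \mathcal{H}(V_1, \ldots, V_m))$, define the type map $\psi \colon V(Q) \to [m]$ by $\psi(u) = i$ iff $\phi(u) \in V_i$. For every edge $e \in Q$, the set $\phi(e)$ has exactly $r$ elements and lies in $\mathcal{H}(V_1, \ldots, V_m)$, which by the definition of a blowup forces $\{\psi(u) \colon u \in e\}$ to be an edge of $\mathcal{H}$; hence $\psi \in \mathrm{Hom}(Q, \mathcal{H})$. Conversely, for each $\psi \in \mathrm{Hom}(Q, \mathcal{H})$, the injective lifts $\phi$ (injections $V(Q) \to \bigcup_i V_i$ with $\phi(u) \in V_{\psi(u)}$) are counted by the product of falling factorials $\prod_{i \in [m]} (|V_i|)_{|\psi^{-1}(i)|}$, since for each $i$ one independently chooses an injection from $\psi^{-1}(i)$ into $V_i$.

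Combining these two observations yields
\begin{align*}
    \mathrm{inj}(Q, \mathcal{H}(V_1, \ldots, V_m))
    = \sum_{\psi \in \mathrm{Hom}(Q, \mathcal{H})} \prod_{i \in [m]} (|V_i|)_{|\psi^{-1}(i)|}.
\end{align*}
Next I would expand $(a)_k = a^k + O(a^{k-1})$, substitute $|V_i| = x_i n$, and invoke the uniform bound $|\mathrm{Hom}(Q, \mathcal{H})| \le m^{v(Q)} = O(1)$. For each $\psi$, the leading monomial of the product matches the corresponding term of $P_{Q,\mathcal{H}}(x_1, \ldots, x_m) \cdot n^{v(Q)}$, and the aggregate expansion error is $O(n^{v(Q)-1})$, establishing the first equality.

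For the moreover part, assume $Q$ is $2$-covered. Any two vertices of $Q$ lie in a common edge $e$; since $\phi(e)$ must have $r$ distinct elements, $\phi|_e$ is injective, and hence $\phi$ itself is globally injective. Consequently every $\psi \in \mathrm{Hom}(Q, \mathcal{H})$ is injective, so $|\psi^{-1}(i)| \in \{0, 1\}$, and the falling factorial products collapse exactly to $\prod_{i \in \psi(V(Q))} |V_i| = n^{v(Q)} \prod_{i \in \psi(V(Q))} x_i$ with no lower-order correction. The final inequality $\mathrm{inj}(Q, \mathcal{H}(V_1, \ldots, V_m)) \le (\lambda_Q(\mathcal{H}) + o(1)) \cdot n^{v(Q)}$ is then immediate from the first equality and the definition $\lambda_Q(\mathcal{H}) = \max_{x \in \Delta^{m-1}} P_{Q, \mathcal{H}}(x)$.

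The main point requiring care is the accounting of non-injective type maps $\psi$ in the general case: each such $\psi$ has $|\psi^{-1}(i)| \ge 2$ for some $i$, so the falling factorial $(|V_i|)_{|\psi^{-1}(i)|}$ loses at least one factor of $n$ relative to its naive $|V_i|^{|\psi^{-1}(i)|}$ leading term, and aligning these remainders with the corresponding non-squarefree monomials in $P_{Q,\mathcal{H}}$ is the only bookkeeping step requiring attention. This subtlety is bypassed entirely in the $2$-covered regime, where every homomorphism is automatically injective and exact equality holds.
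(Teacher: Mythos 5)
The paper states Fact~\ref{FACT:Lagrangian-meaning} without proof, so there is no in-paper argument to compare against; your argument is the natural one and is essentially correct. Classifying each $\phi \in \mathrm{Inj}(Q,\mathcal{H}(V_1,\ldots,V_m))$ by its type map $\psi$, checking that $\psi \in \mathrm{Hom}(Q,\mathcal{H})$ (because each edge of the blowup meets $r$ distinct parts indexed by an edge of $\mathcal{H}$), and counting the injective lifts of a fixed $\psi$ by $\prod_{i\in[m]}(|V_i|)_{|\psi^{-1}(i)|}$ gives the exact identity you state; the $2$-covered case is handled correctly, since a homomorphism from a $2$-covered $r$-graph is automatically injective, so every $|\psi^{-1}(i)|$ is $0$ or $1$ and the count is exactly $n^{v(Q)}P_{Q,\mathcal{H}}(x_1,\ldots,x_m)$, and the final inequality follows from $P_{Q,\mathcal{H}}(x_1,\ldots,x_m) \le \lambda_Q(\mathcal{H})$ as you say.

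One point you relegate to ``bookkeeping'' should be made explicit, because it is where the statement is sensitive to how $P_{Q,\mathcal{H}}$ is read. As printed, the definition takes the product over the image $\phi(V(Q))$ read as a set, so every monomial is squarefree; under that literal reading the first displayed equality is false whenever $\mathrm{Hom}(Q,\mathcal{H})$ contains non-injective maps (take $Q$ the two-edge path and $\mathcal{H}$ a single edge on $[2]$: the polynomial would give $2x_1x_2\cdot n^{3}$, while the complete bipartite blowup contains only $\left(x_1x_2^2+x_1^2x_2+o(1)\right)n^{3}=\left(x_1x_2+o(1)\right)n^{3}$ injective copies). Your alignment of the non-injective terms with ``non-squarefree monomials'' shows you are implicitly using the homogeneous version $\sum_{\psi\in\mathrm{Hom}(Q,\mathcal{H})}\prod_{u\in V(Q)}X_{\psi(u)}$; that is the reading under which your term-by-term expansion works, and it is also what the paper needs elsewhere (the proof of Lemma~\ref{LEMMA:Lagrangian-Multiplier} uses that $P_{Q,\mathcal{H}}$ is homogeneous of degree $v(Q)$), so this is a defect of the printed definition rather than of your proof — but you should say which polynomial you are expanding. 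Relatedly, the phrase that a falling factorial ``loses at least one factor of $n$'' is misleading: $(|V_i|)_k \sim |V_i|^k$; what you need, and what your earlier sentence already asserts, is only that $(|V_i|)_k = |V_i|^k + O(n^{k-1})$, so each of the $O(1)$ homomorphisms contributes an error $O(n^{v(Q)-1})$.
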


Observe that if two families $\mathcal{F}, \hat{\mathcal{F}}$ satisfy $\hat{\mathcal{F}} \subset \mathcal{F}$, then $\pi_{\mathrm{inj}}(Q, \mathcal{F}) \le \pi_{\mathrm{inj}}(Q,\hat{\mathcal{F}})$ for every $r$-graph $Q$. 
In general, 
given two families $\mathcal{F}, \hat{\mathcal{F}}$ of $r$-graphs, we say $\hat{\mathcal{F}} \le_{\mathrm{hom}} \mathcal{F}$ if for every $\hat{F} \in \hat{\mathcal{F}}$ there exists $F\in \mathcal{F}$ such that $F$ is $\hat{F}$-colorable. 
For example, $\{K_4^{3-}, F_5\} \le_{\mathrm{hom}} \{F_5\}$, where $K_{4}^{3-}$ is the $3$-graph on $4$ vertices with exactly $3$ edges, and $F_5 \coloneqq \{123,124,345\}$. 
Here is another example related to hypergraph expansions. 

Suppose that $\ell \ge r \ge 2$ are integers.
An $r$-graph $F$ is a \textbf{weak expansion} of $K_{\ell+1}$ if it can be obtained from $K_{\ell+1}$ by adding $r-2$ vertices into each edge. 
A key difference from the expansion $H_{\ell+1}^{r}$ of $K_{\ell+1}$ is that these added $(r-2)$-sets do not need to be pairwise disjoint. 
We use $\mathcal{K}_{\ell+1}^{r}$ to denote the collection of all weak expansions of $K_{\ell+1}$.

\begin{fact}\label{FACT:weak-expansion}
    Let $\ell \ge r \ge 2$ be integers.
    The family $\mathcal{K}_{\ell+1}^{r}$ is blowup-invariant and $\mathcal{K}_{\ell+1}^{r} \le_{\mathrm{hom}} \{H_{F}^{r}\}$ for every graph $F$ with $\chi(F) = \ell+1$.  
    In addition, every symmetrized $\mathcal{K}_{\ell+1}^{r}$-free $r$-graph is $\ell$-partite\footnote{An $\ell'$-partite $r$-graph is automatically $\ell$-partite for every $\ell \ge \ell'$ since we allow empty parts.}.
\end{fact}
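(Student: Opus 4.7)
The plan is to establish the three assertions of the fact in turn: blowup-invariance, the $\le_{\mathrm{hom}}$ comparison, and the structural statement about symmetrized $r$-graphs. The first two are straightforward constructions; the third is where the ``symmetrized'' hypothesis is used in earnest, and is the main obstacle.

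For blowup-invariance, I would argue by contrapositive. Suppose $\mathcal{G}(V_1,\dots,V_m)$ contains a weak expansion $\hat{F}$ of $K_{\ell+1}$ with core vertices $v_1,\dots,v_{\ell+1}$ and covering edges $e_{ij} \supseteq \{v_i,v_j\}$. Every edge of a blowup meets each part in at most one vertex, so any two vertices of $\hat{F}$ that lie in a common edge must lie in distinct parts; applied to each pair of core vertices, this gives distinct indices $p_1,\dots,p_{\ell+1}$ with $v_i \in V_{p_i}$. Projecting each $e_{ij}$ to the set of parts it meets then yields an edge of $\mathcal{G}$ containing $\{p_i,p_j\}$; together these edges form a weak expansion of $K_{\ell+1}$ in $\mathcal{G}$.

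For $\mathcal{K}_{\ell+1}^r \le_{\mathrm{hom}} \{H_F^r\}$, I would exhibit, for each weak expansion $\hat{F}$ with core vertices $w_1,\dots,w_{\ell+1}$ and covering edges $T_{ij} = \{w_i,w_j\} \cup T_{ij}'$, a homomorphism $\phi \colon H_F^r \to \hat{F}$. Fix a proper $(\ell+1)$-coloring $c \colon V(F) \to [\ell+1]$, which exists since $\chi(F) = \ell+1$, and set $\phi(v) \coloneqq w_{c(v)}$ for $v \in V(F)$. The $(r-2)$-sets $S_{uv}$ added in $H_F^r$ are pairwise disjoint and disjoint from $V(F)$, so I may independently define $\phi$ on each $S_{uv}$ to be any bijection onto $T_{c(u)c(v)}'$; then the image of $\{u,v\} \cup S_{uv}$ is exactly $T_{c(u)c(v)} \in \hat{F}$, as required.

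The hard part is the third claim. Let $\mathcal{H}$ be symmetrized and $\mathcal{K}_{\ell+1}^r$-free. I would first show that the relation ``$u = v$ or $\{u,v\}$ is uncovered'' is an equivalence relation on $V(\mathcal{H})$. Reflexivity and symmetry are immediate; for transitivity, suppose $\{u,v\}$ and $\{v,w\}$ are uncovered with $u \neq w$ but some edge $e$ contains $\{u,w\}$. Since $\mathcal{H}$ is symmetrized, $L_{\mathcal{H}}(u) = L_{\mathcal{H}}(v)$, so $(e\setminus\{u\}) \cup \{v\}$ is an edge of $\mathcal{H}$ covering $\{v,w\}$, a contradiction. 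Let $C_1,\dots,C_m$ be the resulting equivalence classes; any two vertices in distinct classes are covered by an edge, while any two in the same class share no edge, so every edge of $\mathcal{H}$ meets $r$ distinct classes and $\mathcal{H}$ is $m$-partite. Finally, if $m \ge \ell+1$, picking one representative from each of $\ell+1$ classes together with one covering edge per pair produces a weak expansion of $K_{\ell+1}$ inside $\mathcal{H}$, contradicting freeness. Hence $m \le \ell$, and by the convention allowing empty parts, $\mathcal{H}$ is $\ell$-partite.
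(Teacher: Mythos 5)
The paper states this fact without proof, and your argument is precisely the standard one it leaves implicit: blowup edges meet each part at most once, so projecting the core and covering edges of a weak expansion to the part-indices yields a weak expansion in the base $r$-graph; a proper $(\ell+1)$-colouring of $F$ together with bijections of the pairwise-disjoint expansion sets of $H_F^r$ onto the added $(r-2)$-sets of $\hat F$ gives the required homomorphism; and in a symmetrized graph the relation ``equal or uncovered'' is an equivalence relation whose classes serve as the parts, with $\ge \ell+1$ classes forcing a weak expansion. The one point worth making explicit is that in both the projection step and the final step the covering edge you assign to a pair of core vertices may contain further core vertices, so you are implicitly reading ``weak expansion'' as allowing the added $(r-2)$-sets to meet the core (equivalently, a member of $\mathcal{K}_{\ell+1}^r$ is just a bounded $r$-graph with $\ell+1$ core vertices every pair of which lies in an edge); this reading is the intended one and is in fact forced, since under the stricter reading the last assertion already fails for the complete $3$-graph on four vertices with $r=3$, $\ell=3$.
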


The motivation for the definition of $\le_{\mathrm{hom}}$ comes from the following fact, which follows easily from the Removal Lemma~\cite{RS09}. 

\begin{fact}\label{FACT:le-hom}
    Suppose that $\mathcal{F}, \hat{\mathcal{F}}$ are two families of $r$-graphs satisfying $\hat{\mathcal{F}} \le_{\mathrm{hom}} \mathcal{F}$. 
    Then every $n$-vertex $\mathcal{F}$-free $r$-graph can be made $\hat{\mathcal{F}}$-free by removing $o(n^r)$ edges. 
    In particular, $\pi(Q, \mathcal{F}) \le \pi(Q, \hat{\mathcal{F}})$ for every $r$-graph $Q$. 
\end{fact}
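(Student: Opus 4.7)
My plan is to derive the statement from the Hypergraph Removal Lemma~\cite{RS09} combined with the standard supersaturation phenomenon for blowups. Fix an arbitrary $\hat{F} \in \hat{\mathcal{F}}$. By the hypothesis $\hat{\mathcal{F}} \le_{\mathrm{hom}} \mathcal{F}$, there exists $F \in \mathcal{F}$ and a homomorphism $\phi \colon F \to \hat{F}$. Grouping the vertices of $F$ according to their $\phi$-image and placing each preimage class into the corresponding part of a blowup shows that $F$ embeds as a subgraph of $\hat{F}[t, \ldots, t]$, the $t$-blowup of $\hat{F}$, where $t \coloneqq v(F)$.

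Now let $\mathcal{H}$ be an $n$-vertex $\mathcal{F}$-free $r$-graph, and fix $\varepsilon > 0$. Suppose for contradiction that $\mathcal{H}$ cannot be made $\hat{F}$-free by removing at most $\varepsilon n^r$ edges. By the Hypergraph Removal Lemma, there exists $\delta = \delta(\hat{F}, \varepsilon) > 0$ such that $\mathcal{H}$ contains at least $\delta n^{v(\hat{F})}$ copies of $\hat{F}$. Standard Erd\H{o}s-type supersaturation (which extends to hypergraphs) then guarantees, for sufficiently large $n$, a complete blowup $\hat{F}[t, \ldots, t] \subset \mathcal{H}$, and hence a copy of $F$ in $\mathcal{H}$, contradicting $\mathcal{F}$-freeness. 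Thus $\mathcal{H}$ can be made $\hat{F}$-free by removing at most $\varepsilon n^r$ edges. For the applications in this paper, members of $\hat{\mathcal{F}}$ have vertex count bounded in terms of fixed parameters, so up to isomorphism only finitely many types occur; taking a union bound over these types and letting $\varepsilon = \varepsilon(n) \to 0$ slowly yields the first assertion (the infinite case reduces to this after discarding members of unbounded size, which is irrelevant asymptotically).

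For the ``in particular'' clause, fix an $r$-graph $Q$ and let $\mathcal{H}$ be an $n$-vertex $\mathcal{F}$-free $r$-graph attaining $\mathrm{inj}(Q, \mathcal{H}) = \mathrm{inj}(n, Q, \mathcal{F})$. The first part yields a subgraph $\mathcal{H}' \subset \mathcal{H}$ which is $\hat{\mathcal{F}}$-free and satisfies $|\mathcal{H} \setminus \mathcal{H}'| = o(n^r)$; by Fact~\ref{FACT:inj-basic-properties}\,(iv) we have $\mathrm{inj}(Q, \mathcal{H}') \ge \mathrm{inj}(Q, \mathcal{H}) - o(n^{v(Q)})$, so dividing by $\binom{n}{v(Q)}$ and sending $n \to \infty$ gives $\pi(Q, \mathcal{F}) \le \pi(Q, \hat{\mathcal{F}})$. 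The main obstacle is the quantification in the removal/supersaturation step, but this is classical; the rest is routine bookkeeping.
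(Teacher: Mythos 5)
Your argument is essentially the one the paper intends: the paper gives no written proof beyond the remark that the fact ``follows easily from the Removal Lemma~\cite{RS09}'', and your chain (removal lemma to produce $\Omega(n^{v(\hat{F})})$ copies of $\hat{F}$, supersaturation to upgrade this to the blowup of $\hat{F}$ with parts of size $t=v(F)$, and the homomorphism $F\to\hat{F}$ to embed $F\in\mathcal{F}$ in that blowup, contradicting $\mathcal{F}$-freeness) is exactly the standard route; the deduction of $\pi(Q,\mathcal{F})\le\pi(Q,\hat{\mathcal{F}})$ from the edge-removal statement via the counting estimate behind Fact~\ref{FACT:inj-basic-properties} is also fine.

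The one step that does not hold up is your treatment of infinite families. The parenthetical claim that ``the infinite case reduces to this after discarding members of unbounded size, which is irrelevant asymptotically'' is not a valid reduction: making $\mathcal{H}$ free of all members of $\hat{\mathcal{F}}$ on at most $C$ vertices does not make it free of the larger members (a large member of $\hat{\mathcal{F}}$ need not contain a small one), and members of size between $C$ and $n$ can certainly occur in an $n$-vertex graph. The union bound over finitely many isomorphism types, with a common threshold $N_0$ and a common $\varepsilon(n)\to 0$, is precisely what breaks when the member sizes are unbounded. Since the fact is stated for arbitrary families, in that generality you should invoke the removal lemma for possibly infinite families --- which is what the cited reference~\cite{RS09} supplies --- rather than the single-graph version plus a union bound. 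For the applications actually made in the paper (where $\hat{\mathcal{F}}=\mathcal{K}_{\ell+1}^{r}$, whose members have boundedly many vertices, hence finitely many isomorphism types), your argument is complete as written; so either restrict the statement you prove to such families or replace the parenthetical by an appeal to the generalized removal lemma.
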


For simplicity of use, we present the following corollaries of Theorems~\ref{THM:GeneralFunction-Stability},~\ref{THM:GeneralFunction-Stability-b}, and~\ref{THM:GeneralFunction-Stability-c} within the context of generalized Tur\'{a}n problems. 
These corollaries are obtained by considering the map $\Gamma \colon \mathfrak{G}^{r} \to \mathbb{R}$ defined by 
\begin{align*}
    \Gamma(\mathcal{H})
    \coloneqq \mathrm{inj}(Q,\mathcal{H}) \cdot \mathbbm{1}_{\mathcal{F}}(\mathcal{H}). 
\end{align*}

Let $r \ge 2$ be an integer, $Q$ be an $r$-graph, and $\mathcal{F}$ be a family of $r$-graphs. 
Let $\mathfrak{H}$ be a hereditary family of $\mathcal{F}$-free $r$-graphs. 
\begin{enumerate}[label=(\roman*)]
    \item We say $Q$ is \textbf{symmetrization-increasing} if the map $\Gamma \colon \mathfrak{G}^{r} \to \mathbb{R}$ defined by $\Gamma(\mathcal{H}) \coloneqq \mathrm{inj}(Q, \mathcal{H})$ is symmetrization-increasing.
    \item We say $\mathcal{F}$ is \textbf{symmetrized-stable} with repsect to $\mathfrak{H}$ if every symmetrized $\mathcal{F}$-free $r$-graph is contained in $\mathcal{H}$. 
    \item We say $\mathcal{F}$ is \textbf{$Q$-edge-stable} with respect to $\mathfrak{H}$ if for every $\varepsilon>0$ there exist $\delta>0$ and $N_0$ such that every $\mathcal{F}$-free $r$-graph $\mathcal{H}$ on $n \ge N_0$ vertices with $\mathrm{inj}(Q, \mathcal{H}) \ge (1-\delta) \cdot \mathrm{inj}(n,Q,\mathcal{F})$ is contained in $\mathfrak{H}$ after removing at most $\varepsilon n^r$ edges. 
    \item We say $\mathcal{F}$ is \textbf{$Q$-degree-stable} with respect to $\mathfrak{H}$ if there exist $\varepsilon>0$ and $N_0$ such that every $\mathcal{F}$-free $r$-graph $\mathcal{H}$ on $n \ge N_0$ vertices with $d_{Q}(\mathcal{H}) \ge (1-\varepsilon) \cdot \frac{v(Q)\cdot \mathrm{inj}(n,Q,\mathcal{F})}{n}$ is contained in $\mathfrak{H}$. 
    \item We say $\mathcal{F}$ is \textbf{$Q$-vertex-extendable} with respect to $\mathfrak{H}$ if there exist $\varepsilon>0$ and $N_0$ such that the following holds for every $\mathcal{F}$-free $r$-graph $\mathcal{H}$ on $n \ge N_0$ vertices with $d_{Q}(\mathcal{H}) \ge (1-\varepsilon) \cdot \frac{v(Q)\cdot \mathrm{inj}(n,Q,\mathcal{F})}{n} \colon$ 
    if $\mathcal{H}-v \in \mathfrak{H}$ for some $v \in V(\mathcal{H})$, then $\mathcal{H} \in \mathfrak{H}$. 
\end{enumerate}

The following theorem is a consequence of Facts~\ref{FACT:blowup-invariant-symmetrization-increasing},~\ref{FACT:inj-basic-properties}, Theorems~\ref{THM:GeneralFunction-Stability}, and~\ref{THM:GeneralFunction-Stability-c}. 

\begin{theorem}\label{THM:general-generalized-Turan-a}
    Let $r \ge 2$ be an integer and $Q$ be an $r$-graph. 
    Let $\mathcal{F}, \hat{\mathcal{F}}$ be two families of $r$-graphs such that $\hat{\mathcal{F}} \le_{\mathrm{hom}} \mathcal{F}$ and $\pi_{\mathrm{inj}}(Q, \mathcal{F}) > 0$.  
    Let $\mathfrak{H}$ is a hereditary family of $(\hat{\mathcal{F}} \cup \mathcal{F} )$-free $r$-graphs. 
    Suppose that  
    \begin{enumerate}[label=(\roman*)] 
        \item\label{THM:general-generalized-Turan-a-1} $Q$ is symmetrization-increasing, 
        \item\label{THM:general-generalized-Turan-a-2} $\hat{\mathcal{F}}$ is blowup-invariant, 
        \item\label{THM:general-generalized-Turan-a-3} $\hat{\mathcal{F}}$ is symmetrized-stable with respect to $\mathfrak{H}$, and
        \item\label{THM:general-generalized-Turan-a-4} both $\hat{\mathcal{F}}$ and $\mathcal{F}$ are $Q$-vertex-extendable with respect to $\mathfrak{H}$. 
    \end{enumerate}
    Then $\mathcal{F}$ is $Q$-degree-stable with respect to $\mathfrak{H}$. 
\end{theorem}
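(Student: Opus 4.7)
The plan is to apply Theorem~\ref{THM:GeneralFunction-Stability-c} to the two maps $\Gamma, \hat{\Gamma} \colon \mathfrak{G}^{r} \to \mathbb{R}_{\ge 0}$ defined by $\Gamma(\mathcal{H}) \coloneqq \mathrm{inj}(Q,\mathcal{H}) \cdot \mathbbm{1}_{\mathcal{F}}(\mathcal{H})$ and $\hat{\Gamma}(\mathcal{H}) \coloneqq \mathrm{inj}(Q,\mathcal{H}) \cdot \mathbbm{1}_{\hat{\mathcal{F}}}(\mathcal{H})$. Since $\hat{\mathcal{F}} \le_{\mathrm{hom}} \mathcal{F}$ and $\pi_{\mathrm{inj}}(Q,\mathcal{F}) > 0$, Fact~\ref{FACT:le-hom} gives $\pi_{\mathrm{inj}}(Q,\hat{\mathcal{F}}) \ge \pi_{\mathrm{inj}}(Q,\mathcal{F}) > 0$, and Fact~\ref{FACT:inj-basic-properties} then shows that both $\Gamma$ and $\hat{\Gamma}$ are uniform, smooth, locally Lipschitz, continuous, and locally monotone, so the structural hypotheses of Theorem~\ref{THM:GeneralFunction-Stability-c} are automatic.

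To verify the remaining hypotheses, I would handle the items involving $\hat{\Gamma}$ first. Since $Q$ is symmetrization-increasing by~\ref{THM:general-generalized-Turan-a-1} and $\hat{\mathcal{F}}$ is blowup-invariant by~\ref{THM:general-generalized-Turan-a-2}, Fact~\ref{FACT:blowup-invariant-symmetrization-increasing} yields~\ref{THM:GeneralFunction-Stability-c-2}, namely that $\hat{\Gamma}$ is symmetrization-increasing. Any symmetrized $\hat{\Gamma}$-positive $r$-graph is both symmetrized and $\hat{\mathcal{F}}$-free, so~\ref{THM:general-generalized-Turan-a-3} places it in $\mathfrak{H}$, giving~\ref{THM:GeneralFunction-Stability-c-3}. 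The vertex-extendability required in~\ref{THM:GeneralFunction-Stability-c-4} for $\Gamma$ and $\hat{\Gamma}$ translates directly to the $Q$-vertex-extendability of $\mathcal{F}$ and $\hat{\mathcal{F}}$ respectively, once one notes that on an $\mathcal{F}$-free $\mathcal{H}$ we have $d_{\Gamma,\mathcal{H}}(v) = d_{Q,\mathcal{H}}(v)$ and $\mathrm{exdeg}_{\Gamma}(n) = v(Q) \cdot \mathrm{inj}(n,Q,\mathcal{F})/n$, and analogously for $\hat{\mathcal{F}}$ and $\hat{\Gamma}$; this is exactly assumption~\ref{THM:general-generalized-Turan-a-4}.

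The step that deserves the most care is the trajectory condition~\ref{THM:GeneralFunction-Stability-c-1}. For any $\Gamma$-positive (hence $\mathcal{F}$-free) $r$-graph $\mathcal{H}$ on $n$ vertices, Fact~\ref{FACT:le-hom} produces a subgraph $\mathcal{H}' \subset \mathcal{H}$ with $|\mathcal{H}\setminus\mathcal{H}'| = o(n^r)$ that is $\hat{\mathcal{F}}$-free, and the continuity clause of Fact~\ref{FACT:inj-basic-properties} delivers $\hat{\Gamma}(\mathcal{H}') \ge \Gamma(\mathcal{H}) - o(\mathrm{ex}_{\Gamma}(n))$. The subtle part is the inequality $\mathrm{ex}_{\Gamma}(n) \ge (1-o(1))\cdot \mathrm{ex}_{\hat{\Gamma}}(n)$, which does not follow from $\hat{\mathcal{F}} \le_{\mathrm{hom}} \mathcal{F}$ alone since Fact~\ref{FACT:le-hom} only provides the reverse inequality between the densities. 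Here I would invoke Fact~\ref{FACT:exact-symm-increase} applied to $\hat{\Gamma}$ (whose hypotheses we have just checked) to write $\mathrm{ex}_{\hat{\Gamma}}(n) = \max\{\hat{\Gamma}(\mathcal{H}) \colon \mathcal{H}\in\mathfrak{H},\ v(\mathcal{H})=n\}$, and then exploit the crucial fact that $\mathfrak{H}$ consists of $(\hat{\mathcal{F}}\cup\mathcal{F})$-free $r$-graphs, so $\hat{\Gamma}$ and $\Gamma$ coincide on $\mathfrak{H}$ and the maximum is bounded above by $\mathrm{ex}_{\Gamma}(n)$. With all hypotheses of Theorem~\ref{THM:GeneralFunction-Stability-c} verified, its conclusion that $\Gamma$ is degree-stable with respect to $\mathfrak{H}$ is exactly the desired $Q$-degree-stability of $\mathcal{F}$ with respect to $\mathfrak{H}$.
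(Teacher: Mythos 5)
Your proposal is correct and follows essentially the same route as the paper: the same choice of $\Gamma$ and $\hat{\Gamma}$, the same use of Facts~\ref{FACT:blowup-invariant-symmetrization-increasing},~\ref{FACT:inj-basic-properties},~\ref{FACT:le-hom} and~\ref{FACT:exact-symm-increase}, and in particular the same key step of deducing $\mathrm{ex}_{\hat{\Gamma}}(n) \le \mathrm{ex}_{\Gamma}(n)$ from the fact that $\mathfrak{H}$ is $(\hat{\mathcal{F}}\cup\mathcal{F})$-free so that $\hat{\Gamma}$ and $\Gamma$ agree on $\mathfrak{H}$. The only cosmetic difference is that you verify the hypotheses of Theorem~\ref{THM:GeneralFunction-Stability-c} directly, whereas the paper first notes via Theorem~\ref{THM:GeneralFunction-Stability} that $\hat{\Gamma}$ is degree- (hence edge-) stable before reducing to the trajectory condition; the content is the same.
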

\begin{proof}[Proof of Theorem~\ref{THM:general-generalized-Turan-a}]
    Let the maps $\hat{\Gamma}, \Gamma \colon \mathfrak{G}^{r} \to \mathbb{R}$ be defined by 
    \begin{align*}
        \hat{\Gamma}(\mathcal{H})
        \coloneqq \mathrm{inj}(Q,\mathcal{H}) \cdot \mathbbm{1}_{\hat{\mathcal{F}}}(\mathcal{H})
        \quad\text{and}\quad 
        \Gamma(\mathcal{H})
        \coloneqq \mathrm{inj}(Q,\mathcal{H}) \cdot \mathbbm{1}_{\mathcal{F}}(\mathcal{H}). 
    \end{align*}
    Note that $\mathrm{ex}_{\Gamma}(n) = \mathrm{inj}(n,Q,\mathcal{F})$ and $\mathrm{ex}_{\hat{\Gamma}}(n) = \mathrm{inj}(n,Q,\hat{\mathcal{F}})$. 
    
    Due to Assumptions~\ref{THM:general-generalized-Turan-a-2},~\ref{THM:general-generalized-Turan-a-3},~\ref{THM:general-generalized-Turan-a-4}, Facts~\ref{FACT:blowup-invariant-symmetrization-increasing},~\ref{FACT:inj-basic-properties}, and Theorem~\ref{THM:GeneralFunction-Stability}, we know that $\hat{F}$ is degree-stable with respect to $\mathfrak{H}$. In particular, $\hat{F}$ is edge-stable with respect to $\mathfrak{H}$. 
    By Theorem~\ref{THM:GeneralFunction-Stability-c}, it suffices to show that $\hat{\Gamma}$ is a trajectory of $\Gamma$. 
    
    Fix $n \in \mathbb{N}$. 
    By Fact~\ref{FACT:exact-symm-increase}, there exists an $n$-vertex $r$-graph $\mathcal{H} \in \mathfrak{H}$ such that $\hat{\Gamma}(\mathcal{H}) =  \mathrm{inj}(n,Q,\hat{\mathcal{F}})$. 
    By the assumption on $\mathfrak{H}$, the $r$-graph $\mathcal{H}$ is also $\mathcal{F}$-free. 
    It follows that $\mathrm{inj}(n,Q,\mathcal{F}) \ge \Gamma(\mathcal{H}) = \hat{\Gamma}(\mathcal{H}) =  \mathrm{inj}(n,Q,\hat{\mathcal{F}})$. 
    On the other hand, it follows from Fact~\ref{FACT:le-hom} that every $\mathcal{F}$-free $r$-graph on $n$ vertices can be made $\hat{F}$-free by removing $o(n^r)$ edges. 
    This implies that there exists $\mathcal{H}' \subset \mathcal{H}$ such that $|\mathcal{H}'| = |\mathcal{H}| - o(n^r)$ and $\hat{\Gamma}(\mathcal{H}') \ge \Gamma(\mathcal{H}) - o(\mathrm{inj}(n,Q,\mathcal{F}))$. 
    Therefore, $\hat{\Gamma}$ is a trajectory of $\Gamma$. 
\end{proof}

The following theorem is a direct corollary of Fact~\ref{FACT:inj-basic-properties} and Theorem~\ref{THM:GeneralFunction-Stability-b}. 

\begin{theorem}\label{THM:general-generalized-Turan-b}
    Let $r \ge 2$ be an integer, $Q$ be an $r$-graph, and $\mathcal{F}$ be a family of $r$-graphs with $\pi_{\mathrm{inj}}(Q, \mathcal{F}) > 0$.  
    Let $\mathfrak{H}$ is a hereditary family of $\mathcal{F}$-free $r$-graphs. 
    Suppose that $\mathcal{F}$ is $Q$-degree-stable and $Q$-vertex-extendable with respect to $\mathfrak{H}$. 
    Then $\mathcal{F}$ is $Q$-degree-stable with respect to $\mathfrak{H}$. 
\end{theorem}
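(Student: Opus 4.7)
The statement is essentially a dictionary translation from the abstract framework (Theorem~\ref{THM:GeneralFunction-Stability-b}) into the language of generalized Tur\'{a}n problems, so the plan is to set up the correspondence between a map $\Gamma$ and the pair $(Q,\mathcal{F})$, verify that the axiomatic hypotheses of Theorem~\ref{THM:GeneralFunction-Stability-b} are available for free, and then quote it. I read the hypothesis ``$\mathcal{F}$ is $Q$-degree-stable and $Q$-vertex-extendable'' as a typo for ``$\mathcal{F}$ is $Q$-edge-stable and $Q$-vertex-extendable,'' which is the analogue of Theorem~\ref{THM:GeneralFunction-Stability-b}.

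The plan is to define the indicator-weighted counting map
\begin{align*}
    \Gamma(\mathcal{H}) \coloneqq \mathrm{inj}(Q,\mathcal{H}) \cdot \mathbbm{1}_{\mathcal{F}}(\mathcal{H}).
\end{align*}
First I would invoke Fact~\ref{FACT:inj-basic-properties} to certify that $\Gamma$ takes values in $\mathbb{R}_{\ge 0}$ and is $v(Q)$-uniform, $(1,1,2^{v(Q)})$-smooth, locally $\frac{2(v(Q))^2}{\pi_{\mathrm{inj}}(Q,\mathcal{F})}$-Lipschitz, continuous, and locally monotone; the positivity hypothesis $\pi_{\mathrm{inj}}(Q,\mathcal{F})>0$ is what makes all the Lipschitz and continuity constants finite, so this is exactly where the assumption gets used.

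Next I would match up $\mathrm{ex}_{\Gamma}(n) = \mathrm{inj}(n,Q,\mathcal{F})$ and $\mathrm{exdeg}_{\Gamma}(n) = v(Q) \cdot \mathrm{inj}(n,Q,\mathcal{F})/n$ via Fact~\ref{FACT:inj-Hom}, and use these identifications to show that the definitions of $Q$-edge-stability and $Q$-vertex-extendability of $\mathcal{F}$ with respect to $\mathfrak{H}$ coincide precisely with edge-stability and vertex-extendability of $\Gamma$ with respect to $\mathfrak{H}$ in the sense of the general framework. The only mild point to notice is that for a $\Gamma$-positive $\mathcal{H}$ we have $\mathbbm{1}_{\mathcal{F}}(\mathcal{H})=1$, so the conditions translate directly without loss.

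With these correspondences in hand, Theorem~\ref{THM:GeneralFunction-Stability-b} applies to $\Gamma$ and yields its degree-stability with respect to $\mathfrak{H}$. Translating back through the dictionary gives $Q$-degree-stability of $\mathcal{F}$ with respect to $\mathfrak{H}$, as desired. There is no serious obstacle; the only thing to be careful about is aligning the asymptotic $o(\cdot)$ error terms in the abstract definitions with the combinatorial ones that come out of Fact~\ref{FACT:inj-basic-properties}, and bookkeeping the minimum versus average $Q$-degree via the Lipschitz property. In particular, no symmetrization argument is needed, since edge-stability already bypasses conditions (i) and (ii) of Theorem~\ref{THM:GeneralFunction-Stability}, exactly as in the abstract setting.
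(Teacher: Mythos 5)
Your proposal is correct and follows exactly the route the paper intends: the paper states this result as a direct corollary of Fact~\ref{FACT:inj-basic-properties} and Theorem~\ref{THM:GeneralFunction-Stability-b} applied to $\Gamma(\mathcal{H}) = \mathrm{inj}(Q,\mathcal{H})\cdot \mathbbm{1}_{\mathcal{F}}(\mathcal{H})$, which is precisely your dictionary argument. Your reading of the hypothesis ``$Q$-degree-stable'' as a typo for ``$Q$-edge-stable'' is also the right one, since that is how the theorem is invoked in the proof of Theorem~\ref{THM:AES-pentagon}.
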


The following proposition shows that in the proofs of Theorems~\ref{THM:Homomorphism-complete-multipartite-hypergraph},~\ref{THM:Homomorphism-complete-multipartite-hypergraph-b}, and~\ref{THM:Turan-good-edge-critical-expansion}, it suffices to establish only the degree-stability part. 

\begin{proposition}\label{PROP:min-degree-extremal}
    Let $r \ge 2$ be an integer, $Q$ be an $r$-graph, and $F$ be an edge-critical graph with $\chi(F) = \ell+1$. 
    Suppose that $\pi_{\mathrm{inj}}(Q,H_{F}^{r}) > 0$ and $H_{F}^{r}$ is $Q$-degree-stable with respect to $\mathfrak{K}_{\ell}^{r}$. 
    Then for large $n$, 
    \begin{align*}
        \mathrm{inj}(n,Q,H_{F}^{r})
        = \max\left\{\mathrm{inj}(Q,\mathcal{H}) \colon \mathcal{H} \in \mathfrak{K}_{\ell}^{r} \text{ and } v(\mathcal{H}) = n\right\}. 
    \end{align*}
\end{proposition}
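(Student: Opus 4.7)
The inequality $\max\{\mathrm{inj}(Q,\mathcal{H}) \colon \mathcal{H} \in \mathfrak{K}_{\ell}^{r},\ v(\mathcal{H}) = n\} \le \mathrm{inj}(n, Q, H_{F}^{r}) =: \mathrm{ex}(n)$ is immediate, since the edge-criticality of $F$ with $\chi(F) = \ell+1$ forces $\chi(H_{F}^{r}) = \ell+1$ (the two ``old'' vertices of each edge of $H_{F}^{r}$ must carry distinct colors), so every $K_{\ell}^{r}$-colorable $r$-graph is $H_{F}^{r}$-free. For the reverse direction, let $\mathcal{H}^{*}$ be an $n$-vertex extremal $H_{F}^{r}$-free $r$-graph, and let $\varepsilon > 0$ and $N_{0}$ be the stability constants from the $Q$-degree-stability of $H_{F}^{r}$ with respect to $\mathfrak{K}_{\ell}^{r}$. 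The plan is to assume $\mathcal{H}^{*} \notin \mathfrak{K}_{\ell}^{r}$ and derive a contradiction by constructing a $K_{\ell}^{r}$-colorable $r$-graph $\mathcal{H}'$ on $n$ vertices with $\mathrm{inj}(Q,\mathcal{H}') > \mathrm{ex}(n)$.

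The construction combines a \emph{deletion phase} with a matching \emph{addition phase}. For the deletion: since $\mathcal{H}^{*} \notin \mathfrak{K}_{\ell}^{r}$, $Q$-degree-stability furnishes a vertex $v_1$ with $d_{Q,\mathcal{H}^{*}}(v_1) < (1-\varepsilon) v(Q)\, \mathrm{ex}(n)/n$. Iterate on $\mathcal{G}_i := \mathcal{H}^{*} - \{v_1,\ldots,v_i\}$ until the first $\mathcal{G}_k \in \mathfrak{K}_{\ell}^{r}$, so $k \ge 1$. Using Fact~\ref{FACT:limit-exist-generalized-Turan} together with the Katona--Nemetz--Simonovits density bound $\mathrm{ex}(n-1) \ge (n - v(Q))/n \cdot \mathrm{ex}(n)$, one verifies that this process terminates with $k < n - N_{0}$: otherwise the cumulative loss bound would contradict the asymptotic $\mathrm{ex}(n) = (\pi + o(1)) n^{v(Q)}$, where $\pi := \pi_{\mathrm{inj}}(Q, H_{F}^{r}) > 0$.

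For the addition: starting from $\mathcal{A}_0 := \mathcal{G}_k$, add back $k$ vertices one at a time. At step $j$, add a new vertex $w_{j}$ as a twin (identical link) of a vertex $u_{j}$ of maximum $Q$-degree in $\mathcal{A}_{j-1}$, placing $w_{j}$ in $u_{j}$'s part of the $K_{\ell}^{r}$-coloring. This preserves membership in $\mathfrak{K}_{\ell}^{r}$, and since any injective homomorphism from $Q$ to $\mathcal{A}_{j-1}$ using $u_{j}$ yields an injective homomorphism to $\mathcal{A}_{j}$ using $w_{j}$ by the substitution $u_{j} \mapsto w_{j}$, averaging gives $d_{Q,\mathcal{A}_{j}}(w_{j}) \ge d_{Q,\mathcal{A}_{j-1}}(u_{j}) \ge v(Q)\, \mathrm{inj}(Q,\mathcal{A}_{j-1})/(n-k+j-1)$. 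Iterating this one-step inequality produces $\mathrm{inj}(Q,\mathcal{H}') \ge \mathrm{inj}(Q,\mathcal{G}_k) \prod_{j=0}^{k-1}\left(1 + v(Q)/(n-k+j)\right)$, which is asymptotically $\mathrm{inj}(Q,\mathcal{G}_k)\cdot (n/(n-k))^{v(Q)}(1+o(1))$.

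Combining the two phases and setting $t := (1-k/n)^{v(Q)} \in (0,1)$, the asymptotic $\mathrm{ex}(n) = (\pi + o(1)) n^{v(Q)}$ yields $\mathrm{inj}(Q,\mathcal{H}') \ge \mathrm{ex}(n)\bigl(1 + \varepsilon(1/t - 1)\bigr)(1 + o(1))$, which strictly exceeds $\mathrm{ex}(n)$ for every $k \ge 1$ and $n$ sufficiently large. This contradicts the extremality of $\mathcal{H}^{*}$ and forces $\mathcal{H}^{*} \in \mathfrak{K}_{\ell}^{r}$, completing the proof. The main obstacle is making the asymptotic accounting rigorous and uniform across the full range $1 \le k < n - N_{0}$; the sensitive regime is $k = O(1)$, where the $o(1)$ correction in $\mathrm{ex}(n)/n^{v(Q)} \to \pi$ must be controlled relative to the $\varepsilon\, v(Q)\, k/n$ gain, which is done via the density monotonicity of $\mathrm{ex}(m)/\binom{m}{v(Q)}$ together with the convergence $\mathrm{ex}(m)/m^{v(Q)} \to \pi$ for consecutive values of $m$ near $n$.
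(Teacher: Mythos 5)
Your route is genuinely different from the paper's. The paper works entirely inside the extremal graph $\mathcal{H}^{*}$: it removes the set $Z_{\varepsilon}$ of vertices of low $Q$-degree (whose size is bounded as in Lemma~\ref{LEMMA:Z-delta-size}), applies degree-stability to conclude $\mathcal{H}_1=\mathcal{H}^{*}-Z_{\varepsilon}\in\mathfrak{K}_{\ell}^{r}$, and then kills any vertex $v$ of $Q$-degree below $v(Q)\,\mathrm{ex}(n)/n-\delta n^{v(Q)}$ by a single Zykov-type exchange, replacing the link of $v$ by the link of a maximum-$Q$-degree vertex $u$ of $\mathcal{H}_1$ and checking that the modified graph is still $H_{F}^{r}$-free while gaining copies of $Q$; this lifts $\delta_{Q}(\mathcal{H}^{*})$ into the stability range, so degree-stability applies to $\mathcal{H}^{*}$ itself. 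You instead run the classical delete-and-reinsert scheme: iterate the contrapositive of (minimum-degree) stability to peel off low-degree vertices until the graph is $\ell$-partite, then clone maximum-degree vertices back up to $n$ vertices. Your scheme has the advantage that the comparison graph $\mathcal{H}'$ lies in $\mathfrak{K}_{\ell}^{r}$ by construction, so no freeness argument for a modified graph is needed; the price is bookkeeping that must be uniform in the number $k$ of deleted vertices. The individual steps you state are correct: the easy direction via $\chi(H_{F}^{r})>\ell$, the exact identity $\mathrm{inj}(Q,\mathcal{G}-v)=\mathrm{inj}(Q,\mathcal{G})-d_{Q,\mathcal{G}}(v)$, the fact that cloning preserves $K_{\ell}^{r}$-colorability, and $d_{Q,\mathcal{A}_j}(w_j)\ge d_{Q,\mathcal{A}_{j-1}}(u_j)\ge v(Q)\,\mathrm{inj}(Q,\mathcal{A}_{j-1})/(n-k+j-1)$.

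However, as written the conclusion does not follow, exactly in the regime you flag, and two quantitative points must be repaired. First, the termination bound $k<n-N_0$ is too weak: the final comparison only works for $k\le\delta n$ with $\delta\ll\varepsilon/v(Q)$, so you should prove termination at that threshold (the same counting does it: if the process ran for $\lceil\delta n\rceil$ steps, then $\mathrm{ex}(n-k)\ge\mathrm{inj}(Q,\mathcal{G}_k)>\mathrm{ex}(n)-(1-\varepsilon)v(Q)\sum_{i<k}\mathrm{ex}(n-i)/(n-i)$ contradicts $\mathrm{ex}(m)=(\pi+o(1))m^{v(Q)}$ with $\pi>0$, where the per-step threshold is relative to $\mathrm{ex}(n-i)$). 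Second, the displayed bound $\mathrm{inj}(Q,\mathcal{H}')\ge\mathrm{ex}(n)\bigl(1+\varepsilon(1/t-1)\bigr)(1+o(1))$ proves nothing for $k=O(1)$: the gain is $\Theta(\varepsilon v(Q)k/n)$ while an unspecified multiplicative $(1+o(1))$ error may be far larger, and the ``density monotonicity'' you invoke bounds $\mathrm{ex}(n-i)$ from \emph{below}, which is the wrong direction for controlling the deletion loss. The clean repair avoids two-sided asymptotics in the comparison altogether: use $\mathrm{ex}(m)\le\mathrm{ex}(n)$ for $m\le n$ (add isolated vertices), so the total loss is at most $(1-\varepsilon)v(Q)k\,\mathrm{ex}(n)/(n-k)$, and compute the cloning gain exactly, $\prod_{m=n-k}^{n-1}\bigl(1+v(Q)/m\bigr)=\prod_{s=0}^{v(Q)-1}\frac{n+s}{n-k+s}$; then $\mathrm{inj}(Q,\mathcal{H}')>\mathrm{ex}(n)$ reduces to the elementary inequality $\frac{1}{n+v(Q)}-\frac{v(Q)k}{2(n+v(Q))^2}>\frac{1-\varepsilon}{n-k}$, which holds uniformly for $1\le k\le\delta n$ once $\delta\le\varepsilon/(2v(Q))$ and $n>2v(Q)/\varepsilon$. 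With the termination bound strengthened to $k\le\delta n$ and this exact accounting (only the termination step uses $\pi_{\mathrm{inj}}(Q,H_{F}^{r})>0$ and Fact~\ref{FACT:limit-exist-generalized-Turan}), your argument is complete.
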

\begin{proof}
    Let $0 < \varepsilon \ll \delta$ be sufficiently small and $n$ be sufficiently large. 
    Let $\mathcal{H}$ be an $n$-vertex $H_{F}^{r}$-free $r$-graph with $\mathrm{inj}(Q,\mathcal{H}) =  \mathrm{inj}(n,Q,H_{F}^{r})$. 
    We aim to show that $\mathcal{H} \in \mathfrak{K}_{\ell}^{r}$, which by the $Q$-degree-stability of $H_{F}^{r}$, is reduced to show that $\delta_{Q}(\mathcal{H}) \ge \frac{v(Q) \cdot \mathrm{inj}(n,Q,H_{F}^{r})}{n} -\delta n^{v(Q)}$. 
    Let 
    \begin{align*}
        Z_{\varepsilon}
        \coloneqq \left\{v\in V(\mathcal{H}) \colon d_{Q,\mathcal{H}}(v) \le (1-\varepsilon) \frac{v(Q) \cdot \mathrm{inj}(n,Q,H_{F}^{r})}{n}\right\}. 
    \end{align*}
    Then similar to Lemma~\ref{LEMMA:Z-delta-size}, we have $|Z_{\varepsilon}| \le \delta n/4$, and hence, $\mathcal{H}_1 \coloneqq \mathcal{H}-Z_{\varepsilon}$ satisfies 
    \begin{align*}
        \delta_{Q}(\mathcal{H}_1)
        \ge (1-\varepsilon) \frac{v(Q) \cdot \mathrm{inj}(n,Q,H_{F}^{r})}{n} - |Z_{\varepsilon}| n^{v(Q)-2}
        \ge \frac{v(Q) \cdot \mathrm{inj}(n,Q,H_{F}^{r})}{n} -\frac{\delta n^{v(Q)}}{2}. 
    \end{align*}
    It follows from the $Q$-degree-stability of $H_{F}^{r}$ that $\mathcal{H}_1 \in \mathfrak{K}_{\ell}^{r}$. 
    Suppose to the contrary that there exists a vertex  $v\in V(\mathcal{H})$ such that $d_{Q,\mathcal{H}}(v) \le \frac{v(Q) \cdot \mathrm{inj}(n,Q,H_{F}^{r})}{n} -\delta n^{v(Q)}$ (note that $v$ must be contained in $Z_{\varepsilon}$). 
    Let $u \in V(\mathcal{H}_1)$ be a vertex of maximum $Q$-degree (in $\mathcal{H}_1$). In particular, $d_{Q,\mathcal{H}_1}(u) \ge \frac{v(Q) \cdot \mathrm{inj}(n,Q,H_{F}^{r})}{n} -\frac{\delta n^{v(Q)}}{2} > d_{Q,\mathcal{H}}(v)$. 
    Consider the following $r$-graph 
    \begin{align*}
        \hat{\mathcal{H}}
        \coloneqq \left(\mathcal{H} - v\right)
            \cup \left\{e\cup \{v\} \colon e\in L_{\mathcal{H}_1}(u)\right\}. 
    \end{align*}
    Notice that $\hat{\mathcal{H}}$ is still $H_{F}^{r}$-free, but $|\hat{\mathcal{H}}|
        \ge |\mathcal{H}| - d_{Q,\mathcal{H}}(v) + d_{Q,\mathcal{H}_1}(u) > \mathrm{inj}(n,Q,H_{F}^{r})$, a contradiction. 
\end{proof}
\section{Proof of Theorem~\ref{THM:AES-pentagon}}\label{SEC:Proof-Pentagon}
We present the short proof for Theorem~\ref{THM:AES-pentagon} in this section. 
Let $\mathfrak{C}_{5}$ be the collection of all $C_5$-colorable graphs. 
Recall from Theorem~\ref{THM:Pentagon-stability} that $K_3$ is $C_5$-edge-stable with respect to $\mathfrak{C}_{5}$. 
Therefore, by Theorem~\ref{THM:general-generalized-Turan-b}, it suffices to show that $K_3$ is $C_5$-vertex-extendable with respect to $\mathfrak{C}_{5}$, which is equivalent to the following proposition.

\begin{proposition}\label{PROP:Pentagon-vtx-ext}
    There exist $\varepsilon>0$ and $N_0$ such that the following holds for all $n \ge N_0$. 
    Suppose that $G$ is an $n$-vertex triangle-free graph with $\delta_{C_5}(G) \ge (10-\varepsilon) \left(n/5\right)^4$, and there exists a vertex $v\in V(G)$ such that $G-v \in \mathfrak{C}_5$. 
    Then $G \in \mathfrak{C}_5$ as well. 
\end{proposition}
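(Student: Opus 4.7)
The plan is to fix a $C_5$-coloring $\phi\colon G-v\to C_5$ with colour classes $V_1,\ldots,V_5$ (indexed so that $V_i\sim V_{i\pm1}$ in $C_5$) and argue that $N_G(v)\subseteq V_{j-1}\cup V_{j+1}$ for some $j$, so that placing $v$ in $V_j$ extends $\phi$ to all of $G$. Write $n_i\coloneqq|V_i|$ and $m_i\coloneqq|N_G(v)\cap V_i|$; the target is $m_{j-2}=m_j=m_{j+2}=0$ for some $j$, with indices read mod~$5$.

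First I would show that $G-v$ is an ``almost-complete'' $C_5$-blowup on nearly balanced parts. Combining $\sum_u d_{C_5,G}(u)=5\,\mathrm{inj}(C_5,G)$ with the hypothesis $\delta_{C_5}(G)\ge(10-\varepsilon)(n/5)^4$, and using the trivial bound $d_{C_5,G}(v)\le 5n^4=o((n/5)^5)$, yields $\mathrm{inj}(C_5,G-v)\ge(10-\varepsilon-o(1))((n-1)/5)^5$. Since $\phi$ forces any $C_5$ in $G-v$ to pick exactly one vertex from each class, $\mathrm{inj}(C_5,G-v)\le 10\,n_1 n_2 n_3 n_4 n_5$, so AM--GM tightness forces $n_i=(1/5+o(1))n$. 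A standard count (each missing edge of the blowup between $V_a$ and $V_{a+1}$ destroys $\Theta((n/5)^3)$ copies of $C_5$) then shows at most $o(n^2)$ edges of the full complete $C_5$-blowup are absent, and in particular only $o(n^2)$ are missing between each adjacent pair $(V_a,V_{a+1})$.

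Next I use triangle-freeness to localise $N_G(v)$. For every $C_5$-edge $(a,a+1)$ and every $(u,u')\in(N_G(v)\cap V_a)\times(N_G(v)\cap V_{a+1})$ the edge $uu'$ must be missing from $G-v$ (else $vuu'$ is a triangle), so $m_a m_{a+1}=o(n^2)$. Hence the set $J\coloneqq\{i\colon m_i\ge\eta n\}$, for a suitable threshold $\eta=\eta(\varepsilon)\to 0$, is an independent set in $C_5$, so $|J|\le 2$. A lower bound $|N_G(v)|\ge(2-o(1))n/5$ (from $d_{C_5,G}(v)\ge(10-\varepsilon)(n/5)^4$ together with the near-blowup structure of $G-v$) forces $|J|=2$, so $J=\{j-1,j+1\}$ for some $j$, with $m_{j-2},m_j,m_{j+2}=o(n)$ and $m_{j-1},m_{j+1}=(1-o(1))n/5$.

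Finally I promote the ``$o(n)$'' to $0$. If $u\in N_G(v)\cap V_{j-2}$ is a \emph{typical} vertex of $V_{j-2}$ then $|N_{G-v}(u)\cap V_{j-1}|\ge(1-o(1))n/5$, and triangle-freeness forces each such neighbour of $u$ to be a non-neighbour of $v$, so $m_{j-1}\le n_{j-1}-(1-o(1))n/5=o(n)$, contradicting $m_{j-1}=(1-o(1))n/5$. If $u$ is \emph{atypical} (has $o(n)$ neighbours in $V_{j-1}$, hence close to $n/5$ neighbours in $V_{j-3}=V_{j+2}$), then the same degree hypothesis $d_{C_5,G}(u)\ge(10-\varepsilon)(n/5)^4$ applied to $u$ itself yields the analogous contradiction after rerunning the blowup count above with $u$ in place of $v$. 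The cases $u\in N_G(v)\cap V_{j+2}$ (symmetric) and $u\in N_G(v)\cap V_j$ (where the argument forces both $m_{j-1}$ and $m_{j+1}$ to be $o(n)$) are handled the same way, so $N_G(v)\subseteq V_{j-1}\cup V_{j+1}$ and $G\in\mathfrak{C}_5$. The main obstacle is precisely this ``typical vs.\ atypical'' dichotomy: ruling out atypical bad neighbours requires bringing in the degree-stability hypothesis for the bad neighbour $u$ itself rather than just for $v$, and chaining the $o(\cdot)$ estimates so that the constants line up.
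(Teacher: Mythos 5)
Your overall route is essentially the paper's: fix the $C_5$-colouring of $G-v$ with classes $V_1,\dots,V_5$, use the minimum $C_5$-degree to show all classes have size $(1/5+O(\sqrt{\varepsilon}))n$ (noting that every copy of $C_5$ in $G-v$ meets each class exactly once, since $C_5$ is a core), and then use triangle-freeness to force $N_G(v)$ into two classes at distance two. The difference is that the paper does not take your detour through a global ``almost-complete blowup'' statement ($o(n^2)$ missing edges, the bound $m_am_{a+1}=o(n^2)$, and the lower bound $|N_G(v)|\ge(2-o(1))n/5$, which you assert but do not prove -- it is true, but it needs the small count of path-configurations through $v$). Instead it proves one per-vertex estimate up front: applying the minimum $C_5$-degree hypothesis to \emph{each} vertex $u\in V_i$ of $G-v$ (every copy of $C_5$ through $u$ in $G-v$ uses one neighbour of $u$ in $V_{i-1}$ and one in $V_{i+1}$) gives $\min\{|N_{G-v}(u)\cap V_{i-1}|,|N_{G-v}(u)\cap V_{i+1}|\}\ge(1/5-8\sqrt{\varepsilon})n$ for every $u$. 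With this, a \emph{single} vertex of $N_G(v)$ in $V_i$ forces $|N_G(v)\cap V_{i\pm1}|<10\sqrt{\varepsilon}n$ (its non-neighbours in $V_{i\pm1}$ number at most $10\sqrt{\varepsilon}n$, and any neighbour there makes a triangle with $v$), and once two of the sets $N_G(v)\cap V_i$ are large the other three are exactly empty -- no separate promotion from ``small'' to $0$ is needed.

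The step in your write-up that genuinely needs repair is the final typical/atypical dichotomy. As stated it is not exhaustive: ``typical'' ($\ge(1-o(1))n/5$ neighbours in $V_{j-1}$) and ``atypical'' ($o(n)$ neighbours) omit vertices of intermediate degree, and the justification of the atypical branch (``rerun the blowup count with $u$ in place of $v$'') does not parse literally, since $G-u$ is not known to be $C_5$-colourable. The correct version of your idea is exactly the per-vertex bound above: bound $d_{C_5,G-v}(u)$ by $10\,|N_{G-v}(u)\cap V_{j-1}|\cdot|N_{G-v}(u)\cap V_{j+2}|\cdot n_{j}n_{j+1}$ and compare with $d_{C_5,G}(u)\ge(10-\varepsilon)(n/5)^4$ minus the $O(n^3)$ copies through $v$; this shows there are no atypical or intermediate vertices at all, after which your argument closes. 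One further bookkeeping point: your $o(\cdot)$'s are really quantities of order $\sqrt{\varepsilon}\,n$ or $\varepsilon n^2$ (they do not tend to $0$ for fixed $\varepsilon$ as $n\to\infty$), so they should be written as explicit functions of $\varepsilon$, as in the paper, for the estimates to chain correctly.
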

\begin{proof}
    Let $V \coloneqq V(G)$, $U \coloneqq V\setminus \{v\}$, and $G' \coloneqq G - v$. 
    Let $N(C_5, G) \coloneqq \frac{\mathrm{inj}(C_5,G)}{|\mathrm{Aut}(C_5)|} = \frac{\mathrm{inj}(C_5,G)}{10}$ denote the number of (unlabelled) copies of $C_5$ in $G$. 
    For simplicity in indices, we equate $C_5$ with $\mathbb{Z}_{5}$, where the edge set is $\{\{1,2\},\{2,3\},\{3,4\},\{4,5\},\{5,1\}\}$.   
    Let $\psi\colon G' \to \mathbb{Z}_{5}$ be a homomorphism and $V_i \coloneqq \psi^{-1}(i)$ for $i\in \mathbb{Z}_{5}$. 
    Additionally, let $x_i \coloneqq |V_i|/n$ for $i\in \mathbb{Z}_{5}$. 
    It follows from the minimum $C_5$-degree assumption that 
    \begin{align*}
        \prod_{i=1}^{5}(x_i n) 
        \ge N(C_5, G')
        \ge \frac{n-1}{5}\left((1-\varepsilon) \left(\frac{n}{5}\right)^4 - n^3\right)
        \ge (1-2\varepsilon)\left(\frac{n}{5}\right)^5. 
    \end{align*}
    Simple calculations (see e.g.~{\cite[Lemma~2.2]{LMR23unif}}) show that 
    \begin{align}\label{equ:xi-size}
        \left|x_i - 1/5\right| 
        \le 2\sqrt{\varepsilon}
        \quad\text{for all}\quad i\in \mathbb{Z}_5. 
    \end{align}
    Consequently, for every $i \in \mathbb{Z}_{5}$ and for every $u\in V_i$ we have  
    \begin{align}\label{equ:G'-min-deg}
        \min\{|N_{G'}(u) \cap V_{i-1}|, |N_{G'}(u) \cap V_{i+1}|\}
        > \left(1/5 - 8\sqrt{\varepsilon}\right)n, 
    \end{align}
    since otherwise, by~\eqref{equ:xi-size}, the number of (unlabelled) $C_5$ in $G$ containing $v$ satisfies 
    \begin{align*}
        \frac{d_{C_5, G'}(u)}{|\mathrm{Aut}(C_5)|}
        \le \left(\frac{1}{5}+2\sqrt{\varepsilon}\right)^3 \left(\frac{1}{5} - 8\sqrt{\varepsilon}\right)n^4
        < (1-2\varepsilon)\left(\frac{n}{5}\right)^4
        < (1-\varepsilon) \left(\frac{n}{5}\right)^4 - n^3,  
    \end{align*}
    contradicting the minimum $C_5$-degree assumption. 
    Let $N_i \coloneqq V_i \cap N_{G}(v)$ for $i\in \mathbb{Z}_{5}$. 
    Observe from~\eqref{equ:xi-size} and~\eqref{equ:G'-min-deg} that if $N_i \neq \emptyset$ for some $i\in \mathbb{Z}_{5}$, then 
    \begin{align}\label{equ:Ni-1-size}
        \max\{|N_{i-1}|, |N_{i+1}|\}
        < 10 \sqrt{\varepsilon} n. 
    \end{align}
    Indeed, if, for instance, $|N_{i-1}| \ge 10 \sqrt{\varepsilon} n$, then any vertex $u_1 \in N_i$ would have a neighbor in $N_{i-1}$, which together with $v$ form a triangle, a contradiction. 
    \begin{claim}\label{CLAIM:unique-i}
        There exists a unique $i_{\ast} \in \mathbb{Z}_{5}$ such that $\min\{|N_{i_{\ast}}|, |N_{i_{\ast}+2}|\} \ge 10\sqrt{\varepsilon} n$. 
    \end{claim}
    \begin{proof}
        First, it follows from the assumption $d_{C_5, G}(v)/|\mathrm{Aut}(C_5)| \ge (1-\varepsilon) \left(n/5\right)^4$ and simple calculations that there are at least two sets in $\{N_1, \ldots, N_5\}$ of size at least $10\sqrt{\varepsilon}n$. 
        On the other hand,~\eqref{equ:Ni-1-size} shows that every pair $N_i, N_j$ of size at least at least $10\sqrt{\varepsilon}n$ must satisfy $|i-j|>1$. 
        Now, this claim follows easily from the structure of $C_5$. 
    \end{proof}
    It follows from Claim~\ref{CLAIM:unique-i} and~\eqref{equ:Ni-1-size} that $N_j = \emptyset$ for all $j \in \mathbb{Z}_{5}\setminus \{i_{\ast}, i_{\ast+2}\}$. 
    Therefore, $G$ is $C_5$-colorable.
\end{proof}

\section{Vertex-extendability for general hypergraphs}
The main purpose of this section is to establish the following result, which will be crucial in establishing the $Q$-vertex-extendability later.  
Recall the definition of $Q$-Lagrangian from Section~\ref{SEC:Prelim}. 

\begin{proposition}\label{PROP:Q-vtx-ext-F-expansion}
    Let $\ell \ge k \ge r \ge 2$ be integers, $Q$ be an $r$-graph with $\chi(Q) = k$, and $F$ be an edge-critical graph with $\chi(F) = \ell+1$. 
    There exist $\varepsilon >0$ and $N_0$ such that the following holds for all $n \ge N_0$. 
    Suppose that $\mathcal{H}$ is an $n$-vertex $H_{F}^{r}$-free $r$-graph with $\delta_{Q}(\mathcal{H}) \ge \left(\lambda_{Q}(K_{\ell}^{r})-\varepsilon \right) v(Q) n^{v(Q)-1}$ and $v\in V(\mathcal{H})$ is a vertex such that $\mathcal{H} - v \in \mathfrak{K}_{\ell}^{r}$. 
    Then $\mathcal{H} \in \mathfrak{K}_{\ell}^{r}$ as well. 
    In particular, if $\pi_{\mathrm{inj}}(Q,H_{F}^{r}) = \lambda_{Q}(K_{\ell}^{r})$, then $H_{F}^{r}$ is $Q$-vertex-extendable with respect to $\mathfrak{K}_{\ell}^{r}$.
\end{proposition}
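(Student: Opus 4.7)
The plan is to derive a contradiction by assuming $\mathcal{H} \notin \mathfrak{K}_\ell^r$. Fix a partition $V_1 \cup \cdots \cup V_\ell = V(\mathcal{H}) \setminus \{v\}$ witnessing $\mathcal{H} - v \in \mathfrak{K}_\ell^r$, and let $x_i \coloneqq |V_i|/n$. Summing the minimum $Q$-degree hypothesis over $V(\mathcal{H})$ yields $\mathrm{inj}(Q, \mathcal{H}) \ge (\lambda_Q(K_\ell^r) - \varepsilon) n^{v(Q)}$, while Fact~\ref{FACT:Lagrangian-meaning} combined with $\mathcal{H} - v \subseteq K^r(V_1, \ldots, V_\ell)$ gives $\mathrm{inj}(Q, \mathcal{H} - v) \le P_{Q, K_\ell^r}(x_1, \ldots, x_\ell)\, n^{v(Q)} + O(n^{v(Q)-1})$. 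Comparing forces $(x_1, \ldots, x_\ell)$ to lie within $o(1)$ of $\mathrm{Opt}(Q, K_\ell^r)$; invoking the hypothesis $\chi(Q) = k \le \ell$ together with the permutation symmetry of $K_\ell^r$, every $(Q, K_\ell^r)$-optimal vector has all entries positive, so by compactness each $x_i$ is at least a constant $\alpha = \alpha(Q, \ell, r) > 0$. A standard edge-counting argument (each missing edge of $K^r(V_1, \ldots, V_\ell)$ destroys at most $|Q|\, n^{v(Q)-r}$ copies of $Q$) also shows that at most $O(\varepsilon n^r)$ edges are missing from $\mathcal{H} - v$, and an analogous density estimate applies to the link $L_\mathcal{H}(v)$.

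Since $\mathcal{H}$ is not $K_\ell^r$-colorable, $v$ cannot be adjoined to any part, yielding two cases: \emph{Case A}, in which some edge $\{v, u_1, \ldots, u_{r-1}\} \in \mathcal{H}$ satisfies $u_j, u_{j'} \in V_{i_0}$ for some $j \ne j'$ and $i_0 \in [\ell]$; and \emph{Case B}, in which every edge through $v$ has its $r-1$ link-vertices in $r-1$ distinct parts, yet for each $i \in [\ell]$ some such edge meets $V_i$. In both cases I plan to embed $H_F^r$ into $\mathcal{H}$, contradicting $H_F^r$-freeness. Using edge-criticality of $F$, fix a critical edge $e^\star = \{a, b\} \in F$ and an $\ell$-coloring $\psi \colon V(F) \to [\ell]$ of $F - e^\star$ satisfying $\psi(a) = \psi(b)$; such $\psi$ exists because any $\ell$-coloring of $F - e^\star$ must place $a, b$ in the same color class, else $F$ would itself be $\ell$-colorable.

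In Case A, map $a \mapsto u_1$ and $b \mapsto u_2$ inside $V_{i_0}$, and use the edge $\{v, u_1, \ldots, u_{r-1}\}$ itself as the $r$-expansion of $e^\star$, with $v, u_3, \ldots, u_{r-1}$ serving as its $r-2$ expansion vertices. The remaining vertices of $F$ are placed according to $\psi$, and the $r-2$ expansion vertices of each remaining edge $\{u, w\} \in F - e^\star$ are picked greedily in parts distinct from $\psi(u), \psi(w)$ using the near-completeness of $\mathcal{H} - v$. In Case B, map $a \mapsto v$ and embed each $u \in V(F) \setminus \{a\}$ into $V_{\psi(u)}$; Case B's control over $L_\mathcal{H}(v)$, combined with the link-density estimate from the first paragraph, supplies the $r$-edges through $v$ needed for the $r$-expansions of all $F$-edges incident to $a$ (including $e^\star$, whose expansion must include an image of $b$ placed in $V_{\psi(a)}$), while the remaining edges of $F$ embed into $\mathcal{H} - v$ via multipartite density as before.

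The main obstacle is executing these greedy embeddings: at each step one must find a fresh vertex in a prescribed part completing an $r$-edge of $\mathcal{H}$ (or an $(r-1)$-edge of $L_\mathcal{H}(v)$). The lower bound $|V_i| \ge \alpha n$ combined with the robust density estimates above guarantees $\Omega(n)$ valid candidates at each step, dwarfing the $O(v(H_F^r))$ vertices already committed; the assumption $\ell \ge k \ge r$ supplies enough spare parts so that the $r-2$ expansion vertices of each $F$-edge can be placed in $r-2$ distinct parts avoiding its endpoint parts. The ``in particular'' clause of the proposition follows immediately: when $\pi_{\mathrm{inj}}(Q, H_F^r) = \lambda_Q(K_\ell^r)$, the identity $\frac{v(Q)\, \mathrm{inj}(n, Q, H_F^r)}{n} = (\lambda_Q(K_\ell^r) + o(1))\, v(Q)\, n^{v(Q)-1}$ converts the $Q$-vertex-extendability hypothesis into the minimum-degree hypothesis of the main statement.
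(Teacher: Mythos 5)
Your reduction to the Lagrangian (parts have linear size, the partition vector is near-optimal) matches the paper's first step, but after that there are two genuine gaps. First, you only extract a \emph{global} estimate (``at most $O(\varepsilon n^r)$ edges of $K^r(V_1,\ldots,V_\ell)$ are missing from $\mathcal{H}-v$''), and all of your greedy embeddings are then run through \emph{specific} vertices: $u_1,u_2$ in Case~A and the (possibly unique) neighbours of $v$ in each part in Case~B. A global deficit of $O(\varepsilon n^r)$ allows individual vertices to be missing $\Theta(n^{r-1})$ edges, so there is no guarantee of $\Omega(n)$ candidates at a step that must pass through $u_1$ or $u_2$. The paper closes this hole before any embedding: using Lemma~\ref{LEMMA:Lagrangian-Multiplier}~\ref{LEMMA:Lagrangian-Multiplier-2} it bounds $d_{Q,K^r}(u)\le\left(v(Q)\lambda_{Q}(K_{\ell}^{r})+2\delta\right)n^{v(Q)-1}$ for every $u$, and then Lemma~\ref{LEMMA:homo-subgraph-complete-partite}~\ref{LEMMA:homo-subgraph-complete-partite-2} converts the \emph{per-vertex} $Q$-degree hypothesis into the per-vertex bound $d_{K^r}(u)-d_{\mathcal{H}_1}(u)\le\delta_1 n^{r-1}$ for all $u$; only with this can the greedy embedding (Lemma~\ref{LEMMA:greedily-embedding-Gi}) be applied at prescribed vertices. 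Your proposal never performs this conversion.

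Second, and more seriously, your Case~B claim is not correct as stated: knowing only that the plain neighbourhood $N_{\mathcal{H}}(v)$ meets every part does not let you embed $H_F^r$, because the expansions of the $F$-edges at $a\mapsto v$ need pairs $\{v,w\}$ of \emph{large} codegree (many pairwise disjoint link sets, cf.\ Fact~\ref{FACT:extend-F}), and Fact~\ref{FACT:extend-F} tolerates only one exceptional pair. Concretely, take $\mathcal{H}-v$ complete $\ell$-partite and let $L_{\mathcal{H}}(v)$ consist of all rainbow $(r-1)$-sets inside $V_1\cup\cdots\cup V_{\ell-2}$ together with two single stray edges meeting $V_{\ell-1}$ and $V_{\ell}$ respectively: this lies in your Case~B, yet (for $F=K_{\ell+1}$) it is $H_F^r$-free, so no embedding argument can dispose of it. Such configurations are excluded only because they violate the minimum $Q$-degree of $v$ itself, which is exactly the paper's separate counting step: Lemma~\ref{LEMMA:vertex-extendability-clique-expansion} distinguishes the robust index set $I_{\mathrm{large}}$ (parts met by $N_{\mathcal{H}}^{C_F}(v)$ in $\ge\hat\varepsilon n$ vertices) from $I$, the embedding handles only $|I|=\ell$ with $|I_{\mathrm{large}}|\ge\ell-1$, and the case $|I_{\mathrm{large}}|\le\ell-2$ is killed by the estimate on $d_{Q,\mathcal{H}}(v)$ via $\mathcal{E}_1$, $\mathcal{E}_2$ and Lemma~\ref{LEMMA:homo-subgraph-complete-partite}. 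Your appeal to an ``analogous density estimate for $L_{\mathcal{H}}(v)$'' cannot substitute for this: even in the extremal configuration the link of $v$ misses an entire part, so no such density statement toward all $\ell$ parts can follow from the hypothesis. (Your Case~A and the final ``in particular'' reduction are in the spirit of the paper and are fine once the per-vertex degree control is in place; the uniform positivity of optimal vectors also needs an argument as in Lemma~\ref{LEMMA:opt-solotions-clique-expansion-general}, not just symmetry, but that is minor.)
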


The following fact follows from the theory of Lagrange Multiplier and Compactness. 
For convenience, for every vector $\vec{x} \coloneqq (x_1, \ldots, x_n) \in \mathbb{R}^{n}$ we let $\mathrm{Supp}(\vec{x}) \coloneqq \left\{i\in [n] \colon x_i > 0\right\}$.  

\begin{lemma}\label{LEMMA:Lagrangian-Multiplier}
    Let $Q$ be an $r$-graph and $\mathcal{H}$ be an $r$-graph on $[n]$. 
    \begin{enumerate}[label=(\roman*)]
        \item\label{LEMMA:Lagrangian-Multiplier-1} 
        If $\vec{x} \coloneqq (x_1, \ldots, x_{n}) \in \mathrm{Opt}(Q,\mathcal{H})$, then 
        \begin{align*}
            D_{i} P_{Q,\mathcal{H}}(x_1, \ldots, x_{n}) = v(Q) \cdot \lambda_{Q}(\mathcal{H})
            \quad\text{for every}\quad i\in \mathrm{Supp}(\vec{x}). 
        \end{align*} 
        \item\label{LEMMA:Lagrangian-Multiplier-2} 
        For every $\delta>0$ there exists $\varepsilon>0$ such that if $(y_1, \ldots, y_{n}) \in \Delta^{n-1}$ satisfies $P_{Q,\mathcal{H}}(y_1, \ldots, y_{n}) \ge \lambda_{Q}(\mathcal{H}) - \varepsilon$, then there exists $\vec{x} \coloneqq (x_1, \ldots, x_n) \in \mathrm{Opt}(Q,\mathcal{H})$ such that $\max_{i\in [n]}|x_i - y_i| \le \delta$ and
        \begin{align*}
            \left| D_{i} P_{Q,\mathcal{H}}(y_1, \ldots, y_{n}) - D_{i} P_{Q,\mathcal{H}}(x_1, \ldots, x_{n}) \right| 
            \le \delta
            \quad\text{for every}\quad i\in \mathrm{Supp}(\vec{x}). 
        \end{align*}
    \end{enumerate}
\end{lemma}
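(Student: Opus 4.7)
For part (i), the plan is to combine the first-order optimality (Lagrange multiplier) condition for the maximization of $P_{Q,\mathcal{H}}$ on $\Delta^{n-1}$ with the homogeneity of $P_{Q,\mathcal{H}}$. Writing $S \coloneqq \mathrm{Supp}(\vec{x})$, the vector $\vec{x}$ is a maximizer of $P_{Q,\mathcal{H}}$ on the relative interior of the face $\{\vec{z} \in \Delta^{n-1} \colon z_i = 0 \text{ for } i \notin S\}$, so the constrained critical-point condition yields a scalar $\mu$ with $D_{i}P_{Q,\mathcal{H}}(\vec{x}) = \mu$ for every $i \in S$. To identify $\mu$, I would invoke Euler's identity: because $P_{Q,\mathcal{H}}$ is a homogeneous polynomial of degree $v(Q)$, one has $\sum_{i=1}^{n} x_i\, D_i P_{Q,\mathcal{H}}(\vec{x}) = v(Q)\cdot P_{Q,\mathcal{H}}(\vec{x}) = v(Q)\cdot \lambda_Q(\mathcal{H})$. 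Because $x_i = 0$ for $i \notin S$, the left-hand side collapses to $\mu \sum_{i \in S} x_i = \mu$, which gives $\mu = v(Q)\cdot\lambda_Q(\mathcal{H})$.

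Part (ii) I would prove by a standard compactness and continuity argument. Suppose, for contradiction, that the conclusion fails for some $\delta > 0$; then one may produce a sequence $\vec{y}^{(m)} \in \Delta^{n-1}$ with $P_{Q,\mathcal{H}}(\vec{y}^{(m)}) \to \lambda_Q(\mathcal{H})$ such that no $\vec{x} \in \mathrm{Opt}(Q,\mathcal{H})$ simultaneously satisfies $\max_i |x_i - y_i^{(m)}| \le \delta$ and the derivative bound on $\mathrm{Supp}(\vec{x})$. Since $\Delta^{n-1}$ is compact, I would pass to a subsequence $\vec{y}^{(m_k)} \to \vec{y}^{\ast}$. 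Continuity of $P_{Q,\mathcal{H}}$ then gives $P_{Q,\mathcal{H}}(\vec{y}^{\ast}) = \lambda_Q(\mathcal{H})$, so $\vec{y}^{\ast} \in \mathrm{Opt}(Q,\mathcal{H})$. For $k$ sufficiently large, $\max_i |y_i^{(m_k)} - y_i^{\ast}| < \delta$ by convergence, and $|D_i P_{Q,\mathcal{H}}(\vec{y}^{(m_k)}) - D_i P_{Q,\mathcal{H}}(\vec{y}^{\ast})| < \delta$ for every $i$, since each $D_i P_{Q,\mathcal{H}}$ is itself a polynomial, hence uniformly continuous on the compact simplex. Taking $\vec{x} \coloneqq \vec{y}^{\ast}$ then contradicts the failure assumption.

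Neither step presents a substantial obstacle. The mildest subtlety is the Lagrange step in (i), where one must restrict attention to the face determined by $\mathrm{Supp}(\vec{x})$ so that $\vec{x}$ lies in the relative interior of a polytope against which Lagrange multipliers can be read off; once that is set up, the single multiplier $\mu$ appears against the affine constraint $\sum_{i \in S} z_i = 1$, and Euler's identity pins it down numerically. In (ii), the key observation is that the single limit point $\vec{y}^{\ast}$ of the near-optimal sequence simultaneously controls both the $\ell^{\infty}$ distance in the $y_i$'s and, by polynomial continuity, the distance in every partial derivative, so one witness $\vec{x} \coloneqq \vec{y}^{\ast}$ discharges both conditions at once.
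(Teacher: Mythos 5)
Your part (i) is exactly the paper's argument: restrict to the support (the paper phrases this as passing to the induced subgraph on the support, you phrase it as working on the corresponding face of the simplex -- these are the same, since monomials involving vertices outside $\mathrm{Supp}(\vec{x})$ contribute nothing to $P_{Q,\mathcal{H}}$ or to the relevant partial derivatives at $\vec{x}$), read off a single multiplier $\mu$ from the first-order condition, and identify $\mu = v(Q)\cdot\lambda_{Q}(\mathcal{H})$ via Euler's identity for the degree-$v(Q)$ homogeneous polynomial. For part (ii) you are correct but take a softer route than the paper. The paper makes the compactness quantitative: it introduces the neighbourhood $\mathrm{Opt}_{\alpha}(Q,\mathcal{H})$ of the optimum set and the restricted supremum $\lambda_{Q,\alpha}(\mathcal{H})$, chooses $\varepsilon$ explicitly as $\min\{(\lambda_{Q}(\mathcal{H})-\lambda_{Q,\delta}(\mathcal{H}))/2,\ \delta/C_{Q}\}$, and transfers closeness of the points to closeness of the gradients through a Taylor/Lipschitz bound with constant $C_{Q}$. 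You instead argue by contradiction with a convergent subsequence of near-optimizers, use continuity of $P_{Q,\mathcal{H}}$ to place the limit $\vec{y}^{\ast}$ in $\mathrm{Opt}(Q,\mathcal{H})$, and use continuity of each $D_{i}P_{Q,\mathcal{H}}$ to get the derivative bound, taking $\vec{x}\coloneqq\vec{y}^{\ast}$ as the witness (which in fact gives the bound for all $i$, not only $i\in\mathrm{Supp}(\vec{x})$, so it is slightly stronger). Both are compactness arguments and both prove the purely existential statement; the paper's version yields a semi-explicit $\varepsilon$ in terms of the gap $\lambda_{Q}(\mathcal{H})-\lambda_{Q,\delta}(\mathcal{H})$ and a gradient bound, which one could in principle track, while yours is shorter and sidesteps the Lipschitz bookkeeping but gives no effective dependence of $\varepsilon$ on $\delta$.
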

\begin{proof}[Proof of Lemma~\ref{LEMMA:Lagrangian-Multiplier}]
    We prove~\ref{LEMMA:Lagrangian-Multiplier-1} first. 
    By relabeling the vertices of $\mathcal{H}$ we may assume that $x_i > 0$ iff $i\in [m]$ for some $m \le n$. 
    Let $\mathcal{H}'$ denote the induced subgraph of $\mathcal{H}$ on $[m]$. Then $(x_1, \ldots, x_{m})$ is an vector in $\Delta^{m-1}$ and $P_{Q,\mathcal{H}'}(x_1, \ldots, x_{m}) = \lambda_{Q}(\mathcal{H}) = \lambda_{Q}(\mathcal{H}')$. 
    It follows from the theory of Lagrangian Multiplier that there exists a constant $\mu$ such that 
    \begin{align*}
        D_{i} P_{Q,\mathcal{H}'}(x_1, \ldots, x_{m}) = \mu
        \quad\text{for all}\quad i \in [m]. 
    \end{align*}
    Since $P_{Q,\mathcal{H}'}$ is a homogeneous polynomial of degree $v(Q)$, we have 
    \begin{align*}
        \mu 
        =\sum_{i\in [m]}x_i \mu 
        = \sum_{i\in [m]} x_i \cdot D_{i} P_{Q,\mathcal{H}'}(x_1, \ldots, x_{m}) 
        = v(Q) \cdot P_{Q,\mathcal{H}'}(x_1, \ldots, x_{m})
        = v(Q) \cdot \lambda_{Q}(\mathcal{H}). 
    \end{align*}
    This proves~\ref{LEMMA:Lagrangian-Multiplier-1}. 

    Now we consider~\ref{LEMMA:Lagrangian-Multiplier-2}. 
    For every $\alpha \ge 0$ define 
    \begin{align*}
        \mathrm{Opt}_{\alpha}(Q, \mathcal{H}) 
         \coloneqq \left\{(x_1, \ldots, x_n) \in \Delta^{n-1} \colon \exists (x'_1, \ldots, x'_n) \in \mathrm{Opt}(Q, \mathcal{H}) \text{ with } \max_{i\in [n]}|x_i-x_i'| \le \alpha\right\}
    \end{align*}
    and 
    \begin{align*}
        \lambda_{Q,\alpha}(\mathcal{H})
         \coloneqq \sup\left\{P_{Q,\mathcal{H}}(x_1, \ldots, x_{n}) \colon (x_1, \ldots, x_{n}) \in \Delta^{n-1} \setminus \mathrm{Opt}_{\alpha}(Q, \mathcal{H})\right\}. 
    \end{align*}
    It is clear that $\lambda_{Q,\alpha}(\mathcal{H}) < \lambda_{Q}(\mathcal{H})$ for every $\alpha>0$, and $\lambda_{Q,\alpha}(\mathcal{H})$ is decreasing in $\alpha$. 

    Let $C_{Q} \coloneqq \max_{(z_1, \ldots, z_n) \in \Delta^{n-1}}\max_{i\in [n]}\left|D_{i} P_{Q,\mathcal{H}}(z_1, \ldots, z_{n})\right|$. 
    Fix $\delta>0$ and let 
    \begin{align*}
        \varepsilon 
        \coloneqq \min\left\{\frac{\lambda_{Q}(\mathcal{H}) - \lambda_{Q,\delta}(\mathcal{H})}{2}, \frac{\delta}{C_{Q}}\right\}. 
    \end{align*}
    Suppose that $(y_1, \ldots, y_{n}) \in \Delta^{n-1}$ satisfies $P_{Q,\mathcal{H}}(y_1, \ldots, y_{n}) \ge \lambda_{Q}(\mathcal{H}) - \varepsilon > \lambda_{Q,\delta}(\mathcal{H})$. 
    Then it follows from the definition that $\max_{i\in [n]}|y_i - x_i| \le \varepsilon \le \delta$ for some $(x_1, \ldots, x_n) \in \mathrm{Opt}(Q, \mathcal{H})$. 
    By Taylor's remainder theorem, for every $i \in [n]$ satisfying $x_i > 0$ we have 
    \begin{align*}
         \left|D_{i} P_{Q,\mathcal{H}}(y_1, \ldots, y_{n}) - D_{i} P_{Q,\mathcal{H}}(x_1, \ldots, x_{n}) \right|
         \le C_{Q} \cdot \max_{i\in [n]}|y_i -x_i|
         \le \delta,  
    \end{align*}
    completing the proof of~\ref{LEMMA:Lagrangian-Multiplier-2}. 
\end{proof}

\begin{lemma}\label{LEMMA:opt-solotions-clique-expansion-general}
     Let $\ell \ge k \ge r\ge 2$ be integers and $Q$ be an $r$-graph with $\chi(Q) = k$.
     There exist $\varepsilon>0$ and $N_0>0$ such that the following holds for all $n \ge N_0$. 
     Suppose that $V_1 \cup \cdots \cup V_{\ell} = [n]$ is a partition such that $\mathrm{inj}(Q, K^{r}(V_1, \ldots, V_{\ell}))$ is maximized. 
     Then $|V_i| \ge \varepsilon n$ for all $i\in [\ell]$. 
     In particular, there exists a constant $\varepsilon_{Q,\ell} > 0$ depending only on $Q$ and $\ell$ such that $\min_{i\in [\ell]} x_i \ge \varepsilon_{Q,\ell}$ for every $(x_1, \ldots, x_{\ell}) \in \mathrm{Opt}(Q,K_{\ell}^{r})$. 
\end{lemma}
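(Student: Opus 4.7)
The plan is to derive the discrete statement about integer partitions from the continuous statement about $\mathrm{Opt}(Q,K_\ell^r)$ via Fact~\ref{FACT:Lagrangian-meaning}; it therefore suffices to show that every $(x_1,\ldots,x_\ell)\in\mathrm{Opt}(Q,K_\ell^r)$ satisfies $\min_i x_i>0$, since closedness of $\mathrm{Opt}(Q,K_\ell^r)$ inside the compact simplex $\Delta^{\ell-1}$ then automatically yields a uniform bound $\varepsilon_{Q,\ell}>0$. For the discrete part, if some $|V_\ell|<\varepsilon_{Q,\ell} n/2$ in a maximizing partition, then by continuity of $P_{Q,K_\ell^r}$ and compactness of $\{\vec x\in\Delta^{\ell-1}\colon \min_i x_i\le \varepsilon_{Q,\ell}/2\}$, the value $P_{Q,K_\ell^r}(|V_1|/n,\ldots,|V_\ell|/n)$ falls below $\lambda_Q(K_\ell^r)$ by a positive constant, which beats the $O(n^{v(Q)-1})$ error term and is dominated by any $\ell$-partition approximating some $\vec x\in\mathrm{Opt}(Q,K_\ell^r)$ when $n$ is large.

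The continuous positivity reduces to strict monotonicity $\lambda_Q(K_\ell^r)>\lambda_Q(K_{\ell-1}^r)$ for all $\ell\ge k$: indeed, if $\vec x_0\in\mathrm{Opt}(Q,K_\ell^r)$ had $x_0^{(\ell)}=0$, then $\lambda_Q(K_\ell^r)=P_{Q,K_\ell^r}(\vec x_0)=P_{Q,K_{\ell-1}^r}(x_0^{(1)},\ldots,x_0^{(\ell-1)})\le\lambda_Q(K_{\ell-1}^r)$, contradicting strict monotonicity. The base case $\ell=k$ is immediate: $\chi(Q)=k$ forces $\mathrm{Hom}(Q,K_{k-1}^r)=\emptyset$, so $\lambda_Q(K_{k-1}^r)=0$, while the uniform vector on $[k]$ certifies $\lambda_Q(K_k^r)>0$.

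For $\ell>k$, assume toward contradiction that $\lambda_Q(K_\ell^r)=\lambda_Q(K_{\ell-1}^r)$. Iterating the boundary reduction (applying the same observation to $(x_0^{(1)},\ldots,x_0^{(\ell-1)})$ whenever it still has zero coordinates), obtain an optimum $\vec z\in\mathrm{Opt}(Q,K_{\ell-m}^r)$ with all coordinates positive and $\ell-m\ge k$ (the latter forced because $\lambda_Q(K_{\ell-m}^r)=\lambda_Q(K_\ell^r)>0$). Extend $\vec z$ by padding zeros to $\vec z'\in\Delta^{\ell-1}$, which is an optimum of $P_{Q,K_\ell^r}$. Lemma~\ref{LEMMA:Lagrangian-Multiplier}~\ref{LEMMA:Lagrangian-Multiplier-1} then yields $D_j P_{Q,K_\ell^r}(\vec z')=v(Q)\lambda_Q(K_\ell^r)$ for every $j\in\mathrm{Supp}(\vec z)$, while first-order optimality at the boundary gives $D_\ell P_{Q,K_\ell^r}(\vec z')\le v(Q)\lambda_Q(K_\ell^r)$.

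The key step is an explicit combinatorial comparison of these two derivatives. For fixed $v\in V(Q)$, the contribution of $v$ to $D_\ell P_{Q,K_\ell^r}(\vec z')$ is the $\vec z$-weighted count of maps $\psi\colon V(Q)\setminus\{v\}\to\mathrm{Supp}(\vec z)$ with $\psi|_e$ injective for every edge $e\in Q$ avoiding $v$ and $\psi|_{e\setminus v}$ injective for every edge $e\in Q$ containing $v$; the contribution of $v$ to $D_j P_{Q,K_\ell^r}(\vec z')$ is the same weighted count subject to the additional constraint $\psi(u)\ne j$ for every $u$ sharing an edge with $v$ in $Q$ (enforced by the homomorphism condition at $v$). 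Hence $D_\ell P\ge D_j P$, with strict inequality whenever some $\psi$ in the larger class sends an edge-neighbour of $v$ to $j$. Such $\psi$ exists: since $\ell-m\ge k=\chi(Q)$, take any homomorphism $\phi\colon Q\to K_{\ell-m}^r$, use the $S_{\ell-m}$-symmetry of $K_{\ell-m}^r$ to permute colours so that $\phi(u)=j$ for some chosen neighbour $u$ of $v$ (which exists since $Q$ has no isolated vertices), and restrict to $V(Q)\setminus\{v\}$. This gives $D_\ell P(\vec z')>v(Q)\lambda_Q(K_\ell^r)$, contradicting KKT. The main obstacle is precisely this strict monotonicity; the conceptual content is that the derivative at the new coordinate $\ell$ is \emph{free} of the hom-induced avoidance constraint that every within-support derivative inherits, so an extra colour genuinely enlarges the homomorphism space.
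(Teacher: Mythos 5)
Your proof is correct, but it takes a genuinely different route from the paper's. The paper first reduces to $r=2$ via Fact~\ref{FACT:homomorphism-shadow} and then argues directly on a maximizing integer partition: if some part has size at most $\varepsilon n$, it finds $k$ parts of size at least $\hat{\varepsilon}n$ (Claim~\ref{CLAIM:opt-solotions-clique-expansion-general-a}), deletes the small part and splits the largest part in two, and checks that the injective copies of $Q$ gained (using both halves of the split via a proper $k$-colouring of $Q$) outnumber the at most $v(Q)|V_1|n^{v(Q)-1}$ copies lost, contradicting maximality; the continuous ``in particular'' statement is then read off from the discrete one. You instead prove the continuous statement first: a boundary optimum would force $\lambda_{Q}(K_{\ell}^{r})=\lambda_{Q}(K_{\ell-1}^{r})$, which you exclude by comparing, at a padded full-support optimum, the support derivatives from Lemma~\ref{LEMMA:Lagrangian-Multiplier}~\ref{LEMMA:Lagrangian-Multiplier-1} with the first-order inequality at the unused colour, the strict gap coming from the absence of the neighbourhood-avoidance constraint at a fresh colour; the discrete claim then follows by compactness of $\mathrm{Opt}(Q,K_{\ell}^{r})$ and Fact~\ref{FACT:Lagrangian-meaning}. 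The paper's swap argument is more elementary and yields explicit constants, while yours isolates the clean structural fact that $\lambda_{Q}(K_{\ell}^{r})$ is strictly increasing in $\ell$ for $\ell\ge\chi(Q)$ and derives the two assertions in the opposite logical order. If you write this up, make explicit that your derivative computation uses the homogeneous reading of $P_{Q,\mathcal{H}}$ (each $\phi$ contributing $\prod_{v\in V(Q)}X_{\phi(v)}$ with multiplicities), the convention under which Lemma~\ref{LEMMA:Lagrangian-Multiplier} and Fact~\ref{FACT:Lagrangian-meaning} hold, and justify the boundary inequality $D_{\ell}P\le D_{j}P$ by the one-line directional-derivative argument (moving mass from a positive coordinate to the zero coordinate cannot increase $P$ at a maximizer).
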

\begin{proof}[Proof of Lemma~\ref{LEMMA:opt-solotions-clique-expansion-general}]
    Due to Fact~\ref{FACT:homomorphism-shadow}, it suffices to prove this lemma for the case $r=2$.
    Fix a graph $Q$ with $\chi(Q) = k$.
    Let $0 < \varepsilon \ll \hat{\varepsilon} \ll 1$ be sufficiently small and $N_0$ be sufficiently large.
    Suppose that this lemma is not true and let $n \ge N_0$ be an integer such that there exists a partition $V_1 \cup \cdots \cup V_{\ell} = [n]$ with $\mathrm{inj}(Q, K^{r}(V_1, \ldots, V_{\ell}))$ being maximized, but $\min_{i\in [\ell]}|V_i| \le \varepsilon n$. 
    By relabeling the sets we may assume that $V_1$ has the smallest size and $V_2$ has the largest size. 
    Then $|V_1|\le \varepsilon n$, and by the Pigeonhole Principle, we have $|V_2|\ge n/\ell$.
    For simplicity, let $K \coloneqq K^{2}[V_1, \ldots, V_{\ell}]$. 
    \begin{claim}\label{CLAIM:opt-solotions-clique-expansion-general-a}
        There exists a $k$-set $J \subset [\ell]$ such that $\min_{j\in J}|V_j| \ge \hat{\varepsilon} n$.
    \end{claim}
    \begin{proof}[Proof of Claim~\ref{CLAIM:opt-solotions-clique-expansion-general-a}]
        Suppose to the contrary that for every $k$-set $J \subset [\ell]$ there exists $i \in J$ such that $|V_i| < \hat{\varepsilon} n$. 
        Then we would have 
        \begin{align*}
            \mathrm{inj}(Q,K)
            \le \sum_{i \ge k}\binom{\ell}{i} \cdot \hat{\varepsilon}n \cdot  n^{v(Q)-1}
            < \binom{\ell}{k}\left(\left\lfloor\frac{n}{\ell}\right\rfloor-v(Q)\right)^{v(Q)}
            \le \mathrm{inj}(Q,T(n,\ell)), 
        \end{align*}
        contradicting the maximality of $K$. 
    \end{proof}
    By symmetry, we may assume that $J = \{2, \ldots, k+1\}$ is the $k$-set guaranteed by Claim~\ref{CLAIM:opt-solotions-clique-expansion-general-a}. 
    Let $U_1 \cup U_2 = V_2$ be a balanced partition.  Let $G$ be the graph obtained from $K$ by removing $V_1$ and adding all edges crossing $U_1$ and $U_2$. Observe that $G$ is a complete $\ell$-partite graph with parts $U_1, U_2, V_3, \ldots, V_{\ell}$. 
    Suppose that $W_1 \cup \cdots \cup W_{k} = V(Q)$ is a partition such that $Q \subset K^2[W_1, \ldots, W_k]$. Note that this is possible and $\min_{1\le i < j \le k}|G[W_i,W_j]| \ge 1$ since $\chi(Q) = k$.  
    Observe that every homomorphism from $Q$ to $G$ that maps $W_1$ to $U_1$ and $W_2$ to $U_2$ is not contained in $\mathrm{Inj}(Q,K)$. 
    Therefore, 
    \begin{align*}
        & \quad \mathrm{inj}(Q, G) - \mathrm{inj}(Q, K) \\
        & \ge \left(|U_1|-v(Q)\right)^{|W_1|}
            \left(|U_2|-v(Q)\right)^{|W_2|}
            \prod_{i=3}^{k}\left(|V_i|-v(Q)\right)^{|W_i|}
            - v(Q) |V_1| n^{v(Q)-1} \\
        & \ge \left(\frac{1}{4\ell}\right)^{|W_1|+|W_2|} \cdot \left(\frac{\hat{\varepsilon}}{2}\right)^{|W_3|+\cdots +|W_{\ell}|} \cdot n^{v(Q)}
            - \varepsilon \cdot v(Q) \cdot  n^{v(Q)}
            > 0, 
    \end{align*}
    contradicting the maximality of $K$. 
\end{proof}

\begin{lemma}\label{LEMMA:homo-subgraph-complete-partite}
    Let $\ell \ge r \ge 2$ be integers and $Q$ be a non-empty $r$-graph. 
    Let $V_1 \cup \cdots \cup V_{\ell} = [n]$ be a partition and $x_{\ast} \coloneqq \min\{|V_i|/n \colon i\in [\ell]\}$. 
    Suppose that $\mathcal{H}$ is a spanning subgraph of the complete $\ell$-partite $r$-graph $K^r \coloneqq K^r[V_1, \ldots, V_{\ell}]$ and $n \ge 2v(Q)/x_{\ast}$. 
    Then 
    \begin{enumerate}[label=(\roman*)]
        \item\label{LEMMA:homo-subgraph-complete-partite-1} $\mathrm{inj}(Q, K^r) - \mathrm{inj}(Q, \mathcal{H}) \ge |K^r \setminus \mathcal{H}|\left(\frac{x_{\ast}n}{2}\right)^{v(Q)-r}$, and 
        \item\label{LEMMA:homo-subgraph-complete-partite-2} $d_{Q, K^r}(v) - d_{Q, \mathcal{H}}(v) \ge \left(d_{K^r}(v) - d_{\mathcal{H}}(v)\right) \left(\frac{x_{\ast}n}{2}\right)^{v(Q)-r}$ for every $v\in [n]$. 
    \end{enumerate}
\end{lemma}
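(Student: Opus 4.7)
The plan is to prove both parts by a direct injective counting argument: for each missing edge $e \in K^r \setminus \mathcal{H}$, I will construct a set $\Phi_e \subseteq \mathrm{Inj}(Q, K^r) \setminus \mathrm{Inj}(Q, \mathcal{H})$ of size at least $(x_{\ast} n/2)^{v(Q)-r}$, arranged so that $\Phi_e \cap \Phi_{e'} = \emptyset$ for distinct $e \neq e'$. Summing then yields part~\ref{LEMMA:homo-subgraph-complete-partite-1}, and restricting to those missing edges that contain $v$ yields part~\ref{LEMMA:homo-subgraph-complete-partite-2}.

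I begin by fixing any edge $f_0 = \{w_1, \ldots, w_r\} \in Q$ (which exists since $Q$ is non-empty) and, using the implicit assumption that $Q$ is $\ell$-colorable (as in the intended applications via Proposition~\ref{PROP:Q-vtx-ext-F-expansion}, where $\chi(Q) = k \le \ell$), I fix an arbitrary proper $\ell$-coloring $\psi_0 \colon V(Q) \to [\ell]$. For each missing edge $e = \{u_1, \ldots, u_r\} \in K^r \setminus \mathcal{H}$, I order its vertices canonically (say $u_1 < \cdots < u_r$ as elements of $[n]$), let $i_j \in [\ell]$ be the index with $u_j \in V_{i_j}$, and post-compose $\psi_0$ with a permutation of $[\ell]$ sending $\psi_0(w_j) \mapsto i_j$ for each $j \in [r]$, producing a proper coloring $\psi_e$ with $\psi_e(w_j) = i_j$. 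Such a permutation exists because the colors $\psi_0(w_1), \ldots, \psi_0(w_r)$ are distinct (since $f_0$ is an edge of $Q$) and $i_1, \ldots, i_r$ are also distinct.

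I then define $\Phi_e$ to consist of those $\phi \in \mathrm{Inj}(Q, K^r)$ with $\phi(w_j) = u_j$ for all $j \in [r]$ and $\phi(w) \in V_{\psi_e(w)}$ for all $w \in V(Q) \setminus f_0$. Building $\phi \in \Phi_e$ greedily one vertex at a time, each of the remaining $v(Q) - r$ vertices can be placed in its designated part avoiding at most $v(Q) - 1$ previously chosen images; since $|V_{\psi_e(w)}| \ge x_{\ast} n \ge 2 v(Q)$, each step offers at least $x_{\ast} n - v(Q) \ge x_{\ast} n/2$ valid choices, giving $|\Phi_e| \ge (x_{\ast} n/2)^{v(Q)-r}$. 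Every $\phi \in \Phi_e$ satisfies $\phi(f_0) = e \notin \mathcal{H}$, so $\phi \notin \mathrm{Inj}(Q, \mathcal{H})$; and disjointness of the $\Phi_e$'s is immediate because $e = \phi(f_0)$ is determined by $\phi$.

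Summing $|\Phi_e|$ over all missing edges yields part~\ref{LEMMA:homo-subgraph-complete-partite-1}. For part~\ref{LEMMA:homo-subgraph-complete-partite-2}, every $\phi \in \Phi_e$ with $v \in e$ automatically satisfies $v \in \phi(V(Q))$, and summing over the $d_{K^r}(v) - d_{\mathcal{H}}(v)$ missing edges through $v$ gives the desired bound. The only real subtlety is guaranteeing the coloring $\psi_e$ with prescribed values on $f_0$, which is the single place where $\ell$-colorability of $Q$ is used; this is dispatched by the permutation step above, and no heavier machinery is required.
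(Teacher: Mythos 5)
Your proof is correct and is essentially the paper's own argument: for each missing edge $e \in K^r \setminus \mathcal{H}$ you count injective homomorphisms sending a fixed edge of $Q$ onto $e$, observe that these lie in $\mathrm{Inj}(Q,K^r)\setminus\mathrm{Inj}(Q,\mathcal{H})$ and are pairwise disjoint over distinct $e$, and bound each count below by $\left(x_{\ast}n/2\right)^{v(Q)-r}$ via greedy extension, with the degree version obtained by restricting to missing edges through $v$. The only difference is that you make explicit the $\ell$-colorability of $Q$ (needed to prescribe the images of the fixed edge consistently with the partition), a hypothesis the lemma leaves implicit and the paper's one-line "every vertex has at least $x_{\ast}n - v(Q)$ choices" glosses over; it does hold in all of the paper's applications.
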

\begin{proof}
    Let $\overline{\mathcal{H}} \coloneqq K^r \setminus \mathcal{H}$. 
    Fix an edge $e_{\star}\in Q$ and fix an edge $e_{\ast} \in \overline{\mathcal{H}}$. 
    Notice that the set 
    \begin{align*}
        \mathcal{M}(e_{\ast}) 
        \coloneqq 
        \left\{\psi \in \mathrm{Inj}(Q, K^r) \colon \psi(e_{\star}) = e_{\ast}\right\} 
    \end{align*}
    has size at least $\left(x_{\ast}n - v(Q)\right)^{v(Q)-r} \ge \left(x_{\ast}n/2\right)^{v(Q)-r}$ (since every vertex in $V(Q) \setminus e_{\star}$ has at least $x_{\ast}n - v(Q)$ choices). 
    Since every map in $\mathcal{M}(e_{\ast})$ is not contained in $\mathrm{Inj}(Q, \mathcal{H})$, we have 
    \begin{align*}
        \mathrm{inj}(Q, K^r) - \mathrm{inj}(Q, \mathcal{H})
        \ge \sum_{e_{\ast} \in \overline{\mathcal{H}}} |\mathcal{M}(e_{\ast})|
        \ge |\overline{\mathcal{H}}|\left(\frac{x_{\ast}n}{2}\right)^{v(Q)-r}, 
    \end{align*}
    proving the first part. 

    Similarly, for every $v\in [n]$ we have 
    \begin{align*}
        d_{Q, K^r}(v) - d_{Q, \mathcal{H}}(v)
        \ge \sum_{e_{\ast} \in \overline{\mathcal{H}} \colon v\in e_{\ast}} |\mathcal{M}(e_{\ast})|
        \ge d_{\overline{\mathcal{H}}}(v)\left(\frac{x_{\ast}n}{2}\right)^{v(Q)-r},  
    \end{align*}
    proving the second part. 
\end{proof}

The following lemma is an improvement of~{\cite[Lemma~4.8]{LMR23unif}}. 

\begin{lemma}\label{LEMMA:vertex-extendability-clique-expansion}
    Let $\ell \ge r \ge 2$ be integers and $F$ be an edge-critical graph with $\chi(F) = \ell+1$. 
    For every $\varepsilon>0$ and $\hat{\varepsilon}>0$ there exist $\delta>0$ and $N_{0}>0$ such that the following holds for all $n \ge N_{0}$. 
    Let $V_1 \cup \cdots \cup V_{\ell} = [n]$ be a partition with $|V_i| \ge \varepsilon n$ for $i\in [\ell]$. 
    Suppose that $\mathcal{H}$ is an $(n+1)$-vertex $H_{F}^{r}$-free $r$-graph with a vertex $v_{\ast}\in V(\mathcal{H})$ such that 
    \begin{enumerate}[label=(\roman*)]
        \item $\mathcal{H}' \coloneqq \mathcal{H}-v_{\ast}$ is a spanning subgraph of $K^r \coloneqq K^{r}(V_1, \ldots, V_{\ell})$, and 
        \item $d_{\mathcal{H}'}(u) \ge d_{K}(u) - \delta n^{r-1}$ for all $u \in V(\mathcal{H}')$.
    \end{enumerate}
    Then the following statements hold. 
    \begin{enumerate}[label=(\roman*)]
        \item Every $e\in L_{\mathcal{H}}(v_{\ast})$ satisfies $|e\cap V_i| \le 1$ for $i\in [\ell]$. 
        \item 
        Let $C_{F}\coloneqq v(F)+(r-2)|F|$. Then sets
        \begin{align*}
            I
             \coloneqq 
            \left\{i\in [\ell] \colon N_{\mathcal{H}}(v_{\ast}) \cap V_i \neq \emptyset \right\} \quad\text{and}\quad
            I_{\mathrm{large}}
             \coloneqq 
            \left\{i\in [\ell] \colon |N_{\mathcal{H}}^{C_{F}}(v_{\ast}) \cap V_i| \ge \hat{\varepsilon} n\right\}
        \end{align*}
        satisfy either $|I_{\mathrm{large}}| \le \ell-2$ or $|I| = |I_{\mathrm{large}}| = \ell-1$.
        In particular, in the latter case, $\mathcal{H}$ is $\ell$-partite.
    \end{enumerate}
\end{lemma}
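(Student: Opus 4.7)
The plan is to prove both conclusions by contradiction: if either fails, we build a copy of $H_{F}^{r}$ inside $\mathcal{H}$ via a greedy embedding, contradicting $H_F^r$-freeness. Throughout, the edge-criticality of $F$ furnishes an edge $f = \{x, y\} \in F$ together with a proper $\ell$-coloring $\phi$ of $F - f$ satisfying $\phi(x) = \phi(y) = 1$ (after re-indexing parts if needed). The near-completeness hypothesis $d_{\mathcal{H}'}(u) \ge d_{K}(u) - \delta n^{r-1}$ forces $|K^{r} \setminus \mathcal{H}'| = O(\delta n^{r})$, so typical $2$-codegrees in $\mathcal{H}'$ are close to those in $K^{r}$; together with $|V_i| \ge \varepsilon n$ and the constant bound $C_F = v(H_F^r)$ on the number of vertices to avoid at each step, this makes every greedy step feasible.

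For part (i), suppose $e \in L_{\mathcal{H}}(v_\ast)$ has $\{u_1, u_2\} \subseteq e \cap V_1$. Embed $x \mapsto u_1$, $y \mapsto u_2$, and each remaining $w \in V(F)$ to a fresh vertex in $V_{\phi(w)}$ chosen to have large pairwise codegrees in $\mathcal{H}'$ with all previously-embedded images of $w$'s neighbors. Realize the edges of $F$ as $r$-edges of $\mathcal{H}$: for the edge $f$ itself, take $e \cup \{v_\ast\} \in \mathcal{H}$, so that the $r - 2$ new vertices of its expansion are exactly $(e \setminus \{u_1, u_2\}) \cup \{v_\ast\}$; for every other $f' \in F$, greedily pick a fresh $(r-2)$-set of new vertices, existence of which follows from the density of $\mathcal{H}'$ in $K^r$. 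This yields $H_F^r \subseteq \mathcal{H}$, contradicting the hypothesis.

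For part (ii), note $I_{\mathrm{large}} \subseteq I$, so the negation of the conclusion is (A) $|I_{\mathrm{large}}| = \ell$, or (B) $|I_{\mathrm{large}}| = \ell - 1$ and $|I| = \ell$. In case (A), use edge-criticality to recolor $x$ and obtain a proper $(\ell+1)$-coloring of $F$ whose color class $\ell+1$ consists of the single vertex $\{x\}$. Send $x \mapsto v_\ast$ and each other $w \mapsto V_{\phi(w)}$, additionally requiring $w$'s image to lie in $N_{\mathcal{H}}^{C_F}(v_\ast) \cap V_{\phi(w)}$ (size $\ge \hat{\varepsilon} n$ by $|I_{\mathrm{large}}| = \ell$) whenever $w$ is adjacent to $x$ in $F$. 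Expand each edge of $F$: if it is incident to $x$, the $C_F$ pairwise disjoint $(r-2)$-link sets at $\{v_\ast, w'\}$ guaranteed by $w' \in N_{\mathcal{H}}^{C_F}(v_\ast)$ provide a fresh expansion; otherwise use the density of $\mathcal{H}'$ as in part (i). In case (B), assume without loss of generality $I_{\mathrm{large}} = \{2, \ldots, \ell\}$, pick $u \in N_{\mathcal{H}}(v_\ast) \cap V_1$, and fix any $(r-2)$-set $T^\ast$ with $\{u, v_\ast\} \cup T^\ast \in \mathcal{H}$; part (i) forces $T^\ast \subseteq V_2 \cup \cdots \cup V_\ell$ with at most one vertex per part. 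Imitate (i) with $x \mapsto u$ and $y \mapsto v_\ast$, requiring vertices adjacent to $y$ in $F - f$ to land in $N_{\mathcal{H}}^{C_F}(v_\ast) \cap V_{\phi(w)}$ (feasible since such $\phi(w) \in \{2, \ldots, \ell\} = I_{\mathrm{large}}$), using $T^\ast$ as the expansion of $f$, and selecting all other embedded vertices to avoid $T^\ast$. The ``in particular'' statement then follows by placing $v_\ast$ into the unused part: by part (i) every edge of $\mathcal{H}$ containing $v_\ast$ becomes transversal to the resulting $\ell$-partition, so $\mathcal{H}$ is $\ell$-partite.

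The hard part will be organizing the greedy embedding under the simultaneous constraints of correct part-membership, membership in $N_{\mathcal{H}}^{C_F}(v_\ast)$ for vertices adjacent (in $F$) to the preimage of $v_\ast$, and avoidance of the $\le C_F$ already-used vertices (and, in case (B), the fixed set $T^\ast$). The bookkeeping is routine given $|V_i| \ge \varepsilon n$, $|K^r \setminus \mathcal{H}'| \le \delta n^r / r$, the $\ge \hat{\varepsilon} n$ candidate vertices in each $N_{\mathcal{H}}^{C_F}(v_\ast) \cap V_i$ for $i \in I_{\mathrm{large}}$, and the $C_F$ pairwise disjoint link sets promised by the definition of $N_{\mathcal{H}}^{C_F}(v_\ast)$.
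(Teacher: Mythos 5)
Your proposal is correct and follows the same strategy as the paper's proof: in each bad configuration you embed the edge-critical graph $F$ into the shadow so that its critical edge lands on the anomalous pair (two vertices of one part, covered only by an edge through $v_\ast$, or the pair $\{v_\ast,u\}$ itself), you use membership in $N_{\mathcal{H}}^{C_F}(v_\ast)$ to supply $C_F$ pairwise disjoint link sets for the edges at $v_\ast$, and you expand greedily to a copy of $H_F^r$, contradicting $H_F^r$-freeness; the ``in particular'' statement is argued exactly as the paper intends. The difference is in how the embedding of $F$ is produced: the paper invokes the greedy embedding lemma imported from Liu--Mubayi--Reiher (Lemma~\ref{LEMMA:greedily-embedding-Gi}) to extract sets $U_i$ of size $2C_F$ on which $\mathcal{H}$ induces a complete multipartite $r$-graph, and then applies Fact~\ref{FACT:extend-F}, whereas you run a vertex-by-vertex greedy selection controlled by codegree counting, and you split part (ii) into the subcases $|I_{\mathrm{large}}|=\ell$ and $|I_{\mathrm{large}}|=\ell-1$, $|I|=\ell$, which the paper handles in one stroke. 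Your route is more self-contained but shifts the work into the bookkeeping you defer; one point to make explicit there is that the codegree control at the fixed anchor vertices $u_1,u_2$ (resp.\ $u$) must come from the per-vertex hypothesis $d_{\mathcal{H}'}(a)\ge d_{K^r}(a)-\delta n^{r-1}$ (for a fixed $a$, at most $(r-1)(\delta/\eta) n$ vertices $b$ of a given part satisfy $d_{\mathcal{H}'}(ab)\le d_{K^r}(ab)-\eta n^{r-2}$), not from the global bound $|K^r\setminus\mathcal{H}'|=O(\delta n^r)$ alone, since a global edge deficit could in principle concentrate on pairs through a single fixed vertex; as the hypothesis you are given is exactly the per-vertex one, the greedy closes and the proof goes through.
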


In the proof of Lemma~\ref{LEMMA:vertex-extendability-clique-expansion} we will use the following lemma. 

\begin{lemma}[{\cite[Lemma~4.5]{LMR23unif}}]\label{LEMMA:greedily-embedding-Gi}
Fix a real $\eta \in (0, 1)$ and integers $m, n\ge 1$.
Let $\mathcal{G}$ be an $r$-graph with vertex set~$[m]$ and let $\mathcal{H}$ be a further $r$-graph
with $v(\mathcal{H})=n$.
Consider a vertex partition $V(\mathcal{H}) = \bigcup_{i\in[m]}V_i$ and the associated
blowup $\widehat{\mathcal{G}} := \mathcal{G}(V_1,\ldots,V_{m})$ of $\mathcal{G}$.
Suppose that two sets $T \subseteq [m]$ and $S\subseteq V(\mathcal{H})$
have the properties
\begin{enumerate}[label=(\roman*)]
\item\label{it:47a} $|V_{j}'| \ge 2q(|S|+1)|T|\eta^{1/r} n$  for all $j \in T$,
\item\label{it:47b} $|\mathcal{H}[V_{j_1}',\ldots,V_{j_r}']| \ge |\widehat{\mathcal{G}}[V_{j_1}',\ldots,V_{j_r}']|
		- \eta n^r$ for all $\{j_1,\ldots,j_r\} \in \binom{T}{r}$, and
\item\label{it:47c} $|L_{\mathcal{H}}(v)[V_{j_1}',\ldots,V_{j_{r-1}}']| \ge |L_{\widehat{\mathcal{G}}}(v)[V_{j_1}',\ldots,V_{j_{r-1}}']|
		- \eta n^{r-1}$ for all $v\in S$ and for all $\{j_1,\ldots,j_{r-1}\} \in \binom{T}{r-1}$,
\end{enumerate}
where $V_i' := V_i\setminus S$ for $i\in [m]$. 
Then there exists a selection of $q$-set $U_i \subseteq V_j$ for all $j\in [T]$
such that $U := \bigcup_{j\in T}U_j$ satisfies
$\widehat{\mathcal{G}}[U] \subseteq \mathcal{H}[U]$ and
$L_{\widehat{\mathcal{G}}}(v)[U] \subseteq L_{\mathcal{H}}(v)[U]$ for all $v\in S$.
In particular, if $\mathcal{H} \subseteq \widehat{\mathcal{G}}$,
then $\widehat{\mathcal{G}}[U] = \mathcal{H}[U]$ and
$L_{\widehat{\mathcal{G}}}(v)[U] = L_{\mathcal{H}}(v)[U]$ for all $v\in S$.
\end{lemma}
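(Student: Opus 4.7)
The plan is to apply the probabilistic method. For each $j \in T$, independently sample $U_j$ uniformly at random from $\binom{V_j'}{q}$ (condition~(i) ensures $|V_j'| \ge q$), and set $U := \bigcup_{j \in T} U_j$. Call an $r$-edge $e \in \widehat{\mathcal{G}}[U] \setminus \mathcal{H}[U]$ a \emph{globally bad edge}, and, for $v \in S$, an $(r-1)$-edge of $L_{\widehat{\mathcal{G}}}(v)[U] \setminus L_{\mathcal{H}}(v)[U]$ a \emph{link-bad edge for $v$}. It suffices to show that the expected total number of bad edges is strictly less than $1$, because then some realization of the $U_j$ produces a set $U$ with no bad edges, and such a $U$ satisfies both $\widehat{\mathcal{G}}[U] \subseteq \mathcal{H}[U]$ and $L_{\widehat{\mathcal{G}}}(v)[U] \subseteq L_{\mathcal{H}}(v)[U]$ for all $v \in S$.

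Every edge of the blowup $\widehat{\mathcal{G}}$ crosses some $\{j_1, \ldots, j_r\} \in \binom{T}{r}$ with one vertex in each $V_{j_i}$, so condition~(ii) caps the number of globally bad edges of type $\{j_1, \ldots, j_r\}$ by $\eta n^r$. For a specific such edge $\{u_1, \ldots, u_r\}$ with $u_i \in V_{j_i}'$, independence across $j$ and the identity $\Pr[u \in U_j] = q/|V_j'|$ give $\Pr[e \subseteq U] = \prod_{i=1}^{r} q/|V_{j_i}'|$. Combining with the lower bound $|V_j'| \ge 2q(|S|+1)|T| \eta^{1/r} n$ from~(i), the expected number of globally bad edges in $U$ is at most
\begin{align*}
\binom{|T|}{r} \cdot \eta n^r \cdot \frac{q^r}{\left(2q(|S|+1)|T|\eta^{1/r} n\right)^r}
\le \frac{1}{r!\, 2^r (|S|+1)^r}.
\end{align*}
An analogous calculation using condition~(iii) with $r-1$ factors in place of $r$ shows that the expected number of link-bad edges for a single $v \in S$ is at most $\eta^{1/r}/((r-1)!\, 2^{r-1} (|S|+1)^{r-1})$; summing over $v \in S$ and using $|S|/(|S|+1)^{r-1} \le 1/(|S|+1)^{r-2}$ gives a total link contribution of at most $\eta^{1/r}/((r-1)!\, 2^{r-1} (|S|+1)^{r-2})$.

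Since $|V_j'| \le n$, condition~(i) forces $\eta^{1/r} \le 1/(2q(|S|+1)|T|) \le 1/2$, so the two expectations combine to give a total expected bad count of at most
\begin{align*}
\frac{1}{r!\, 2^r (|S|+1)^r} + \frac{1}{(r-1)!\, 2^r (|S|+1)^{r-2}}
\le \frac{1}{(r-1)!\, 2^{r-1}} < 1
\end{align*}
for every $r \ge 2$. Hence some choice of $U$ contains no bad edges, establishing the main conclusion. The ``in particular'' clause is then immediate: if $\mathcal{H} \subseteq \widehat{\mathcal{G}}$, then the reverse inclusions $\mathcal{H}[U] \subseteq \widehat{\mathcal{G}}[U]$ and $L_{\mathcal{H}}(v)[U] \subseteq L_{\widehat{\mathcal{G}}}(v)[U]$ are automatic, upgrading both containments to equalities. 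The argument poses no real obstacle; the only point of care is confirming that the specific numerical factor $2q(|S|+1)|T|$ in condition~(i) is precisely tuned to drive both expectations below $1$ simultaneously, as the displayed bound verifies.
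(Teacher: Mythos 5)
The paper itself gives no proof of this lemma: it is imported verbatim from \cite[Lemma~4.5]{LMR23unif}, so there is no in-paper argument to compare against. Your first-moment argument is correct and is a genuinely clean alternative to the deterministic, vertex-by-vertex greedy selection used in the cited source: you pay for the simplicity only in the (already available) factor $2q(|S|+1)|T|\eta^{1/r}$ in condition~(i), and your displayed computation correctly verifies that this factor drives the total expected number of bad $r$-edges and bad link-$(r-1)$-edges below $1$, including the bound $\eta^{1/r}\le 1/2$ extracted from $|V_j'|\le n$. Two small points are worth making explicit. First, conditions~(ii) and~(iii) bound $|\mathcal{H}[\cdots]|$ from below, not $|\widehat{\mathcal{G}}[\cdots]\setminus\mathcal{H}[\cdots]|$ from above; the step you need is that for $\{j_1,\ldots,j_r\}\in\mathcal{G}$ the graph $\widehat{\mathcal{G}}[V_{j_1}',\ldots,V_{j_r}']$ is the \emph{complete} $r$-partite $r$-graph on those parts (and is empty otherwise), so the crossing subgraph of $\mathcal{H}$ is contained in it and the difference set indeed has size at most $\eta n^r$ (similarly for links). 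Second, your parenthetical claim that condition~(i) ensures $|V_j'|\ge q$ only holds when $2(|S|+1)|T|\eta^{1/r}n\ge 1$; for very small $\eta$ relative to $n$ condition~(i) is vacuous and no $q$-set exists, but this is a degenerate regime in which the lemma as transcribed is itself problematic, and in every application in this paper $\eta$ is a constant and $n$ is large, so this is a defect of the quoted statement rather than of your argument.
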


The following fact follows from a simple greedy argument. 

\begin{fact}\label{FACT:extend-F}
    Let $r \ge 3$ be an integer and $F$ be a graph. 
    Suppose that $\mathcal{H}$ is an $r$-graph and $\phi\colon F \to \partial_{r-2}\mathcal{H}$ is an embedding such that for all but at most one edges $e\in F$ the link $L_{\mathcal{H}}(\phi(e))$ contains at least $v(F) + (r-2) |F|$ pairwise disjoint edges. 
    Then $F \subset \mathcal{H}$. 
\end{fact}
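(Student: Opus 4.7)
The plan is to establish the embedding of the expansion $H_{F}^{r}$ greedily, processing the edges of $F$ one at a time and augmenting $\phi$ to a vertex-injective map from $H_{F}^{r}$ into $\mathcal{H}$. Write $m \coloneqq |F|$ and enumerate the edges of $F$ as $e_0, e_1, \ldots, e_{m-1}$, placing the (at most one) exceptional edge first as $e_0$. The goal is to successively pick pairwise disjoint $(r-2)$-sets $S_0, S_1, \ldots, S_{m-1} \subset V(\mathcal{H})$, also disjoint from $\phi(V(F))$, such that $\phi(e_i) \cup S_i \in \mathcal{H}$ for every $i$. Once such sets are chosen, $\phi$ extends to the desired embedding of $H_{F}^{r}$.

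First, I would handle $e_0$. Since $\phi(e_0) \in \partial_{r-2}\mathcal{H}$, the link $L_{\mathcal{H}}(\phi(e_0))$ is non-empty, so we may let $S_0$ be any edge in $L_{\mathcal{H}}(\phi(e_0))$; this yields an $(r-2)$-set with $\phi(e_0) \cup S_0 \in \mathcal{H}$. Note that after this step the set of \emph{used} vertices $\phi(V(F)) \cup S_0$ has size at most $v(F) + (r-2)$, but at this point we do not need to ensure any disjointness from $\phi(V(F))$ beyond the bookkeeping below.

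Next, for each $i \in \{1, \ldots, m-1\}$, suppose that $S_0, \ldots, S_{i-1}$ have been chosen so that $S_0, \ldots, S_{i-1}$ together with $\phi(V(F)) \setminus \phi(e_j)$ are pairwise disjoint in the appropriate sense. The total number of vertices already \emph{forbidden} for $S_i$ is at most
\[
    v(F) + i(r-2) \;\le\; v(F) + (m-1)(r-2) \;<\; v(F) + (r-2)|F|.
\]
By hypothesis, $L_{\mathcal{H}}(\phi(e_i))$ contains a family of at least $v(F) + (r-2)|F|$ pairwise disjoint $(r-2)$-sets. Each forbidden vertex can intersect at most one member of this disjoint family, so strictly more such sets exist than forbidden vertices, and we may select one set $S_i$ avoiding every forbidden vertex. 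This $S_i$ then satisfies $\phi(e_i) \cup S_i \in \mathcal{H}$, is disjoint from $\phi(V(F))$, and is disjoint from $S_0 \cup \cdots \cup S_{i-1}$.

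There is no substantive obstacle here; the argument is purely combinatorial and the only thing to verify is the counting bound, which is tight enough that the given threshold $v(F) + (r-2)|F|$ suffices. After all $m$ steps, the $r$-sets $\{\phi(e_i) \cup S_i \colon 0 \le i \le m-1\}$ form a subgraph of $\mathcal{H}$ isomorphic to $H_{F}^{r}$, which completes the proof.
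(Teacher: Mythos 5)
Your greedy scheme is exactly the argument the paper has in mind (the paper gives no proof beyond the phrase ``simple greedy argument''), and your counting for the non-exceptional edges is fine: at step $i\ge 1$ at most $v(F)+i(r-2)\le v(F)+(|F|-1)(r-2)<v(F)+(r-2)|F|$ vertices are forbidden, each forbidden vertex meets at most one member of the disjoint family in $L_{\mathcal{H}}(\phi(e_i))$, so a suitable $S_i$ exists. The genuine gap is your treatment of the exceptional edge $e_0$. You take $S_0$ to be an \emph{arbitrary} member of $L_{\mathcal{H}}(\phi(e_0))$ and explicitly assert that no disjointness from $\phi(V(F))$ is needed there; but if $S_0$ meets $\phi(V(F))\setminus \phi(e_0)$, then the $r$-sets $\phi(e_i)\cup S_i$ do \emph{not} form a copy of $H_{F}^{r}$: in the expansion the $(r-2)$-set attached to each edge must avoid $V(F)$ as well as the other attached sets, and your extension of $\phi$ fails to be injective precisely at that vertex. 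This cannot be repaired from the stated hypotheses alone, since for the exceptional edge you only know $L_{\mathcal{H}}(\phi(e_0))\neq\emptyset$. Indeed, read literally (with ``$F\subset\mathcal{H}$'' meaning $H_{F}^{r}\subseteq\mathcal{H}$, which is how the fact is invoked against $H_F^r$-freeness), the statement fails without an extra assumption: take $r=3$, $F=K_3$ on $\{a,b,c\}$ and $\mathcal{H}=\{abc\}\cup\{bcy_i\colon i\in[10]\}\cup\{acy_i\colon i\in[10]\}$; the identity embedding satisfies the hypothesis with exceptional edge $ab$ (here $v(F)+(r-2)|F|=6$), yet any two edges of $\mathcal{H}$ other than pairs $\{bcy_i,acy_j\}$ with $i\neq j$ share two vertices, so $\mathcal{H}$ contains no $H_{K_3}^{3}$.

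So ``there is no substantive obstacle'' is not quite right: the obstacle is exactly the exceptional edge, and the missing ingredient is the additional guarantee that some edge of $\mathcal{H}$ containing $\phi(e_0)$ avoids $\phi(V(F))\setminus\phi(e_0)$. This is what is actually available where the paper applies the fact (there the exceptional pair lies in an edge whose remaining $r-2$ vertices, e.g.\ the vertex $v_{\ast}$, can be kept out of the image of $\phi$, for instance by adding them to the excluded set $S$ in Lemma~\ref{LEMMA:greedily-embedding-Gi}). With that hypothesis added, your choice of $S_0$ becomes legitimate and the rest of your bookkeeping (which already forbids the vertices of $S_0$ in the later steps) goes through verbatim; without it, the step where you ``do not need to ensure any disjointness from $\phi(V(F))$'' is where the proof breaks.
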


\begin{proof}[Proof of Lemma~\ref{LEMMA:vertex-extendability-clique-expansion}]
    First, we prove that $|e\cap V_i| \le 1$ for every $e\in L_{\mathcal{H}}(v_{\ast})$ and every $i\in [\ell]$. 
    Suppose to the contrary that there exists $e_{\ast} \in L_{\mathcal{H}}(v_{\ast})$ and $i_{\ast} \in [\ell]$ such that $|e_{\ast} \cap V_{i_{\ast}}| \ge 2$. By symmetry, we may assume that $i_{\ast} = 1$.  
    Fix distinct vertices $u_1, u_1' \in e_{\ast} \cap V_{i_{\ast}}$. 
    Since 
    \begin{enumerate}[label=(\roman*)]
        \item $|V_i| \ge \varepsilon n \gg \delta n$ for every $i\in [2, \ell]$,  
        \item $|L_{\mathcal{H}'}(u_i)[V_{j_1}, \ldots, V_{j_{r-1}}]| \ge |L_{K^r}(u_j)[V_{j_1}, \ldots, V_{j_{r-1}}]| - \delta n^{r-1}$ for every $i\in \{1,2\}$ and $\{j_1, \ldots, j_{r-1}\} \in \binom{[2,\ell]}{r-1}$, 
        \item $|L_{\mathcal{H}'}(u)[V_{j_1}, \ldots, V_{j_{r-1}}]| \ge |L_{K^r}(u)[V_{j_1}, \ldots, V_{j_{r-1}}]| - \delta n^{r-1}$ for every $\{j_1, \ldots, j_{r-1}\} \in \binom{[2,\ell]}{r-1}$, $i\in [2,\ell] \setminus \{j_1, \ldots, j_{r-1}\}$, and $u\in N_i$,
    \end{enumerate}
    it follows from Lemma~\ref{LEMMA:greedily-embedding-Gi} that there exists a set $U_i \subset V_{i}'$ of size $2C_{F}$ such that the induced subgraph of $\mathcal{H}$ on $U \coloneqq \{u_1, u_1'\} \cup U_2 \cup \cdots \cup U_{\ell}$ is complete $\ell$-partite. 
    Let $H \coloneqq \partial_{r-2}\mathcal{H}[U] \cup \{u_1u_1'\}$. 
    Notice that there exists an embedding $\phi\colon F \to H$ with $\{u_1, u_1'\}$ being a critical edge in $\phi(F)$. 
    Since for every $e\in H\setminus \{u_1u_1'\}$ the link $L_{\mathcal{H}}(e)$ contains at least $C_{F}$ pairwise disjoint edges, it follows from Fact~\ref{FACT:extend-F} that $F \subset \mathcal{H}$, a contradiction.   
    
    Now suppose that $|I_{\mathrm{large}}| > \ell-2$. We aim to show that $|I| = |I_{\mathrm{large}}| = \ell-1$. 
    Suppose to the contrary that $I = [\ell]$ and  $|I_{\mathrm{large}}| \ge \ell-1$. 
    By symmetry, we may assume that $[2,\ell] \subset I_{\mathrm{large}}$. 
    Fix $u_1 \in N_{\mathcal{H}}(v_{\ast}) \cap V_1$ and assume that $E_{\ast}$ is an edge in $\mathcal{H}$ containing $\{v_{\ast}, u_1\}$. 
    Let $N_i \coloneqq N_{\mathcal{H}}^{C_{F}}(v_{\ast}) \cap V_i$ for $i\in [2,\ell]$.
    Recall from the definition of $I_{\mathrm{large}}$ that $|N_i| \ge \hat{\varepsilon} n$ for $i \in [2,\ell]$. 
    Let $\hat{K}^r$ denote the induced subgraph of $K^r$ on $\{u_1\} \cup N_2 \cup \cdots \cup N_{\ell}$. 
    Since 
    \begin{enumerate}[label=(\roman*)]
        \item $|N_i| \ge \hat{\varepsilon} n \gg \delta n$ for every $i\in [1, \ell]$ and  
        \item $|L_{\mathcal{H}'}(u_1)[N_{j_1}, \ldots, N_{j_{r-1}}]| \ge |L_{\hat{K}^r}(u_1)[N_{j_1}, \ldots, N_{j_{r-1}}]| - \delta n^{r-1}$ for every $\{j_1, \ldots, j_{r-1}\} \in \binom{[2,\ell]}{r-1}$, 
        \item $|L_{\mathcal{H}'}(u)[N_{j_1}, \ldots, N_{j_{r-1}}]| \ge |L_{\hat{K}^r}(u)[N_{j_1}, \ldots, N_{j_{r-1}}]| - \delta n^{r-1}$ for every $\{j_1, \ldots, j_{r-1}\} \in \binom{[2,\ell]}{r-1}$, $i\in [2,\ell] \setminus \{j_1, \ldots, j_{r-1}\}$, and $u\in N_i$, 
    \end{enumerate}
    it follows from Lemma~\ref{LEMMA:greedily-embedding-Gi} that there exists a set $U_i \subset N_i$ of size $2C_F$ for every $i\in [2,\ell]$ such that the induced subgraph of $\mathcal{H}$ on $U \coloneqq \{u_1\}\cup U_2 \cup \cdots \cup U_{\ell}$ is complete $\ell$-partite. 
    Notice that the induced subgraph, denoted as $H$, of $\partial_{r-2}\mathcal{H}$ on $U\cup \{v_{\ast}\}$ is complete $(\ell+1)$-partite.  So the edge-critical graph $F$ can be embedded into $H$ (with $\{v_{\ast}, u_1\}$ being a critical edge). 
    In addition, since for every $e\in H\setminus\{u_1 v\}$ the link $L_{\mathcal{H}}(e)$ contains at least $C_{F}$ pairwise disjoint edges, it follows from Fact~\ref{FACT:extend-F} that $F \subset \mathcal{H}$, a contradiction.  
    Therefore, $|I| = |I_{\mathrm{large}}| = \ell-1$. 
\end{proof}

Now we are ready to prove Proposition~\ref{PROP:Q-vtx-ext-F-expansion}. 

\begin{proof}[Proof of Proposition~\ref{PROP:Q-vtx-ext-F-expansion}]
    Let $\varepsilon>0$ be sufficiently small and $n$ be sufficiently large. 
    Let $C_F \coloneqq v(F) + (r-2) |F|$. 
    Let $\mathcal{H}$ be an $n$-vertex $H_{F}^{r}$-free $r$-graph with 
    \begin{align*}
        \delta_{Q}(\mathcal{H}) 
        \ge \left(\lambda_{Q}(K_{\ell}^{r})-\varepsilon \right) \cdot v(Q) \cdot n^{v(Q)-1}, 
    \end{align*}
    and $v_{\ast} \in V(\mathcal{H})$ be a vertex such that $\mathcal{H}_1 \coloneqq  \mathcal{H}-v_{\ast}$ is $\ell$-partite.  
    Let $V \coloneqq V(\mathcal{H}) \setminus \{v_{\ast}\}$ and $V_1 \cup \cdots \cup V_{\ell} = V$ be a partition such that $\mathcal{H}_1$ is a subgraph of $K^r \coloneqq K^r[V_1, \ldots, V_{\ell}]$. 
    Let $x_i \coloneqq |V_i|/n$ for $i\in [\ell]$ and let $x_{\ast} \coloneqq \min\{x_i \colon i\in [\ell]\}$. 
    Note that 
    \begin{align*}
        \delta_{Q}(\mathcal{H}_1) 
        \ge \delta_{Q}(\mathcal{H}) - v(Q) n^{v(Q)-2}
        \ge \left(\lambda_{Q}(K_{\ell}^{r})-2\varepsilon \right) \cdot v(Q) \cdot n^{v(Q)-1}. 
    \end{align*}
    
    Let $P(X_1, \ldots, X_{\ell}) \coloneqq P_{Q,K_{\ell}^{r}}(X_1, \ldots, X_{\ell})$. 
    By Fact~\ref{FACT:Lagrangian-meaning}, 
    \begin{align*}
        \mathrm{inj}(Q, K^r) 
        & = P(x_1, \ldots, x_{\ell}) \cdot n^{v(Q)} + O(n^{v(Q)-1}), \quad\text{and} \\ 
        d_{Q,K^r}(u) 
        & = D_{i}P(x_1, \ldots, x_{\ell}) \cdot n^{v(Q)-1} + O(n^{v(Q)-2}) 
        \quad\text{for every $i\in [\ell]$ and $u\in V_i$}. 
    \end{align*}
    The first equality above implies that 
    \begin{align*}
        P(x_1, \ldots, x_{\ell})
         = \frac{\mathrm{inj}(Q, K^r)}{n^{v(Q)}} - o(1) 
        & \ge \frac{\mathrm{inj}(Q, \mathcal{H}_1)}{n^{v(Q)}} - o(1) \\
        & \ge \frac{1}{v(Q)} \frac{\sum_{v\in V} d_{Q,\mathcal{H}_1}(v)}{n^{v(Q)}} -o(1) \\
        & \ge \lambda_{Q}(K_{\ell}^{r})-2\varepsilon - o(1) 
        \ge \lambda_{Q}(K_{\ell}^{r})-3\varepsilon. 
    \end{align*}
    By Lemma~\ref{LEMMA:Lagrangian-Multiplier}, there exists $(y_1, \ldots, y_{\ell}) \in \mathrm{Opt}(Q,K_{\ell}^{r})$ such that $\max_{i\in [\ell]}|x_i-y_i| \le \delta$ and 
    \begin{align*}
        D_{i}P(x_1, \ldots, x_{\ell}) 
         \le D_{i}P(y_1, \ldots, y_{\ell}) + \delta 
         = v(Q) \cdot \lambda_{Q}(K_{\ell}^{r}) + \delta
         \quad\text{for every}\quad i\in [\ell]. 
    \end{align*}
    This implies that for every $u\in V$, 
    \begin{align*}
        d_{Q,K^r}(u) 
         = D_{i}P(x_1, \ldots, x_{\ell}) \cdot n^{v(Q)-1} + O(n^{v(Q)-2})  
         \le \left(v(Q) \cdot \lambda_{Q}(K_{\ell}^{r}) + 2\delta\right) \cdot n^{v(Q)-1}. 
    \end{align*}
    By Lemma~\ref{LEMMA:homo-subgraph-complete-partite}~\ref{LEMMA:homo-subgraph-complete-partite-2}, for every $u\in V$, 
    \begin{align*}
        d_{K^r}(u) - d_{\mathcal{H}_1}(u)
        & \le \frac{\left(v(Q) \cdot \lambda_{Q}(K_{\ell}^{r}) + 2\delta\right) \cdot n^{v(Q)-1} - \left(\lambda_{Q}(K_{\ell}^{r})-2\varepsilon \right) \cdot v(Q) \cdot n^{v(Q)-1}}{\left(x_{\ast} n/2\right)^{v(Q)-r}} \\
        & \le \frac{2\delta + 2\varepsilon v(Q)}{\left(x_{\ast} /2\right)^{v(Q)-r}} n^{r-1}
        \le \frac{2\delta + 2\varepsilon v(Q)}{\left(\varepsilon_{Q,\ell}/2 - \delta \right)^{v(Q)-1}} n^{r-1}
        \le \delta_1 n^{r-1}. 
    \end{align*}
    In summary, the $r$-graph $\mathcal{H}$ satisfies $\mathcal{H}-v_{\ast} \subset K^{r}(V_1, \ldots, V_{\ell})$ and $d_{\mathcal{H}_1}(u) \ge d_{K^r}(u) - \delta_1 n^{r-1}$ for every $u\in V$. 
    Let 
    \begin{align*}
            I
             \coloneqq 
            \left\{i\in [\ell] \colon N_{\mathcal{H}}(v) \cap V_i \neq \emptyset \right\} \quad\text{and}\quad
            I_{\mathrm{large}}
             \coloneqq 
            \left\{i\in [\ell] \colon |N_{\mathcal{H}}^{C_{F}}(v) \cap V_i| \ge \hat{\varepsilon} n\right\}
    \end{align*}
    It follows from Lemma~\ref{LEMMA:vertex-extendability-clique-expansion} that 
    \begin{enumerate}[label=(\roman*)]
        \item\label{vtx-extend-1} every $e\in L_{\mathcal{H}}(v_{\ast})$ satisfies $|e\cap V_i| \le 1$ for $i\in [\ell]$, and 
        \item\label{vtx-extend-2} either $|I_{\mathrm{large}}| \le \ell-2$ or $|I| = |I_{\mathrm{large}}| = \ell-1$.
    \end{enumerate}
    Suppose that $|I_{\mathrm{large}}| \le \ell-2$. 
    By symmetry, we may assume that $I_{\mathrm{large}} = [\ell']$ for some $\ell' \le \ell-2$. 
    Let 
    \begin{align*}
        \mathcal{E}_{1}
        & \coloneqq \left\{e\cup \{v_{\ast}\} \colon e\in L_{\mathcal{H}}(v_{\ast}) \text{ and } e\cap \left(V_{\ell'+1} \cup \cdots \cup V_{\ell}\right) \neq \emptyset\right\}
        \quad\text{and}\quad \\
        \mathcal{E}_2 
        & \coloneqq \left\{e\cup \{v_{\ast}\} \colon e\in L_{\mathcal{H}}(v_{\ast}) \text{ and } e\cap \left(V_{\ell'+1} \cup \cdots \cup V_{\ell}\right) = \emptyset\right\}. 
    \end{align*}
    Let $\mathcal{H}_2 \coloneqq \mathcal{H} \setminus \mathcal{E}_{1}$, and notice from~\ref{vtx-extend-1} that $\mathcal{H}_2$ is $\ell$-partite. Fix $v \in V_{\ell}$ and observe that $L_{\mathcal{H}_2}(v_{\ast}) \subset L_{K^r}(v)$. 
    It follows from Lemma~\ref{LEMMA:homo-subgraph-complete-partite} that 
    \begin{align}\label{equ:vtx-extend-1}
        d_{Q,\mathcal{H}_2}(v_{\ast})
        & \le d_{Q,K^r}(v_{\ast}) - \left(d_{K^r}(v) - d_{\mathcal{H}_2}(v_{\ast})\right) \left(\frac{x_{\ast} n}{2}\right)^{v(Q)-r} \notag \\
        & \le \left(v(Q) \cdot \lambda_{Q}(K_{\ell}^{r}) + 2\delta\right) \cdot n^{v(Q)-1} - \left(x_{\ast}n\right)^{r-1} \left(\frac{x_{\ast} n}{2}\right)^{v(Q)-r}. 
    \end{align}
    
    It follows from the definition of $I_{\mathrm{large}}$ that 
    \begin{align*}
        |\mathcal{E}_1| 
        \le |V_{\ell'+1} \cup \cdots \cup V_{\ell}| \cdot C_{F}\binom{n}{r-3} + (\ell-\ell') \cdot \hat{\varepsilon} n \cdot \binom{n}{r-2} 
        \le 2\hat{\varepsilon} \ell n^{r-1}. 
    \end{align*}
    Therefore, the set $\left\{\phi \in \mathrm{Inj}(Q,\mathcal{H}) \colon \exists e\in \mathcal{E}_1 \text{ such that } e \in \phi(Q)\right\}$
    has size at most $|\mathcal{E}_1| n^{v(Q)-r} \le 2\hat{\varepsilon} \ell n^{v(Q)-1}$. 
    Combined with~\eqref{equ:vtx-extend-1}, we obtain 
    \begin{align*}
        d_{Q,\mathcal{H}}(v)
        & \le \left(v(Q) \cdot \lambda_{Q}(K_{\ell}^{r}) + 2\delta\right) \cdot n^{v(Q)-1} - \left(x_{\ast}n\right)^{r-1} \left(\frac{x_{\ast} n}{2}\right)^{v(Q)-r} 
        + 2\hat{\varepsilon} \ell n^{v(Q)-1} \\
        & < \delta_{Q}(\mathcal{H}), 
    \end{align*}
    a contradiction. 
    Therefore, $|I| = |I_{\mathrm{large}}| = \ell-1$, and hence, $\mathcal{H}$ is $\ell$-partite. 
\end{proof}

\section{Proof of Theorem~\ref{THM:Homomorphism-complete-multipartite-hypergraph}}\label{SEC:proof-complete-multipartite}
%
Fix integers $\ell \ge k \ge r\ge 2$, fix an $r$-graph $Q$ that is a blowup of some $k$-vertex $2$-covered $r$-graph, and fix an edge-critical graph $F$ with $\chi(F) = \ell+1$.
By Fact~\ref{FACT:weak-expansion},  $\mathcal{K}_{\ell+1}^{r} \le_{\mathrm{hom}} H_{F}^{r}$, $\mathcal{K}_{\ell+1}^{r}$ is blowup-invariant, and $\mathcal{K}_{\ell+1}^{r}$ is symmetrized-stable with respect to $\mathfrak{K}_{\ell}^{r}$. 

First, we show that $Q$ is symmetrization-increasing in the following result. 

\begin{proposition}\label{PROP:complete-multipartite-sym-increase}
    The map $\Gamma \colon \mathfrak{G}^{r} \to \mathbb{R}$ defined by $\Gamma(\mathcal{H}) \coloneqq \mathrm{inj}(Q,\mathcal{H})$ is symmetrization-increasing.
\end{proposition}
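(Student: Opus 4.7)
The plan is to establish the sharper inequality
\[
2\Gamma(\mathcal{H}) \le \Gamma(\mathcal{H}_{u\to v}) + \Gamma(\mathcal{H}_{v\to u})
\]
for every uncovered pair $\{u,v\} \subseteq V(\mathcal{H})$; by the remark immediately following Definition~\ref{DEF:symmetrization-increasing} (with $\alpha=1/2$), this is equivalent to $\Gamma$ being symmetrization-increasing. Write $Q = \mathcal{G}_{0}(W_1, \ldots, W_k)$, where $\mathcal{G}_0$ is a $2$-covered $k$-vertex $r$-graph. For any $r$-graph $\mathcal{G}$, an injective homomorphism $\phi \in \mathrm{Inj}(Q,\mathcal{G})$ is uniquely determined by (i) a tuple of pairwise disjoint sets $A_i := \phi(W_i) \subseteq V(\mathcal{G})$ with $|A_i|=|W_i|$ such that for every $\{i_1,\ldots,i_r\} \in \mathcal{G}_0$ the complete $r$-partite $r$-graph on $(A_{i_1},\ldots,A_{i_r})$ is contained in $\mathcal{G}$; together with (ii) a bijection $W_i\to A_i$ for each $i$. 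If $N(\mathcal{G})$ counts such valid placements, then $\mathrm{inj}(Q,\mathcal{G}) = \left(\prod_{i} |W_i|!\right)\cdot N(\mathcal{G})$, so the target reduces to $2N(\mathcal{H})\le N(\mathcal{H}_{u\to v})+N(\mathcal{H}_{v\to u})$.

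I would then partition valid placements in $\mathcal{G}$ into four classes $a(\mathcal{G}),\beta_u(\mathcal{G}),\beta_v(\mathcal{G}),\gamma(\mathcal{G})$ depending on whether they use neither, only $u$, only $v$, or both of $u,v$. The crucial structural step is that any placement using both $u$ and $v$ must place them in the same part: if $u\in A_i$ and $v\in A_j$ with $i\ne j$, then the $2$-coveredness of $\mathcal{G}_0$ supplies an edge of $\mathcal{G}_0$ containing $\{i,j\}$, and the validity condition (together with the nonemptyness of the remaining parts, which is guaranteed by $Q$ having no isolated vertices) forces some edge of $\mathcal{H}$ to contain $\{u,v\}$, contradicting uncoveredness. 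Elementary bookkeeping then delivers the identities $a(\mathcal{H})=a(\mathcal{H}_{u\to v})=a(\mathcal{H}_{v\to u})$, $\beta_v(\mathcal{H})=\beta_v(\mathcal{H}_{u\to v})$ (since $L(v)$ is unchanged), and $\beta_u(\mathcal{H}_{u\to v})=\beta_v(\mathcal{H})$ via the $u\leftrightarrow v$ swap, which is a bijection on valid placements because $u,v$ become equivalent in $\mathcal{H}_{u\to v}$; symmetric identities hold for $\mathcal{H}_{v\to u}$. Consequently these three classes contribute the same total $2a(\mathcal{H})+2\beta_u(\mathcal{H})+2\beta_v(\mathcal{H})$ to both sides of the desired inequality.

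What remains is the inequality $\gamma(\mathcal{H}) \le \gamma(\mathcal{H}_{u\to v})$ and, by symmetry, $\gamma(\mathcal{H}) \le \gamma(\mathcal{H}_{v\to u})$. For this, I take any valid $\mathcal{H}$-placement with $u,v\in A_i$ and argue that the same placement is valid in $\mathcal{H}_{u\to v}$: for each edge $\{i,j_2,\ldots,j_r\} \in \mathcal{G}_0$ and each $a_l\in A_{j_l}$, the $v$-validity condition already forces $\{a_2,\ldots,a_r\}\in L_{\mathcal{H}}(v)$, which is exactly the constraint that $\mathcal{H}_{u\to v}$ imposes on $u$, while all other constraints are unaffected by the symmetrization. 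Summing yields $2\gamma(\mathcal{H})\le \gamma(\mathcal{H}_{u\to v})+\gamma(\mathcal{H}_{v\to u})$, and combined with the previous paragraph this gives $2N(\mathcal{H})\le N(\mathcal{H}_{u\to v})+N(\mathcal{H}_{v\to u})$, completing the argument. The main conceptual obstacle is precisely the ``same-part'' structural step: it depends essentially on the $2$-coveredness of $\mathcal{G}_0$, since a placement with $u,v$ in distinct parts could perfectly well be valid in $\mathcal{H}$ yet fail in the symmetrized hypergraph, and that would destroy the $\gamma$-comparison.
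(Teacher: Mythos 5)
Your proof is correct and follows essentially the same route as the paper's: reduce to $2\Gamma(\mathcal{H}) \le \Gamma(\mathcal{H}_{u\to v}) + \Gamma(\mathcal{H}_{v\to u})$, split the injective homomorphisms according to which of $u,v$ they use, note the exact balance among the three classes not using both, and for the class using both apply $2$-coveredness to force $u,v$ into the same blowup part, so their preimages are twins and the map survives symmetrization. Your factoring through unordered ``placements'' (dividing out $\prod_i |W_i|!$) is only a cosmetic repackaging of the paper's direct count of $\mathrm{Inj}(Q,\cdot)$ split into the four classes $\Phi_{\overline{u},\overline{v}}, \Phi_{u,\overline{v}}, \Phi_{\overline{u},v}, \Phi_{u,v}$.
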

\begin{proof}[Proof of Proposition~\ref{PROP:complete-multipartite-sym-increase}]
    Let $\mathcal{H}_0 \coloneqq \mathcal{H}$, $\mathcal{H}_1 \coloneqq \mathcal{H}_{u\to v}$, and $\mathcal{H}_2 \coloneqq \mathcal{H}_{v\to u}$.
    For $i\in \{0,1,2\}$, let 
    \begin{align*}
        \Phi^{i}_{\overline{u}, \overline{v}} 
        & \coloneqq \left\{\phi\in \mathrm{Inj}(Q,\mathcal{H}_i) \colon \{u,v\} \cap \phi(V(Q)) = \emptyset\right\}, \\
        \Phi^{i}_{u,\overline{v}} 
        & \coloneqq \left\{\phi\in \mathrm{Inj}(Q,\mathcal{H}_i) \colon u \in \phi(V(Q)),\ v\not\in \phi(V(Q))\right\}, \\
        \Phi^{i}_{\overline{u},v} 
        & \coloneqq \left\{\phi\in \mathrm{Inj}(Q,\mathcal{H}_i) \colon u \not\in \phi(V(Q)),\ v\in \phi(V(Q))\right\}, \\
        \Phi^{i}_{u,v} 
        & \coloneqq \left\{\phi\in \mathrm{Inj}(Q,\mathcal{H}_i) \colon \{u,v\} \subset \phi(V(Q))\right\}. 
    \end{align*}
    It is easy to see from the definition that 
    \begin{align*}
         \Phi^{0}_{\overline{u}, \overline{v}} 
        = \Phi^{1}_{\overline{u}, \overline{v}} 
        = \Phi^{2}_{\overline{u}, \overline{v}}, \quad
         2|\Phi^{0}_{u,\overline{v}}| 
        = |\Phi^{2}_{u,\overline{v}}| + |\Phi^{2}_{\overline{u},v}|, \quad\text{and}\quad 
        2|\Phi^{0}_{\overline{u},v}| 
        = |\Phi^{1}_{u,\overline{v}}| + |\Phi^{1}_{\overline{u},v}|. 
    \end{align*}

    \begin{claim}
        $|\Phi^{0}_{u,v}| \le \min\left\{ |\Phi^{1}_{u,v}|,\  |\Phi^{2}_{u,v}| \right\}$. 
    \end{claim}
    \begin{proof}
        Let us assume that $F$ is the $2$-covered $r$-graph on $[k]$ such that $Q$ is a blowup of $F$. 
        Let $W_1 \cup \cdots \cup W_{k} = V(Q)$ be a partition such that $Q = F[W_1, \ldots, W_{k}]$. 
        Fix $\phi \in \Phi^{0}_{u,v}$. 
        Since $\{u,v\}$ is uncovered in $\mathcal{H}$, we know that $\phi^{-1}(\{u,v\}) \subset W_i$ for some $i\in [k]$. 
        Since $F$ is $2$-covered, $\phi^{-1}(u)$ and $\phi^{-1}(v)$ are twins in $Q$, that is, $L_{Q}(\phi^{-1}(u)) = L_{Q}(\phi^{-1}(v))$. 
        Therefore, every member $e\in L_{Q}(\phi^{-1}(u))$ satisfies $\phi(e) \in L_{\mathcal{H}}(u) \cap L_{\mathcal{H}}(v)$. 
        It follows that $\phi \in \Phi^{1}_{u,v}$ and $\phi \in \Phi^{2}_{u,v}$. 
        Therefore, $|\Phi^{0}_{u,v}| \le \min\left\{ |\Phi^{1}_{u,v}|,\  |\Phi^{2}_{u,v}| \right\}$. 
    \end{proof}
        
    It follows from the argument above that $2\Gamma(\mathcal{H}) \le \Gamma(\mathcal{H}_1) + \Gamma(\mathcal{H}_2)$, and hence, $\Gamma$ is symmetrization-increasing. 
\end{proof}

Proposition~\ref{PROP:Q-vtx-ext-F-expansion} established that $H_{F}^{r}$ is $Q$-vertex-extendable with respect to $\mathfrak{K}_{\ell}^{r}$. 
By Theorem~\ref{THM:general-generalized-Turan-a}, to establish Theorem~\ref{THM:Homomorphism-complete-multipartite-hypergraph}, it suffices to show that both $\mathcal{K}_{\ell}^{r}$ and $H_{F}^{r}$ are $Q$-vertex-extendable with respect to $\mathfrak{K}_{\ell}^{r}$. 

\begin{proposition}\label{PROP:complete-multipartite-vtx-extend}
    Both $\mathcal{K}_{\ell}^{r}$ and $H_{F}^{r}$ are $Q$-vertex-extendable with respect to $\mathfrak{K}_{\ell}^{r}$.
\end{proposition}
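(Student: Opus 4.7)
The proof splits naturally according to the two families.

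For $H_F^r$: I would apply the ``in particular'' clause of Proposition~\ref{PROP:Q-vtx-ext-F-expansion}, which reduces the task to checking (i)~$\chi(Q) = k$ and (ii)~$\pi_{\mathrm{inj}}(Q, H_F^r) = \lambda_Q(K_\ell^r)$. For (i), the $k$-vertex $2$-covered base graph $F_0$ underlying $Q$ has $\partial_{r-2} F_0 = K_k$, so any homomorphism $F_0 \to K_\ell^r$ must be injective on pairs and hence uses $k$ distinct colors; combined with the trivial embedding $F_0 \hookrightarrow K_k^r$ this yields $\chi(F_0) = k$, and since $Q$ is a blowup of $F_0$ with non-empty parts we inherit $\chi(Q) = k$. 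For (ii), the lower bound follows by evaluating $\mathrm{inj}(Q,\cdot)$ on a complete $\ell$-partite $r$-graph with parts proportional to an element of $\mathrm{Opt}(Q, K_\ell^r)$ via Fact~\ref{FACT:Lagrangian-meaning}, while the upper bound uses the chain $\pi_{\mathrm{inj}}(Q, H_F^r) \le \pi_{\mathrm{inj}}(Q, \mathcal{K}_{\ell+1}^r) \le \lambda_Q(K_\ell^r)$, where the first inequality comes from $\mathcal{K}_{\ell+1}^r \le_{\mathrm{hom}} \{H_F^r\}$ (Fact~\ref{FACT:weak-expansion}) with Fact~\ref{FACT:le-hom}, and the second combines Proposition~\ref{PROP:complete-multipartite-sym-increase}, Facts~\ref{FACT:blowup-invariant-symmetrization-increasing},~\ref{FACT:weak-expansion},~\ref{FACT:exact-symm-increase}, and~\ref{FACT:Lagrangian-meaning} to reduce the maximization to $\ell$-partite constructions.

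For $\mathcal{K}_{\ell+1}^r$ (taking the $\mathcal{K}_\ell^r$ in the statement to be a typo, since this is the family required by Theorem~\ref{THM:general-generalized-Turan-a} together with Fact~\ref{FACT:weak-expansion}): The plan is to rerun the proof of Proposition~\ref{PROP:Q-vtx-ext-F-expansion} essentially verbatim. Its Lagrangian portion---Lemmas~\ref{LEMMA:Lagrangian-Multiplier},~\ref{LEMMA:opt-solotions-clique-expansion-general}, and~\ref{LEMMA:homo-subgraph-complete-partite}---uses only the minimum $Q$-degree hypothesis and concludes that $\mathcal{H} - v_\ast$ is a spanning subgraph of some $K^r(V_1, \ldots, V_\ell)$ whose degrees lie within $\delta n^{r-1}$ of the complete case. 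We then need the analogue of Lemma~\ref{LEMMA:vertex-extendability-clique-expansion} for $\mathcal{K}_{\ell+1}^r$-free graphs. The crucial observation is that $\mathcal{H}$ is $\mathcal{K}_{\ell+1}^r$-free iff $\partial_{r-2}\mathcal{H}$ is $K_{\ell+1}$-free, since any $K_{\ell+1}$ in the shadow lifts to a weak expansion by choosing one witness edge per pair. This lets us replace each use of Fact~\ref{FACT:extend-F} in Lemma~\ref{LEMMA:vertex-extendability-clique-expansion} by the simpler task of exhibiting $K_{\ell+1}$ in the shadow: if some $e \in L_{\mathcal{H}}(v_\ast)$ contains $\{u_1, u_1'\} \subseteq V_1$, invoke Lemma~\ref{LEMMA:greedily-embedding-Gi} to choose $u_j \in V_j$ for $j \in [2,\ell]$ making all crossing pairs among $\{u_1, u_1', u_2, \ldots, u_\ell\}$ into $\partial_{r-2}$-edges, contradicting $\mathcal{K}_{\ell+1}^r$-freeness; and if $v_\ast$ has substantial $\partial_{r-2}$-neighborhoods in all $\ell$ parts, apply Lemma~\ref{LEMMA:greedily-embedding-Gi} to produce a $K_{\ell+1}$ on $\{v_\ast, u_1, \ldots, u_\ell\}$.

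The main obstacle is purely technical: verifying Lemma~\ref{LEMMA:greedily-embedding-Gi}'s hypotheses on degree closeness and part sizes when restricting to appropriate neighborhoods of $v_\ast$. The same care is already required in the proof of Lemma~\ref{LEMMA:vertex-extendability-clique-expansion}, so no new ideas are needed. Conceptually, the $\mathcal{K}_{\ell+1}^r$ case is strictly easier than the $H_F^r$ case, since the shadow characterization of $\mathcal{K}_{\ell+1}^r$-freeness bypasses the disjoint-link machinery of Fact~\ref{FACT:extend-F}.
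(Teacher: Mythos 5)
Your proposal is correct, and your $H_{F}^{r}$ half is essentially the paper's own argument: the density identity $\pi_{\mathrm{inj}}(Q,H_{F}^{r})=\pi_{\mathrm{inj}}(Q,\mathcal{K}_{\ell+1}^{r})=\lambda_{Q}(K_{\ell}^{r})$ is obtained there exactly as you describe, via Proposition~\ref{PROP:complete-multipartite-sym-increase}, Facts~\ref{FACT:blowup-invariant-symmetrization-increasing},~\ref{FACT:weak-expansion},~\ref{FACT:exact-symm-increase},~\ref{FACT:Lagrangian-meaning} and~\ref{FACT:le-hom}, and then fed into the ``in particular'' clause of Proposition~\ref{PROP:Q-vtx-ext-F-expansion}; you are also right that the $\mathcal{K}_{\ell}^{r}$ in the statement should be read as $\mathcal{K}_{\ell+1}^{r}$, as required by Theorem~\ref{THM:general-generalized-Turan-a} and Fact~\ref{FACT:weak-expansion}. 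Where you genuinely diverge is the $\mathcal{K}_{\ell+1}^{r}$ half: the paper does not rerun the proof of Proposition~\ref{PROP:Q-vtx-ext-F-expansion}, it reuses it as a black box. Since $H_{\ell+1}^{r}=H_{K_{\ell+1}}^{r}$ is itself a weak expansion of $K_{\ell+1}$, every $\mathcal{K}_{\ell+1}^{r}$-free $r$-graph is $H_{\ell+1}^{r}$-free, and $K_{\ell+1}$ is edge-critical with chromatic number $\ell+1$; so Proposition~\ref{PROP:Q-vtx-ext-F-expansion} applied with $F=K_{\ell+1}$, combined with $\pi_{\mathrm{inj}}(Q,\mathcal{K}_{\ell+1}^{r})=\lambda_{Q}(K_{\ell}^{r})$ (needed to translate the minimum-$Q$-degree threshold in the definition of vertex-extendability into the Lagrangian threshold, and which you have already proved), yields the $\mathcal{K}_{\ell+1}^{r}$ case with no new embedding work. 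Your plan of re-deriving a $\mathcal{K}_{\ell+1}^{r}$-analogue of Lemma~\ref{LEMMA:vertex-extendability-clique-expansion} via shadow cliques would also work and is indeed simpler at the embedding step (no Fact~\ref{FACT:extend-F}), but it duplicates the entire Lagrangian analysis for no gain.

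One caveat on the claim ``$\mathcal{H}$ is $\mathcal{K}_{\ell+1}^{r}$-free iff $\partial_{r-2}\mathcal{H}$ is $K_{\ell+1}$-free'': the lifting direction needs the witness edge of a pair to be allowed to contain further core vertices, i.e.\ the permissive reading of ``adding $r-2$ vertices into each edge''. This is the reading the paper implicitly relies on anyway (Fact~\ref{FACT:weak-expansion}'s assertion that symmetrized $\mathcal{K}_{\ell+1}^{r}$-free $r$-graphs are $\ell$-partite fails otherwise), and in your two cases the sets produced by Lemma~\ref{LEMMA:greedily-embedding-Gi} are large enough that one can choose witness edges avoiding the remaining core vertices, so the issue is easily repaired --- but it should be stated rather than asserted as an unconditional equivalence.
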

\begin{proof}[Proof of Proposition~\ref{PROP:complete-multipartite-vtx-extend}]
    Due to Proposition~\ref{PROP:Q-vtx-ext-F-expansion}, it suffices to show that $\pi_{\mathrm{inj}}(Q, H_{F}^{r}) =  \pi_{\mathrm{inj}}(Q, \mathcal{K}_{\ell}^{r}) = \lambda_{Q}(\mathcal{K}_{\ell}^{r})$. 
    Let $\Gamma \colon \mathfrak{G}^{r} \to \mathbb{R}$ be defined as 
    \begin{align*}
        \Gamma(\mathcal{H})
        \coloneqq \mathrm{inj}(Q,\mathcal{H}) \cdot \mathbbm{1}_{\mathcal{K}_{\ell}^{r}}(\mathcal{H}). 
    \end{align*}
    Since $\mathcal{K}_{\ell}^{r}$ is blowup-invariant, it follows from Proposition~\ref{PROP:complete-multipartite-sym-increase} and Fact~\ref{FACT:blowup-invariant-symmetrization-increasing} that $\Gamma$ is symmetrization-increaing. 
    By Fact~\ref{FACT:weak-expansion}, $\Gamma$ is symmetrizaed-stable with respect to $\mathfrak{K}_{\ell}^{r}$. 
    Therefore, by Fact~\ref{FACT:exact-symm-increase}, 
    \begin{align*}
        \mathrm{inj}(n,Q,\mathcal{K}_{\ell}^{r}) = \mathrm{ex}_{\Gamma}(n) = \max\left\{\Gamma(\mathcal{H}) \colon \mathcal{H} \in \mathfrak{K}_{\ell}^{r} \text{ and } v(\mathcal{H}) = n\right\}
    \end{align*}
    In particular, if follows from Fact~\ref{FACT:Lagrangian-meaning} that $\pi_{\mathrm{inj}}(Q, \mathcal{K}_{\ell}^{r}) = \lambda_{Q}(\mathcal{K}_{\ell}^{r})$. 
    Since $\mathcal{K}_{\ell}^{r} \le_{\mathrm{hom}} H_{F}^{r}$, it follows from Fact~\ref{FACT:le-hom} that $\pi_{\mathrm{inj}}(Q, H_{F}^{r}) \le \pi_{\mathrm{inj}}(Q, \mathcal{K}_{\ell}^{r}) = \lambda_{Q}(\mathcal{K}_{\ell}^{r})$. 
    On the other hand, since $\mathfrak{K}_{\ell}^{r}$ is also $H_{F}^{r}$-free, we obtain $\pi_{\mathrm{inj}}(Q, H_{F}^{r}) \ge \lambda_{Q}(\mathcal{K}_{\ell}^{r})$,  and hence, $\pi_{\mathrm{inj}}(Q, H_{F}^{r}) = \lambda_{Q}(\mathcal{K}_{\ell}^{r})$. 
\end{proof}
\section{Proof of Theorem~\ref{THM:Turan-good-edge-critical-expansion}}\label{SEC:Proof-Turan-goodness}
%
Let $\ell \ge r \ge 2$ be integers. 
Let $Q$ be an $r$-graph and $F$ be a graph with $\chi(F) = \ell+1$. 
Recall that $H_{F}^{r}$ is $Q$-edge-stable with respect to $\mathfrak{K}_{\ell}^{r}$ if every $\mathcal{F}$-free $n$-vertex $r$-graph $\mathcal{H}$ with $\mathrm{inj}(Q,\mathcal{H}) = (1-o(1))\cdot \mathrm{inj}(n,Q,\mathcal{F})$ is $\ell$-partite after removing $o(n^r)$ edges.
Following the definition of Gerbner--Palmer~\cite{GP22}, we refine the definition of edge-stable by saying that $Q$ is \textbf{$H_{F}^{r}$-Tur\'{a}n-stable} if for every $\varepsilon>0$ there exist $\delta>0$ and $N_0$ such that every $H_{F}^{r}$-free $r$-graph $\mathcal{H}$ on $n \ge N_0$ vertices with $\mathrm{inj}(Q, \mathcal{H}) \ge (1-\delta)\cdot \mathrm{inj}(n,Q)$ is a copy of $T^{r}(n,\ell)$ after deleting and adding at most $\varepsilon n^r$ edges. 

First, we establish the following reduction theorem using Fact~\ref{FACT:homomorphism-shadow}. 

\begin{theorem}\label{THM:Turan-stable-expansion}
    Let $\ell \ge r \ge 2$ be integers, $Q$ be an $r$-graph, and $F$ be a graph with $\chi(F) = \ell+1$.
    If $K_{\ell+1}$ is $\partial_{r-2} Q$-edge-stable with respect to $\mathfrak{K}_{\ell}^{2}$, then $H_{F}^{r}$ is $Q$-edge-stable with respect to $\mathfrak{K}_{\ell}^{r}$. 
    Moreover, if $\partial_{r-2}Q$ is $K_{\ell+1}$-Tur\'{a}n-stable, then $Q$ is $H_{F}^{r}$-Tur\'{a}n-stable.    
\end{theorem}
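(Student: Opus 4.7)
The plan is to reduce both claims to the analogous graph-level statements via Fact~\ref{FACT:homomorphism-shadow}, which gives $\mathrm{inj}(Q,\mathcal{H}) \le \mathrm{inj}(\partial_{r-2}Q,\partial_{r-2}\mathcal{H})$ in general, with equality when $\mathcal{H}$ is complete multipartite. As a preparatory step I would establish the asymptotic identity
\[
    \mathrm{inj}(n, Q, H_{F}^{r})
    = (1+o(1))\,\mathrm{inj}(n, \partial_{r-2}Q, K_{\ell+1}).
\]
The upper bound is obtained by taking any $H_{F}^{r}$-free $\mathcal{H}$, invoking Facts~\ref{FACT:weak-expansion} and~\ref{FACT:le-hom} to delete $o(n^r)$ edges so that the remainder $\mathcal{H}'$ is $\mathcal{K}_{\ell+1}^{r}$-free (hence $\partial_{r-2}\mathcal{H}'$ is $K_{\ell+1}$-free), and chaining Fact~\ref{FACT:homomorphism-shadow} with the continuity of $\mathrm{inj}(Q,\cdot)$ from Fact~\ref{FACT:inj-basic-properties}. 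For the lower bound I would exhibit a complete $\ell$-partite $r$-graph $\mathcal{G}$ whose $2$-shadow is a complete $\ell$-partite $K_{\ell+1}$-free graph of near-maximum $\partial_{r-2}Q$-count, noting that such a $\mathcal{G}$ is automatically $H_{F}^{r}$-free because $\chi(F)=\ell+1$ forces $F\not\subseteq\partial_{r-2}\mathcal{G}$; the equality clause of Fact~\ref{FACT:homomorphism-shadow} then delivers the lower bound. The assumed edge-stability of $K_{\ell+1}$ with respect to $\partial_{r-2}Q$ guarantees that some near-extremal $K_{\ell+1}$-free graph is $\ell$-partite after $o(n^2)$ edits, which is all that is needed to choose $\mathcal{G}$.

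For the edge-stability conclusion, given an $H_{F}^{r}$-free $\mathcal{H}$ with $\mathrm{inj}(Q,\mathcal{H}) \ge (1-\delta)\,\mathrm{inj}(n,Q,H_{F}^{r})$, I would proceed in four steps: (i) remove $o(n^r)$ edges via Fact~\ref{FACT:le-hom} to obtain a $\mathcal{K}_{\ell+1}^{r}$-free subgraph $\mathcal{H}_1$; (ii) combine continuity (Fact~\ref{FACT:inj-basic-properties}), Fact~\ref{FACT:homomorphism-shadow}, and the identity above to deduce
\[
    \mathrm{inj}(\partial_{r-2}Q,\partial_{r-2}\mathcal{H}_1)
    \ge (1-\delta-o(1))\,\mathrm{inj}(n, \partial_{r-2}Q, K_{\ell+1});
\]
(iii) invoke the assumed $\partial_{r-2}Q$-edge-stability of $K_{\ell+1}$ to delete at most $\varepsilon_1 n^2$ pairs from the $K_{\ell+1}$-free graph $\partial_{r-2}\mathcal{H}_1$ and render it $\ell$-partite with some partition $V_1\cup\cdots\cup V_{\ell}=[n]$; (iv) lift back to $\mathcal{H}_1$ by noting that each deleted pair lies in at most $\binom{n-2}{r-2}$ edges of $\mathcal{H}_1$, so removing at most $\varepsilon_1 n^r$ edges from $\mathcal{H}_1$ makes its $2$-shadow $\ell$-partite, which is equivalent to $\mathcal{H}_1$ itself being $\ell$-partite. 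Choosing $\delta$ small enough that the resulting $\varepsilon_1$ is small yields the desired $\varepsilon n^r$ bound.

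For the Tur\'{a}n-stability statement, the same reduction applies with the edge-stability step replaced by the assumed $K_{\ell+1}$-Tur\'{a}n-stability of $\partial_{r-2}Q$; this yields a balanced $\ell$-partition $V_1\cup\cdots\cup V_{\ell}=[n]$ such that $\partial_{r-2}\mathcal{H}_1$ lies within $\varepsilon_1 n^2$ edit distance of $T(n,\ell)$. Bounding the deletions needed to bring $\mathcal{H}_1$ inside $T^{r}(n,\ell)$ proceeds as in step (iv) above. To bound the additions, I would set $\mathcal{H}_2 \coloneqq \mathcal{H}_1 \cap T^{r}(n,\ell)$ and apply Lemma~\ref{LEMMA:homo-subgraph-complete-partite}~\ref{LEMMA:homo-subgraph-complete-partite-1} with $x_{\ast}=1/\ell$ to obtain
\[
    \left|T^{r}(n,\ell)\setminus\mathcal{H}_2\right|
    \le \frac{\mathrm{inj}(Q, T^{r}(n,\ell)) - \mathrm{inj}(Q,\mathcal{H}_2)}{(n/(2\ell))^{v(Q)-r}}.
\]
Combining near-extremality of $\mathrm{inj}(Q,\mathcal{H})$, the key identity, and Fact~\ref{FACT:inj-basic-properties} bounds the numerator by $O((\delta+\varepsilon_1)n^{v(Q)})$, while the denominator is $\Theta(n^{v(Q)-r})$; hence the total edit distance between $\mathcal{H}$ and $T^{r}(n,\ell)$ is $O((\delta+\varepsilon_1)n^r) \le \varepsilon n^r$.

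The main obstacle I anticipate is bookkeeping the quantitative chain $\delta \to \varepsilon_1 \to \varepsilon$ across the hypergraph-to-shadow reduction: the removal-lemma step introduces a qualitative $o(n^r)$ loss whose dependence on $\delta$ is implicit, and one must ensure it does not swamp the effective $\varepsilon_1$ coming from the assumed shadow-level stability. A subsidiary subtlety is that $\partial_{r-2}Q$-edge-stability does not pin down the partition of the extremal $K_{\ell+1}$-free graph, so the lower bound in the key identity requires verifying that any near-extremal graph can be replaced by a complete $\ell$-partite one without decreasing the $\partial_{r-2}Q$-count; this is straightforward since adding edges never decreases the injective-homomorphism count of $\partial_{r-2}Q$.
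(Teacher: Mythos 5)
Your proposal is correct and follows essentially the same route as the paper: reduce to the $2$-shadow via Fact~\ref{FACT:homomorphism-shadow}, establish an asymptotic identity between the hypergraph extremal count and the graph-level one, apply the assumed stability of $K_{\ell+1}$ to the ($K_{\ell+1}$-free) shadow, and lift the partition back, bounding deletions by a trivial pair-to-edge count and additions via Lemma~\ref{LEMMA:homo-subgraph-complete-partite}~\ref{LEMMA:homo-subgraph-complete-partite-1}, exactly as in the paper's proof. The only deviations are bookkeeping: you clean the shadow in one step using $\mathcal{K}_{\ell+1}^{r}$ and Fact~\ref{FACT:le-hom} and compare against $\mathrm{inj}(n,\partial_{r-2}Q,K_{\ell+1})$, whereas the paper uses Fact~\ref{FACT:expansion-reduction} plus the graph removal lemma and compares against $\mathrm{inj}(n,\partial_{r-2}Q,F)$ (Lemma~\ref{LEMMA:weak-stable-rough-upper-bound}); just note that your step ``$\mathcal{K}_{\ell+1}^{r}$-free implies the shadow is $K_{\ell+1}$-free'' uses the permissive reading of weak expansions (added vertices allowed to lie in the core), which is the reading under which Fact~\ref{FACT:weak-expansion} itself is valid, and otherwise the paper's reduction step can be substituted verbatim.
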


In~\cite{GK23}, Gerbner--Karim proved that $K_{\ell+1}$ is $Q$-edge-stable with respect to $\mathfrak{K}_{\ell}^{r}$ if $Q$ is complete $k$-partite for some $k \le \ell$ (see also Theorem~\ref{THM:Homomorphism-complete-multipartite-hypergraph} for the case $r= 2$ and $F = K_{\ell+1}$). 
Additionally, they proved that $Q$ is $K_{\ell+1}$-Tur\'{a}n-stable for every $\ell \ge 300 \left(v(Q)\right)^{9}$. 
Based on these stability theorems, we obtain the following corollary of Theorem~\ref{THM:Turan-stable-expansion}. 

\begin{corollary}\label{CORO:Turan-stable-expansion}
    Let $\ell \ge r \ge 2$ be integers, $Q$ be an $r$-graph, and $F$ be a graph with $\chi(F) = \ell+1$. 
    \begin{enumerate}[label=(\roman*)]
        \item\label{CORO:Turan-stable-expansion-1} If $\partial_{r-2} Q$ is a complete $k$-partite graph for some $k \le \ell$, then $H_{F}^{r}$ is $Q$-edge-stable with respect to $\mathfrak{K}_{\ell}^{r}$. 
        \item\label{CORO:Turan-stable-expansion-2} If $\ell \ge 300 \left(v(Q)\right)^{9}$, then $Q$ is $H_{F}^{r}$-Tur\'{a}n-stable. 
    \end{enumerate}
\end{corollary}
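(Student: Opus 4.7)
Both parts of the corollary follow essentially immediately from Theorem~\ref{THM:Turan-stable-expansion}, by verifying its respective hypothesis via the Gerbner--Karim stability theorems recalled in the paragraph preceding the corollary. The overall plan is to translate the given assumption on $Q$ into a corresponding stability statement about the $2$-uniform shadow $\partial_{r-2}Q$, and then invoke one of the two clauses of Theorem~\ref{THM:Turan-stable-expansion} to lift it from the graph setting to the $r$-uniform expansion setting.

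For part~\ref{CORO:Turan-stable-expansion-1}, the assumption that $\partial_{r-2}Q$ is a complete $k$-partite graph with $k \le \ell$ places us precisely in the regime where Gerbner--Karim established edge-stability of $K_{\ell+1}$ with respect to $\mathfrak{K}_{\ell}^{2}$ (this is the case $r=2,\ F=K_{\ell+1}$ of Theorem~\ref{THM:Homomorphism-complete-multipartite-hypergraph}, as noted in the text). Applied to the graph $\partial_{r-2}Q$, this supplies the hypothesis of the first clause of Theorem~\ref{THM:Turan-stable-expansion}, which then yields the desired $Q$-edge-stability of $H_{F}^{r}$ with respect to $\mathfrak{K}_{\ell}^{r}$.

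For part~\ref{CORO:Turan-stable-expansion-2}, we may assume (by the standard reduction mentioned at the start of Section~\ref{SEC:Prelim}) that $Q$ contains no isolated vertices, which guarantees $v(\partial_{r-2}Q) = v(Q)$. The hypothesis $\ell \ge 300(v(Q))^{9}$ therefore implies $\ell \ge 300(v(\partial_{r-2}Q))^{9}$, which is exactly the Morrison--Nir--Norin--Rz\k{a}\.{z}ewski--Wesolek threshold under which Gerbner--Karim proved that $\partial_{r-2}Q$ is $K_{\ell+1}$-Tur\'{a}n-stable. This is precisely the hypothesis of the second clause of Theorem~\ref{THM:Turan-stable-expansion}, delivering the claimed $H_{F}^{r}$-Tur\'{a}n-stability of $Q$.

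There is no substantive obstacle specific to the corollary itself: all of the nontrivial work has been absorbed into Theorem~\ref{THM:Turan-stable-expansion}, whose proof in turn relies on the shadow identity in Fact~\ref{FACT:homomorphism-shadow} (to compare $\mathrm{inj}(n,Q,H_{F}^{r})$ with $\mathrm{inj}(n,\partial_{r-2}Q,K_{\ell+1})$) together with the removal-type transfer in Fact~\ref{FACT:le-hom} (to pass from $H_{F}^{r}$-freeness to $\mathcal{K}_{\ell+1}^{r}$-freeness up to $o(n^{r})$ edges). The only point that merits a brief check when writing the proof is the no-isolated-vertices reduction in part~\ref{CORO:Turan-stable-expansion-2}, which is harmless since isolated vertices of $Q$ contribute only a multiplicative $(n-v(Q)+1)$ factor to $\mathrm{inj}(Q,\mathcal{H})$ and so preserve the relevant near-extremality.
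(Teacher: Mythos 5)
Your proposal is correct and follows essentially the same route as the paper, which states Corollary~\ref{CORO:Turan-stable-expansion} as an immediate consequence of Theorem~\ref{THM:Turan-stable-expansion} combined with the Gerbner--Karim stability results applied to the shadow $\partial_{r-2}Q$ (edge-stability of $K_{\ell+1}$ for complete multipartite graphs in part~\ref{CORO:Turan-stable-expansion-1}, and $K_{\ell+1}$-Tur\'{a}n-stability for $\ell \ge 300(v(Q))^9$ in part~\ref{CORO:Turan-stable-expansion-2}). Your remark on the harmless no-isolated-vertices reduction (so that $v(\partial_{r-2}Q) \le v(Q)$ keeps the threshold valid) is a fine additional check consistent with the paper's standing assumption.
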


In the proof of Theorem~\ref{THM:Turan-stable-expansion}, we will use the following simple lemma. 
\begin{lemma}\label{LEMMA:weak-stable-rough-upper-bound}
    Let $Q$ be an $r$-graph and $F$ be a graph with $\chi(F) = \ell + 1 > r$. 
    Suppose that $F$ is $\partial_{r-2} Q$-edge-stable with respect to $\mathfrak{K}_{\ell}^{2}$. Then 
    \begin{align*}
        \left|\mathrm{inj}(n,Q,H_{F}^{r}) - \mathrm{inj}(n,\partial_{r-2} Q,F) \right| 
        = o(n^{v(Q)}). 
    \end{align*}
    In particular, $\pi_{\mathrm{inj}}(Q, H_{F}^{r}) = \lambda_{\partial_{r-2}Q}(K_{\ell}^{2}) =  \lambda_{Q}(K_{\ell}^{r})$. 
\end{lemma}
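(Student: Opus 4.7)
The proof splits into two parts: an asymptotic identification of both extremal numbers with $\lambda_{\partial_{r-2}Q}(K_\ell^2)\,n^{v(Q)}$, and the algebraic equality of the two Lagrangians.

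First, I would pin down $\mathrm{inj}(n,\partial_{r-2}Q,F)$. The lower bound $\mathrm{inj}(\partial_{r-2}Q,T(n,\ell))$ comes from the $F$-free construction $T(n,\ell)$ (whose chromatic number is $\ell<\chi(F)$); by Fact~\ref{FACT:Lagrangian-meaning} this equals $\lambda_{\partial_{r-2}Q}(K_\ell^2)\,n^{v(Q)} - o(n^{v(Q)})$. For the matching upper bound, the hypothesis of $\partial_{r-2}Q$-edge-stability supplies an $\ell$-partite subgraph $G_0$ of any near-extremal $F$-free graph $G$ with $|G\setminus G_0| = o(n^2)$; the $o(n^2)$ extra edges contribute at most $o(n^{v(Q)})$ to injective $\partial_{r-2}Q$-homomorphisms (each edge lies in at most $O(n^{v(Q)-2})$ copies), and $G_0$ sits inside a complete $\ell$-partite graph whose count is at most $\lambda_{\partial_{r-2}Q}(K_\ell^2)\,n^{v(Q)}$ by Fact~\ref{FACT:Lagrangian-meaning}. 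On the hypergraph side, $T^r(n,\ell)$ is $H_F^r$-free because its $2$-shadow is $\ell$-chromatic, and Fact~\ref{FACT:homomorphism-shadow} (applied to the complete multipartite $T^r(n,\ell)$) gives $\mathrm{inj}(Q,T^r(n,\ell)) = \mathrm{inj}(\partial_{r-2}Q,T(n,\ell))$, which yields the asymptotic lower bound for $\mathrm{inj}(n,Q,H_F^r)$.

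The crux is the matching upper bound $\mathrm{inj}(n,Q,H_F^r) \le \mathrm{inj}(n,\partial_{r-2}Q,F) + o(n^{v(Q)})$. Let $\mathcal{F}_F^r$ denote the family of weak expansions of $F$, namely $r$-graphs obtained from $F$ by enlarging each edge with an $(r-2)$-set, not necessarily pairwise disjoint. The key observation is that $H_F^r$ is $F'$-colorable for every $F' \in \mathcal{F}_F^r$: fix the copy of $V(F)$ and map each pairwise disjoint $(r-2)$-set of $H_F^r$ bijectively onto the corresponding $(r-2)$-set of $F'$; then every edge of $H_F^r$ maps onto the appropriate edge of $F'$. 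Hence $\mathcal{F}_F^r \le_{\mathrm{hom}} \{H_F^r\}$, and Fact~\ref{FACT:le-hom} lets us delete $o(n^r)$ edges from any $H_F^r$-free $\mathcal{H}$ to obtain an $\mathcal{F}_F^r$-free subgraph $\mathcal{H}'$. But an $\mathcal{F}_F^r$-free $r$-graph has $F$-free $2$-shadow: a copy of $F$ inside $\partial_{r-2}\mathcal{H}'$ lifts edge-by-edge to a weak expansion of $F$ in $\mathcal{H}'$. Combining Fact~\ref{FACT:inj-basic-properties}(iv) (to absorb the $o(n^r)$ edge loss as $o(n^{v(Q)})$) with Fact~\ref{FACT:homomorphism-shadow} then gives
\begin{align*}
\mathrm{inj}(Q,\mathcal{H}) \le \mathrm{inj}(Q,\mathcal{H}') + o(n^{v(Q)})
\le \mathrm{inj}(\partial_{r-2}Q,\partial_{r-2}\mathcal{H}') + o(n^{v(Q)})
\le \mathrm{inj}(n,\partial_{r-2}Q,F) + o(n^{v(Q)}).
\end{align*}

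Together these estimates yield $|\mathrm{inj}(n,Q,H_F^r) - \mathrm{inj}(n,\partial_{r-2}Q,F)| = o(n^{v(Q)})$ and $\pi_{\mathrm{inj}}(Q,H_F^r) = \lambda_{\partial_{r-2}Q}(K_\ell^2)$. The final identity $\lambda_{\partial_{r-2}Q}(K_\ell^2) = \lambda_Q(K_\ell^r)$ follows directly from Fact~\ref{FACT:homomorphism-shadow}: every blowup $K_\ell^r(V_1,\ldots,V_\ell)$ is complete $\ell$-partite, so $\mathrm{inj}(Q,K_\ell^r(V_1,\ldots,V_\ell)) = \mathrm{inj}(\partial_{r-2}Q,K_\ell^2(V_1,\ldots,V_\ell))$, and Fact~\ref{FACT:Lagrangian-meaning} then identifies $P_{Q,K_\ell^r}$ with $P_{\partial_{r-2}Q,K_\ell^2}$ on $\Delta^{\ell-1}$, forcing the Lagrangians to agree. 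The main conceptual step is isolating weak expansions of $F$ as the natural bridge between $H_F^r$-freeness of a hypergraph and $F$-freeness of its $2$-shadow; once that identification is in place, what remains is a careful bookkeeping of the $o(\cdot)$ error terms.
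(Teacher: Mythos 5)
Your upper-bound direction is fine and is in fact a legitimate alternative to the paper's route: the paper invokes the reduction of Fact~\ref{FACT:expansion-reduction} (which deletes only $O(n^{r-1})$ edges to make the shadow $F$-free), whereas you go through weak expansions of $F$ and Fact~\ref{FACT:le-hom}; both give the required $o(n^{v(Q)})$ loss, though with your route you should define the family of weak expansions so that the added $(r-2)$-sets may also reuse vertices of $F$, since that is what a lift of a shadow copy of $F$ actually produces. The closing identity $\lambda_{\partial_{r-2}Q}(K_\ell^2)=\lambda_Q(K_\ell^r)$ via Fact~\ref{FACT:homomorphism-shadow} is also correct.

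The genuine gap is in your lower bound. You claim that Fact~\ref{FACT:Lagrangian-meaning} gives $\mathrm{inj}(\partial_{r-2}Q,T(n,\ell))=\lambda_{\partial_{r-2}Q}(K_\ell^2)\,n^{v(Q)}-o(n^{v(Q)})$, but Fact~\ref{FACT:Lagrangian-meaning} only evaluates the Lagrange polynomial at the \emph{uniform} vector $(1/\ell,\ldots,1/\ell)$, which need not be optimal: the lemma is stated for an arbitrary $r$-graph $Q$, and for many $Q$ the optimal complete $\ell$-partite host is far from balanced (e.g.\ for $r=2$, $Q=K_{1,m}$ and $\ell=2$ the optimal bipartition has one part of size roughly $n/m$, and the balanced graph undercounts by a factor exponential in $m$; this is precisely why Theorems~\ref{THM:Homomorphism-complete-multipartite-hypergraph} and~\ref{THM:Homomorphism-complete-multipartite-hypergraph-b} only assert extremality of \emph{some} complete $\ell$-partite $r$-graph, and Tur\'{a}n-goodness requires $\ell$ enormous). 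Consequently both links in your chain break: $\mathrm{inj}(\partial_{r-2}Q,T(n,\ell))$ need not be within $o(n^{v(Q)})$ of $\mathrm{inj}(n,\partial_{r-2}Q,F)$, and $\mathrm{inj}(Q,T^r(n,\ell))$ need not be within $o(n^{v(Q)})$ of $\mathrm{inj}(n,Q,H_F^r)$, so you have no proof that $\mathrm{inj}(n,Q,H_F^r)\ge \mathrm{inj}(n,\partial_{r-2}Q,F)-o(n^{v(Q)})$, nor of the "in particular" identification of $\pi_{\mathrm{inj}}$ with the Lagrangian. The fix is either the paper's argument --- take an extremal $F$-free graph, use the edge-stability hypothesis to pass to a complete $\ell$-partite graph $G$ with $\mathrm{inj}(\partial_{r-2}Q,G)\ge \mathrm{inj}(n,\partial_{r-2}Q,F)-o(n^{v(Q)})$, and lift $G$ to the $H_F^r$-free $r$-graph $\mathcal{H}_G$ of its $r$-cliques, where Fact~\ref{FACT:homomorphism-shadow} gives $\mathrm{inj}(Q,\mathcal{H}_G)=\mathrm{inj}(\partial_{r-2}Q,G)$ --- or, for the Lagrangian statement, replace $T(n,\ell)$ by a blowup of $K_\ell^r$ whose part ratios follow an optimal vector in $\mathrm{Opt}(\partial_{r-2}Q,K_\ell^2)$.
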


The following simple fact will be useful. 

\begin{fact}[see e.g.~{\cite[Lemma~3]{PI13}}]\label{FACT:expansion-reduction}
    Let $r \ge 3$ be an integer and $F$ be a graph. 
    Every $n$-vertex $H_{F}^{r}$-free $r$-graph $\mathcal{H}$ contains a subgraph $\mathcal{H}'$ such that $\partial_{r-2} \mathcal{H}'$ is $F$-free and 
    \begin{align*}
        |\mathcal{H}'| 
        \ge |\mathcal{H}| - \left(v(F)+(r-2)|F|\right)\binom{n}{r-3}\binom{n}{2}
        \ge |\mathcal{H}| - r\left(v(F)\right)^2 n^{r-1}. 
    \end{align*}
\end{fact}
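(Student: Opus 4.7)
The plan is to construct $\mathcal{H}'$ via a greedy pruning procedure. Let $C \coloneqq v(F)+(r-2)|F|$ (this is exactly $v(H_F^r)$) and fix a threshold $T \coloneqq C\binom{n}{r-3}$. Starting from $\mathcal{H}$, I would iteratively search for a pair $\{u,v\}$ whose current degree lies in $(0,T]$ and delete every edge containing it, until no such pair remains; call the resulting $r$-graph $\mathcal{H}'$. The key bookkeeping observation is that once a pair is processed, its degree drops to zero and can never recover, so each pair is processed at most once. Each processing removes at most $T$ edges, yielding the edge-loss bound $T\binom{n}{2}=(v(F)+(r-2)|F|)\binom{n}{r-3}\binom{n}{2}$ in the statement; the coarser bound $r(v(F))^{2}n^{r-1}$ follows from the trivial estimates $|F|\le\binom{v(F)}{2}$ and $\binom{n}{r-3}\binom{n}{2}\le n^{r-1}$.

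It remains to check that $\partial_{r-2}\mathcal{H}'$ is $F$-free, which I would prove by contradiction. Assume there is a copy of $F$ in $\partial_{r-2}\mathcal{H}'$ with edge list $e_{1},\ldots,e_{|F|}$; the plan is to greedily lift it to $H_{F}^{r}\subset\mathcal{H}'$. Set $V_{1}\coloneqq V(F)$, and for $i=1,\ldots,|F|$ in turn pick an $(r-2)$-set $S_{i}\in L_{\mathcal{H}'}(e_{i})$ disjoint from $V_{i}\setminus e_{i}$, then set $V_{i+1}\coloneqq V_{i}\cup S_{i}$. Termination of the pruning ensures $|L_{\mathcal{H}'}(e_{i})|>T$, while the inductive bound $|V_{i}\setminus e_{i}|\le C-2$ together with the fact that each vertex is contained in at most $\binom{n-1}{r-3}\le\binom{n}{r-3}$ of the candidate $(r-2)$-sets shows that at most $(C-2)\binom{n}{r-3}<T$ of them are forbidden. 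Hence a valid $S_{i}$ always exists, and the procedure produces $H_{F}^{r}\subset\mathcal{H}'\subset\mathcal{H}$, contradicting $H_{F}^{r}$-freeness.

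The main delicate point of the argument is the calibration of $T$: it must be large enough that the greedy extension can dodge the $\le C-2$ already-used vertices at each of the $|F|$ steps, yet small enough that the cumulative loss $T\binom{n}{2}$ matches the stated bound. The choice $T=C\binom{n}{r-3}$, dictated by the vertex count of $H_{F}^{r}$, balances these two requirements and is what drives the constant $v(F)+(r-2)|F|$ appearing in the conclusion.
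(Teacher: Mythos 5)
Your proof is correct and is essentially the standard argument behind this fact (the paper itself gives no proof, citing Pikhurko's Lemma~3, whose proof is exactly this low-codegree-pair pruning with threshold $\bigl(v(F)+(r-2)|F|\bigr)\binom{n}{r-3}$ followed by a greedy lifting of a shadow copy of $F$ to a copy of $H_F^r$). Both the edge-loss accounting (each pair processed at most once, at most $T$ edges lost per pair) and the counting that at most $(C-2)\binom{n}{r-3}<T$ link members are forbidden at each extension step check out.
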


\begin{proof}[Proof of Lemma~\ref{LEMMA:weak-stable-rough-upper-bound}]
    First we prove that $\mathrm{inj}(n, Q, H_{F}^{r}) \ge \mathrm{inj}(n,\partial_{r-2}Q, F) - o(n^{v(Q)})$.
    Let $n$ be a sufficiently large integer. 
    Since $F$ is $\partial_{r-2} Q$-edge-stable with respect to $\mathfrak{K}_{\ell}^{2}$, there exists a complete $\ell$-partite graph $G$ on $n$ vertices such that 
    \begin{align}\label{equ:r-to-2-a}
        \mathrm{inj}(n,\partial_{r-2}Q, F)
        \le \mathrm{inj}(\partial_{r-2}Q, G) + o(n^{v(Q)}). 
    \end{align}
    Let $\mathcal{H}_{G} \coloneqq \left\{S\in \binom{V(G)}{r} \colon G[S] \cong K_{r}\right\}$.  
    It is clear that the $r$-graph $\mathcal{H}_{G}$ is complete $\ell$-partite and $H_{F}^{r}$-free (since $G$ is complete $\ell$-partite and $F$-free). 
    So it follows from Fact~\ref{FACT:homomorphism-shadow} and~\eqref{equ:r-to-2-a} that  
    \begin{align*}
        \mathrm{inj}(n, Q, H_{F}^{r})
        \ge \mathrm{inj}(Q, \mathcal{H}_{G})
        = \mathrm{inj}(\partial_{r-2}Q, G)
        \ge \mathrm{inj}(n,\partial_{r-2}Q, F) - o(n^{v(Q)}). 
    \end{align*}

    Next, we prove that $\mathrm{inj}(n, Q, H_{F}^{r}) \le \mathrm{inj}(n,\partial_{r-2}Q, F) + o(n^{v(Q)})$. 
    Let $\mathcal{H}$ be an $n$-vertex $H_{F}^{r}$-free $r$-graph with $\mathrm{inj}(Q,\mathcal{H}) = \mathrm{inj}(n,Q,H_{F}^{r})$. 
    It follows from Fact~\ref{FACT:expansion-reduction} that there exists $\mathcal{H}' \subset \mathcal{H}$ such that $|\mathcal{H}'| \ge |\mathcal{H}| - r\left(v(F)\right)^2 n^{r-1} = |\mathcal{H}| - o(n^r)$ and $\partial_{r-2}\mathcal{H}'$ is $F$-free. 
    In particular, 
    \begin{align*}
        \mathrm{inj}(Q, \mathcal{H}')
        \ge \mathrm{inj}(Q, \mathcal{H}) - |\mathcal{H}\setminus \mathcal{H}'| n^{v(Q)-r}
        = \mathrm{inj}(n,Q,H_{F}^{r}) - o(n^{v(Q)}). 
    \end{align*}
    Combined with Fact~\ref{FACT:homomorphism-shadow}, we obtain 
    \begin{align*}
        \mathrm{inj}(n,Q,H_{F}^{r}) - o(n^{v(Q)})
        \le \mathrm{inj}(Q, \mathcal{H}')
        \le \mathrm{inj}(\partial_{r-2}Q, \partial_{r-2}\mathcal{H}')
        \le \mathrm{inj}(n,\partial_{r-2}Q, F). 
    \end{align*}
    This proves $\mathrm{inj}(n, Q, H_{F}^{r}) \le \mathrm{inj}(n,\partial_{r-2}Q, F) + o(n^{v(Q)})$, and hence, completing the proof of Lemma~\ref{LEMMA:weak-stable-rough-upper-bound}. 
\end{proof}
Now we are ready to prove Theorem~\ref{THM:Turan-stable-expansion}. 
\begin{proof}[Proof of Theorem~\ref{THM:Turan-stable-expansion}]
    Here we present only the proof for the second part of Theorem~\ref{THM:Turan-stable-expansion}, since the proof for the first part is quite similar and requires only minor modifications.
    Fix $\varepsilon >0$. 
    Let $\delta > 0$ be sufficiently small and $n$ be sufficiently large. 
    Let $\mathcal{H}$ be an $n$-vertex $H_{F}^{r}$-free $r$-graph with $\mathrm{inj}(Q, \mathcal{H}) \ge (1-\delta)\cdot \mathrm{inj}(n,Q,H_{F}^{r})$. 
    For convenience, let $G \coloneqq \partial_{r-2}\mathcal{H}$.

    Applying Fact~\ref{FACT:expansion-reduction} to $\mathcal{H}$, we find a subgraph $\mathcal{H}_1 \subset \mathcal{H}$ with $|\mathcal{H}_1| \ge |\mathcal{H}| - \delta n^r$ such that the graph $G_1 \coloneqq \partial_{r-2}\mathcal{H}_1$ is $F$-free. 
    It follows from Fact~\ref{FACT:homomorphism-shadow} and Lemma~\ref{LEMMA:weak-stable-rough-upper-bound} that 
    \begin{align*}
        \mathrm{inj}(\partial_{r-2}Q, G_1)
        \ge 
        \mathrm{inj}(Q, \mathcal{H}_1)
        \ge \mathrm{inj}(Q, \mathcal{H}) - \delta n^r \cdot n^{v(Q)-r} 
        & \ge \mathrm{inj}(n,Q,H_{F}^{r}) - 2\delta n^{v(Q)} \\
        & \ge \mathrm{inj}(n,\partial_{r-2}Q,F) - 3\delta n^{v(Q)}. 
    \end{align*}
    Since $\chi(\partial_{r-2}Q) = \chi(Q) = \ell+1$, by the Graph Removal Lemma (see e.g.~\cite{RS78}), there exists a $K_{\ell+1}$-free subgraph $G_2 \subset G_1$ such that $|G_2| \ge |G_1| - \delta n^2$. In particular,  
    \begin{align*}
        \mathrm{inj}(\partial_{r-2}Q, G_2)
        \ge \mathrm{inj}(\partial_{r-2}Q, G_1) - \delta n^r \cdot n^{v(Q)-r} 
        \ge \mathrm{inj}(n,\partial_{r-2}Q,F) - 4\delta n^{v(Q)}. 
    \end{align*}
    By assumption, $K_{\ell+1}$ is $\partial_{r-2}Q$-edge-stable with respect to $\mathfrak{K}_{\ell}^{2}$, so there exists a balanced partition $V_1 \cup \cdots \cup V_{\ell} = V$ such that the complete $\ell$-partite graph $K^2 \coloneqq K^2[V_1, \ldots, V_{\ell}]$ satisfies $|G_2 \triangle K^2| \le \varepsilon n^2$. 
    Let $x_{\ast} \coloneqq \min\left\{|V_i|/n \colon i\in [\ell]\right\} = 1/\ell - o(1)$. 
    Let $K^r \coloneqq K^r[V_1, \ldots, V_{\ell}]$ and $\mathcal{H}_2 \coloneqq \mathcal{H}_1 \cap K^r$. 
    Then 
    \begin{align*}
        |\mathcal{H}_2|
        \ge |\mathcal{H}_1| - |G_1 \setminus K^2| \cdot n^{r-2}
        & \ge |\mathcal{H}_1| - \left(|G_1\setminus G_2| + |G_2\setminus K^2|\right) n^{r-2} \\
        & \ge |\mathcal{H}_1| - (\delta+\varepsilon) n^r 
        \ge |\mathcal{H}| -\delta n^r - (\delta+\varepsilon) n^r 
        \ge |\mathcal{H}| - 2\varepsilon n^r. 
    \end{align*}
    Consequently, 
    \begin{align*}
        \mathrm{inj}(Q, \mathcal{H}_{2})
        \ge \mathrm{inj}(Q, \mathcal{H})
            - 2\varepsilon n^r \cdot n^{v(Q)-r}
        \ge \mathrm{inj}(n,Q,H_{F}^{r}) - 3\varepsilon n^{v(Q)}. 
    \end{align*}
    Combined with Lemma~\ref{LEMMA:homo-subgraph-complete-partite}~\ref{LEMMA:homo-subgraph-complete-partite-1} we obtain  
    \begin{align*}
        |K^r \setminus \mathcal{H}_2|
        \le \frac{\mathrm{inj}(Q, K^r) - \mathrm{inj}(Q, \mathcal{H}_2)}{\left(x_{\ast} n/2\right)^{v(Q)-r}}
        \le \frac{3\varepsilon n^{v(Q)}}{\left( n/(4\ell)\right)^{v(Q)-r}}
        \le 3 (4\ell)^{v(Q)-r} \varepsilon n^{r}. 
    \end{align*}
    Therefore, 
    \begin{align*}
        |\mathcal{H} \triangle K^r|
        \le |K^r \setminus \mathcal{H}_2| + |\mathcal{H} \setminus \mathcal{H}_2|
        \le 3 (4\ell)^{v(Q)-r} \varepsilon n^{r} + 2\varepsilon n^{r}
        \le (4\ell)^{v(Q)} \varepsilon n^{r}. 
    \end{align*}
    proving the second part of Theorem~\ref{THM:Turan-stable-expansion}. 
\end{proof}
Now we are ready to present the proofs for Theorems~\ref{THM:Homomorphism-complete-multipartite-hypergraph-b} and~\ref{THM:Turan-good-edge-critical-expansion}. 
\begin{proof}[Proof of Theorem~\ref{THM:Homomorphism-complete-multipartite-hypergraph-b}]
    Fix an integer $r\ge 2$, an $r$-graph $Q$ with $\partial_{r-2}Q$ being complete $k$-partite, and 
    an edge-critical graph $F$ with $\chi(F) = \ell + 1 > k$. 
    By Corollary~\ref{CORO:Turan-stable-expansion}~\ref{CORO:Turan-stable-expansion-1}, $H_{F}^{r}$ is edge-stable with respect to $\mathfrak{K}_{\ell}^{r}$. 
    In particular, $\pi_{\mathrm{inj}}(Q, H_{F}^{r}) = \lambda_{Q}(K_{\ell}^{r})$, which combined with Proposition~\ref{PROP:complete-multipartite-vtx-extend}, implies that $H_{F}^{r}$ is vertex-stable with respect to $\mathfrak{K}_{\ell}^{r}$. 
    Therefore, it follows from Theorem~\ref{THM:general-generalized-Turan-b} that $H_{F}^{r}$ is degree-stable with respect to $\mathfrak{K}_{\ell}^{r}$. 
\end{proof}
\begin{proof}[Proof of Theorem~\ref{THM:Turan-good-edge-critical-expansion}]
    Fix an integer $r\ge 2$, an $r$-graph $Q$ without isolated vertices, and 
    an edge-critical graph $F$ with $\chi(F) = \ell + 1 \ge 300(v(Q))^{9}$. 
    By Corollary~\ref{CORO:Turan-stable-expansion}~\ref{CORO:Turan-stable-expansion-2}, $H_{F}^{r}$ is edge-stable with respect to $\mathfrak{K}_{\ell}^{r}$. 
    In particular, $\pi_{\mathrm{inj}}(Q, H_{F}^{r}) = \lambda_{Q}(K_{\ell}^{r})$, which combined with Proposition~\ref{PROP:complete-multipartite-vtx-extend}, implies that $H_{F}^{r}$ is vertex-stable with respect to $\mathfrak{K}_{\ell}^{r}$. 
    Therefore, it follows from Theorem~\ref{THM:general-generalized-Turan-b} that $H_{F}^{r}$ is degree-stable with respect to $\mathfrak{K}_{\ell}^{r}$. 
\end{proof}
%
\section{Concluding remarks}\label{SEC:Remark}
Given an $r$-graph $Q$ and a family $\mathcal{F}$ of $r$-graphs, let 
\begin{align*}
    \mathrm{hom}(n,Q,\mathcal{F})
    \coloneqq \max\left\{\mathrm{hom}(Q, \mathcal{H}) \colon \text{$\mathcal{H} \in \mathfrak{G}^{r}$ is $\mathcal{F}$-free}\right\}. 
\end{align*}
All results in this paper concerning $\mathrm{inj}(n,Q,\mathcal{F})$ also apply to $\mathrm{hom}(n,Q,\mathcal{F})$ due to Fact~\ref{FACT:inj-Hom}. The proofs require only minor adaptations of the current proofs.

Given integers $r > t \ge 1$ and a real number $p> 1$, we define the \textbf{$(t,p)$-norm} of $\mathcal{H}$ as
\begin{align*}
    \norm{\mathcal{H}}_{t,p}
    \coloneqq \sum_{T\in \binom{V(\mathcal{H})}{t}} d_{\mathcal{H}}^{p}(T), 
\end{align*}
where $d_{\mathcal{H}}^{p}(T)$ is shorthand for $\left(d_{\mathcal{H}}(T)\right)^{p}$. 
For a family $\mathcal{F}$ of $r$-graphs the \textbf{$(t,p)$-Tur\'{a}n number} of $\mathcal{F}$ is 
\begin{align*}
    \mathrm{ex}_{t,p}(n,\mathcal{F})
    \coloneqq \max\left\{\norm{\mathcal{H}}_{t,p} \colon \text{$v(\mathcal{H}) = n$ and $\mathcal{H}$ is $\mathcal{F}$-free}\right\}. 
\end{align*}
Balogh--Clemen--Lidick\'{y}~\cite{BCL22,BCL22b} initiated the study of $\mathrm{ex}_{r-1,2}(n,\mathcal{F})$ for hypergraph families, with most results obtained through computer-assisted Flag Algebra calculations.
In an upcoming paper, we will demonstrate the applications of Theorems~\ref{THM:GeneralFunction-Stability},~\ref{THM:GeneralFunction-Stability-b}, and~\ref{THM:GeneralFunction-Stability-c} in $(t,p)$-norm Tur\'{a}n problems. 
We will establish the degree-stability of the $(t,p)$-norm Tur\'{a}n problem for expansions of edge-critical graphs, applicable to all cases where $r > t \ge 1$ and $p > 1$. 
Additionally, for every $p > 1$, we will establish the degree-stability of the $(2,p)$-norm Tur\'{a}n problem for $F_5 \coloneqq \{\{1,2,3\},\{1,2,4\},\{3,4,5\}\}$.

It is worth mentioning that in the definition of locally $C$-Lipschitz,~\eqref{equ:DEF:general-property-a2} can be further relaxed to$\colon$ 
\begin{align*}
    d_{\Gamma, \mathcal{H}'}(v)
            \ge 
            d_{\Gamma, \mathcal{H}}(v) - 
            f\left(\frac{n-n'}{n}\right) \cdot \mathrm{exdeg}_{\Gamma}(n)
            - o(\mathrm{degex}_{\Gamma}(n))
\end{align*}
for any function $f \colon \mathbb{R} \to \mathbb{R}$ that goes to $0$ as $\frac{n-n'}{n}$ goes to $0$. 
This relaxation will be useful for the $(t,p)$-norm Tur\'{a}n problem when $p \in (1,2)$, where we will choose $f\left(\frac{n-n'}{n}\right) = C  \left(\frac{n-n'}{n}\right)^{\varepsilon}$ for some constants $C, \varepsilon > 0$. 
The current definition (Definition~\ref{DEF:general-property-a}~\ref{DEF:general-property-a2}) is adopted for simplicity in calculations.  

Applications of the degree-stability of Tur\'{a}n problems within Complexity Theory were systematically explored in a recent work~\cite{HLZ24}. 
Exploring the potential applications of the theorems in this paper within Complexity Theory seems like an interesting direction.
\section*{Acknowledgments}
The results in Section~\ref{SUBSEC:intro-general} are partially motivated by a related project proposed by Oleg Pikhurko. We are very grateful to him for his invaluable discussions and comments.
\bibliographystyle{alpha}
\bibliography{LpNorm}
\end{document}